\numberwithin{equation}{section}
\DeclareMathAlphabet{\mathpzc}{OT1}{pzc}{m}{it}
\DeclareSymbolFont{EulerScripta}{U}{euf}{m}{n}
\DeclareSymbolFontAlphabet\matheufm{EulerScripta}
\DeclareSymbolFont{EulerScriptb}{U}{eur}{m}{n}
\DeclareSymbolFontAlphabet\matheurm{EulerScriptb}
\DeclareSymbolFont{EulerScriptc}{U}{eus}{m}{n}
\DeclareSymbolFontAlphabet\matheusm{EulerScriptc}
\newcommand\eusm{\matheusm}
\theoremstyle{plain}
\newtheorem{thm}[subsubsection]{Theorem}
\newtheorem*{introthm}{Theorem}
\newtheorem*{introcorollary}{Corollary}
\newtheorem{proposition}[subsubsection]{Proposition}
\newtheorem{propositiondefinition}[subsubsection]{Proposition-Definition}
\newtheorem{lemma}[subsubsection]{Lemma}
\newtheorem{corollary}[subsubsection]{Corollary}
\theoremstyle{definition}
\newtheorem{definition}[subsubsection]{Definition}
\newtheorem{example}[subsubsection]{Example}
\theoremstyle{remark}
\newtheorem{remark}[subsubsection]{Remark}
\newtheorem{notation}[subsubsection]{Notation}
\newcommand{\name}[1]{#1}
\newcommand{\eq}[2]{\begin{equation}\label{#1}#2 \end{equation}}
\newcommand{\ml}[2]{\begin{multline}\label{#1}#2 \end{multline}}
\newcommand{\mlnl}[1]{\begin{multline*}#1 \end{multline*}}
\newcommand{\Div}{{\rm div}}
\newcommand{\xr}[1] {\xrightarrow{#1}}
\newcommand{\inj}{\hookrightarrow}
\newcommand{\surj}{\twoheadrightarrow}
\newcommand{\ul}{\underline}
\newcommand{\mc}[1]{\mathcal{#1}}
\newcommand{\id}{{\rm id}}
\newcommand{\Res}{{\rm Res}}
\newcommand{\Fil}{{\rm Fil}}
\newcommand{\CH}{{\rm CH}}
\newcommand{\Cor}{{\rm Cor}}
\newcommand{\Nis}{{\rm Nis}}
\newcommand{\HZar}{\mathrm{H}_{\mathrm{Zar}}}
\newcommand{\KMsheaf}{\mathcal{K}^{\mathrm{M}}}
\newcommand{\PST}{{\bf PST}}
\newcommand{\NST}{{\bf NST}}
\newcommand{\Nm}{{\rm Nm }}
\newcommand{\Image}{\mathop{\rm Image}}
\newcommand{\Coker}{\mathop{\rm Coker}}
\newcommand{\Ker}{\mathop{\mathrm{Ker}}}
\newcommand{\DMeffm}{\mathbf{DM}^{\mathrm{eff}}_{-}}
\newcommand{\DM}{\mathbf{DM}}
\newcommand{\RF}{\mathbf{RF}}
\newcommand{\MF}{\mathbf{MF}}
\newcommand{\PT}{\mathbf{PT}}
\newcommand{\NT}{\mathbf{NT}}
\newcommand{\LT}{\mathrm{LT}}
\newcommand{\T}{\mathrm{T}}
\newcommand{\MFsp}{\mathbf{MFsp}}
\newcommand{\LMFsp}{\mathbf{LMFsp}}
\newcommand{\tH}{\mathsf{H}}
\newcommand{\Reg}{\Regone}
\newcommand{\SmCor}{\rm SmCor}
\newcommand{\KM}{\mathrm{K}^\mathrm{M}}
\newcommand{\Mod}{\mathrm{Mod}}
\newcommand{\ra}{\rightarrow}
\newcommand{\Tr}{\mathrm{Tr}}
\newcommand{\pt}{\mathrm{pt}}
\newcommand{\Hom}{\mathrm{Hom}}
\newcommand{\colim}{\mathop{\mathrm{colim}}}
\newcommand{\Spec}{\mathop{\mathrm{Spec}}}
\newcommand{\op}{\mathrm{op}}
\newcommand{\otimesM}{\overset{\mathrm{M}}{\otimes}}
\newcommand{\length}{\mathrm{lg}}
\newcommand{\Bil}{\mathrm{Bil}}
\newcommand{\Lin}[1]{#1-\mathrm{Lin}}
\newcommand{\Tor}{\mathrm{Tor}}
\renewcommand{\mathcal}{\mathscr}
\newcommand{\sC}{{\mathcal C}}
\newcommand{\sD}{{\mathcal D}}
\newcommand{\sF}{{\mathcal F}}
\newcommand{\sG}{{\mathcal G}}
\newcommand{\sH}{{\mathcal H}}
\newcommand{\sK}{{\mathcal K}}
\newcommand{\sL}{{\mathcal L}}
\newcommand{\sM}{{\mathcal M}}
\newcommand{\sN}{{\mathcal N}}
\newcommand{\sO}{{\mathcal O}}
\newcommand{\sP}{{\mathcal P}}
\newcommand{\sR}{{\mathcal R}}
\newcommand{\sS}{{\mathcal S}}
\newcommand{\A}{{\mathbb A}}
\newcommand{\G}{{\mathbb G}}
\newcommand{\N}{{\mathbb N}}
\renewcommand{\P}{{\mathbb P}}
\newcommand{\Q}{{\mathbb Q}}
\newcommand{\Z}{\mathbb{Z}}
\newcommand{\fM}{\mathfrak{M}}
\newcommand{\fm}{\mathfrak{m}}
\newcommand{\fn}{\mathfrak{n}}
\newcommand{\HINis}{{\HI_\Nis}}
\newcommand{\HI}{\mathbf{HI}}
\newcommand{\otimesHtr}{\otimes_{\HINis}}
\newcommand{\Forget}{o}
\newcommand{\Regone}{\mathrm{Reg}^{\leqslant 1}}
\newcommand{\RegCon}{\mathrm{RegCon}^{\leqslant 1}}
\newcommand{\Sm}{\mathrm{Sm}}
\newcommand{\Kgeo}{\mathrm{K}^{\mathrm{geo}}}
\begin{document}

\title{K-Groups of Reciprocity Functors}
\author{Florian {\scshape{Ivorra}}}
\address{Florian Ivorra: Institut de recherche math\'ematique de Rennes\\ UMR 6625 du CNRS\\ Universit\'e de Rennes 1\\
Campus de Beaulieu\\
35042 Rennes cedex (France)}
\email{florian.ivorra@univ-rennes1.fr}
\author{Kay R\"ulling}
\address{Kay R\"ulling: Institut f\"ur Mathematik\\Freie Universit\"at Berlin \\
14195 Berlin  (Germany)}
\email{kay.ruelling@fu-berlin.de}

\thanks{The first author acknowledges support from the DAAD (Deutscher Akademischer Austausch Dienst) during 
the preparation of this work and thanks \name{M. Levine} for providing an excellent working environment and making 
his stay at the University Duisburg-Essen possible. The second author was supported by the SFB/TR 45 
``Periods, moduli spaces and arithmetic of algebraic varieties'' of the DFG, by the ERC Advanced Grant 226257 
and thanks the first author for an invitation to the University of Rennes in 2010}

\begin{abstract}

In this work we introduce reciprocity functors, construct the associated K-group functor of a family of reciprocity functors, which itself is a reciprocity functor, and compute it in several different cases. It may be seen as a first attempt to get close to the  notion of reciprocity sheaves imagined by B. \name{Kahn}.
Commutative algebraic groups, homotopy invariant Nisnevich sheaves with transfers, cycle modules or K\"ahler differentials are examples of reciprocity functors.
As commutative algebraic groups do, reciprocity functors are equipped with symbols and satisfy a reciprocity law for curves.
\end{abstract}
\maketitle

\setcounter{tocdepth}{2}
\tableofcontents

\section*{Introduction}
Let $F$ be a perfect field. In this work we introduce a notion of reciprocity functors. Early in the 90's, \name{B. Kahn} already suggested to use the local symbol of \name{M. Rosenlicht} and \name{J.-P. Serre} \cite{Rosenlicht,SerreGACC}  for smooth commutative algebraic groups in order to develop a theory which contains algebraic groups and homotopy invariant Nisnevich sheaves with transfers, 
see e.g. \cite{KahnRF}. 
Our approach is inspired by his 
idea, and in this work reciprocity functors are introduced as functors  defined over finitely generated field extensions of $F$ and regular curves over them. Besides the above examples 
we also show that the absolute K\"ahler differentials are reciprocity functors in our sense.\par 
Given reciprocity functors $\sM_1,\ldots,\sM_n$, our main construction is a reciprocity functor $\T(\sM_1,\ldots,\sM_n)$ that we call the K-group of $\sM_1,\ldots, \sM_n$, although it is much more than a group, it is a reciprocity functor. This construction is related to the K-group associated by M. Somekawa in \cite{Somekawa} with a family of semi-Abelian varieties and its variants introduced in \cite{RS,Akhtar, Hiranouchi}. But notice that even in the case the Somekawa type K-group of reciprocity functors $\sM_1,\ldots, \sM_n$ is defined there is no direct comparison with the reciprocity functor $\T(\sM_1,\ldots, \sM_n)$ we construct. 
Nevertheless, we compute this K-group of reciprocity functors in several cases and comparing these results
with the computation of the corresponding Somekawa type K-groups in \cite{RS,Akhtar,KY}
we conclude that these groups agree in these cases. As a new example we show, as expected by \name{B. Kahn}, that 
$$\T(\G_a,\underbrace{\G_m,\ldots, \G_m}_{n\text{ copies}})$$ evaluated at a characteristic zero field $k$ computes
the $n$-th group of absolute K{\"a}hler differentials of $k$, $\Omega^n_{k/\Z}$. 
In a recent preprint T. Hiranouchi \cite{Hiranouchi} obtains a similar result using a Somekawa type K-group.

\section*{\itshape{Detailed description of this work}}
\par
 Let $F$ be a perfect field and $S=\Spec F$. A point over $S$ or - for short - an $S$-point is the spectrum of a finitely generated field extension and
we denote by $\Regone$ the category with objects the regular $S$-schemes of dimension $\leqslant 1$, which are separated and of finite type over some $S$-point. As in \cite{VoDM} we can define the category $\Regone\Cor$, which has the same objects as $\Regone$ but
finite correspondences as morphisms.\par
Inspired by \name{B. Kahn}'s idea to use Rosenlicht-Serre's treatment of local symbols for smooth connected and commutative algebraic groups over an algebraically closed field 
in order to develop a theory that contains algebraic groups and homotopy invariant Nisnevich sheaves with transfers, we define a {\em reciprocity functor} to be a presheaf $\sM$ of Abelian groups on $\Regone\Cor$, which satisfies the following conditions:
\begin{enumerate}
\item[(Nis)] $\sM$ is a sheaf in the Nisnevich topology on $\Regone$.
\item[(FP)] For all connected $X\in \Reg$ with generic point $\eta$, the group $\sM(\eta)$ is the stalk in the generic point of $\sM$ viewed as a Zariski sheaf on $X$.
\item[(Inj)] For all connected $X\in \Reg$ and all non-empty open subsets $U\subset X$, the restriction map $\sM(X)\inj \sM(U)$ is injective.
\item[(MC)] For all regular projective and connected curves $C\in \Reg$,  all non-empty open subsets $U\subset C$ and sections $a\in \sM(U)$, there exists an effective divisor $\fm$ with support equal to $C\setminus U$
         and such that
         \[\sum_{P\in U} v_P(f) \Tr_{P/x_C} s_P(a)=0,\]
         where $f$ is any non-zero element in the function field of $C$, which is congruent to 1 modulo $\fm$ 
      (i.e. $\Div(f-1)\geqslant \fm$ on $C\setminus U$), $v_P$ is the discrete valuation associated with the closed point $P\in C$,
         $s_P: \sM(U)\to \sM(P)$ is the specialization map, which is simply given
         by the pullback along the natural inclusion $P\inj U$, $x_C=\Spec \, H^0(C,\sO_C)$, and $\Tr_{P/x_C}:\sM(P)\to \sM(x_C)$ is the pushforward along the finite map $P\to x_C$.
\end{enumerate}
 The condition (MC) is nothing but the modulus condition from \cite{SerreGACC}.
As suggested by \name{B. Kahn}, examples of reciprocity functors are (see \S2):
\begin{itemize}
\item Smooth commutative algebraic groups over $S$. 
                 (This essentially follows from a theorem of M. Rosenlicht \cite{Rosenlicht}.)
\item Homotopy invariant Nisnevich sheaves with transfers in the sense of \cite{VoDM}.
\item The Nisnevich sheafification of $X\mapsto H^1_{\rm \'et}(X, \Q/\Z)$. 
\item Rost's cycle modules. (This can be seen as a consequence of the example above and \cite{Deglise},
         but also follows directly from the axioms of cycle modules.)     
\end{itemize}
As a particular case, we also obtain that 
\begin{itemize}
 \item for any smooth projective variety $X$ over $S$ of pure dimension $d$ and any $n\geqslant 0$
       the functor on $S$-points $x\mapsto\CH^{d+n}(X_x,n)$ defines a reciprocity functor, where $\CH^{d+n}(X_x,n)$ denotes Bloch's higher Chow groups. 
\end{itemize}
Futhermore for all $n\geqslant 0$,
\begin{itemize}
\item absolute K\"ahler differentials, $x\mapsto \Omega^n_{x/\Z}$, define a reciprocity functor.
\end{itemize}

As already observed by \name{B. Kahn}, one can use the same computations as in \cite{SerreGACC} to show that a reciprocity functor $\sM$ has local symbols. More precisely, if $C$ is a regular projective and connected curve over some $S$-point $x$ with generic point $\eta$, then for all closed points $P\in C$
there exists a bilinear map 
\[(-,-)_P: \sM(\eta)\times \G_m(\eta)\to \sM(x),\]
which is continuous, when $\sM(\eta)$ and $\sM(x)$ are equipped with the discrete topology and $\G_m(\eta)$ with the $\fm_P$-adic toplogy ($\fm_P$ the maximal ideal in the local ring of $C$ at $P$)
and satisfies $(a,f)_P= v_P(f)\Tr_{P/x}(s_P(a))$, for all $a\in \sM_{C,P}$, and the reciprocity law $\sum_{P\in C} (a,f)_P=0$.\par
These local symbols provide an increasing and exhaustive filtration $\Fil^\bullet_P \sM(\eta)$, where $\Fil^0_P\sM(\eta)=\sM_{C,P}$ and $\Fil^n_P\sM(\eta)$ is the subgroup consisting
of the elements $a\in \sM(\eta)$ such that $(a, 1+\fm_P^n)_P=0$. 
\section*{\itshape{Main results}}
Now let $\sM_1,\ldots,\sM_n$ and $\sN$ be reciprocity functors. Then a $n$-linear map of reciprocity functors $\Phi: \sM_1\times \cdots\times \sM_n\to \sN$
is a $n$-linear map of sheaves, which is compatible with pullback, satisfies a projection formula, and the following condition
\[(\mathrm{L3})\quad \Phi(\Fil^r_P\sM_1(\eta)\times \cdots\times \Fil^r_P\sM_n(\eta))\subset
 \Fil_P^r\sN(\eta),\]
for all regular projective curves $C$ with generic point $\eta$ and $P\in C$ a closed point and all positive integers $r\geqslant 1$.
We denote by $\Lin{n}(\sM_1,\ldots, \sM_n;\sN)$ the group of $n$-linear maps as above.
Then the main theorem of this article is the following:
\begin{introthm}[see Theorem \ref{thm-tensorRF}]
The functor $\RF\to (\text{Abelian groups})$, $\sN\mapsto \Lin{n}(\sM_1,\ldots, \sM_n;\sN)$ is representable by a reciprocity functor
\[\T(\sM_1,\ldots,\sM_n).\]
\end{introthm}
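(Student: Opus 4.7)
The plan is to realize $\T := \T(\sM_1,\ldots,\sM_n)$ as an explicit quotient of a tautological presheaf built from the $\sM_j$, and then verify that this quotient satisfies the axioms (Nis), (FP), (Inj), (MC) of a reciprocity functor and co-represents the $n$-linear-maps functor. The construction is modelled on Somekawa's K-groups but adapted so that the new axiom (L3) is built in from the start.

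First, for each $S$-point $x$, I would define
\[\T(x) := \Bigl(\bigoplus_{y/x \text{ finite}} \sM_1(y)\otimes_{\Z}\cdots\otimes_{\Z}\sM_n(y)\Bigr)\Big/\sR(x),\]
where $\sR(x)$ is the subgroup generated by projection-formula relations (PF), moving $\Tr_{y'/y}$ across the tensor product whenever only one factor lives on the larger field, and by reciprocity relations (Rec) of Weil type, indexed by a regular projective connected curve $C$ over an $x$-point with generic point $\eta$, a function $g\in K(C)^{\times}=\G_m(\eta)$, and sections $a_j\in \sM_j(\eta)$ subject to joint compatibility with the filtrations $\Fil^\bullet_P$. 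The relations (Rec) are precisely those needed to ensure that the universal map $(a_1,\ldots,a_n)\mapsto a_1\otimes\cdots\otimes a_n$ is filtration-preserving and that the local-symbol reciprocity law holds after passing to the quotient; they are expressed using only the intrinsic data (local symbols, specialisations, filtrations) of the given $\sM_j$. For connected $X\in\Reg$ with generic point $\eta$ I would then put
\[\T(X) := \bigcap_{P\in X,\ P\text{ closed}} \Fil^0_P\,\T(\eta),\]
extend to non-connected $X$ componentwise, and inherit transfers along finite correspondences in $\Regone\Cor$ factor by factor from the $\sM_j$. The axioms (FP) and (Inj) are then tautological, (Nis) follows from the Nisnevich sheaf property of each $\sM_j$ together with the local character of the filtrations, and (MC) for $\T$ is proved by reducing a section $a\in\T(U)$ to finitely many tensor generators and taking as joint modulus the maximum of the individual moduli produced by (MC) for the $\sM_j$.

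For the universal property, the tautological map $\sM_1\times\cdots\times\sM_n\to\T$ visibly satisfies multilinearity, pullback compatibility, the projection formula, and (L3); conversely, any $\Phi\in\Lin{n}(\sM_1,\ldots,\sM_n;\sN)$ annihilates (PF) by assumption and (Rec) because (L3) applied to $\Phi$ combined with the reciprocity law in $\sN$ forces the corresponding sum of local symbols to vanish, so $\Phi$ factors uniquely through $\T$. The main obstacle will be the simultaneous calibration of the relations (Rec) against (L3): one must impose enough relations so that $\T(\eta)$ itself carries a filtration $\Fil^\bullet_P$ for which (L3) holds and $\T$ satisfies its own reciprocity law, while not imposing so many that a generic multilinear $\Phi$ satisfying only (L3) fails to descend. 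Concretely, the technical heart is to identify which symbols in $\T(\eta)$ must lie in $\Fil^r_P$, and to show that the resulting quotient admits local-symbol pairings with $\G_m$ satisfying the reciprocity law; once this is granted, both the remaining axiom (MC) for $\T$ and the factorisation of $\Phi$ become formal, but I expect this filtration-comparison step to constitute the real technical core of the theorem.
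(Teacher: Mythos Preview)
Your overall shape is right and close to what the paper does, but there are two concrete gaps that would prevent the argument from going through as stated.

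\textbf{Circularity in the definition of $\T(X)$.} You propose to set $\T(X) = \bigcap_{P\in X} \Fil^0_P\,\T(\eta)$ for connected one-dimensional $X$. But by definition $\Fil^0_P\sM(\eta)$ is the Zariski stalk $\sM_{C,P}$, which in turn is a colimit of $\sM(U)$ over opens $U\ni P$; so you need $\T$ on open subschemes before you can form $\Fil^0_P\T(\eta)$. If instead you intend to define $\Fil^0_P\T(\eta)$ ``by hand'' as the subgroup generated by tensors of $P$-regular elements (or via some externally imposed local symbol), you then face the nontrivial task of checking that this ad hoc group really agrees with the stalk of the presheaf you are trying to build, and that the resulting assignment carries an action of $\Regone\Cor$. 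The paper avoids this by constructing a genuine presheaf with transfers $\sM_1\otimes\cdots\otimes\sM_n$ on all of $\Regone$ first (Lemma~\ref{LemmaTensTr}), so that values on opens, stalks, transfers, and specialization maps are all present from the outset.

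\textbf{The role of the functor $\Sigma$ and Nisnevich locality.} Once one quotients a presheaf with transfers by relations, there is no reason for (Inj) to survive: new elements may restrict to zero without being zero. You assert (Inj) and (FP) are ``tautological'', but this is exactly what fails. The paper handles this with a dedicated left adjoint $\Sigma:\LMFsp\to\MFsp$ (Proposition~\ref{prop-adjoint-of-LMF}) that iteratively kills kernels of restriction and then Nisnevich-sheafifies. After applying $\Sigma$, a section of $\T(U)$ is only Nisnevich-locally a finite sum of pushforwards of elementary tensors, so your argument for (MC) (``take the max of the moduli of the generators'') does not apply as stated; one needs to know that admitting a modulus is a Nisnevich-local property, which is the content of Theorem~\ref{thm-modulus-Nis-local} and is used explicitly in the proof of Theorem~\ref{thm-tensorRF}. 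Your sketch has no analogue of either ingredient. Finally, the paper's relation set is not the Somekawa-style ``(Rec) with filtration side-conditions'' you describe, but simply the elements \eqref{defn-LMFtensor1}; the condition (L3) for the universal map is then \emph{proved} (not imposed) from the observation that those relations already force $\tau(a_1,\ldots,a_n)\in\T(C,\max_i\fm_i)$, so the ``filtration-comparison step'' you flag as the technical core is in fact bypassed.
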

We call $\T(\sM_1,\ldots,\sM_n)$ the K-group of $\sM_1,\ldots, \sM_n$, although it is much more than a group, it is a reciprocity functor.
We would like to call this a tensor product, unfortunately it is not clear whether  associativity is satisfied (one reason is the condition (L3)).
But we have commutativity, compatibility with direct sums and the constant reciprocity functor $\Z$ is a unit object.

In \cite{KY},  \name{B. Kahn} and \name{T. Yamazaki} prove the following isomorphism
\begin{equation}\label{KahnYamIso}
\mathrm{K}(F,\sF_1,\ldots,\sF_n)\simeq\Hom_{\DMeffm}(\Z,\sF_1\otimes\cdots\otimes\sF_n )\tag{$\ast$}
\end{equation}
where the $\sF_i$'s are homotopy invariant Nisnevich sheaves with transfers, the left hand side is a Somekawa type K-group and the right hand side  is the group of morphisms in Voevodsky's category of effective motivic complexes. The analogy of Somekawa's K-groups with our K-groups of reciprocity functors suggested the following theorem:


\begin{introthm}[see Theorem \ref{CompHIA}]
Let $\sF_1,\ldots,\sF_n\in\HINis$ be homotopy invariant Nisnevich sheaves with transfers. There exists a canonical and functorial isomorphism of reciprocity functors
\begin{equation*}
\T(\sF_1,\ldots,\sF_n)\xrightarrow{\sim} (\sF_1\otimesHtr\cdots\otimesHtr \sF_n).
\end{equation*}
\end{introthm}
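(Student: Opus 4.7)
The plan is to construct mutually inverse morphisms using the dual universal properties. The crucial input is a tameness lemma: for any $\sF\in\HINis$, any regular projective connected curve $C$ with generic point $\eta$, and any closed point $P\in C$, the Rosenlicht--Serre filtration collapses at level one, i.e.\ $\Fil^1_P\sF(\eta)=\sF(\eta)$. I would prove this first, exploiting the fact that for a homotopy invariant sheaf with transfers the local symbol is governed by a motivic contraction pairing with $\G_m$ which is tame, so that $(a,1+g)_P=0$ for any $a\in\sF(\eta)$ and any $g\in\fm_P$.

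Granting this, the canonical multilinear map $\sF_1\times\cdots\times\sF_n\to\sF_1\otimesHtr\cdots\otimesHtr\sF_n$ in $\HINis$ is automatically a multilinear map of reciprocity functors: multilinearity as sheaves, pullback compatibility and the projection formula are built into the construction of $\otimesHtr$ as a tensor product in $\HINis$, while condition (L3) is vacuous because the target has trivial filtration. The universal property of $\T$ then yields the forward map $\alpha\colon\T(\sF_1,\ldots,\sF_n)\to\sF_1\otimesHtr\cdots\otimesHtr\sF_n$.

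For the inverse, I would show that $\T(\sF_1,\ldots,\sF_n)$ itself lies in $\HINis$. The transfer structure is inherent to $\Regone\Cor$ and to the defining relations of $\T$; the extension to a Nisnevich sheaf with transfers on $\Sm$ uses Voevodsky's theorem that such a sheaf is controlled by its values on function fields, all of which are objects of $\Regone$. Homotopy invariance is the heart of the matter: I would show that the projection $X\times\A^1\to X$ induces an isomorphism on $\T$ by applying (MC) to the horizontal projective curves $\{x\}\times\P^1$, using the tameness of each $\sF_i$ to ensure that the resulting reciprocity sum is already expressible within the defining relations of $\T$. With $\T(\sF_1,\ldots,\sF_n)\in\HINis$ in hand, the universal property of $\otimesHtr$ applied to the canonical multilinear map $\sF_1\times\cdots\times\sF_n\to\T(\sF_1,\ldots,\sF_n)$ produces $\beta$. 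Both compositions $\alpha\circ\beta$ and $\beta\circ\alpha$ induce the identity on the universal multilinear datum, hence coincide with the identity by uniqueness in each universal property.

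The main obstacle is the homotopy invariance of $\T(\sF_1,\ldots,\sF_n)$: symbols $\{a_1,\ldots,a_n\}$ generating $\T$ over $X\times\A^1$ must be shown to agree with their zero-section specializations modulo the defining relations of $\T$, and this requires a curve-by-curve reciprocity argument whose error terms are only controlled, and only closed within $\T$, because of the tameness of the HI inputs.
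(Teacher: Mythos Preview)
Your forward map $\alpha$ is essentially the one the paper constructs, and your tameness observation (that $\hat\sF\in\RF_1$ for $\sF\in\HINis$) is exactly Proposition~\ref{HI=RF0}; so far so good.

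The gap is in the construction of $\beta$. You propose to show that $\T(\hat\sF_1,\ldots,\hat\sF_n)$ lies in $\HINis$ and then invoke the universal property of $\otimesHtr$. But $\T$ is by construction only a functor on $\Regone\Cor$, not on $\SmCor$; the universal property of $\otimesHtr$ lives in $\HINis$ and needs a target in that category. Your appeal to ``Voevodsky's theorem that such a sheaf is controlled by its values on function fields'' does not do the job: those results say that for an object \emph{already in} $\HINis$ certain data on fields determine it, not that an arbitrary reciprocity functor in $\RF_1$ extends to an object of $\HINis$. Indeed such an extension cannot be automatic (many reciprocity functors, e.g.\ $\G_a$, do not come from $\HINis$), so at best you are begging the question. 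Even if you succeeded in proving homotopy invariance of $\T$ on $\Regone$ curve by curve, you would still owe a construction of the $\SmCor$-extension together with a verification that the canonical multilinear map into it is one of presheaves with transfers on $\Sm$; this is substantial and not in your sketch.

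The paper avoids this entirely by working inside $\LMFsp$ and $\RF$ throughout. The argument has three ingredients: (i) for $\sF_i\in\HINis$ one has $\T(\hat\sF_1,\ldots,\hat\sF_n)=\varrho_1(\hat\sF_1\otimes\cdots\otimes\hat\sF_n)$ (Lemma~\ref{lem-CompTh0}, immediate from (L3) being vacuous in $\RF_1$); (ii) a direct comparison $(\sF_1\otimes_{\PST}\cdots\otimes_{\PST}\sF_n)\,\widehat{}\cong\hat\sF_1\otimes\cdots\otimes\hat\sF_n$ in $\PT$ (Proposition~\ref{prop-tensor-PST-vs-MF}); and (iii) the key general fact $\varrho_1(\hat\sF)\cong\widehat{h_0^{\Nis}(\sF)}$ for any $\sF\in\PST$ (Proposition~\ref{Prophath0}). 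The point of (iii) is that on an $S$-point $x$ the only relations imposed by $h_0^{\Nis}$ are $i_0^*(\alpha)-i_1^*(\alpha)$ for $\alpha\in\hat\sF_{\P^1_x}(\A^1_x)$, and one writes this as the reciprocity sum for $f=t/(t-1)$ on $(\P^1_x,\{\infty\})$, which is precisely a relation killed by $\varrho_1$. Injectivity then follows, and the whole proof takes place without ever lifting $\T$ to $\Sm$. If you want to rescue your approach, the honest route is to drop the attempt to place $\T$ in $\HINis$ and instead prove that $\alpha$ is an isomorphism on $S$-points directly; but once you unwind what that requires, you will find yourself reproving Proposition~\ref{Prophath0}.
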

Let us emphasize that the definition of the K-group of reciprocity functors is different from the Somekawa type one. In particular the above theorem does not follow from the isomorphism (\ref{KahnYamIso}). Let us also point out that it is a priori not clear how the K-group of reciprocity functors associated with semi-Abelian varieties compare with the Somekawa K-groups. It follows from Kahn-Yamazaki's isomorphism (\ref{KahnYamIso}) and the above Theorem that $\T(G_1,\ldots, G_n)(S)$ with $G_i$ semi-Abelian varieties equals Somekawa's K-group $\mathrm{K}(F,G_1,\ldots,G_n)$.

In the same way as in \cite{KY} we obtain as a corollary:

\begin{introcorollary}[see Corollary \ref{cor-tensor-chow}]
Let $X_1,\ldots,X_n$ be smooth projective schemes over $S$  of pure dimension $d_1,\ldots,d_n$ and $r\geqslant 0$ an integer.
Then for all $S$-points $x$ we have an isomorphism
\[\T(\CH_0(X_1),\ldots, \CH_0(X_n),\G_m^{\times r})(x)\cong \CH^{d+r}(X_{1,x}\times_x\ldots\times_x X_{n,x}, r)\]
where $d=d_1+\cdots+d_n$ and $\G_m^{\times r}$ denotes the $r$-tuple $(\G_m,\ldots,\G_m)$
\end{introcorollary}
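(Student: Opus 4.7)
The plan is to deduce the corollary from Theorem \ref{CompHIA} by running the same motivic computation as Kahn--Yamazaki in \cite{KY}. For each smooth projective $X_i/S$ the functor $U\mapsto\CH_0(X_i\times_S U)$ sheafifies to a homotopy invariant Nisnevich sheaf with transfers, and the same holds for $\G_m$; hence all the reciprocity functors in the statement lie in $\HINis$. Applying Theorem \ref{CompHIA}, I obtain a canonical isomorphism of reciprocity functors
\[
\T(\CH_0(X_1),\ldots,\CH_0(X_n),\G_m^{\times r})\xrightarrow{\sim}\CH_0(X_1)\otimesHtr\cdots\otimesHtr\CH_0(X_n)\otimesHtr\G_m\otimesHtr\cdots\otimesHtr\G_m,
\]
with $r$ copies of $\G_m$ on the right.

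Next I would evaluate both sides at an $S$-point $x$ and apply the Kahn--Yamazaki isomorphism (\ref{KahnYamIso}) base-changed to $x$, together with the identifications $\CH_0(Y)\leftrightarrow M(Y)$ for $Y$ smooth projective and $\G_m\leftrightarrow\Z(1)[1]$ in $\DMeffm$, in order to rewrite the right hand side as
\[
\Hom_{\DMeffm}\bigl(\Z,\;M(X_{1,x})\otimes\cdots\otimes M(X_{n,x})\otimes\Z(r)[r]\bigr),
\]
computed over the residue field of $x$. Using the K\"unneth isomorphism to combine the factors into $M(Y)$ with $Y=X_{1,x}\times_x\cdots\times_x X_{n,x}$, and then invoking smooth-projective duality $M(Y)^\vee\simeq M(Y)(-d)[-2d]$ (where $d=d_1+\cdots+d_n$), this Hom becomes
\[
\Hom_{\DM}\bigl(M(Y),\;\Z(d+r)[2d+r]\bigr),
\]
which by Voevodsky's comparison of motivic cohomology with Bloch's higher Chow groups is precisely $\CH^{d+r}(Y,r)$.

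All the non-trivial content of the corollary is packaged into Theorem \ref{CompHIA}, so that once this is granted the remaining steps are an essentially verbatim transcription of the calculation in \cite{KY}; in particular there is no new hard step and no new obstacle. The one small compatibility point I would verify is that the reciprocity functor called $\CH_0(X_i)$ here coincides, on essentially smooth schemes, with the homotopy invariant Nisnevich sheaf with transfers used in \cite{KY}; this is immediate from the definitions, since both are Nisnevich sheafifications of the same presheaf $U\mapsto \CH_0(X_i\times_S U)$ and both reduce on points to the classical $\CH_0$.
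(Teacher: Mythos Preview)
Your overall strategy coincides with the paper's: apply Theorem \ref{CompHIA} and then carry out a motivic identification. The paper's proof proceeds via Corollary \ref{ComputationMC} (which uses \eqref{IsoMotC} to rewrite the $\HINis$-tensor as $\tH^0(M(X)(r)[r])$, working entirely over $S$) and Lemma \ref{lem-comp-Chow-HI} (which packages duality, cancellation, and the comparison with higher Chow groups, again over $S$), and only \emph{then} evaluates at the $S$-point $x$.

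Your version reverses the order: you evaluate at $x$ first and then attempt the motivic computation over $\kappa(x)$. This is where the gap lies. An $S$-point $x=\Spec k$ has $k$ a finitely generated extension of $F$, which is typically \emph{not perfect}; the results you invoke (smooth--projective duality $M(Y)^\vee\simeq M(Y)(-d)[-2d]$, cancellation, and Voevodsky's comparison with higher Chow groups) are, in the references used in this paper, established over perfect base fields. Moreover, ``the Kahn--Yamazaki isomorphism base-changed to $x$'' is not what you need: after Theorem \ref{CompHIA} you already have the $\HINis$-tensor as a reciprocity functor, and \eqref{KahnYamIso} concerns Somekawa $K$-groups, which play no role here. (A minor imprecision: the identification is $\CH_0(Y)\cong h_0^{\Nis}(Y)=\tH^0(M(Y))$, not $\CH_0(Y)\leftrightarrow M(Y)$.) The fix is exactly what the paper does: compute the $\HINis$-tensor as a sheaf over the perfect base $S$ via \eqref{IsoMotC} and Lemma \ref{lem-comp-Chow-HI}, obtaining an isomorphism of reciprocity functors, and then simply evaluate at $x$.
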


Using a variant of Somekawa's K-groups on the left hand side the above isomorphism was proven by \name{W. Raskind} and \name{M. Spie\ss} in \cite{RS} (case $r=0$) and by \name{R. Akhtar} in \cite{Akhtar} (case $r\geqslant 0$).
We can use this as in \cite{Somekawa} or \cite{RS} to determine the kernel of the Albanese map of a product of smooth projective and connected curves over $S=\Spec F$, where $F$ is algebraically closed.

Further we can also calculate by hand:
 
\begin{introthm}[see Theorem \ref{thm-Milnor=tensor}]
For all $n\geqslant  1$ and all $S$-points $x=\Spec \, k$, there is a canonical isomorphism
\[\T(\G_m^{\times n})(x) \xr{\simeq} \KM_n(k), \]
where $\KM_n(k)$ denotes the $n$-th Milnor $K$-group of $k$.
\end{introthm}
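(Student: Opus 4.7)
The plan is to construct mutually inverse homomorphisms between $\T(\G_m^{\times n})(x)$ and $\KM_n(k)$ using the universal property of $\T$ together with the fact (already established via the cycle module example) that $\KM_n$ is itself a reciprocity functor.

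For the map $\phi\colon \KM_n(k)\to \T(\G_m^{\times n})(x)$, evaluating the universal $n$-linear map at $x$ gives a multilinear map $(k^\times)^n\to \T(\G_m^{\times n})(x)$, $(a_1,\ldots,a_n)\mapsto[a_1,\ldots,a_n]$. To factor it through $\KM_n(k)$ it suffices to verify the Steinberg relation $[a,1-a,a_3,\ldots,a_n]=0$ for $a\in k\setminus\{0,1\}$. I would establish this on $C=\P^1_k$: the section $[t,1-t,a_3,\ldots,a_n]$ is defined over $U=\P^1\setminus\{0,1,\infty\}$, and applying (MC) for the reciprocity functor $\T(\G_m^{\times n})$ gives $\sum_{P\in U} v_P(f)\Tr_{P/x}s_P([t,1-t,a_3,\ldots,a_n])=0$ for every $f\in k(t)^\times$ congruent to $1$ modulo a suitable modulus supported on $\{0,1,\infty\}$. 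A judicious choice of $f$ (in the spirit of Bass--Tate's reciprocity computation of $K_2$) isolates the contribution at $t=a$, producing the desired relation.

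For the inverse $\psi\colon\T(\G_m^{\times n})(x)\to\KM_n(k)$, I would exhibit the product map $\mu\colon\G_m\times\cdots\times\G_m\to\KM_n$, $(a_1,\ldots,a_n)\mapsto\{a_1,\ldots,a_n\}$, as an $n$-linear map of reciprocity functors. Multilinearity, compatibility with pullback, and the projection formula are classical properties of Milnor symbols, so the essential point is condition (L3): one must check $\mu(\Fil^r_P\G_m(\eta)\times\cdots\times\Fil^r_P\G_m(\eta))\subset\Fil^r_P\KM_n(\eta)$ for all $r\geq 1$. For this I would identify $\Fil^r_P\G_m(\eta)$ with the principal-unit subgroup $1+\fm_P^r$ (and $\Fil^0_P$ with $\sO_{C,P}^\times$) via the explicit Rosenlicht--Serre tame symbol on $\G_m$, and similarly describe $\Fil^r_P\KM_n(\eta)$ using the local symbol on $\KM_n$ coming from its cycle-module structure; (L3) then reduces to the classical fact that $\{1+\fm_P^r,\,\sO_{C,P}^\times,\ldots\}$ lies in the $r$-th principal-unit filtration of $\KM_n(k(C))$, a direct consequence of bilinearity. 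By universality of $\T$, this extends $\mu$ to a map of reciprocity functors $\T(\G_m^{\times n})\to\KM_n$; evaluation at $x$ gives $\psi$.

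By construction $\phi$ and $\psi$ are inverse on symbols, and via the projection formula built into both sides they agree on traces of symbols from finite extensions $k'/k$; since such trace-symbols generate $\T(\G_m^{\times n})(x)$ (a feature of the explicit construction of $\T$ as a quotient built from $n$-linear traces) and $\KM_n(k)$ is generated by symbols, this suffices. The main obstacle, I expect, is the verification of condition (L3) for $\mu$: it requires translating the abstract reciprocity-functor filtration $\Fil^r_P$ on both $\G_m$ and $\KM_n$ into the classical principal-unit filtration, which rests on an explicit calculation of the local symbols afforded by the Rosenlicht--Serre theory. The Steinberg computation via reciprocity on $\P^1$ is essentially formal, but choosing the modulus and auxiliary function $f$ so that the local contributions at $0$, $1$, and $\infty$ cancel while the contribution at $t=a$ survives is the delicate ingredient.
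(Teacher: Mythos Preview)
Your overall architecture matches the paper's: construct an $n$-linear map $\Phi:\G_m^{\times n}\to\KM_n$ of reciprocity functors, use the universal property of $\T$ to get $\psi:\T(\G_m^{\times n})\to\KM_n$, and produce a Mackey-functor map $\sigma:\KM_n\to\T(\G_m^{\times n})$ (your $\phi$) inverse to it. But you have misidentified where the difficulty lies, and your description of the filtration is incorrect.

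Condition (L3) is \emph{not} the main obstacle; it is automatic. The paper establishes (Proposition~\ref{prop-cycle-modules-are-RF}) that every cycle module, in particular $\KM_n$, lies in $\RF_1$, meaning $\Fil^1_P\KM_n(\eta_C)=\KM_n(\eta_C)$ for every $P$. Since the filtration is increasing, $\Fil^r_P\KM_n(\eta_C)=\KM_n(\eta_C)$ for all $r\geq 1$, and (L3) holds trivially whatever the source filtrations are. Relatedly, your proposed identification $\Fil^r_P\G_m(\eta)=1+\fm_P^r$ is wrong: $\G_m$ is semi-abelian, hence also in $\RF_1$ (Remark~\ref{rmk-gps-Fil}), so $\Fil^r_P\G_m(\eta)=\G_m(\eta)$ for all $r\geq 1$. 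The $\Fil^\bullet$ of Definition~\ref{defn-filtration} is an \emph{increasing} filtration measuring how bad the pole is, not the decreasing principal-unit filtration.

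The actual work is in the two points you treat lightly. First, the Steinberg relation in $\T(\G_m^{\times n})(x)$: ``a judicious choice of $f$'' is not enough. The paper (Proposition~\ref{prop-Milnor-to-tensor}, following \cite[Prop.~5.9]{MVW}) passes to an extension containing a cube root $c$ of $a$ and a primitive cube root of unity, applies reciprocity on $\P^1$ to $\tau(m,t,1-t,\ul{b})$ with the specific function
\[
f=\frac{t^3-a}{(t-a)(t^2-t+1)},
\]
and then uses a $12$-torsion argument to conclude. Second, to know your map $\phi$ is surjective (equivalently, commutes with traces so that it hits all trace-symbols generating $\T(\G_m^{\times n})(x)$), you need $\phi$ to be a morphism of Mackey functors; this is the content of Proposition~\ref{prop-Milnor-surj-tensor}, proved by the Bass--Tate reduction to prime-degree extensions. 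Your phrase ``via the projection formula built into both sides'' hides exactly this step.
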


Observe that combining the corollary above in the case $n=1$ and $X_1=S$ with this theorem, we get the \name{Nesterenko}-\name{Suslin} theorem $\CH^n(k,n)\cong \KM_n(k)$.
But actually to prove the above theorem we use more or less the same methods as Yu. \name{Nesterenko} and \name{A. Suslin}, so it is not really a new proof of their theorem.

\begin{introthm}[see Theorem \ref{thm-tensorRF=KD}]
Assume $F$ has characteristic zero. Then there is an isomorphism for all $S$-points $x$ 
\[\theta: \Omega^n_{x/\Z}\xr{\simeq} \T(\G_a,\G_m^{\times n})(x). \]
\end{introthm}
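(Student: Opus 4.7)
The plan is to construct two morphisms of reciprocity functors, $\psi\colon \T(\G_a, \G_m^{\times n}) \to \Omega^n_{-/\Z}$ and $\theta\colon \Omega^n_{-/\Z} \to \T(\G_a, \G_m^{\times n})$, and to verify they are mutually inverse; this makes sense since $\Omega^n_{-/\Z}$ is already known (from \S2) to be a reciprocity functor.

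For $\psi$, I would invoke the representability theorem above: giving such a map is equivalent to giving an $(n+1)$-linear map of reciprocity functors
\[\Phi\colon \G_a \times \G_m^{\times n} \longrightarrow \Omega^n_{-/\Z},\qquad (a, b_1,\ldots,b_n)\longmapsto a\,\frac{db_1}{b_1}\wedge\cdots\wedge\frac{db_n}{b_n}.\]
Multilinearity and compatibility with pullbacks are immediate, and the projection formula follows from the $\sO$-linearity of the trace on K\"ahler differentials. Condition (L3) is the key technical point: using the Rosenlicht--Serre filtration $\Fil^r_P \G_a(\eta) = \fm_P^{1-r}$ and the residue-based formula for the local symbol of $\Omega^n$ from \S2, one checks (L3) by a valuation estimate, the essential input being $v_P(d\log f) \geqslant r-1$ for $f \in 1+\fm_P^r$, which forces the relevant residue of an $(n+1)$-form at $P$ to vanish.

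For $\theta$, the characteristic-zero assumption ensures that $\Omega^n_{k/\Z}$ is generated as an abelian group by elements $a\,d\log b_1 \wedge \cdots \wedge d\log b_n$ with $a \in k$ and $b_i \in k^*$, since $db = b\,d\log b$ for $b \neq 0$. I would define $\theta$ on such generators by
\[\theta\bigl(a\,d\log b_1 \wedge \cdots \wedge d\log b_n\bigr) \;:=\; \{a; b_1,\ldots,b_n\},\]
the image of the tuple under the universal $(n+1)$-linear map into $\T(\G_a, \G_m^{\times n})(k)$. Well-definedness amounts to verifying the defining relations of $\Omega^n_{k/\Z}$ at the level of symbols in $\T$: multilinearity is automatic, while skew-symmetry in the $b_i$-slots and the Steinberg relation $\{a; 1-b, b, b_3,\ldots,b_n\} = 0$ are derived by applying (MC) on $\P^1_k$, in direct analogy with the proof of Theorem~\ref{thm-Milnor=tensor} (the $\G_a$-slot playing a passive role throughout).

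The hard part will be the ``Leibniz-type'' identity
\[\{a(b+b');\, b+b', c_2,\ldots,c_n\} \;=\; \{ab;\, b, c_2,\ldots,c_n\} + \{ab';\, b', c_2,\ldots,c_n\},\]
which encodes $d(b+b') = db + db'$ inside $\Omega^n$ and is \emph{not} a formal consequence of multilinearity, skew-symmetry and Steinberg. This is where the characteristic-zero hypothesis enters crucially and where the residue-based local symbol for $\G_a$ must cooperate with the reciprocity law; I would establish this identity by applying (MC) to a carefully chosen symbol in $\T(\G_a, \G_m^{\times n})$ over the function field of $\P^1_k$. Once this is in place, $\psi\circ\theta$ is the identity on generators of $\Omega^n_{k/\Z}$ by construction, and $\theta\circ\psi = \id$ follows from the fact that $\T(\G_a, \G_m^{\times n})(x)$ is generated by trace-pushforwards of symbols from finite extensions of $x$, which are fixed by $\theta\circ\psi$ thanks to the projection formula.
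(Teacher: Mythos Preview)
Your overall strategy is correct and matches the paper's approach quite closely: both directions of the isomorphism are constructed exactly as you outline, and the paper also concludes via $\psi\circ\theta=\id$ on generators together with surjectivity of $\theta$ (rather than arguing $\theta\circ\psi=\id$ directly).

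Two points deserve comment. First, you have misplaced the role of the characteristic-zero hypothesis. The construction of $\theta$ and the verification of the Leibniz-type relation hold in \emph{any} characteristic; the paper proves this as a separate proposition with no assumption on $\mathrm{char}(F)$. Characteristic zero is used \emph{only} in building $\psi$, precisely where you first invoke it: the identification $\Fil^r_P\G_a(\eta_C)=\fm_P^{1-r}$ fails in characteristic $p$ (when $p\mid r$ the residue argument breaks down), and without it condition (L3) for $\Phi$ cannot be verified. So your sentence ``This is where the characteristic-zero hypothesis enters crucially'' attached to the Leibniz identity is in the wrong place.

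Second, the ``carefully chosen symbol on $\P^1_k$'' step hides a genuine sublemma that you should not expect to be routine. The paper uses the Bloch--Esnault presentation of $\Omega^n_k$, which reduces the Leibniz relation to showing $\tau(\lambda a,a,\ul{b})+\tau(\lambda(1-a),1-a,\ul{b})=0$. One then takes $\tau(\lambda t,t,\ul{b})$ on $\P^1$ and the function $f=\tfrac{(t-a)(t-(1-a))}{t(t-1)}$; reciprocity gives the desired relation \emph{provided} one first proves that the local symbol $(\tau(\lambda t,t,\ul{b}),g)_0$ vanishes for \emph{every} $g\in k(t)^\times$. This vanishing is a separate lemma, established by a somewhat delicate case analysis (reducing to $g\in\sO^\times_0$ and $g=t$, and in each case passing to an auxiliary quadratic or cubic extension). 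It is not a step that follows automatically from (MC), since $0$ lies in the support of the modulus of $\tau(\lambda t,t,\ul{b})$.
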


Combining the above theorem with the theorem of \name{S. Bloch} and \name{H. Esnault} in \cite{BE} which identifies $\Omega^{n-1}_{x/\Z}$ with the additive higher Chow groups of $x$ of level $n$ (and with modulus 2) , we obtain
\[\T(\G_a,\G_m^{\times n-1})(x)\cong {\rm TCH}^n(x,n,2).\]

This result does not hold in positive characteristic. The reason is essentially that in positive characteristic the algebraic group (or the reciprocity functor) $\G_a$ has more endomorphisms than
in characteristic zero, namely the absolute Frobenius comes into the game. This forces  the following:

\begin{introcorollary}[see Corollary \ref{cor-Kaehler-posChar}]
Assume $F$ has characteristic $p>0$. Then for all $S$-points $x$ we have a surjective morphism 
\[\Omega^n_{x/\Z}/B_\infty\surj \T(\G_a,\G_m^{\times n})(x)\]
and the following commutative diagram 
\[\xymatrix{\Omega^n_{x/\Z}/B_\infty\ar[r]^{C^{-1}}\ar@{->>}[d] & \Omega^n_{x/\Z}/B_\infty\ar@{->>}[d]\\
            \T(\G_a,\G_m^{\times n})(x)\ar[r]^{F\otimes\id} & \T(\G_a,\G_m^{\times n})(x),  }\]
where $F:\G_a\to \G_a$ is the absolute Frobenius, $C^{-1}:\Omega^n_{x/\Z}\to \Omega^n_{x/\Z}/d\Omega^{n-1}_{x/\Z}$ is the inverse Cartier operator and 
$B_\infty$ is the union over $B_n$, where $B_1=d\Omega^{n-1}_{x/\Z}$ and $B_n$ is the preimage of $C^{-1}(B_{n-1})$ in $\Omega^n_{x/\Z}$ for $n\geqslant 2$.
\end{introcorollary}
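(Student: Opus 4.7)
The plan is to adapt the map $\theta$ from the preceding characteristic-zero theorem, check that it remains well-defined in positive characteristic, and identify the new relations in the target that are forced by the Frobenius $F : \G_a \to \G_a$. First I would define $\theta : \Omega^n_{x/\Z} \to \T(\G_a, \G_m^{\times n})(x)$ by the same formula as before: on a typical generator, $\theta(a\, d\log b_1 \cdots d\log b_n) := \{a, b_1, \ldots, b_n\}$, the canonical symbol in $\T$. Well-definedness (additivity in each slot, compatibility with Leibniz and with $d^2 = 0$) uses only the $n$-linearity of symbols and the reciprocity axiom (MC), exactly as in characteristic zero. Surjectivity is immediate, because $\T(\G_a, \G_m^{\times n})(x)$ is generated as an abelian group by such symbols.

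Next I would verify the commutativity of the diagram already at the level of $\Omega^n$ itself, using the specific lift $\widetilde{C^{-1}}(a\, d\log b_1 \cdots d\log b_n) := a^p\, d\log b_1 \cdots d\log b_n$. On a generator both composites yield $\{a^p, b_1, \ldots, b_n\}$, so $\theta \circ \widetilde{C^{-1}} = (F \otimes \id) \circ \theta$ on all of $\Omega^n$. This commutativity will descend to the diagram in the statement once $\theta$ is known to factor through $\Omega^n/B_\infty$.

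The main step is to show $\theta(B_\infty) = 0$, which I would establish by induction on $m \geqslant 1$. For the base case we need $\theta(d\Omega^{n-1}) = 0$: on a generator $\omega = a\, d\log b_1 \cdots d\log b_{n-1}$ this amounts to $\theta(da \wedge d\log b_1 \cdots d\log b_{n-1}) = 0$. This is the genuinely characteristic-$p$ input, which I would extract from a direct calculation in $\T(\G_a, \G_m^{\times n})$ using the reciprocity law (MC) applied to a suitable rational function on $\P^1$, together with the additivity of the $\G_a$-slot. For the inductive step, any $\omega \in B_m$ decomposes as $\omega = \widetilde{C^{-1}}(\alpha) + d\beta$ with $\alpha \in B_{m-1}$ and $\beta \in \Omega^{n-1}$, and the lifted commutativity together with the inductive hypothesis and the base case give $\theta(\omega) = (F \otimes \id)\theta(\alpha) + \theta(d\beta) = 0$. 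The hard part will be the base case: since $\theta$ is an isomorphism in characteristic zero by the previous theorem, exact forms are not killed there, so any successful argument must genuinely use the Frobenius on $\G_a$ through a characteristic-$p$-specific identity in $\T(\G_a, \G_m^{\times n})$ forced by reciprocity.
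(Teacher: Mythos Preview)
Your outline is correct and mirrors the paper's proof. Note that $\theta$ is already available as a surjective morphism of reciprocity functors from Propositions~\ref{prop-Kaehler-map-to-RFtensor} and~\ref{prop-Kaehler-map-to-RFtensor-is-MFhom}, neither of which assumes characteristic zero, so your first paragraph is already done; the square commutes on generators by inspection; and your induction reducing $\theta(B_m)=0$ to $\theta(B_1)=0$ via $F\otimes\id$ is exactly the paper's final paragraph.

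The only place your proposal is thin is the base case $\tau(a,a,\ul{b})=0$, which you rightly flag as the crux but leave as ``a suitable rational function on $\P^1$''. The paper's argument is more layered than a single reciprocity computation, and the Frobenius enters \emph{before} reciprocity, not as a consequence of it. One first uses the well-definedness of $\theta$ (applied to $da=d(a+1)$) to get $\tau(a,a,\ul{b})=\tau(a+1,a+1,\ul{b})$, then applies $F\otimes\id$ to this identity and combines it with $\tau((a+1)^p,a+1,\ul{b})=\tau((a+1)^{p-1}a,a,\ul{b})$ (again from $\theta$ being well-defined, since $(a+1)^p\,d\log(a+1)=(a+1)^{p-1}a\,d\log a$) to obtain the polynomial constraint
\[\tau\bigl((a^{p-1}-(a+1)^{p-1})\,a,\;a,\;\ul{b}\bigr)=0.\]
Only now does reciprocity on $\P^1_x$ enter: one applies it to the section $\tau\bigl((t^{p-1}-(t+1)^{p-1})a,\,a,\,\ul{b}\bigr)$, which lies in $\T(\G_a,\G_m^{\times n})(\P^1,(p-1)\{\infty\})$, against the function $f=(t^{p-1}-a^{p-1})/t^{p-1}$. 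The constraint above, applied at each zero $a\zeta^i$ of $f$ (with $\zeta\in\mathbb{F}_p^\times\subset F$ a primitive $(p-1)$-th root of unity, so that the $F$-module structure of Lemma~\ref{lem-module-structure} is available), kills those contributions and leaves only the pole at $0$, yielding $(p-1)\,\tau(a,a,\ul{b})=0$. Since $\T(\G_a,\G_m^{\times n})(x)$ is a quotient of the $\mathbb{F}_p$-vector space $\Omega^n_x$ via the surjection $\theta$, it is $p$-torsion, and the base case follows.
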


We don't know whether the above map is an isomorphism. 
The above theorem and the corollary should be compared to \cite{Hiranouchi}, where \name{T. Hiranouchi} defines a variant of Somekawa's K-groups for perfect fields in which one can also plug in $\G_a$ (and also Witt group schemes) 
and in \cite[Theorem 3.6]{Hiranouchi} he proves $\Omega^n_{F/\Z}\cong \mathrm{K}(F; \G_a, \G_m^{n})$. (Notice that in positive characteristic this is just a vanishing result for the K-group, which for our group also follows from the above corollary.)

The following theorem was suggested to us by \name{B. Kahn}.

\begin{introthm}[see Theorem \ref{thm-van-unipotent-gps}]
Assume ${\rm char}(F)\neq 2$. Let $\sM_1,\ldots, \sM_n$ be reciprocity functors. 
Then 
\[\T(\G_a,\G_a,\sM_1,\ldots,\sM_n)=0.\]
\end{introthm}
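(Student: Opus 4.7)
I would use the universal property of $\T$: to show $\T(\G_a, \G_a, \sM_1, \ldots, \sM_n)(x) = 0$ at every $S$-point $x$, it suffices to show that every generating symbol $\{a, b, m_1, \ldots, m_n\}$ (with $a, b \in x = \G_a(x)$ and $m_i \in \sM_i(x)$, obtained from the universal $(n+2)$-linear pairing into $\T$) vanishes; symbols arising by trace from finite extensions $x'/x$ are killed by the same argument applied over $x'$. The commutativity of $\T$ asserted in the paper gives $\{a, b, \vec m\} = \{b, a, \vec m\}$, and combined with bilinearity this yields the polarisation identity
\[
\{a+b, a+b, \vec m\} - \{a, a, \vec m\} - \{b, b, \vec m\} = 2\{a, b, \vec m\}.
\]
In characteristic $\neq 2$ this reduces the problem to proving the vanishing of every diagonal symbol $\{c, c, \vec m\}$ for $c \in x$.

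\textbf{Reciprocity on $\P^1$.} The diagonal vanishing is obtained by applying (MC) on $C = \P^1_x$ with coordinate $t$ to the section $\alpha := \{t, t, m_1, \ldots, m_n\}$ of $\T$ on $U = \A^1_x$ (the $m_i$ being pulled back from $x$), using the test function
\[
f = 1 - \frac{c^2}{t^2} = \frac{(t-c)(t+c)}{t^2}.
\]
To see that $f$ is admissible, I have to check that $\alpha$ lies in $\Fil^2_\infty \T(\eta)$: the pulled-back constants $m_i$ extend regularly across $\infty$, so they belong to $\Fil^0_\infty \sM_i(\eta) \subset \Fil^2_\infty$, and a direct residue computation of the Rosenlicht--Serre local symbol $(t, g)_\infty = -\res_\infty(t \cdot dg/g)$ with local parameter $s = 1/t$ shows that it vanishes for every $g$ with $v_\infty(g-1) \geq 2$, so $t \in \Fil^2_\infty \G_a(\eta)$; condition (L3) applied to the universal pairing then lifts this to $\alpha \in \Fil^2_\infty \T(\eta)$. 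Since $v_\infty(f-1) = 2$ and the divisor of $f$ on $U$ is $(c)+(-c)-2(0)$, the reciprocity law yields
\[
\{c, c, \vec m\} + \{-c, -c, \vec m\} - 2\{0, 0, \vec m\} = 0,
\]
and this collapses via bilinearity ($\{0,0,\vec m\} = 0$ and $\{-c,-c,\vec m\} = \{c,c,\vec m\}$) to $2\{c, c, \vec m\} = 0$, hence $\{c, c, \vec m\} = 0$.

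\textbf{Main obstacle.} The principal analytic input is the filtration calculation for $\G_a$ at $\infty$ on $\P^1$: one must know that $t$, with its simple pole at $\infty$, in fact lies in $\Fil^2_\infty$ (and not in $\Fil^1_\infty$), via the explicit residue formula. Once that is in place, (L3) transfers the information to the universal $\T$-pairing, and (MC) converts the chosen divisor $(c)+(-c)-2(0)$ into the desired relation; everything else is a short bilinearity manipulation. The hypothesis $\mathrm{char}(F) \neq 2$ enters only through the coefficient $2$ produced by the two symmetric zeros $\pm c$ of the test function $1 - c^2/t^2$, which is the essential reason the argument requires invertibility of $2$.
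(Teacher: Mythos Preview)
Your core reciprocity computation on $\P^1$ is correct and is exactly the mechanism the paper uses. However, your reduction to diagonal symbols via polarisation has a gap: the commutativity of $\T$ stated in Corollary~\ref{cor-RFtensor}(1) is a functorial \emph{isomorphism} $\T(\G_a,\G_a,\ul{\sM})\xr{\sim}\T(\G_a,\G_a,\ul{\sM})$ swapping the first two factors; it does \emph{not} assert that this swap automorphism is the identity, so it does not give $\tau(a,b,\ul{m})=\tau(b,a,\ul{m})$ as elements. Consequently, your diagonal vanishing together with bilinearity only yields the antisymmetry $\tau(a,b,\ul{m})+\tau(b,a,\ul{m})=0$, not $2\tau(a,b,\ul{m})=0$.

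The paper avoids this by skipping the polarisation step entirely: it takes the section $\tau(at,bt,\ul{m})\in\T(\G_a,\G_a,\ul{\sM})(\P^1_x,2\{\infty\})$ and the test function $f=t^2/(t^2-1)$ (divisor $2[0]-[1]-[-1]$ on $\A^1$), and the same reciprocity computation gives directly
\[0=-\tau(a,b,\ul{m})-\tau(-a,-b,\ul{m})=-2\tau(a,b,\ul{m}).\]
Your argument is trivially repaired in the same way: replace your section $\tau(t,t,\ul{m})$ by $\tau(at,bt,\ul{m})$ (or $\tau(at,t,\ul{m})$ with a test function vanishing at $\pm b$), and the polarisation step becomes unnecessary.
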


\section*{\itshape{Acknowledgment}} 
The preprint \cite{KahnRF} was not available to the authors at the time this paper was written. However they had knowledge of this work through handwritten notes taken by the first author on September the 28th 2006 when B. Kahn sketched him the ideas of \cite{KahnRF} and proposed him to have a look at  problems in that direction.  The authors thank him heartily for discussing his ideas with them and for pointing out a mistake in an earlier version of this article. We also thank T. Hiranouchi for a careful reading of our article.
We are grateful to the anonymous referee for his very useful comments and questions.

\section*{\itshape{Conventions and notations}}\label{not-general}
We fix the following notation and conventions.
\begin{enumerate}
\item[(1)]  $F$ is a perfect field and $S=\Spec F$ is our base point. A point $x$ over $S$ is a morphism $\Spec k\to S$, where $k$ is a finitely generated field extension over $F$.
            By abuse of notation we will also say that $x=\Spec k$ is a point over $S$, meaning that it comes with a fixed morphism to $S$ and (by even more abuse) we will simply say {\em $x$ is an $S$-point}.
            If not said differently cartesian products of schemes are over $S$.
 \item[(2)] $R$ is a commutative ring with $1$ and we denote by $(R-\rm{mod})$ the category of $R$-modules.
\item[(3)]  By a curve over an $S$-point $x$ we mean a pure 1-dimensional scheme, which is separated and of finite type over $x$. Points on a curve $C$ over an $S$-point $x$, which are denoted by capital letters like 
$P,Q, P', \ldots\in C$ are always meant to be closed points; if $C$ is irreducible its generic point will be denoted by $\eta_C$ or just by $\eta$ if no confusion can arise.
\item[(4)] If $X$ is a scheme and $x\in X$ is a point, we denote by $\kappa(x)$ its residue field, if $X$ is integral and $\eta$ is its generic point then we also write
            $\kappa(X)=\kappa(\eta)$ for its function field.
\item[(5)] If $f: X\to Y$ is a morphism of $S$-schemes we denote by $\Gamma_f\subset X\times Y$ its graph and by $\Gamma_f^t$ its transpose. 
\item[(6)] If $C$ is a regular connected curve over a field and $P\in C$ is a closed point, then we set for a positive integer $n$
\[U_P^{(n)}:= 1+\fm_P^n,\]
      where $\fm_P\subset\sO_{C,P}$ denotes the maximal ideal. Further we denote by $v_P: \kappa(C)\to \Z\cup\{\infty\}$ the normalized discrete valuation associated with $P$.

          If $D\to C$ is a finite and surjective morphism between regular curves over a field, sending a closed point $Q\in D$ to $P\in C$, we will denote by $e(Q/P)$ and
           $f(Q/P)=[\kappa(Q):\kappa(P)]$ the ramification index and the inertia degree, respectively.
\item[(7)]  If $X$ is an integral scheme we denote by $\tilde{X}$ the normalization of $X$ and by $\nu_X: \tilde{X}\ra X$ the corresponding map.
\end{enumerate}

\section{Reciprocity Functors}

\subsubsection{}\label{some-cats}
We introduce the following categories.
\begin{enumerate}\label{defn-pts}
 \item Let $(\pt/S)$ be the category with objects the $S$-points $x$ and morphisms the $S$-morphisms $x\to y$ of $S$-points.
 \item Let $(\pt/S)_*$ be the category with objects the $S$-points and morphisms the {\em finite} $S$-morphisms of $S$-points.
 \item Let $(\mc{C}/S)$ be the category with objects the regular and connected curves $C$, which are
         projective over some $S$-point, morphisms are dominant morphisms of $S$-schemes.
 \item Let $\Reg_S=\Reg$ be the category with objects the regular $S$-schemes of dimension $\leqslant 1$, which are separated and of finite type over some $S$-point, morphisms are morphisms of $S$-schemes.
 \item Let $\RegCon$ (resp. $\RegCon_*$) be the category with objects the connected regular $S$-schemes of dimension $\leqslant 1$ which are separated and of finite type over some $S$-point, morphisms are 
         morphisms of $S$-schemes (resp. finite flat morphism of $S$-schemes).
\end{enumerate}
Notice that $(\pt/S)$ is a subcategory of $\Reg$ and that we have a faithful functor $(\mc{C}/S)\to \Reg$. Further
if $U$ is in $\Reg$, then any point $y\in U$  naturally defines an $S$-point. If $U\in \Reg$ is connected,  1-dimensional
and of finite type over some $S$-point $x$, then it admits a unique (up to isomorphism) open immersion over $x$ 
into a curve $C\in (\sC/S)$.
For a curve $C\in (\sC/S)$, we denote 
\eq{some-cats1}{x_C:= \Spec H^0(C,\sO_C)\in (\pt/S),}
Notice that $C\to x_C$ is projective and geometrically connected.

\subsection{Correspondences in dimension at most 1}

\begin{definition}\label{defn-corr}
Let $X$ and $Y$ be in $\Reg$. Then we denote by $\Cor_R(X,Y)=\Cor(X,Y)$ the free $R$-module generated by symbols $[V]$ with $V$ an integral closed subscheme of $X\times Y$,
which is finite and surjective over a connected component of $X$.
Elements in $\Cor(X,Y)$ are called correspondences from $X$ to $Y$ and correspondences of the form $[V]$ with $V$ as above are called  elementary correspondences.
\end{definition}

\begin{lemma}\label{lem-vanishing-of-Tor}
Let $X$, $Y$ and $Z$ be in $\Reg$ and let $[V]\in \Cor(X,Y)$, $[W]\in \Cor(Y,Z)$ be elementary correspondences. 
Let $(V\times Z)\cap (X\times W)$ be the intersection in $X\times Y\times Z$ and let $T$ be an irreducible component with generic point $\eta$.
Then $T$ as well as its image in $X\times Z$ are finite and surjective over a connected component of $X$ and 
\[\Tor_i^{\sO_{X\times Y\times Z,\eta}}(\sO_{V\times Z,\eta},\sO_{X\times W,\eta})=0\quad \text{for all } i\geqslant 1.\] 
\end{lemma}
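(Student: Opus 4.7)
My plan is to deduce both the finiteness/surjectivity assertions and the Tor vanishing from one key observation: the projection $p\colon X\times Y\times Z\to X\times Y$ is flat (it is a base change of $Z\to S$), which enables a change-of-rings argument.

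Let $X_0$ denote the connected component of $X$ over which $V$ is finite and surjective. For the finiteness claim, I would show that $T\to X_0$ is proper and quasi-finite. Properness follows because $X\times W\to X\times Y$ is the base change of the finite map $W\to Y$ and is therefore finite, so $T\hookrightarrow X\times W\to X\times Y$ is proper; its image lies in $V$, since $T\subset V\times Z$ projects into $V$ under the map to $X\times Y$. Composing with the finite map $V\to X_0$ yields $T\to X_0$ proper. Quasi-finiteness: for each $x\in X_0$ the fiber $T_x$ is contained in the finite set $V_x\times W|_{V_x}$, where $V_x$ is finite and each fiber of $W\to Y$ is finite. Hence $T\to X_0$ is finite. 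For surjectivity, Krull's height theorem applied in the regular local ring $A:=\sO_{X\times Y\times Z,\eta}$ gives
\[\mathrm{codim}_A(T)\;\leqslant\;\mathrm{codim}(V\times Z)+\mathrm{codim}(X\times W),\]
which, working locally near $\eta$, translates to $\dim T\geqslant\dim X_0$; combined with the reverse bound coming from finiteness this forces $\dim T=\dim X_0$, hence $T\to X_0$ is dominant and, being proper, surjective. The analogous statements for the image $T'$ of $T$ in $X\times Z$ are then immediate from the fact that $V\times Z\to X\times Z$ is finite, being a base change of $V\to X$.

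For the Tor vanishing, set $B:=\sO_{X\times Y,p(\eta)}$. Flatness of $p$ together with $V\times Z=p^{-1}(V)$ yields $\sO_{V\times Z,\eta}=\sO_{V,p(\eta)}\otimes_B A$, and the derived version is an equality for the same reason. By associativity of the derived tensor product,
\[\sO_{V\times Z,\eta}\otimes^{\mathbf{L}}_A\sO_{X\times W,\eta}\;\simeq\;\sO_{V,p(\eta)}\otimes^{\mathbf{L}}_B\sO_{X\times W,\eta},\]
where $\sO_{X\times W,\eta}$ is viewed as a $B$-module via the composite $X\times W\hookrightarrow X\times Y\times Z\to X\times Y$. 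This composite equals $\id_X\times(W\to Y)$, and $W\to Y$ is flat: if $Y_0$ is the component of $Y$ onto which $W$ is finite and surjective, then the integral $W$ is torsion-free over $Y_0$, so every stalk of the push-forward is a torsion-free finite module over a DVR or a field, and therefore flat. Consequently $\sO_{X\times W,\eta}$ is $B$-flat, the derived tensor product collapses to degree zero, and $\Tor^A_i$ vanishes for $i\geqslant 1$.

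The main technical point is identifying the correct flat factorization through $X\times Y$ and checking the flatness of $W\to Y_0$; the latter relies essentially on the regularity and dimension $\leqslant 1$ hypothesis on objects in $\Reg$ together with the integrality of $W$. Once these ingredients are set up, both parts of the lemma follow from standard manipulations of derived functors.
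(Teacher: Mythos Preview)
Your argument is correct and takes a different route from the paper. The paper itself only cites references (\cite[1.]{MVW} for the first assertion, \cite[Chapter 7]{F} for the Tor vanishing); the detailed argument behind such a citation --- and the one the authors sketch in a commented-out passage of the source --- is Serre's reduction to the diagonal: spread $X,Y,Z,V,W$ out to finite-type $S$-schemes with $\bar X,\bar Y,\bar Z$ smooth (using that $F$ is perfect) and $\bar V,\bar W$ Cohen--Macaulay, resolve $A$ over $A\otimes_F A$ by the Koszul complex on a regular sequence, and conclude because that sequence is a system of parameters for the CM module $A_V\otimes_F A_W$. Your approach is more elementary and tailored to the dimension-$\leqslant 1$ setting: $W$ integral, finite and surjective over the regular $\leqslant 1$-dimensional $Y_0$ is automatically flat over $Y$, and a single change of rings along the flat projection $p$ reduces $\Tor^A$ to a $\Tor^B$ against a $B$-flat module. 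This avoids spreading out and, notably, does not require $X\times_S Y\times_S Z$ to be regular. One small point to tighten: for surjectivity you invoke the codimension inequality in ``the regular local ring $A$'' without saying why $X\times_S Y\times_S Z$ is regular (it is, by spreading out to smooth $S$-schemes over the perfect field $F$, but this deserves a word); alternatively, reuse your flatness observation --- once $T$ exists the image of $V$ in $Y$ must lie in $Y_0$, so $V\times_Y W\to V$ is finite flat and surjective, and going-down for flat maps forces every irreducible component $T$ to dominate $V$, hence $X_0$. Your separate quasi-finiteness check is also redundant, since the chain $T\hookrightarrow X\times W\to X\times Y\supset V\to X_0$ already exhibits $T\to X_0$ as a composite of finite maps.
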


\begin{proof}
This is well known. We include a proof for the reader's convenience. For the first statement see e.g. \cite[1.]{MVW}. 
  To prove the vanishing of the higher $\Tor$'s we can assume that the schemes $V,W,X,Y,Z$ are all integral of dimension $\leqslant 1$ and
hence are CM. We can spread them out to integral schemes $\bar{V}, \bar{W}, \bar{X}, \bar{Y}, \bar{Z}$, which are separated and of finite type over $S$
such that $\bar{X}$, $\bar{Y}$, $\bar{Z}$ are smooth over $S$ (this is possible since $S$ is perfect) and $\bar{V}\subset \bar{X}\times\bar{Y}$ and $\bar{W}\subset \bar{Y}\times \bar{X}$
are closed subschemes, which are finite and surjective over the first factor and which we can assume to be CM (by \cite[Cor. (6.4.2)]{EGAIV2} applied to $V\to \bar{V}$, $W\to \bar{W}$).
Furthermore we can assume that $T$ spreads out to an integral closed subscheme $\bar{T}\subset (\bar{V}\times\bar{Z})\cap (\bar{X}\times \bar{W})$, which is finite and surjective over $\bar{X}$.
Since it clearly suffices to prove the vanishing of the over-lined-Tor's we can assume in the following that $V,W,X,Y,Z$ are integral, separated and of finite type over $S$ (of some positive dimension)
with $X,Y,Z$ smooth over $S$ and $V,W$ CM.
We set $A:=\sO_{X\times Y\times Z,\eta}$, $A_V:=\sO_{V\times Z,\eta}$, $A_W:=\sO_{X\times W,\eta}$.
Since $A$ is regular the kernel of 
\[B:=A\otimes_F A\to A,\quad a\otimes b\mapsto ab\]
is generated by a regular sequence $\ul{t}=t_1,\ldots, t_n$, with $n:=\dim A$. Hence the Koszul complex $K^B_\bullet(\ul{t})$ is a free resolution of the $B$-module $A$.
By \cite[Cor. (6.7.3)]{EGAIV2} the $B$-algebra $A_V\otimes_F A_W$ is CM of dimension 
\[\dim (A_V\otimes_F A_W)=\dim A_V+\dim A_W=\dim Y+ \dim Z=\dim A=n\]
and 
\[(A_V\otimes_F A_W)/(\ul{t})= (A_V\otimes_F A_W)\otimes_B A= A_V\otimes_A A_W=\sO_{V\times_Y W,\eta}\]
has finite length. Thus $\ul{t}$ is a system of parameters for the CM $B$-module $A_V\otimes_F A_W$ and we obtain for $i\geqslant 1$
\begin{align}
\Tor_i^A(A_V,A_W) &\cong \Tor^B_i(A_V\otimes_F A_W, A), &\text{see e.g. \cite[V,(2)]{SerreAL}},\notag\\
                 &=H_i(K^B_\bullet(\ul{t})\otimes_B (A_V\otimes_F A_W))\notag\\
                 &=0, &\text{by \cite[IV, Thm 3, iv)]{SerreAL}.} \notag
\end{align}
Hence the statement. 
\end{proof}

\begin{definition}\label{defn-catCor}
\begin{enumerate}
 \item Let $X$, $Y$ and $Z$ be in $\Reg$ and let $[V]\in \Cor(X,Y)$ and $[W]\in \Cor(Y,Z)$ be elementary correspondences.
       Then we define
          \[[W]\circ [V]:=\sum_{T\subset V\times_Y W} \length(\sO_{V\times_Y W,\eta_T})\cdot  p_{XZ*}[T],\]
        where $T$ runs over the irreducible components of $V\times_Y W$, $\eta_T$ is the generic point of $T$ and $p_{XZ}: V\times_Y W\to X\times Z$ is the natural map induced by projection.
         (Notice that $p_{XZ}$ is proper.)
      It follows from Lemma \ref{lem-vanishing-of-Tor} that $[W]\circ [V]$ is an element in $\Cor(X,Z)$ and coincides with the pushforward to $X\times Z$  of the intersection cycle 
      $[V\times Z]\cdot [X\times W]$ (in the sense of \cite[V]{SerreAL}).
        Hence $\circ$ extends to a $R$-bilinear pairing
       \[\Cor(X,Y)\times \Cor(Y,Z)\to \Cor(X,Z), \quad (\alpha,\beta)\mapsto \beta\circ\alpha,\]
       which is associative and with the diagonal $\Delta\subset Y\times Y$ acting from the right (resp.  left) as identity on $\Cor(X,Y)$ (resp. $\Cor(Y,X)$).
\item We define the category $\Reg\Cor$ to be the category with objects the objects of $\Reg$ and morphisms given by correspondences. We denote by
      $\pt\Cor$ the full subcategory with objects the finite disjoint unions of $S$-points.
\end{enumerate}
\end{definition}

Notice that $\Reg\Cor$ is a $R$-linear additive category and that the graph gives in the usual way a faithful and essentially surjective functor $\Reg\to \Reg\Cor$.

\begin{lemma}\label{lem-basic-corr-formulas}
\begin{enumerate}
 \item Let $X$ and $Y$ be in $\Reg$ and let $V$ be an integral scheme with a map $\pi:V\to X\times Y$ such that $V$ is finite and surjective over $X$.
      Let $\tilde{V}$ be the normalization of $V$ and $q_V:\tilde{V}\to X$ and $p_V: \tilde{V}\to Y$ be induced by the projection maps.
      Then $\tilde{V}\in \Reg$ and 
       \[\pi_*[V]=[\Gamma_{p_V}]\circ[\Gamma^t_{q_V}]\in \Cor(X,Y).\]
\item Let $f:X\to Y$ be a finite and surjective map in $\RegCon$. Then 
        \[[\Gamma_f]\circ [\Gamma_f^t]=\deg(X/Y)\cdot [\Delta_Y],\]
        with $\Delta_Y\subset Y\times Y $ the diagonal.
\item  For any cartesian square
$$\xymatrix{{Y'}\ar[r]\ar[d]\ar@{}[rd]|{\square} & {Y}\ar[d]^{f}\\
{X'}\ar[r]^{g} & {X}} $$ where $Y,X,X'$ are in $\RegCon$, $g:X'\ra X$ is a morphism and $f:Y\ra X$ is a finite flat morphism, we have 
 \[[\Gamma^t_f]\circ[\Gamma_g]=\sum_T \length(\sO_{Y',\eta_T})\cdot [\Gamma_{g_T}]\circ [\Gamma_{f_T}^t]\quad \text{in } \Cor(X',Y),\]
where the sum is taken over the irreducible components $T$ of $Y'$, and for such an irreducible component $g_T:\tilde{T}\ra Y$ and $f_T:\tilde{T}\ra X'$ are the canonical morphisms from the normalization $\tilde{T}$ of $T$.
\end{enumerate}
\end{lemma}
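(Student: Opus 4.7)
I would prove the three parts in sequence, with part (1) as the key identity, part (2) as its immediate corollary, and part (3) as the base-change formula that reduces to part (1) after decomposing the fiber product into components. Throughout, Lemma \ref{lem-vanishing-of-Tor} plays the central role: it ensures that intersection multiplicities are computed as lengths without any higher-$\Tor$ corrections, so the cycle-theoretic intersection coincides with the fundamental cycle of the scheme-theoretic intersection.

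For part (1), first note that $\tilde V$ is normal of dimension at most one and of finite type over an $S$-point, hence regular and in $\Reg$. By definition $[\Gamma_{p_V}]\circ [\Gamma^t_{q_V}]$ is the pushforward to $X\times Y$ of the intersection cycle of $\Gamma^t_{q_V}\times Y$ and $X\times \Gamma_{p_V}$ in $X\times \tilde V\times Y$. A short affine computation shows that the scheme-theoretic intersection of these two closed subschemes is the reduced image of the closed immersion $\tilde V\hookrightarrow X\times \tilde V\times Y$, $v\mapsto (q_V(v),v,p_V(v))$, so by Lemma \ref{lem-vanishing-of-Tor} the intersection cycle is simply $[\tilde V]$. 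Pushing forward to $X\times Y$ yields $(\pi\circ\nu_V)_*[\tilde V]=\pi_*[V]$, since $\nu_V$ is birational. Part (2) is then the special case $V=X$, $\pi=(f,f)\colon X\to Y\times Y$: here $X$ is already regular, so $\tilde V=X$ and $q_V=p_V=f$, and part (1) gives $\pi_*[X]=[\Gamma_f]\circ [\Gamma^t_f]$; on the other hand $\pi$ factors as $\Delta_Y\circ f$, so $\pi_*[X]=\deg(X/Y)\cdot [\Delta_Y]$, which is the desired identity.

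For part (3), I compute $[\Gamma^t_f]\circ [\Gamma_g]$ as the pushforward to $X'\times Y$ of the intersection cycle of $\Gamma_g\times Y$ and $X'\times \Gamma^t_f$ in $X'\times X\times Y$. A direct local calculation identifies the scheme-theoretic intersection with the fiber product $Y'=X'\times_X Y$, embedded via $y'\mapsto (\mathrm{pr}_{X'}(y'),g(\mathrm{pr}_{X'}(y')),\mathrm{pr}_Y(y'))$, so by Lemma \ref{lem-vanishing-of-Tor} the intersection cycle equals $\sum_T \length(\sO_{Y',\eta_T})\cdot [T]$, with $T$ running over the irreducible components of $Y'$. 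Since $X$ is separated, the projection $Y'\hookrightarrow X'\times Y$ is a closed immersion, so each $T$ maps isomorphically onto a closed integral subscheme of $X'\times Y$; the flatness of $f$ moreover guarantees that each such $T$ dominates $X'$ and is therefore finite and surjective over $X'$, so $[T]$ genuinely defines an elementary correspondence in $\Cor(X',Y)$. Applying part (1) to each such $T$ with its normalization $\tilde T$ rewrites $[T]=[\Gamma_{g_T}]\circ [\Gamma^t_{f_T}]$, yielding the claimed formula. The main obstacle is the multiplicity bookkeeping in this last step: one must carefully verify both that the scheme-theoretic intersection coincides with $Y'$ endowed with its fiber-product structure, and that the resulting cycle is exactly the naive one with multiplicities $\length(\sO_{Y',\eta_T})$—which is precisely where the flatness of $f$ (to control dimensions of components) and Lemma \ref{lem-vanishing-of-Tor} (to eliminate higher-$\Tor$ terms) are both essential.
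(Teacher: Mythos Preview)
Your proof is correct and follows essentially the same approach as the paper's, which simply records the three scheme-theoretic identifications $\Gamma^t_{q_V}\times_{\tilde V}\Gamma_{p_V}\cong\tilde V$, $\Gamma^t_f\times_X\Gamma_f\cong\Delta_X$, and $\Gamma_g\times_X\Gamma^t_f\cong Y'$ and then invokes the definition of composition together with part (1). The only minor difference is that you derive (2) as a special case of (1) via $\pi=(f,f)$, whereas the paper computes the fiber product directly; both are equally short.
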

\begin{proof}
(1) follows from $\Gamma^t_{q_V}\times_{\tilde{V}} \Gamma_{p_V}\cong \tilde{V}$, (2) from $\Gamma^t_f\times_X \Gamma_f\cong \Delta_X$ and (3) follows from 
$\Gamma_g\times_X\Gamma^t_f\cong X'\times_X Y\cong Y'$, the definition of the composition and (1).
\end{proof}

From this, we obtain:

\begin{lemma}\label{RemaLength}
Let $X,Y,Z$ be in $\Regone$ and $[V]\in\Cor(X,Y)$, $[W]\in\Cor(Y,Z)$ be elementary correspondences. Let $T$ be an irreducible component of $V\times_YW$. Then 
\begin{equation}\label{EgRemaLength}
\length(\sO_{V\times_Y W,\eta_{T}})=\sum_{C/T}\length(\sO_{\tilde{V}\times_Y\tilde{W},\eta_C})\deg(C/T).
\end{equation}
where the sum is taken over the irreducible components $C$ of $\tilde{V}\times_Y\tilde{W}$ that dominate $T$.
\end{lemma}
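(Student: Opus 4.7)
The plan is to realize both sides of \eqref{EgRemaLength} as the lengths appearing in a refined intersection with the regular closed embedding
\[i\colon X\times \Delta_Y\times Z \hookrightarrow X\times Y\times Y\times Z\]
(a regular embedding of codimension $\dim Y$, since $Y$ is regular), and then to compare these refined intersections via the projection formula applied to the finite birational map $\pi := \nu_V\times \nu_W \colon \tilde{V}\times\tilde{W}\to V\times W$.

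First I would observe that, thinking of $V\times W$ as a closed subscheme of $X\times Y\times Y\times Z$, we have scheme-theoretic equalities
\[(V\times W)\cap (X\times\Delta_Y\times Z) = V\times_Y W,\qquad (\tilde{V}\times\tilde{W})\times_{X\times Y\times Y\times Z}(X\times\Delta_Y\times Z) = \tilde{V}\times_Y \tilde{W}.\]
Applying Lemma \ref{lem-vanishing-of-Tor} to the pairs $(V,W)$ and $(\tilde V,\tilde W)$ (the proof given there uses only the Cohen--Macaulayness of the two factors and the regularity of $X,Y,Z$, which is even better satisfied by $\tilde V,\tilde W$), Serre's intersection multiplicity reduces to the length of $\mathrm{Tor}_0$, so
\[i^![V\times W] = \sum_T \length(\sO_{V\times_Y W,\eta_T})\,[T],\qquad i^![\tilde{V}\times\tilde{W}] = \sum_C \length(\sO_{\tilde{V}\times_Y\tilde{W},\eta_C})\,[C].\]

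Next I would show that $\pi_*[\tilde{V}\times\tilde{W}] = [V\times W]$ as cycles on $X\times Y\times Y\times Z$. Since $F$ is perfect and $V,W,\tilde{V},\tilde{W}$ are integral, the products $V\times W$ and $\tilde{V}\times\tilde{W}$ are reduced, and the irreducible components of both correspond to the prime factors of $\kappa(V)\otimes_F \kappa(W)$. Because normalization does not alter function fields, $\pi$ restricts to a birational finite morphism from each component of $\tilde{V}\times\tilde{W}$ onto the corresponding component of $V\times W$, whence the claim.

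Finally, I would invoke the projection formula for proper pushforward against pullback by a regular embedding (Fulton, \emph{Intersection Theory}, Ch.~6/8.1.1, which is what the proof of Lemma \ref{lem-vanishing-of-Tor} already appeals to), which yields
\[\pi_*\bigl(i^![\tilde{V}\times\tilde{W}]\bigr)=i^!\bigl(\pi_*[\tilde{V}\times\tilde{W}]\bigr)=i^![V\times W].\]
For each component $C$ of $\tilde{V}\times_Y\tilde{W}$ dominating an irreducible component $T$ of $V\times_Y W$, the ordinary pushforward of cycles gives $\pi_*[C] = \deg(C/T)\,[T]$; other components of $\tilde V\times_Y\tilde W$ map to subschemes of strictly smaller dimension and contribute $0$. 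Matching coefficients of each $[T]$ in the above equality produces \eqref{EgRemaLength}. The only delicate point is ensuring that the proper intersection/projection formula applies in our slightly non-standard setting (neither $V\times W$ nor $\tilde V\times \tilde W$ is a closed subscheme of a smooth ambient variety a priori); this is guaranteed by the Tor-vanishing of Lemma \ref{lem-vanishing-of-Tor}, which makes both refined intersections agree with their naive length-based definitions so that the formal manipulation above is rigorous.
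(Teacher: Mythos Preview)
Your argument is correct and genuinely different from the paper's. The paper computes $[W]\circ[V]$ in two ways and compares coefficients: first via the definition in \ref{defn-catCor}, and second by writing $[V]=[\Gamma_{p_V}]\circ[\Gamma_{q_V}^t]$, $[W]=[\Gamma_{p_W}]\circ[\Gamma_{q_W}^t]$ using Lemma~\ref{lem-basic-corr-formulas}\,(1), then applying the base-change formula \ref{lem-basic-corr-formulas}\,(3) to the middle piece $[\Gamma_{q_W}^t]\circ[\Gamma_{p_V}]$ to produce the sum over components $C\subset\tilde V\times_Y\tilde W$, and finally inserting degree factors via \ref{lem-basic-corr-formulas}\,(2). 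You instead prove the cycle identity $\pi_*[\tilde V\times_Y\tilde W]=[V\times_Y W]$ directly from Fulton's projection formula for the refined Gysin map along the diagonal. The paper's route is entirely self-contained within the correspondence calculus already set up; yours is more conceptual and makes transparent \emph{why} the multiplicities match, at the cost of importing Fulton's machinery.

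Two small imprecisions worth tightening. First, the codimension of $\Delta_Y$ in $Y\times_F Y$ is not $\dim Y$ in general (e.g.\ if $Y$ is an $S$-point of positive transcendence degree over $F$); this is harmless since you never actually use the numerical value, only that $i$ is a regular embedding and the intersections are dimensionally proper. Second, ``since $Y$ is regular'' is not quite enough for $i$ to be a regular embedding: you need $X\times Y\times Y\times Z$ regular, which holds because $F$ is perfect (so each factor is a localization of a smooth $F$-scheme, hence the product is regular), and then any closed immersion between regular schemes is lci. Finally, your remark about components of $\tilde V\times_Y\tilde W$ possibly mapping to lower-dimensional loci is in fact vacuous here: every component is finite surjective over a component of $X$, hence has the same dimension as the corresponding $T$.
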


\subsection{Presheaves with transfers}

\begin{definition}\label{defn-PT}
A {\em presheaf 
with transfers on $\Reg$} 
 is a $R$-linear contravariant functor
       \[\sM: \Reg\Cor^\op\to (R-{\rm mod}).\]
        We denote by $\PT$ the category of presheaves with transfers on $\Reg$ with morphisms the natural transformations.
       Further we denote by $\NT$ the full subcategory with objects the {\em Nisnevich sheaves with transfers on $\Reg$}, i.e. those
       presheaves with transfers on $\Reg$ whose underlying presheaf on $\Reg$ is a sheaf in the Nisnevich topology.
\end{definition}

\begin{lemma}\label{CarPSTRegone}
A presheaf with transfers on $\Regone$ is equivalent to the following data 
\begin{itemize}
\item{a functor $\sM_*:\RegCon_*\ra (R-\Mod)$;}
\item{a functor $\sM^*:{\RegCon}^\op\ra(R-\Mod)$;}
\end{itemize}
which satisfies the following conditions (where we write $f_*:=\sM_*(f)$ and $f^*:=\sM^*(f)$ for a map $f$ in $\RegCon_*$ and $\RegCon$, respectively)
\begin{enumerate}
\item{
$\sM_*(X)=\sM^*(X)$ for any $X\in\RegCon$;}
\item{$g_*g^*=\deg(Y/X)\cdot\id_{\sM(X)}$ for any finite flat morphism $g:Y\ra X$ between connected schemes in $\Regone$;}
\item{ for any cartesian square
$$\xymatrix{{Y'}\ar[r]\ar[d]\ar@{}[rd]|{\square} & {Y}\ar[d]^{f}\\
{X'}\ar[r]^{g} & {X}} $$ where $Y,X,X'$ are in $\RegCon$, $g:X'\ra X$ is a morphism and $f:Y\ra X$ is a finite flat morphism, we have 
$$g^*\circ f_*=\sum_T \length(\sO_{Y',\eta_{T}})\cdot f_{T*}\circ g_T^*,
 $$
where the sum is taken over the irreducible components $T$ of $Y'$, and for such an irreducible component $g_T:\tilde{T}\ra Y$ and $f_T:\tilde{T}\ra X'$ are the canonical morphisms.
}
\end{enumerate}
\end{lemma}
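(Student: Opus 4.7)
The plan is to exhibit the two constructions giving the claimed equivalence, and then reduce the verification of compatibilities to formal consequences of Lemmas \ref{lem-basic-corr-formulas} and \ref{RemaLength}.

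\textbf{Forward direction.} Given $\sM\in\PT$, define $\sM^*\colon\RegCon^{\op}\to(R\text{-}\Mod)$ on a morphism $f\colon X\to Y$ by $f^*:=\sM([\Gamma_f])$, and define $\sM_*\colon\RegCon_*\to(R\text{-}\Mod)$ on a finite flat morphism $g\colon Y\to X$ between connected objects by $g_*:=\sM([\Gamma_g^t])$ (note $[\Gamma_g^t]$ is an elementary correspondence from $X$ to $Y$ because $g$ is finite surjective onto a component). Conditions (1) and the functoriality of $\sM_*,\sM^*$ are clear. Condition (2) is obtained by applying $\sM$ to the identity $[\Gamma_g]\circ[\Gamma_g^t]=\deg(Y/X)\cdot[\Delta_X]$ of Lemma \ref{lem-basic-corr-formulas}(2). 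Condition (3) is exactly the image under $\sM$ of Lemma \ref{lem-basic-corr-formulas}(3).

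\textbf{Backward direction.} Suppose we are given $(\sM_*,\sM^*)$ satisfying (1)--(3). Since every $X\in\Regone$ is a finite disjoint union of connected objects, extend $\sM^*$ uniquely to $\Regone^{\op}\to(R\text{-}\Mod)$ by setting $\sM^*(X)=\bigoplus_i \sM^*(X_i)$ on connected components and declaring the value on a morphism to be the matrix of its component-wise pullbacks. For an elementary correspondence $[V]\in\Cor(X,Y)$ with $X,Y\in\RegCon$, let $\tilde V$ be the normalization of $V$, so $\tilde V\in\RegCon$, and consider the canonical maps $q_V\colon\tilde V\to X$ (finite flat surjective onto the component of $X$ lying under $V$, hence in $\RegCon_*$) and $p_V\colon\tilde V\to Y$ (in $\RegCon$). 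Define
\[
\sM([V]):=\sM_*(q_V)\circ\sM^*(p_V)\colon\sM^*(Y)\longrightarrow\sM^*(X),
\]
and extend $R$-linearly and additively to all of $\Cor(X,Y)$ with $X,Y$ possibly disconnected. The fact that $\sM$ preserves identities is immediate from condition (1), since for the diagonal $V=\Delta_X$ we have $\tilde V=X$ with $q_V=p_V=\id_X$.

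\textbf{Compatibility with composition (main step).} The hard part is to show $\sM([W]\circ[V])=\sM([V])\circ\sM([W])$ for elementary correspondences $[V]\in\Cor(X,Y)$ and $[W]\in\Cor(Y,Z)$. On the one hand, Lemma \ref{RemaLength} together with the proof technique of Lemma \ref{lem-basic-corr-formulas} rewrites $[W]\circ[V]$ as
\[
[W]\circ[V]=\sum_{C}\length(\sO_{\tilde V\times_Y\tilde W,\eta_C})\cdot[\Gamma_{p_C}]\circ[\Gamma_{q_C}^t],
\]
where $C$ runs over irreducible components of $\tilde V\times_Y\tilde W$ with canonical maps $q_C\colon\tilde C\to X$ and $p_C\colon\tilde C\to Z$. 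Applying $\sM$ yields the left side as $\sum_C \length\cdot (q_C)_*\circ (p_C)^*$. On the other hand, $\sM([V])\circ\sM([W])=(q_V)_*\circ (p_V)^*\circ (q_W)_*\circ (p_W)^*$, and applying condition (3) to the cartesian square formed by $p_V\colon\tilde V\to Y$ and the finite flat morphism $q_W\colon\tilde W\to Y$ rewrites the middle piece $(p_V)^*\circ(q_W)_*$ as a sum over components $C$ with the same coefficients and the same terms $(q_V\circ f_C)_*\circ(p_W\circ g_C)^*=(q_C)_*\circ(p_C)^*$. Thus both expressions agree term by term. This is precisely the step that mirrors the commented-out proof of Lemma \ref{RemaLength}; condition (3) has been tailor-made for this verification, which is the main obstacle and the principal content of the lemma.

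\textbf{Equivalence.} The two constructions are mutually inverse: starting from $\sM\in\PT$, the induced $\sM_*,\sM^*$ recover $\sM$ on graphs of morphisms of $\RegCon$ and $\RegCon_*$, and Lemma \ref{lem-basic-corr-formulas}(1) shows that every elementary correspondence decomposes as $[\Gamma_{p_V}]\circ[\Gamma_{q_V}^t]$, so the reconstructed $\sM$ coincides with the original on all of $\Cor$. Conversely, for $(\sM_*,\sM^*)$ satisfying (1)--(3), the forward construction applied to the reconstructed $\sM$ gives back $(\sM_*,\sM^*)$ on the nose (the graphs of $f$ and $g$ have normalization equal to the source of $f$, $g$ respectively). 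Morphisms in both categories (natural transformations, respectively pairs of natural transformations compatible with $\sM_*$ and $\sM^*$) correspond by the same dictionary.
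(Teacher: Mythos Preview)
Your proof is correct and follows the same route as the paper's: extract $f^*=\sM([\Gamma_f])$ and $f_*=\sM([\Gamma_f^t])$ in one direction, and in the other define $\sM([V]):=q_{V*}p_V^*$ via the normalization and verify functoriality using Lemma \ref{lem-basic-corr-formulas}, condition (3), and Lemma \ref{RemaLength}. One small point worth making explicit: in your ``Applying $\sM$ yields the left side as $\sum_C \length\cdot (q_C)_*\circ (p_C)^*$'', you are evaluating $\sM$ on the cycle $[\Gamma_{p_C}]\circ[\Gamma_{q_C}^t]\in\Cor(X,Z)$, which equals $\deg(\tilde C/S_C)\cdot[S_C]$ for $S_C$ its image in $X\times Z$; identifying $\sM$ of this with $(q_C)_*(p_C)^*$ uses condition (2) (via the factorization $\tilde C\to \tilde S_C$), which you should mention---the paper lists (2), (3) and Lemma \ref{RemaLength} as the three ingredients.
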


\begin{proof}
That a presheaf with transfers on $\RegCon$ defines such data is an immediate consequence of Lemma \ref{lem-basic-corr-formulas}. Conversely let $\sM_*$ and $\sM^*$ be such a data satisfying (1)--(3).
Set $\sM(X):=\sM_*(X)=\sM^*(X)$ for any $X\in\RegCon$ and $\sM(X):=\oplus_i \sM(X_i)$ for $X \in \Regone$ with connected components $X_i$. For an elementary correspondence $[V]\in \Cor(X,Y)$
we define a morphism
$$\sM([V]):=q_{V*}p_V^*:\sM(Y)\ra \sM(X),$$
where $q_V: \tilde{V}\to X$ and $p_V:\tilde{V}\to Y$ are the canonical maps from the normalization $\tilde{V}$ of $V$. We extend this linearly to a map $\Cor(X,Y)\to \Hom_R(\sM(Y),\sM(X))$. Using (2), (3) and Lemma \ref{RemaLength} it is straightforward to see that this construction is functorial. Hence the statement.
\end{proof}

\subsection{Mackey functors with specialization maps}

\begin{definition}\label{defn-PST-MF}
 An {\em $R$-Mackey functor} (or just Mackey functor) is a $R$-linear contravariant functor 
        \[M:\pt\Cor^\op\to (R-{\rm mod}).\]
      We denote by $\MF_R=\MF$ the category of Mackey functors with morphisms the natural transformations.
\end{definition}

\begin{notation}\label{not-pb-pf-sp}
Let $\sM$ be a presheaf with transfers on $\Reg$ (or a Mackey functor).
Let $f:X\to Y$ be a flat map in $\Reg$ (equivalently each connected component of $X$ dominates a connected component of $Y$), then we define the {\em pullback} attached to $f$ as
\[f^*:=\sM([\Gamma_f]): \sM(Y)\to \sM(X).\]
Let $f:X\to Y$ be finite and flat (i.e. each connected component of $X$ maps finite and surjective to a connected component of $Y$), then we define the {\em pushforward or trace} attached to $f$ as 
\[f_*=\Tr_{f}=\Tr_{X/Y}:=\sM([\Gamma_f^t]): \sM(X)\to \sM(Y).\]
Let $i:P\inj U$ be the inclusion of a closed point $P$ in a 1-dimensional scheme in $\Reg$, then we define the {\em specialization map} attached to $i$ as
\[s_P:=i^*:=\sM([\Gamma_i]): \sM(U)\to \sM(P).\]
Let $C\in (\sC/S)$ be a curve, then we denote by $\sM_C$
the presheaf on the Zariski site of $C$ induced by $\sM$. For $x\in C$ a point we denote  by $\sM_{C,x}$ the Zariski stalk of $\sM_C$ at $x$ and by
 $\sM_{C,x}^h$ the Nisnevich stalk, i.e. the inductive limit of $\sM(U)$ over all Nisnevich neighborhoods $U$ of $x\in C$.
Notice that $\sM_{C,\eta_C}=\sM_{C,\eta_C}^h$.
Also notice that for $C\in (\sC/S)$ and $P\in C$ the specialization map also induces maps (which by abuse of notation we still denote by $s_P$)
\[s_P:\sM_{C,P}\to \sM(P),\quad s_P:\sM_{C,P}^h\to \sM(P).\]
\end{notation}

\begin{remark}\label{rmk-alt-defn-MF}
It follows from the proof of Lemma \ref{CarPSTRegone} that a Mackey functor is the same as
giving two functors
 \[M^*: ({\rm pt}/S)^\op\to (R-{\rm mod}),\quad M_*:(\rm{pt}/S)_*\to (R-\rm{mod}),\]
which satisfy the following relations.
\begin{enumerate}
 \item[(MF0)] For all $x\in (\pt/S)$ we have $M_*(x)=M^*(x)=:M(x)$.
 \item[(MF1)] Let $\varphi: y\to x$ be a finite morphism of $S$-points and $\psi: z\to x$ any morphism of $S$-points.
             For $s\in y\times_x z$ denote by $\psi_s: s\to y$ and $\varphi_s: s\to z$ the natural maps induced by the projections and set $l_s:={\rm length}(\sO_{y\times_x z,s})$.
             Then 
                 \[M^*(\psi)\circ M_*(\varphi)= \sum_{s\in y\times_x z} l_s\cdot M_*(\varphi_s)\circ M^*(\psi_s): M(y)\to M(z).\]
 \item[(MF2)] Let $\varphi: y\to x$ be a finite morphism of $S$-points, then $M_*(\varphi)\circ M^*(\varphi)$ is multiplication with $\deg(y/x)$.
\end{enumerate}
\end{remark}

\begin{remark}
Notice that there are also different definitions of Mackey functors in the literature (e.g. functoriality is only required for separable field extensions or (MF2) is not required to hold).
\end{remark}

\begin{definition}\label{defn-MFsp}
A {\em $R$-Mackey functor with specialization maps} is a Nisnevich sheaf with transfers on $\Reg$, 
which additionally satisfies the following two conditions. 
      \begin{enumerate}
      \item[(Inj)] For all open immersions $j:U\inj X$ between non-empty connected schemes in $\Reg$ the restriction map 
       \[j^*: \sM(X)\inj \sM(U)\] is injective.
      \item[(FP)] For all $C\in (\sC/S)$ with generic point $\eta$ the natural map \[\varinjlim_{U\subset C} \sM(U)\xr{\simeq} \sM(\eta)\]
 is an isomorphism, where the limit is over all non-empty open subsets $U\subset C$.
      \end{enumerate}
We denote by $\MFsp_R=\MFsp$ the full subcategory of $\NT$ with objects the Mackey functors with specialization maps.
\end{definition}

\begin{definition}\label{defn-projectivization}
A {\em compactification} of an integral and 1-dimensional scheme $U\in \Regone$ is by definition
an integral curve $C\in (\sC/S)$ which has $U$ as an open subscheme.
Notice that for each $S$-point $x$ over which $U$ is of finite type there exists a unique (up to isomorphism)
compactification $C$ over $x$.
\end{definition}

The name ``Mackey functor with specialization maps'' is justified by the following proposition.

\begin{proposition}\label{prop-alt-MFsp}
A Mackey functor with specialization maps is the same as giving a triple $(M, \sM, s)$, where 
\begin{enumerate}
 \item $M$ is a Mackey functor,
 \item $\sM$ is a collection of Zariski sheaves of $R$-modules  $\sM_C$ on $C$, $C\in (\sC/S)$, together with a collection of pullback morphisms $\pi^*: \sM_C(U)\to \sM_D(V)$
        for each morphism $\pi:V\to U$ between 1-dimensional connected schemes in $\Regone$ with compactifications
        $D$ and $C$, respectively, and a collection of pushforward morphisms $\pi_*: \sM_D(V)\to \sM_C(U)$, for each finite morphism $\pi$ as above, which both are functorial.
 \item $s$ is a collection of $R$-linear homomorphisms $s_P: \sM_{C,P}\to M(P)$ for $C\in (\sC/S)$ and $P\in C$ (where $\sM_{C,P}$ denotes the Zariski stalk).
\end{enumerate}
These data satisfy the following conditions:
\begin{enumerate}
 \item[(RS0)] For all $C\in (\sC/S)$ the functor on the small Nisnevich site  $C_\Nis$, which sends an \'etale $C$-scheme $U$ to $\oplus_i \sM_{C_i}(U_i)$, where the $U_i$'s are the connected components of $U$ with unique compactification
                $C_i$ over $C$, and which on morphisms is defined using the pullbacks from (2), is a sheaf.
 \item[(RS1)] For any $C\in (\sC/S)$ with generic point $\eta_C$, the Zariski-sheaf $\sM_C$ is a subsheaf of the constant sheaf $M(\eta_C)$,
              inducing an isomorphism at the generic stalk $\sM_{C,\eta_C}=M(\eta_C)$,
              which is compatible with the pullback and pushforward morphisms of $\sM$ and $M$.
 \item[(RS2)] For any 1-dimensional and integral $U\in \Reg$, any compactification $C\in (\sC/S)$ 
                   and any map $\varphi: U\to x$ to an $S$-point $x$, 
              the pullback $\varphi^*: M(x)\to M(\eta_C)$ has its image contained in $\sM_C(U)$.
 \item[(S1)] For any morphism $\pi: D\to C$ in $(\sC/S)$, $Q\in D$,  $P:=\pi(Q)$ and $a\in \sM_{C,P}$ 
                                 \[\pi_Q^*(s_P(a))=s_Q(\pi^*a)\quad \text{in } M(Q),\]
            where $\pi_Q:Q\to P$ denotes the map induced by $\pi$.
 \item[(S2)] For any finite morphism $\pi: D\to C$ in $(\sC/S)$, any $P\in C$ and $a\in \pi_*(\sM_D)_P$ 
                                \[s_P(\pi_*(a))=\sum_{Q\in \pi^{-1}(P)} e(Q/P)\cdot \pi_{Q*}(s_Q(a))\quad \text{in } M(P),\]
                                 where $\pi_{Q}:Q\to P$ is the finite morphism induced by $\pi$ and $s_Q$ on the right-hand side is the composition $\pi_*(\sM_D)_P\to \sM_{D,Q}\xr{s_Q} M(Q)$.
 \item[(S3)] For any $C\in(\sC/S)$ and any closed point $P\in C$ and  $a\in M(x_C)$ 
                                 \[s_P(\rho_C^*(a))=  \bar{\rho}_C^*(a),\]
                                  where $\bar{\rho}_C: P\to x_C$ is induced by the structure morphism $\rho_C: C\to x_C$.
                                  Notice that this makes sense because of (RS2).
\end{enumerate}
\end{proposition}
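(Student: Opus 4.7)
The plan is to verify the equivalence by constructing functors in both directions and checking that the various axioms match up. I would proceed by giving the easy direction first and then the reconstruction.

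\emph{From a Mackey functor with specialization map to a triple.} Given $\sM \in \MFsp$, I obtain $M$ by restricting $\sM$ along the inclusion $\pt\Cor \inj \Reg\Cor$, I obtain each $\sM_C$ as the restriction of the underlying Nisnevich (hence Zariski) sheaf to the small Zariski site of a curve $C \in (\sC/S)$, and I take $s_P$ to be the map induced at the stalk by the specialization map $s_P : \sM(U) \to \sM(P) = M(P)$ attached to $P \inj U$ in Notation \ref{not-pb-pf-sp}. Condition (RS0) is immediate since $\sM$ is a Nisnevich sheaf on $\Reg$, (RS1) follows from (FP) together with (Inj) applied to all open immersions $U \inj C$, and (RS2) is just the fact that the pullback along $U \to x$ factors through $\sM(U) \subset M(\eta_C)$ by (Inj) and (RS1). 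The compatibilities (S1), (S2), (S3) are all instances of the functoriality and the base-change formula for correspondences from Lemma \ref{CarPSTRegone}: (S1) comes from the identity $[\Gamma_{\pi_Q}] \circ [\Gamma_i] = [\Gamma_{i'}] \circ [\Gamma_\pi]$ over $Q \inj D$, (S2) from Lemma \ref{lem-basic-corr-formulas}(3) applied to the cartesian square computing $\pi^{-1}(P) \to P$ (the factor $e(Q/P)$ is the length appearing there), and (S3) from functoriality of pullback along $P \inj C \to x_C$.

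\emph{From a triple to a Mackey functor with specialization map.} Given $(M,\sM,s)$, I use Lemma \ref{CarPSTRegone}: it suffices to construct functors $\sM_*$ on $\RegCon_*$ and $\sM^*$ on $\RegCon^{\op}$ satisfying conditions (1)--(3) of that lemma, and then verify (Nis), (Inj), (FP). I set $\sM(x) := M(x)$ for an $S$-point $x$ and $\sM(U) := \sM_C(U)$ for a 1-dimensional connected $U \in \Regone$ with compactification $C$. Pullbacks and pushforwards along maps purely inside curves/opens are given by the data (2). The remaining case to handle is morphisms $\varphi : U \to x$ from a 1-dimensional scheme to an $S$-point, and inclusions $P \inj U$ from a closed point of a curve to an open neighborhood: for the former $\varphi^*$ is defined via (RS2), for the latter $s_P$ restricted from $\sM_{C,P}$ to $\sM_C(U)$ plays the role of the specialization map. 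A finite morphism $\varphi : U \to x$ also requires a pushforward $\varphi_*$; I define this by $\varphi_* := \sum_{Q} e(Q/x_C)\, \bar{\varphi}_{Q*} \circ s_Q$ on $\sM_C(U)$ using (S2) as the defining identity on global sections (equivalently, take the compactification $C \to x_C$, push forward to $x_C$, and pull back via $x \to x_C$ if $U = C$; the general case follows by compatibility with open restrictions).

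\emph{Verification of the axioms.} The sheaf axiom (Nis) on $\Regone$ follows directly from (RS0) after decomposing any étale cover into its connected components and their compactifications; (Inj) and (FP) are exactly (RS1). It remains to check the three conditions of Lemma \ref{CarPSTRegone}, namely functoriality of $\sM_*$, the degree formula, and the base change formula. Functoriality reduces, after splitting morphisms into compositions of (i) maps inside a fixed curve, (ii) closed inclusions $P \inj U$ (specializations), and (iii) structure maps $U \to x$ of a 1-dimensional scheme over an $S$-point, to the functoriality of $M$ and $\sM$ separately, together with the compatibilities (S1), (S2), (S3). The degree formula and the base-change formula (conditions (2) and (3) of Lemma \ref{CarPSTRegone}) likewise reduce to the corresponding statements for $M$ (which are the Mackey axioms (MF1), (MF2) recalled in Remark \ref{rmk-alt-defn-MF}) and for the sheaves $\sM_C$ (which hold by construction), with the cross-terms handled by (S1)--(S3).

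The main obstacle is the last step, verifying the base-change formula for cartesian squares that mix $S$-points and curves; one must break such a square into smaller pieces each involving only one of the three types of morphisms above, and repeatedly invoke (S1), (S2) to commute trace and specialization past pullback. The ramification-index coefficients in (S2) must match the lengths appearing in Lemma \ref{CarPSTRegone}(3), which is precisely the formula $\length(\sO_{Y',\eta_T}) = e(Q/P)$ when $Y' = \pi^{-1}(P)$ and $T = Q$; this is where the specific normalization of (S2) becomes essential.
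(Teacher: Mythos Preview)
Your approach is correct and is essentially the paper's: both directions are organized via Lemma \ref{CarPSTRegone}, and the paper's explicit two-case formula for $\tilde{\sM}([Z])$ (dominant projection versus projection factoring through a closed point) is exactly what your recipe for $\sM^*$ and $\sM_*$ produces once one unwinds it. One small remark: the case ``finite $\varphi: U\to x$ with $U$ one-dimensional'' that you discuss does not occur in $\RegCon_*$, since a one-dimensional integral scheme cannot be finite over a point; the only finite flat morphisms to handle are point-to-point (Mackey data) and curve-to-curve (data (2)), so that paragraph can simply be dropped.
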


\begin{proof}
It follows from Lemma \ref{lem-basic-corr-formulas}, Notation \ref{not-pb-pf-sp} and Remark \ref{rmk-alt-defn-MF} that a Mackey functor with specialization maps determines a triple as in the proposition.
Conversely given such a triple $(M,\sM, s)$ we can construct a Mackey functor with specialization map $\tilde{\sM}$ as follows. For an $S$-point $x$ set $\tilde{\sM}(x):=M(x)$ and 
for $U\in \Reg$ integral and 1-dimensional set $\tilde{\sM}(U):= \sM_C(U)$, where $C\in (\sC/S)$ is some
compactification of $U$. Notice that by the functoriality in (2), we have $\sM_C(U)=\sM_{C'}(U)$,
whenever $C$ and $C'$ are compactifications of $U$. Thus $\tilde{\sM}(U)$ is well-defined.
We extend this additively to all objects in
$\Reg$. If $[Z]\in \Cor(X,Y)$ is an elementary correspondence between integral schemes in $\Reg$ and $\tilde{Z}$ is the normalization of $Z$ and $q_Z: \tilde{Z}\to X$ and 
$p_Z: \tilde{Z}\to Y$ are the natural maps induced by the projections, then we define
\[\tilde{\sM}([Z]):= \begin{cases}  q_{Z*}\circ p_Z^*,& \text{if }p_Z(\tilde{Z}) \text{ contains the generic point of } Y,\\
                                    q_{Z*}\circ \varphi^*\circ s_P, &\text{if }p_Z \text{ factors as } \tilde{Z}\xr{\varphi} P \text{ with } \{P\}\varsubsetneq Y \text{ closed},
                     \end{cases}
\]
where the pullback and pushforward on the right hand side are induced by the Mackey functor structure and the structure underlying $\sM$ respectively;
e.g. if $\tilde{Z}$ is 1-dimensional with compactification $C$ the map $\varphi^*:M(P)\to \sM_C(\tilde{Z})$ above is the map induced by $\varphi^*:M(P)\to M(\eta_{\tilde{Z}})$ via (RS2).
It is straightforward to check that this defines a Mackey functor with specialization maps 
(cf. the proof of Lemma \ref{CarPSTRegone}).
\end{proof}

The main reason we work with the Nisnevich topology is the following Lemma due to V. Voevodsky.
\begin{lemma}\label{lem-Nis-keeps-transfer}
Let $\sM$ be a presheaf with transfers on $\Reg$ and denote by $\sM_\Nis$ the associated sheaf of $R$-modules in the Nisnevich topology on $\Reg$.
Then $\sM_\Nis$ uniquely extends to a Nisnevich sheaf with transfers on $\Reg$ and the canonical map $\sM\to \sM_{\Nis}$ is a morphism of presheaves with transfers.
Moreover the underlying Mackey functors of $\sM$ and $\sM_\Nis$ are equal and if $\sM$ satisfies the conditions (Inj) and (FP) from Definition \ref{defn-MFsp}, 
then $\sM_{\Nis}$ is a Mackey functor with specialization map.
\end{lemma}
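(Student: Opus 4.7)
The plan is to follow the strategy of Voevodsky's preservation-of-transfers theorem (cf.\ \cite[Thm.~13.1]{MVW}), adapted to the simpler dimension $\leqslant 1$ setting of $\Reg$. The three things to establish are: existence and uniqueness of a transfer structure on $\sM_\Nis$ compatible with $\sM$; equality of the underlying Mackey functors; and the propagation of (Inj) and (FP) from $\sM$ to $\sM_\Nis$.

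\textbf{Transfer structure on $\sM_\Nis$.} Given an elementary correspondence $[V]\in\Cor(X,Y)$ and a Nisnevich covering $p\colon X'\to X$, I would first observe that the base change $V':=V\times_X X'\subset X'\times Y$ is finite over $X'$ (base change of finite) and maps surjectively onto each connected component of $X'$ lying over the component of $X$ hit by $V$; hence $[V']\in\Cor(X',Y)$. Since $p$ is flat and $X'$ is in $\Reg$, Lemma~\ref{lem-basic-corr-formulas}(3) gives $[V']=[V]\circ[\Gamma_p^t]$ without multiplicities, and the construction is strictly functorial in refinements of Nisnevich covers. Consequently, if a section $a\in\sM_\Nis(Y)$ is represented by compatible $a_i\in\sM(Y'_i)$ for some Nisnevich cover $\{Y'_i\to Y\}$, one defines $[V]^*(a)\in\sM_\Nis(X)$ by refining the cover of $X$ so that the components of $V'$ factor through $\coprod Y'_i$, and applying the PST action of $\sM$ componentwise. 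Uniqueness is automatic because $\sM\to\sM_\Nis$ is locally an isomorphism, so any transfer structure extending that of $\sM$ is forced on a generating family of sections.

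\textbf{Mackey functors and (Inj), (FP).} On $(\pt/S)$, every Nisnevich cover of $\Spec k$ splits, since an étale $k$-algebra all of whose residue fields are $k$ is a product of copies of $k$. Therefore $\sM_\Nis(\Spec k)=\sM(\Spec k)$ for every $S$-point, and the $\pt\Cor$-action agrees on both sides as it comes from the same PST structure. For (Inj): take $X\in\RegCon$, a non-empty open $U\subset X$, and $a\in\sM_\Nis(X)$ with $a|_U=0$; choose a Nisnevich cover $\{W_\alpha\to X\}$ on which $a$ is represented by $a_\alpha\in\sM(W_\alpha)$. Then $a_\alpha|_{W_\alpha\times_X U}$ vanishes in $\sM_\Nis$, hence after further Nisnevich refinement in $\sM$ itself; applying (Inj) for $\sM$ on each connected component of $W_\alpha$ (which still lies in $\RegCon$) yields $a_\alpha=0$, so $a=0$. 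For (FP): the map $\varinjlim_{U\subset C}\sM_\Nis(U)\to\sM_\Nis(\eta_C)=\sM(\eta_C)$ is injective by (Inj) for $\sM_\Nis$; surjectivity follows because for any $b\in\sM(\eta_C)$, (FP) for $\sM$ yields a lift to $\sM(U)$ for some non-empty open $U\subset C$, whose image in $\sM_\Nis(U)$ provides the required preimage.

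\textbf{Main obstacle.} The principal technical point is the construction of the transfer structure on $\sM_\Nis$: one must verify that pullback of elementary correspondences along Nisnevich covers is well-defined and functorially compatible with refinements, so that the PST action of $\sM$ genuinely descends to Čech representatives. The dimension~$\leqslant 1$ setting streamlines matters (finite surjectivity is preserved under étale base change in a transparent way, and the length formula of Lemma~\ref{RemaLength} controls multiplicities), but one still must check carefully that the resulting $[V]^*$ on $\sM_\Nis$ is independent of all the choices of covers and representatives. Once this is in place, everything else is a fairly direct bookkeeping exercise using the hypotheses (Inj) and (FP) on $\sM$.
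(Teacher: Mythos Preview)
Your overall strategy matches the paper's: invoke Voevodsky for the transfer structure, observe that Nisnevich covers of points split for the Mackey part, and then propagate (Inj) and (FP) from $\sM$ to $\sM_\Nis$. However, your argument for (Inj) has a gap. You say that after a further Nisnevich refinement $a_\alpha$ vanishes in $\sM$, and then ``applying (Inj) for $\sM$ on each connected component of $W_\alpha$ yields $a_\alpha=0$''. But (Inj) for $\sM$ only gives injectivity of restriction along \emph{open immersions}, whereas what you have is vanishing on an \'etale (Nisnevich) cover of the dense open $W_\alpha\times_X U$. The missing step is that a Nisnevich cover of a connected regular curve splits over the generic point, so one of its components is birational \'etale over the base, hence an open immersion; only then can (Inj) for $\sM$ be invoked. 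Without this observation the appeal to (Inj) is unjustified.

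The paper handles this more economically: instead of treating (Inj) and (FP) for $\sM_\Nis$ separately, it shows directly that $\sM_\Nis(U)\to\sM_\Nis(\eta)=\sM(\eta)$ is injective for any non-empty open $U\subset C$. Given $a\in\sM_\Nis(U)$ mapping to zero in $\sM(\eta)$, choose a Nisnevich cover $\pi:V\to U$ and a lift $b\in\sM(V)$ of $\pi^*a$; since every generic point of $V$ lies over $\eta$ and $\sM=\sM_\Nis$ on $S$-points, $b$ vanishes at all generic points of $V$, whence $b=0$ by (Inj) and (FP) for $\sM$, so $a=0$. This is exactly the generic-point splitting you are missing, but packaged so that (Inj) and the injectivity half of (FP) for $\sM_\Nis$ come out simultaneously.
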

\begin{proof}
The first two statements are exactly \cite[Lemma 3.1.6]{VoDM} (there it is proven for presheaves with transfers on $\SmCor_S$ but the proof goes through in our situation).
That the underlying Mackey functors of $\sM$ and $\sM_\Nis$ are equal follows from the fact that a Nisnevich covering of a point always admits the identity as a refinement.
If $\sM$ satisfies (FP), then the natural map $\varinjlim_{U\subset C} \sM_\Nis(U)\to \sM_\Nis(\eta)=\sM(\eta)$ is surjective, where the limit is over all non-empty open subsets $U\subset C$.
Hence it suffices to show that if $\sM$ additionally satisfies (Inj), then $\sM_\Nis(U)\to \sM_\Nis(\eta)$, for $U$ as above, is injective.
For this take $a\in \sM_\Nis(U)$, which maps to zero in $\sM_\Nis(\eta)$ and denote by $\sigma:\sM\to \sM_{\Nis}$ the canonical map. Then we find a covering $\pi: V\to U$ in the Nisnevich topology and an element  $b\in \sM(V)$
such that $\pi^*a=\sigma(b)$. It follows that $b$ maps to zero in $\oplus_i\sM(\eta_i)$, where 
the $\eta_i$ are the generic points of $V$. Therefore by (Inj) and (FP) $b=0$, hence also $\pi^*a=0$.
By the Nisnevich property one of the $\eta_i$ is actually equal to $\eta$
and thus $a=0$. This yields the statement.
\end{proof}

\subsection{The modulus condition}

\begin{definition}\label{defn-modulus}
Let $\sM$ be a Mackey functor with specialization, $C\in (\sC/S)$, $a\in \sM(\eta_C)$ and let $\fm=\sum_{P\in C} n_P P$ be an effective divisor on $C$.
Then we say that $\fm$ is a {\em modulus for $a$} if and only if the following condition is satisfied:
\begin{enumerate}
 \item[(MC)] $a\in \sM(C\setminus|\fm|)$ (this makes sense by (FP)) and for $f\in \G_m(\eta_C)$ with $f\equiv 1$ mod $\fm$ we have
               \[\sum_{P\in C\setminus |\fm|} v_P(f)\Tr_{P/x_C}s_P(a)=0,\]
\end{enumerate}
where $x_C$ is defined in \eqref{some-cats1}. (Here we allow $\fm=0$, in which case $f\equiv 1$ mod $\fm$ is an empty condition.)
Clearly the set of all $a\in \sM(\eta)$, which have $\fm$ as a modulus define a $R$-submodule, denoted by
\[\sM(C,\fm)= \{a\in \sM(\eta)\,|\, \fm \text{ is a modulus for } a \}.\]
\end{definition}

We want to give a reformulation of the modulus condition in terms of correspondences.

\begin{definition}\label{defn-corr-with-mod}
For two pairs $(C,\fm)$ and $(D, \fn)$ with $C, D\in (\sC/S)$ curves and $\fm$ and $\fn$ effective divisors on them, define
\[\Cor((C,\fm), (D,\fn))\subset \Cor(C\setminus|\fm|, D\setminus|\fn|)\]
to be the free $R$-module generated by symbols $[V]$, where $V\subset (C\setminus|\fm|)\times (D\setminus|\fn|)$ is an integral closed subscheme which is finite and surjective over
$C\setminus|\fm|$, such that $\nu^*(\fm\times D)\geqslant \nu^*(C\times \fn)$ with $\nu: \tilde{V}\to \bar{V}\subset C\times D$ the normalization of the closure of $V$.

Notice that for $\fn=0_D$ the zero divisor on $D$ we have $\Cor((C,\fm), (D,0_D))=\Cor(C, D)$.
\end{definition}

\begin{lemma}\label{lem-alt-defn-MC}
Let $C\in (\sC/S)$ be a curve, $\fm$ an effective divisor on it and $\sM$ a Mackey functor with specialization maps.
Then an element $a\in\sM(\eta_C)$ has modulus $\fm$ if and only if $a\in \sM(C\setminus|\fm|)$ and for all $\gamma\in \Cor((\P^1_{x_C},\{1\}), (C,\fm))$  we have
\eq{lem-alt-defn-MC1}{i^*_0\sM(\gamma)(a)=i^*_\infty\sM(\gamma)(a),}
where $i_\epsilon: \{\epsilon\}\inj \P^1$, $\epsilon\in\{0,\infty\}$, are the closed immersions.
\end{lemma}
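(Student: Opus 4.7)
The plan is to establish both directions by deriving, for any elementary component $\gamma=[V]$ satisfying the hypotheses, a closed-form expression for $i_0^*\sM(\gamma^t)(a)-i_\infty^*\sM(\gamma^t)(a)$ in terms of a norm. Let $D:=\widetilde{\bar V}\in(\sC/S)$ be the normalisation of the closure of $V$ in $C\times\P^1_{x_C}$; the two projections extend to finite surjective morphisms $q\colon D\to C$ and $p\colon D\to\P^1_{x_C}$, with $p$ corresponding to a rational function $g\in\kappa(D)^\times$. By Lemma \ref{lem-basic-corr-formulas}(1) one has $\sM(\gamma^t)=p_*\circ q^*$, so combining the base-change formula of Lemma \ref{CarPSTRegone}(3) with axioms (S1) and (MF2) of Remark \ref{rmk-alt-defn-MF} gives, for each $\epsilon\in\{0,\infty\}$,
\[
i_\epsilon^*\,\sM(\gamma^t)(a)=\sum_{Q\in p^{-1}(\epsilon)} e(Q/\epsilon)\,f(Q/q(Q))\,\Tr_{q(Q)/x_C}\bigl(s_{q(Q)}(a)\bigr).
\]
Since $e(Q/0)=v_Q(g)$, $e(Q/\infty)=-v_Q(g)$, and $v_P(N_{\kappa(D)/\kappa(C)}(g))=\sum_{Q\mid P}f(Q/P)v_Q(g)$, subtraction yields the master identity
\[
i_0^*\,\sM(\gamma^t)(a)-i_\infty^*\,\sM(\gamma^t)(a)\;=\;\sum_{P\in C}v_P\!\bigl(N_{\kappa(D)/\kappa(C)}(g)\bigr)\,\Tr_{P/x_C}\bigl(s_P(a)\bigr).
\]

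For the implication ``$\Rightarrow$'' I would then show that membership of $\gamma$ in $\Cor((C,\fm),(\P^1,\{1\}))$ forces $f_\gamma:=N_{\kappa(D)/\kappa(C)}(g)$ to be congruent to $1$ modulo $\fm$. The modulus condition $\nu^*(\fm\times\P^1)\leqslant\nu^*(C\times\{1\})$ on $D$ unpacks to $v_Q(g-1)\geqslant e(Q/P)\,n_P$ for every $Q\mid P\in|\fm|$, which in the semi-local ring $\sO_{D,P}:=\prod_{Q\mid P}\sO_{D,Q}$ reads $g\in 1+\fm_P^{n_P}\sO_{D,P}$. Expanding the characteristic polynomial of multiplication by $g$ over $\sO_{C,P}$ gives $f_\gamma\in 1+\fm_P^{n_P}\sO_{C,P}$, in particular $v_P(f_\gamma)=0$ for $P\in|\fm|$. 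Condition (MC) applied to $a$ then kills the right-hand side of the master identity.

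For the converse ``$\Leftarrow$'', given $f\in\kappa(C)^\times$ with $f\equiv 1\pmod\fm$, the natural candidate is the graph correspondence $\gamma_f:=[V_f]$ with $V_f:=\Gamma_f\cap\bigl((C\setminus|\fm|)\times(\P^1\setminus\{1\})\bigr)$; its first projection is an isomorphism and its second is the finite surjection $f$ exactly when $f$ is \emph{admissible}, i.e.\ when $f^{-1}(1)\subseteq|\fm|$. In that case the modulus condition on $\gamma_f$ reduces to the hypothesis on $f$, and applying the master identity to $\gamma_f$ (with $D=C$, $g=f$) together with the assumption yields the vanishing $\sum_{P\in C\setminus|\fm|}v_P(f)\,\Tr_{P/x_C}s_P(a)=0$ immediately. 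For a general $f$ the strategy is a moving trick: choose $h\equiv 1\pmod\fm$ such that both $h$ and $fh$ are admissible, and deduce the sum formula for $f$ from the linearity of the master identity applied to $\gamma_{fh}-\gamma_h$ and the factorisation $f=(fh)\cdot h^{-1}$.

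The delicate point, and the main obstacle, is producing the auxiliary function $h$: one needs $h\equiv 1\pmod\fm$ with $\Div_+(h-1)$ and $\Div_+(fh-1)$ both supported on $|\fm|$. Working inside the affine space $1+L(\fn-\fm)$ for a sufficiently positive effective divisor $\fn$ disjoint from $|\fm|$---which has positive dimension by Riemann--Roch---the locus where $h-1$ or $fh-1$ acquires an extra zero at a fixed closed point $x_0\in C\setminus|\fm|$ is cut out by an affine hyperplane of codimension $[\kappa(x_0):\kappa(x_C)]$. Over an infinite residue field the conclusion follows from a one-line general-position argument; when $\kappa(x_C)$ is finite one exploits the bound $\deg\fn-\deg\fm$ on the total excess-zero degree of any $h$ and enlarges $\fn$ until a counting argument produces an admissible $h$.
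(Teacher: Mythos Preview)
Your derivation of the master identity and your use of the norm congruence for the direction ``$a$ has modulus $\fm$ $\Rightarrow$ \eqref{lem-alt-defn-MC1}'' match the paper exactly (your characteristic-polynomial computation is precisely the content of the paper's Lemma~\ref{lem-mod-norm}). For the converse you are in fact more cautious than the paper, which simply asserts that the graph of any $f\equiv 1\pmod\fm$ ``restricts'' to an element of $\Cor((C,\fm),(\P^1_{x_C},\{1\}))$; you rightly observe that the restricted graph is finite and surjective over $C\setminus|\fm|$ only when $f^{-1}(1)\subseteq|\fm|$, and you try to reduce to that admissible case via a moving trick.

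The genuine gap is in your existence argument for the auxiliary $h$. Admissibility requires that \emph{every} zero of $h-1$ lie in $|\fm|$; this is not a Zariski-open condition on $1+L(\fn-\fm)$, and it cannot be enforced by avoiding the affine hyperplanes $\{g:g(x_0)=0\}$ one closed point $x_0$ at a time, because there are infinitely many such hyperplanes: when $\kappa(x_C)$ is infinite there are already infinitely many degree-$1$ points, and over a finite field the sum $\sum_{x_0}q^{-\deg x_0}$ over closed points of bounded degree diverges as the bound grows, so your counting argument fails as well. Concretely, for $C=\P^1_k$, $\fm=2\{0\}$, $\fn=N\{\infty\}$, the admissible $h\in 1+L(\fn-\fm)$ are exactly $1+ct^m$ with $2\le m\le N$ and $c\ne 0$ --- a one-dimensional locus inside an $(N-1)$-dimensional affine space, not an open set. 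So neither branch of your existence argument works as written; to salvage the moving trick you would need to \emph{construct} $h$ directly (for instance by prescribing the full divisors of $h-1$ and $fh-1$ via Riemann--Roch), rather than locate it by genericity.
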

\begin{proof}
Clearly \eqref{lem-alt-defn-MC1} implies (MC): Indeed for $f\in \G_m(\eta_C)$ with $f\equiv 1$ mod $\fm$, 
take $[\Gamma]^t\in \Cor(\P^1, C)$ to be the transpose of the graph of the finite and surjective map 
$C\to \P^1$ determined by $f$. 
Then $[\Gamma]^t$ restricts to an element $\gamma$ in $\Cor((\P^1_{x_C},\{1\}), (C,\fm))$.
In this case \eqref{lem-alt-defn-MC1} is just a reformulation of (MC) (use (S2)).

Now assume that $a$ has a modulus $\fm$ and let $\gamma=[V]\in\Cor((\P^1_{x_C},\{1\}), (C,\fm))$ 
be an elementary correspondence. 
Denote by $\tilde{V}$ the normalization of the closure of $V$ in $\P^1_{x_C}\times C$ and by 
$\pi_{\P^1}: \tilde{V}\to \P^1_{x_C}$ and $\pi_C: \tilde{V}\to C$  the natural maps induced by projection.
Using Lemma \ref{lem-basic-corr-formulas}, formulas  (S1), (S2) and the notations introduced in \ref{not-pb-pf-sp} 
we obtain
\mlnl{i_0^*\sM(\gamma)(a)-i_\infty^*\sM(\gamma)(a)= 
\sum_{Q\in \pi_{\P^1}^{-1}(0)} e(Q/0)\cdot \pi_{\P^1,Q*}\pi_{C,Q}^*(s_{\pi_C(Q)}(a))\\ 
                  -  \sum_{Q\in \pi_{\P^1}^{-1}(\infty)} e(Q/\infty)\cdot \pi_{\P^1,Q*}\pi_{C,Q}^*(s_{\pi_C(Q)}(a)), }
where $\pi_{C,Q}: Q\to \pi_C(Q)$ and $\pi_{\P^1,Q}: Q\to \pi_{\P^1}(Q)=x_C$ are the natural maps.
By (MF2) we have $\pi_{\P^1,Q*}\pi_{C,Q}^*= f(Q/P)\cdot\Tr_{P/x_C}$, with $P=\pi_{C}(Q)$. 
Let $f\in \G_m(\eta_{\tilde{V}})$ be the function corresponding to $\pi_{\P^1}$, then
$f\equiv 1$ mod $\pi^*\fm$ (by definition of $\Cor$ for pairs) and $e(Q/0)=v_Q(f)$, $e(Q/\infty)=-v_Q(f)$.
Thus
\begin{align}
i_0^*\sM(\gamma)(a)-i_\infty^*\sM(\gamma)(a) & = 
           \sum_{P\in C\setminus|\fm|}  \bigg(\sum_{Q\in \pi^{-1}_C(P)} f(Q/P)v_Q(f)\bigg)\cdot s_P(a)\notag\\
                                    & = \sum_{P\in C\setminus|\fm|} v_P(\Nm_{\eta_{\tilde{V}}/\eta_C}(f))\cdot s_P(a).\notag
\end{align}
But this last sum is zero since $a$ has modulus $\fm$ and $\Nm_{\eta_{\tilde{V}}/\eta_C}(f)\equiv 1$ mod $\fm$ by the following Lemma; hence the statement.
\end{proof}

\begin{lemma}\label{lem-mod-norm}
Let $L/K$ be a finite field extension and $v$ a discrete valuation on $K$  with valuation ring $A$. 
Assume that the normalization $B$ of $A$ in $K$ is a finite $A$-module and let $w_1,\ldots, w_r$ be all the extensions of $v$ to $L$.
Let $K_s\subset L$ be the separable closure of $K$ in $L$ and $w_{i,s}:={w_i}_{|K_s}$, $i=1,\ldots, r$.
Then for all $m\geqslant 1$ and  $n_i\geqslant \frac{m\cdot e(w_{i,s}/v)}{f(w_i/w_{i,s})}$  we have 
\[\Nm_{L/K}(U^{(n_1)}_{w_1}\cap\ldots \cap U^{(n_r)}_{w_r})\subset U^{(m)}_v.\]
\end{lemma}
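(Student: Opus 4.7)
The plan is to factor $\Nm_{L/K} = \Nm_{K_s/K} \circ \Nm_{L/K_s}$ through the separable closure $K_s$ of $K$ in $L$ and handle the two pieces separately. For the purely inseparable part $L/K_s$ of degree $q = p^k$, one has $\Nm_{L/K_s}(x) = x^q$, and in characteristic $p$ the identity $(1+a)^q = 1+a^q$ combined with $w_i|_{K_s} = e(w_i/w_{i,s}) \cdot w_{i,s}$ shows that any $u \in U^{(n_i)}_{w_i}$ satisfies $u^q \in U^{(n_i q/e(w_i/w_{i,s}))}_{w_{i,s}}$. The crucial point is that the finite-normalization hypothesis forces defectlessness: since a purely inseparable extension admits a unique extension of valuations, the degree formula $\sum ef = [L:K_s]$ collapses to a single term, giving $q = e(w_i/w_{i,s}) \cdot f(w_i/w_{i,s})$. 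Substituting, $u^q$ lands in $U^{(N_i)}_{w_{i,s}}$ with $N_i := n_i \, f(w_i/w_{i,s})$.

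Next, for the separable extension $K_s/K$ I would pass to the completion $\hat K_v$: the isomorphism $K_s \otimes_K \hat K_v \simeq \prod_i L_i$ (with $L_i$ the $w_{i,s}$-completion of $K_s$) turns $\Nm_{K_s/K}$ into the product of local norms $\Nm_{L_i/\hat K_v}$ under the diagonal embedding, and the statement reduces to the classical local one: for a finite separable extension $L'/K'$ of complete discrete valuation fields with ramification index $e$, $\Nm_{L'/K'}(U^{(N)}_{L'}) \subset U^{(\lceil N/e \rceil)}_{K'}$. I would prove this by choosing a Galois closure $M/K'$, extending $v$ to a $\Q$-valued valuation $\bar v$ on $\bar K'$, and noting that for $u = 1+a \in U^{(N)}_{L'}$ every Galois conjugate satisfies $\bar v(\sigma(a)) = N/e$; hence each elementary symmetric function of the $\sigma(a)$, and so $\Nm_{L'/K'}(u) - 1 \in K'$, has $\bar v$-valuation at least $N/e$, which forces the integer $v$-valuation to be at least $\lceil N/e \rceil$.

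Combining the two steps, the hypothesis $n_i \geq m \cdot e(w_{i,s}/v)/f(w_i/w_{i,s})$ yields $N_i \geq m \cdot e(w_{i,s}/v)$, and the separable step applied to $u^q$ delivers $\Nm_{L/K}(u) \in U^{(m)}_v$. The main obstacle is the accounting in the first step: the bound in the lemma is precisely the defect-free bound, so without the finite-normalization hypothesis the argument could produce only a weaker inequality, and the somewhat asymmetric appearance of $f(w_i/w_{i,s})$ (rather than $e(w_i/w_{i,s})$) in the denominator is exactly what the relation $q = ef$ produces after raising $\pi_i^{n_i}$ to the $q$-th power and converting from $w_i$- to $w_{i,s}$-valuation.
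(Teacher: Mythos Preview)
Your proposal is correct and follows essentially the same strategy as the paper: factor through the separable closure $K_s$, use $\Nm_{L/K_s}(x)=x^q$ together with $q=e(w_i/w_{i,s})f(w_i/w_{i,s})$ in the purely inseparable step, and then handle the separable step by writing the norm as a product of conjugates. The only cosmetic difference is that in the separable step the paper works globally---writing $g=1+t^m a'$ with $a'\in B$ and expanding $\Nm_{K_s/K}(g)=\prod_{\sigma}(1+t^m\sigma(a'))$ directly---whereas you pass to completions and invoke the decomposition $K_s\otimes_K \hat K_v\simeq\prod_i L_i$; both routes boil down to the same observation that every conjugate of $g-1$ has valuation at least $m$.
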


\begin{proof}
It suffices to consider separately the two cases in which $L/K$ is either purely inseparable or separable.
In the purely inseparable case we have $\Nm_{L/K}(g)= g^{[L:K]}$ and $[L:K]=e(w/v) f(w/v)$.
Thus for $g=1+ \tau^n a$ with $a\in \sO_w$, $n\geqslant \frac{m}{f(w/v)}$ and $\tau$ a local parameter at $w$ we have 
$\Nm_{L/K}(g)= 1+ t^m b$ for some $b\in A$ and $t$ a local parameter at $v$.
In the separable case  take $g\in U^{(n_1)}_{w_1}\cap\ldots \cap U^{(n_r)}_{w_r}$, $n_i\geqslant m e(w_i/v)$, and choose an element $\tau\in B$ which is a local parameter at $w_1,\ldots,w_r$.
Then there exist $a_i\in \sO_{w_i}$ such that $g=1+ \tau^{n_i} a_i$. We can rewrite this as $g= 1+ t^m a'_i$ with $a'_i\in\sO_{w_i}$. It follows that the $a_i'$ are all equal to an element, say, $a'\in B$.
Thus
 \[\Nm_{L/K}(g)=\prod_{\sigma\in \Hom_K(L, \bar{K})} (1+ t^n\sigma(a'))\in U^{(n)}_v.\]
Hence the statement.
\end{proof}


\begin{proposition}\label{prop-modulus}
Let $\sM$ be a Mackey functor with specialization maps, $C\in(\sC/S)$ and $\fm$ an effective divisor on $C$.
\begin{enumerate}
 \item If $\fm\leqslant\fn$ on $C$, then $\sM(C,\fm)\subset \sM(C, \fn)$.
 \item Let $\pi: D\to C$ be a finite morphism in $(\sC/S)$ and $\fm$ and $\fn$ effective divisors on $C$ and $D$, respectively, with $\pi^*\fm\geqslant\fn$. 
       Then the pushforward $\pi_*: \sM(\eta_D)\to \sM(\eta_C)$ restricts to 
         \[\pi_*:\sM(D,\fn)\to \sM(C,\fm).\]
 \item Let $\pi:D\to C$ be a morphism in $(\sC/S)$ such that the induced map $x_D\xr{\simeq} x_C$ is an isomorphism and let $\fm$ and $\fn$ be effective divisors on $C$ and $D$,
       respectively, with $\fn\geqslant \pi^*\fm /[D:C]_{\rm insep}$.
       Then $\pi$ is finite and the pullback $\pi^*: \sM(\eta_C)\to \sM(\eta_D)$ restricts to
         \[\pi^*: \sM(C,\fm)\to \sM(D,\fn).\]
\end{enumerate}
\end{proposition}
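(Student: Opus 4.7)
All three parts reduce to manipulating the defining sum of (MC) and matching it with (MC) for a suitable auxiliary rational function, so I will sketch each in turn.

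For (1) the argument is essentially tautological: by (Inj), $a$ restricts to $\sM(C\setminus|\fn|)$, and for any $f\in \G_m(\eta_C)$ with $f\equiv 1\bmod\fn$ one has $v_P(f)=0$ at every $P\in |\fn|\setminus|\fm|$ (since $v_P(f-1)\geq n_P\geq 1$ forces $f$ to be a unit at $P$); hence the modulus sum over $C\setminus|\fn|$ matches term-by-term the one over $C\setminus|\fm|$, which vanishes by hypothesis.

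For (2), the inclusion $|\fn|\subseteq|\pi^*\fm|$ (which follows from $\fn\leq\pi^*\fm$) ensures that $\pi$ restricts to a finite flat map $\pi^{-1}(C\setminus|\fm|)\to C\setminus|\fm|$, hence $\pi_* a\in \sM(C\setminus|\fm|)$. For $f\equiv 1\bmod\fm$ I would expand $s_P(\pi_*a)$ via (S2), then combine the identities $v_Q(\pi^*f)=e(Q/P)v_P(f)$ and $\Tr_{P/x_C}\circ\pi_{Q*}=\Tr_{Q/x_C}=\Tr_{x_D/x_C}\circ\Tr_{Q/x_D}$ to recast the modulus sum for $\pi_*a$ against $f$ as $\Tr_{x_D/x_C}$ applied to the modulus sum for $a$ against $\pi^*f$. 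Since $\pi^*f\equiv 1\bmod\pi^*\fm$ and $a$ lies in $\sM(D,\pi^*\fm)$ by part (1), (MC) for $a$ yields zero; contributions from points $Q\in\pi^{-1}(|\fm|)\setminus|\fn|$ drop out automatically because $v_P(f)=0$ for $P\in|\fm|$ forces $v_Q(\pi^*f)=0$.

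For (3), finiteness of $\pi$ is a consequence of $C$ and $D$ both being projective over $x_C$ (so $\pi$ is proper and dominant between curves). The hypothesis $\fn\geq \pi^*\fm/[D:C]_{\mathrm{insep}}$ in particular gives $|\pi^*\fm|\subseteq|\fn|$, so $\pi^*a$ restricts to $\sM(D\setminus|\fn|)$. For $g\in\G_m(\eta_D)$ with $g\equiv 1\bmod\fn$, I would first apply (S1) to write $s_Q(\pi^*a)=\pi_Q^*s_{\pi(Q)}(a)$; combining $\Tr_{Q/x_D}=\Tr_{P/x_C}\circ\pi_{Q*}$ (valid since $x_D\xr{\sim}x_C$), the projection formula $\pi_{Q*}\pi_Q^*=f(Q/P)\cdot\id$ coming from (MF2), and the classical identity $v_P(\Nm_{\eta_D/\eta_C}g)=\sum_{Q\mapsto P}f(Q/P)v_Q(g)$, the $Q$-sum collapses and the modulus sum for $\pi^*a$ against $g$ becomes the modulus sum for $a$ against $\Nm_{\eta_D/\eta_C}(g)$. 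The main obstacle is then to verify $\Nm_{\eta_D/\eta_C}(g)\equiv 1\bmod\fm$: this is precisely Lemma~\ref{lem-mod-norm}, once one identifies the lemma's ratio $e(w_{i,s}/v)/f(w_i/w_{i,s})$ with $e(Q_i/P)/[D:C]_{\mathrm{insep}}$, so that its numerical hypothesis coincides with $\fn\geq\pi^*\fm/[D:C]_{\mathrm{insep}}$. With this congruence in hand, (MC) for $a$ gives zero and the proof is complete.
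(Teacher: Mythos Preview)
Your proposal is correct and follows essentially the same approach as the paper: part (1) is tautological, part (2) reduces via (S2) to the modulus condition for $b$ against $\pi^*f$, and part (3) reduces via (S1) and (MF2) to the modulus condition for $a$ against $\Nm_{\eta_D/\eta_C}(g)$, with Lemma~\ref{lem-mod-norm} supplying the needed congruence. The only cosmetic difference is that in (2) the paper observes directly that $\pi^*f\equiv 1\bmod\fn$ (since $\fn\leqslant\pi^*\fm$), whereas you invoke part (1) to pass to $\sM(D,\pi^*\fm)$; these are equivalent bookkeeping choices.
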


\begin{proof}
 (1) is easy. For (2) take $b\in \sM(D,\fn)$ and $f\in \G_m(\eta_C)$ with $f\equiv 1$ mod $\fm$.
  Then $\pi^*f$ is congruent to 1 modulo $\pi^*\fm$  a fortiori modulo $\fn$ and thus (2) follows from
   \begin{align}\label{prop-modulus1}
      \sum_{Q\in\pi^{-1}(P)}\Tr_{x_D/x_C}&(v_Q(\pi^*f)\Tr_{Q/x_D}(s_Q(b)))\notag\\
 &=v_P(f)\Tr_{P/x_C}\bigg(\sum_{Q\in\pi^{-1}(P)} e(Q/P) \Tr_{Q/P}(s_Q(b))\bigg) \notag\\
  &= v_P(f)\Tr_{P/x_C}(s_P(\pi_*b)), 
  \end{align}
where the last equality holds by (S2). Finally, for (3) take $a\in\sM(C,\fm)$ and $g\in\G_m(\eta_D)$ with $g\equiv 1$ mod $\fn$. Then $\pi_*(g)\equiv 1$ mod $\fm$ (by Lemma \ref{lem-mod-norm})
and (3) follows from
\begin{align}\label{prop-modulus2}
\sum_{Q\in \pi^{-1}(P)} v_Q(g) \Tr_{Q/x_D}s_Q(\pi^*a)
      &= \sum_{Q\in \pi^{-1}(P)} f(Q/P) v_Q(g) \Tr_{P/x_C}(s_P(a))\notag \\
      &= v_P(\pi_*g)\Tr_{P/x_C}(s_P(a)), 
      \end{align}
where the first equality holds by (S1) and (MF2) and the second equality follows from $\Div(\pi_*g)=\pi_*\Div(g)$.
 \end{proof}

\subsubsection*{Associated symbols}

\begin{proposition}[cf. {\cite[III, \S 1, Prop. 1]{SerreGACC}}]\label{prop-modulus=symbol}
Let $M_*: ({\rm pt}/S)_*\to (R-\text{mod})$ be a functor and let $C\in (\sC/S)$ be fixed. Assume for all $P\in C$ we are
given submodules $M_P(\eta)\subset M(\eta)$ and $R$-linear maps $s_P: M_P(\eta)\to M(P)$. For $A\subset C$ we set $M_A(\eta):=\cap_{P\in A} M_P(\eta)$ and 
for $\varphi:x\to y$ in $({\rm pt}/S)_*$ we set $\Tr_{x/y}:=M_*(\varphi)$. Then for any $a\in M(\eta)$ the following 
two statements are equivalent:
\begin{enumerate}
 \item There exists a unique family of continuous group homomorphisms $\{\rho_P: \G_m(\eta)\to M(x_C)\}_{P\in C}$ 
        such that for some non-empty open subset $V\subset C$ the following conditions are satisfied
         \begin{enumerate}
           \item $a\in M_V(\eta)$.
           \item $\rho_P(f)=v_P(f)\Tr_{P/x_C}(s_P(a))$ for all $f\in \G_m(\eta)$ and $P\in V$.
           \item $\sum_{P\in C} \rho_P(f)=0$ for all $f\in \G_m(\eta)$.
         \end{enumerate}
 \item There exists an effective divisor $\fm=\sum_{Q\in C} n_Q Q$ on $C$ such that $a\in M_{C\setminus |\fm|}(\eta)$ and for all $f\in \G_m(\eta)$ with $f\equiv 1$ mod $\fm$, we have
        \[\sum_{P\in C\setminus |\fm|} v_P(f) \Tr_{P/x_C}(s_P(a))=0.\] 
\end{enumerate}
\end{proposition}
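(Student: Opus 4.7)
The plan is to reconstruct each $\rho_P$ from the modulus condition by strong approximation on the curve, following the classical pattern of \cite[III, \S 1, Prop.~1]{SerreGACC}.

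For the direction (2) $\Rightarrow$ (1), I would set $V := C \setminus |\fm|$ and define $\rho_P$ directly on $V$ by $\rho_P(f) := v_P(f)\, \Tr_{P/x_C}(s_P(a))$. For $P \in |\fm|$ of multiplicity $n_P$ and $f \in \G_m(\eta)$, the Approximation Lemma produces $f_P \in \G_m(\eta)$ with $f_P/f \in U_P^{(n_P)}$ and $f_P \in U_Q^{(n_Q)}$ for every $Q \in |\fm| \setminus \{P\}$, and I would set
$$\rho_P(f) := -\sum_{Q \in V} v_Q(f_P)\, \Tr_{Q/x_C}(s_Q(a)).$$
Independence of the choice of $f_P$ is the modulus condition (MC) applied to the ratio of two such choices, which is $\equiv 1 \pmod{\fm}$; the same argument gives additivity in $f$. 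Continuity of $\rho_P$ follows because for $f \in U_P^{(n_P)}$ one may take $f_P \equiv 1 \pmod{\fm}$, whence (MC) forces $\rho_P(f) = 0$. Reciprocity is obtained by applying (MC) to $h := f \prod_{P \in |\fm|} f_P^{-1}$: one checks $v_P(h) = 0$ and $h \equiv 1 \pmod{\fm}$ at each $P \in |\fm|$ using the valuation conditions on the $f_P$, and on $V$ one has $v_Q(h) = v_Q(f) - \sum_P v_Q(f_P)$, so (MC) rearranges into $\sum_{P \in C} \rho_P(f) = 0$.

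For the direction (1) $\Rightarrow$ (2), continuity of $\rho_P$ at each $P \in C \setminus V$ yields an integer $n_P \geqslant 1$ with $\rho_P(U_P^{(n_P)}) = 0$; the divisor $\fm := \sum_{P \notin V} n_P \cdot P$ then satisfies $|\fm| = C \setminus V$, and for $f \equiv 1 \pmod{\fm}$ the reciprocity law (c) together with $\rho_P(f) = 0$ for $P \in |\fm|$ reduces immediately to (MC). For uniqueness, the difference $\sigma_P := \rho_P - \rho_P'$ vanishes on $V$ and still satisfies reciprocity; given $P \in C \setminus V$ and $f \in \G_m(\eta)$, approximation provides an $f_P$ close enough to $f$ at $P$ and to $1$ at the other points of $C \setminus V$ (precision governed by the continuity moduli of the $\sigma_Q$'s) so that $\sigma_P(f) = \sigma_P(f_P) = -\sum_{Q \neq P} \sigma_Q(f_P) = 0$.

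The main technical obstacle will be orchestrating the approximation choices so that the same construction simultaneously witnesses well-definedness, additivity, continuity, and reciprocity; in each case one is reduced to an application of (MC) to an auxiliary function built from the $f_P$'s and arranged to be $\equiv 1$ modulo $\fm$, so the argument is uniform but the bookkeeping of valuations at the finitely many points of $|\fm|$ must be done carefully.
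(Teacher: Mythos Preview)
Your proposal is correct and follows essentially the same approach as the paper, which simply defers to \cite[III, \S 1, Prop.~1]{SerreGACC}; the detailed argument you give (define $\rho_P$ on $V$ by the explicit formula, define it off $V$ via approximation and the formula $\rho_P(f)=-\sum_{Q\in V}v_Q(f_P)\Tr_{Q/x_C}(s_Q(a))$, then verify well-definedness, additivity, continuity and reciprocity by reducing each to an application of (MC)) is exactly the classical construction. One small point: in the uniqueness argument you should take $V$ to be the intersection of the open sets attached to the two families, since a priori these need not coincide.
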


\begin{proof}The proof is along the lines of the proof of \cite[III, \S 1, Prop. 1]{SerreGACC}. \end{proof}

\subsubsection*{The modulus condition is local in the Nisnevich topology}

\begin{thm}\label{thm-modulus-Nis-local}
Let $\sM\in \MFsp$ be a Mackey functor with specialization maps. 
Let  $C\in (\sC/S)$ be a curve, $U\subset C$ a non-empty open subset and $a\in \sM(U)$ a section.
Assume there exists a covering $\pi:\coprod_i\ V_i\to U$ in the Nisnevich topology satisfying the following properties:
\begin{enumerate}
 \item The $V_i's$ are connected. We denote by $\pi_i: D_i\to C$ in $(\sC/S)$ the 
         compactification of $\pi_{|V_i}:V_i\to U$ (over $x_C$).
  \item For all $i$ there exists an effective divisors $\fn_i$ on $D_i$ such that  $|\fn_i|=D_i\setminus V_i$ and  
         $\pi_i^*a\in \sM(D_i,\fn_i)$.
\end{enumerate}
Then there exists an effective divisor $\fm$ on $C$ such that $|\fm|=C\setminus U$ and $a\in\sM(C,\fm)$.
Furthermore, if the divisors $\fn_i$ are bounded by a natural number $n\geqslant 1$ for all $i$, then there exists 
an effective divisor $\fm$ as above which is  bounded by the same $n$.
(Here we say that an effective divisor $\sum_{P} n_P [P]$ is {\em bounded by $n$} iff $n_P\leqslant n$ for all $P$.)
\end{thm}

Before we start with the proof of the theorem we need some preparations.

\begin{lemma}\label{lem-modulus-Zar-local}
Let $\sM\in \MFsp$ be a Mackey functor with specialization maps. 
Let  $C\in (\sC/S)$ be a curve, $U\subset C$ a non-empty open subset and $a\in \sM(U)$ a section.
Assume there exist a Zariski open covering $U=\cup_i U_i$ and effective divisors $\fm_i$ on $C$ with $|\fm_i|=C\setminus U_i$ and $a\in \sM(C,\fm_i)$, for all $i$.
Then there exists an effective divisor $\fm$ on $C$ with $|\fm|=C\setminus U$ and $a\in \sM(C,\fm)$.
Further, if the $\fm_i$ are bounded by $n$ for all $i$, then we can achieve that $\fm$ is also bounded by $n$.
\end{lemma}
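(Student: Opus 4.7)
The plan is to reduce to the case of a cover by two opens via induction on the number of $U_i$'s, and then factor a test function $f \equiv 1$ mod $\fm$ as a product $f = f_1 f_2$ where $f_i \equiv 1$ mod $\fm_i$, so that the two given modulus conditions can be added together to yield the desired one.

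More precisely, enumerating the open cover as $U = U_1 \cup \cdots \cup U_r$, I would induct on $r$. For $r=1$ there is nothing to show. For the inductive step, set $V := U_1 \cup \cdots \cup U_{r-1}$. Applying the induction hypothesis to $a\in \sM(V)$ and the cover $V = U_1 \cup \cdots \cup U_{r-1}$ produces an effective divisor $\fn$ on $C$ with $|\fn| = C\setminus V$ and $a \in \sM(C,\fn)$. Thus we are reduced to the case $r=2$: a cover $U = V \cup U_r$ with moduli $\fn$ on $V$ and $\fm_r$ on $U_r$. To ease notation let us write this as $U = U_1 \cup U_2$ with moduli $\fm_i = \sum_P n_{P,i}\, P$ supported on $C\setminus U_i$. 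Let $T := C\setminus U$; then at every $P \in T$ we have $n_{P,i} \geq 1$. I would define
\[\fm := \sum_{P \in T} \max(n_{P,1}, n_{P,2})\, P,\]
so $|\fm| = T = C\setminus U$, as required.

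Now, given any $f \in \G_m(\eta_C)$ with $f \equiv 1$ mod $\fm$, I would use the weak approximation lemma for the finitely many independent discrete valuations $v_P$ indexed by $P \in T \cup (U_2\setminus U_1) \cup (U_1\setminus U_2)$ to construct $f_1 \in \G_m(\eta_C)$ satisfying
\[v_P(f_1 - 1) \geq n_{P,1}\ \text{for } P \in T \cup (U_2\setminus U_1), \qquad v_P(f_1 - f) \geq n_{P,2} + v_P(f)\ \text{for } P \in U_1\setminus U_2.\]
Setting $f_2 := f/f_1$, the first set of conditions says exactly $f_1 \equiv 1$ mod $\fm_1$. One then checks that $f_2 \equiv 1$ mod $\fm_2$: at $P \in T$ the bound follows from $v_P(f-1) \geq \max(n_{P,1},n_{P,2})$ together with $v_P(f_1-1)\geq n_{P,1}$; at $P \in U_1\setminus U_2 \subset |\fm_2|$ the second approximation condition gives $v_P(f_1) = v_P(f)$, so $f_2$ is a unit, and $v_P(f_2 - 1) = v_P(f_1 - f) - v_P(f_1) \geq n_{P,2}$.

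Applying the modulus conditions for $a$ with respect to $\fm_1$ and $\fm_2$ to $f_1$ and $f_2$ respectively and adding yields
\[\sum_{P \in U_1} v_P(f_1)\,\Tr_{P/x_C} s_P(a) + \sum_{P \in U_2} v_P(f_2)\,\Tr_{P/x_C} s_P(a) = 0.\]
Splitting each sum along $U_1 \cap U_2$ and its complement, and using that $f_1$ is a unit on $U_2\setminus U_1$ (since $f_1 \equiv 1$ there) and $f_2$ is a unit on $U_1\setminus U_2$, the multiplicativity $v_P(f) = v_P(f_1) + v_P(f_2)$ collapses the four resulting sums to the single expression $\sum_{P \in U} v_P(f)\,\Tr_{P/x_C} s_P(a) = 0$. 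This shows $a \in \sM(C,\fm)$ and completes the inductive step.

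I expect the only delicate point to be the construction of $f_1$ in step two: one must verify that the imposed finite set of local conditions is achievable by a global rational function, which is the standard weak approximation for independent discrete valuations on the field $\kappa(C)$, and then keep careful bookkeeping of which points lie in which piece of the cover to ensure that the factoring $f = f_1 f_2$ achieves the modulus condition simultaneously on both sides. Once the factorization is in hand, the final summation is a straightforward manipulation of the reciprocity identities supplied by the two hypotheses.
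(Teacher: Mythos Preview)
Your factorization approach is sound and in fact more elementary than the paper's, but there is a gap in your verification that $f_2 \equiv 1$ mod $\fm_2$ at points $P \in T$. You claim this follows from $v_P(f-1) \geqslant \max(n_{P,1},n_{P,2})$ together with $v_P(f_1-1) \geqslant n_{P,1}$; however $f_2 - 1 = (f - f_1)/f_1$, and writing $f-f_1=(f-1)-(f_1-1)$ one only gets
\[v_P(f-f_1) \geqslant \min\bigl(v_P(f-1),\, v_P(f_1-1)\bigr) = n_{P,1},\]
which is insufficient when $n_{P,2} > n_{P,1}$. The fix is immediate: in the weak-approximation step simply impose the stronger condition $v_P(f_1-1) \geqslant \max(n_{P,1},n_{P,2})$ for $P \in T$ (still finitely many local conditions, so approximation still applies). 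Then $f_1 \equiv 1$ mod $\fm_1$ continues to hold, and now $v_P(f_2-1) \geqslant \max(n_{P,1},n_{P,2}) \geqslant n_{P,2}$ as needed. The rest of your argument is correct as written.

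By way of comparison, the paper does not factor $f$. After reducing to two opens and normalising so that $\fm_1,\fm_2$ have the same constant multiplicity, it sets $\fm:=\min\{\fm_1,\fm_2\}$, $\fm_{12}:=\max\{\fm_1,\fm_2\}$ and invokes Proposition~\ref{prop-modulus=symbol}: the modulus condition for each of $\fm_1,\fm_2,\fm_{12}$ produces a family of local symbols $\{\rho_P\}_{P\in C}$, and the uniqueness clause of that proposition forces the three families to coincide. One then approximates $f$ by an $h$ congruent to $1$ modulo $\fm_{12}$ (hence modulo $\fm_1$ and $\fm_2$ as well), and the set-theoretic inclusion--exclusion $\sum_{P\in|\fm_{12}|}-\sum_{P\in|\fm_1|}-\sum_{P\in|\fm_2|}=-\sum_{P\in|\fm|}$ applied to $\rho_P(h)$ yields the claim. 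Your direct factorization bypasses the symbol machinery of Proposition~\ref{prop-modulus=symbol} entirely, at the cost of one slightly more careful approximation; once the small gap above is repaired, it is a genuine simplification.
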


\begin{proof}
It suffices to consider finite coverings and by induction over the number of $U_i's$ it suffices to consider coverings by two open subsets $U=U_1\cup U_2$.
By Proposition \ref{prop-modulus}, (1) we can further assume that 
\[\fm_1=\sum_{P\in C\setminus U_1} n[P],\quad \fm_2=\sum_{P\in C\setminus U_2} n[P],\]
for some large enough positive integer $n$. Set 
\[\fm:=\min\{\fm_1,\fm_2\}.\]
Then $|\fm|= |\fm_1|\cap |\fm_2|= (C\setminus U_1)\cap (C\setminus U_2)=C\setminus U$
and $\fm$ is bounded by $n$. We claim 
\eq{lem-modulus-Zar-local1}{a\in \sM(C,\fm).}
For this set $\fm_{12}:=\max\{\fm_1,\fm_2\}$. Then $C\setminus|\fm_{12}|=U_1\cap U_2$ and $\fm_{1,2}\geqslant \fm_i$, $i=1,2$.
Hence
\eq{lem-modulus-Zar-local2}{a\in \sM(C,\fm_1),\, \sM(C,\fm_2),\,\sM(C,\fm_{12}).}
Therefore the pairs $(a,\fm_1)$, $(a,\fm_2)$, $(a,\fm_{12})$ satisfy condition (2) of Proposition \ref{prop-modulus=symbol} (with $M_*:=\sM_{|(\pt/S)}$ and $M_P(\eta_C)=\sM_{C,P}$).
Thus we get families of continuous group homomorphisms $\{\rho_{P,\fm_*}:\G_m(\eta_C)\to \sM(x_C)\}$ satisfying the properties (a)-(c) from Proposition \ref{prop-modulus=symbol}, (1)
(with $V=C\setminus|\fm_*|$), for $*\in \{1,2,12\}$. But the uniqueness statement of this proposition yields
\eq{lem-modulus-Zar-local3}{\rho_{P,\fm_1}=\rho_{P,\fm_{12}}=\rho_{P,\fm_2}=:\rho_P.}
Now fix $f\in \G_m(\eta_C)$ with $f\equiv 1$ mod $\fm$.
Take $N\geqslant n$ with $\rho_P(U_P^{(N)})=0$ for all $P\in \fm_{12}$ (exists by continuity) and choose $h\in \G_m(\eta_C)$ so that
\[\frac{h}{f}\in U^{(N)}_P\text{ for all }P\in |\fm| \quad\text{and}\quad h\in U^{(N)}_P \text{ for all }P\in |\fm_{12}|\setminus |\fm|.\]
Then 
\begin{enumerate}
 \item $h\equiv 1$ mod $\fm_*$, for all $*\in \{1,2, 12\}$.
 \item $\sum_{P\in |\fm_*|} \rho_P(h)=0$, for all $*\in \{1,2, 12\}$.
 \item $\rho_P(h)=\rho_P(f)$ for all $P\in |\fm|$.
 \item $\rho_P(f)=v_P(f) \Tr_{P/x_C}(s_P(a))$, for all $P\in U$.
\end{enumerate}
Here (1) holds by definition, (3) follows from the choice of $N$ and $h$ and 
(4) from \eqref{lem-modulus-Zar-local3} and $U=U_1\cup U_2$. For 
(2) notice that by \eqref{lem-modulus-Zar-local2}, \eqref{lem-modulus-Zar-local3} and reciprocity 
$\sum_{P\in |\fm_*|} \rho_P(h)=-\sum_{P\in C\setminus|\fm_*|}v_P(h)\Tr_{P/x_C}(s_P(a))$ which vanishes
since $\fm_*$ is a modulus for $a$.
Hence
\begin{align}
0 & = \sum_{P\in |\fm_{12}|} \rho_P(h)- \sum_{P\in |\fm_1|} \rho_P(h)- 
                         \sum_{P\in |\fm_2|} \rho_P(h), &\text{by (2)}\notag\\
  & = -\sum_{P\in |\fm|}\rho_P(h) = -\sum_{P\in |\fm|}\rho_P(f), &\text{by (3)}\notag\\
  & = \sum_{P\in U} \rho_P(f),& \text{by reciprocity}\notag\\
  &= \sum_{P\in U} v_P(f) \Tr_{P/x_C}(s_P(a)), & \text{by (4).}\notag
\end{align}
This proves the Claim \eqref{lem-modulus-Zar-local1} and hence the lemma.
\end{proof}

\begin{notation}\label{not-modulus-functions}
For $C\in (\sC/S)$ with function field $K=\kappa(C)$ and $\fm$ an effective divisor on $C$ we denote
\[K_{\fm,1}:=\{f\in K^\times\,|\, f\equiv 1 \text{ mod }\fm\}.\]
Notice that $K_{\fm,1}\subset K^\times$ is a subgroup and that $K_{\fm',1}\subset K_{\fm,1}$ for all effective divisors $\fm\leqslant \fm'$ and $K_{\{0\}, 1}=K^\times$.
\end{notation}

\begin{lemma}\label{lem-Norm-on-Ln-is-surj}
Let $\pi: D\to C$ be a finite flat morphism in $(\sC/S)$ and denote by $L=\kappa(D)$ and $K=\kappa(C)$
the function fields. Let $\fm'$ be an effective divisor on $C$ and $P\in C$ a closed point such that
\[\fm'=\fm+n[P],\]
where $\fm$ is an effective divisor on $C$ whose support is disjoint from $P$ and $n\geqslant 1$.
Assume there exists a closed point $Q_0\in \pi^{-1}(P)$ such that $\pi$ is \'etale in 
a neighborhood of $Q_0$ and induces an isomorphism $\kappa(P)\xr{\simeq}\kappa(Q_0)$.
Then the norm map $\Nm: L^\times \to K^\times$ induces a surjection
\[\Nm: L_{\pi^*\fm'-n[Q_0],1}\surj \frac{K_{\fm,1}}{K_{\fm',1}}.\]
\end{lemma}

\begin{proof}
Take $f\in K_{\fm,1}$. Choose a natural number $N$ greater than the coefficients in $\pi^*\fm'$ and a function
 $g\in L^\times$ such that
\[g/f\in U^{(N)}_{Q_0},\quad  g\in U^{(N)}_R, \text{ for all }R\in |\pi^*m'|\setminus\{Q_0\}.\]
Since $Q_0$ is \'etale over $P$ the divisor $\pi^*\fm'-n[Q_0]$ has its support disjoint from 
$Q_0$. Thus $g\in L_{\pi^*\fm'-n[Q_0],1}\subset L_{\pi^*\fm,1}$ and by Lemma \ref{lem-mod-norm}
also $\Nm(g)\in K_{\fm,1}$. It remains to show  $\Nm(g)/f\in U^{(n)}_P$.
Denote by $K_P$ the completion of $K$ with respect to $v_P$, by $\hat{U}^{(n)}_P$ the groups of 
higher 1-units in $K_P$
and by $L_Q$ the completion of $L$ with respect to $v_Q$, for $Q\in \pi^{-1}(P)$ . 
By the compatibility of the norm map with completion we obtain in $K_P$
\[\frac{\Nm(g)}{f}= \frac{\Nm_{L_{Q_0}/K_P}(g)}{f}\cdot 
                                               \prod_{Q\in \pi^{-1}(P)\setminus\{Q_0\}} \Nm_{L_Q/K_P}(g).\]
By the choice of $N$ and Lemma \ref{lem-mod-norm} we have 
$\Nm_{L_Q/K_P}(g)\in \hat{U}_P^{(n)}$ for all $Q\neq Q_0$. By assumption $L_{Q_0}=K_P$.
Hence $\Nm_{L_{Q_0}/K_P}(g)/f=g/f\in \hat{U}^{(n)}_{Q_0}=\hat{U}^{(n)}_P$.
All together we get $\Nm(g)/f\in K^\times\cap \hat{U}^{(n)}_P=U^{(n)}_P$. Hence the lemma. 
\end{proof}

\begin{proof}[Proof of Theorem \ref{thm-modulus-Nis-local}]
Let $\pi:\coprod_i\ V_i\to U$ be a covering in the Nisnevich topology as in the theorem. 
Then one of the $V_i$ is a Nisnevich neighborhood
of the generic point of $U$ and hence is a dense open subset of $U$. Therefore 
the theorem follows from Lemma \ref{lem-modulus-Zar-local} and the following special case:

Assume there exists a finite flat map $\pi: D\to C$ in $(\sC/S)$ and effective divisors $\fm'$ and $\fn$ on $C$ and $D$,
 respectively, which are bounded by a natural number $n\geqslant 1$ such that 
\begin{itemize}
 \item $\pi$ restricted to $V:=D\setminus|\fn|$ is \'etale over $U$.
 \item $U=U'\cup \pi(V)$, where $U':=C\setminus |\fm'|$.
 \item $U\setminus U'=\{P_1,\ldots, P_r\}$ and there exist closed points $Q_i\in \pi^{-1}(P_i)\cap V$ 
          such that $\pi$ induces  an isomorphism  $\kappa(P_i)\xr{\simeq} \kappa(Q_i)$ for all $i$.
 \item $a\in \sM(C,\fm')$ and $\pi^*a\in \sM(D,\fn)$.
\end{itemize}
{\em Claim:} There exists an effective divisor $\fm$ on $C$ bounded by $n$ 
                    such that $U=C\setminus|\fm|$ and $a\in \sM(C,\fm)$.

{\em Proof:} 
We proof the claim by induction on $r$.

{\em Case $r=1$.}
By Lemma \ref{lem-modulus-Zar-local} and Proposition \ref{prop-modulus}, 
(1) we can make $U$ smaller around $P_1$ and $\fm'$ and $\fn$ bigger.
Thus we can assume
\begin{enumerate}
 \item $V=\pi^{-1}(U')\cup\{Q_1\}$.
 \item $\fm'=\fm + n[P_1]$, where $\fm$ is an effective divisor on $C$ bounded by $n$ with $P_1\not\in |\fm|$, 
             i.e. $|\fm|=C\setminus U$.
 \item $\pi^*\fm'\geqslant \fn$.
\end{enumerate}
Since $|\fn|=D\setminus V= \pi^{-1}(C\setminus U')\setminus\{Q_1\}= \pi^{-1}(|\fm'|)\setminus\{Q_1\}$ 
and since the multiplicity of $Q_1$ in $\pi^*\fm'$ is $n$ ($Q_1$ being unramified over $P_1$)
we have $\pi^*\fm'-n[Q_1]\geqslant \fn$ and thus we can in fact assume
\begin{enumerate}
 \item[(4)] $\fn=\pi^*\fm'-n[Q_1]$.
\end{enumerate}
(Notice that now $\fn$ does not need to be bounded by $n$ anymore, but $\fm$ still is.)
Now the claim holds with $\fm$ as above.
Indeed, take $f\in \G_m(\eta_C)$ with $f\equiv 1$ mod $\fm$. 
By Lemma \ref{lem-Norm-on-Ln-is-surj} 
there exists a $g\in \G_m(\eta_D)$ with $g\equiv 1$ mod $\fn$
and $h\in \G_m(\eta_C)$ with $h\equiv 1$ mod $\fm'$ such that 
\[\pi_* g=fh\quad \text{in }\G_m(\eta_C).\]
We get
\begin{align}
\sum_{P\in U} v_P(f) \Tr_{P/x_C}(s_P(a))& = 
             \sum_{P\in U} v_P(f) \Tr_{P/x_C}(s_P(a)) + \sum_{P\in U'} v_P(h) \Tr_{P/x_C}(s_P(a)) \label{MNL1}\\
                                        & = \sum_{P\in U} v_P(fh) \Tr_{P/x_C}(s_P(a)) \label{MNL2}\\
                                        & = \sum_{P\in U} v_P(\pi_*g) \Tr_{P/x_C}(s_P(a))\notag\\
                                        & = \sum_{P\in U} \sum_{Q\in \pi^{-1}(P)} [Q:P]v_Q(g) \Tr_{P/x_C}(s_P(a))\label{MNL3}\\
                                        & = \sum_{Q\in\pi^{-1}(U) } v_Q(g) \Tr_{Q/x_C}(s_Q(\pi^*a))\label{MNL4} \\
                                        & = \sum_{Q\in V} v_Q(g) \Tr_{Q/x_C}(s_Q(\pi^*a)) \label{MNL5} \\
                                        & = 0,\label{MNL6}
\end{align}
where \eqref{MNL1} holds since $a\in\sM(C,\fm')$, \eqref{MNL2} by $v_{P_1}(h)=0$, \eqref{MNL3} by
$\Div(\pi_*g)=\pi_*\Div (g)$, \eqref{MNL4} by (MF2), (S1), \eqref{MNL5} since  $v_Q(g)=0$  
for $Q\in\pi^{-1}(P_1)\setminus\{Q_1\}$
and \eqref{MNL6} holds since $\pi^*a\in \sM(D,\fn)$. This proves $a\in \sM(C,\fm)$ and hence the claim 
in the case $r=1$.

{\em Case $r\ge 1$.}  
Set $U_1:=U'\cup \{P_1\}$, $V_1:=\pi^{-1}(U_1)\cap V$, $\fn_1:=\fn+\sum_{Q\in V\setminus V_1}[Q]$.
Then $\fn_1$ is bounded by $n$,  $|\fn_1|=D\setminus V_1$, $\pi^*a\in\sM(D,\fn_1)$, 
 $\pi$ restricted to $V_1$ 
is \'etale, $Q_1\in V_1$, $U_1\setminus U'=\{P_1\}$ and $U_1=U'\cup \pi(V_1)$.
Thus we can apply the case $r=1$ to get an effective divisor $\fm_1$ on $C$ bounded by $n$ with 
$|\fm_1|=C\setminus U_1$ and $a\in \sM(C,\fm_1)$. Therefore we can replace $(U',\fm')$
by $(U_1,\fm_1)$ and since $U\setminus U_1=\{P_2,\ldots, P_r\}$ we conclude by induction.
This proves the claim and hence the theorem.
\end{proof}

\subsection{Reciprocity functors}

\begin{definition}\label{defn-RF}
An {\em $R$-reciprocity functor} (or just reciprocity functor) is a $R$-Mackey functor with specialization maps $\sM$ 
such that
for any $C\in (\sC/S)$, any non-empty open subset $U\subset C$ and any section $a\in \sM(U)$ there exists an 
effective divisor $\fm$ on $C$ which has support equal to $C\setminus U$ and is a modulus for $a$, 
i.e. for all $\emptyset\neq U\subset C$ we have
\eq{defn-RF1}{\sM(U)=\bigcup_{|\fm|=C\setminus U} \sM(C,\fm),}
where the union is over all effective divisors on $C$ with  support equal to $C\setminus U$.
(In particular we have $\sM(C)=\sM(C, 0)$ and $\sM(\eta_C)=\bigcup_{\fm} \sM(C,\fm)$, where the union is over all effective divisors on $C$.)

We denote by $\RF_R=\RF$ the full subcategory of $\MFsp$ whose objects are reciprocity functors.
\end{definition}

\begin{remark}\label{rmk-defn-RF}
Let $\sM$ be a Mackey functor with specialization maps.
By Theorem \ref{thm-modulus-Nis-local} the condition \eqref{defn-RF1} is satisfied for all $C\in (\sC/S)$ and all non-empty open subsets $U\subset C$ if and only if for all $C\in (\sC/S)$ and all points $x\in C$
(closed or not) we have
\eq{defn-RF2}{\sM^h_{C,x}= \varinjlim_{(D,\fn)} \sM(D,\fn),} 
where the limit is over all pairs $(D,\fn)$  with $D\to C$ finite flat in $(\sC/S)$ and $\fn$ is an effective divisor on $D$ such that $D\setminus |\fn|$ is a Nisnevich neighborhood of $x$
(i.e. it is \'etale over $C$ and there is a point $y\in D\setminus |\fn|$, which maps isomorphically to $x$).
\end{remark}

\begin{propositiondefinition}\label{prop-defn-localsymbol}
Let $\sM$ be a reciprocity functor.
Then for any $C\in (\sC/S)$  and $P\in C$ there exists a biadditive pairing
\[(-,-)_P: \sM(\eta_C)\times \G_m(\eta_C)\to \sM(x_C),\]
which is $R$-linear in the first argument and has the following properties:
\begin{enumerate}
 \item $(-,-)_P$ is continuous when $\sM(\eta_C)$ and $\sM(x_C)$ are endowed with the discrete topology and $\G_m(\eta_C)$ with the topology for which $\{U_P^{(n)}\,|\, n\geqslant 1\}$ is a fundamental system of open 
       neighborhoods of $1$. 
 \item For all $a\in \sM_{C,P}$ (notation as in \ref{not-pb-pf-sp}) and $f\in \G_m(\eta_C)$ we have
            \[(a,f)_P= v_P(f)\Tr_{P/x_C}(s_P(a)). \]
 \item For all $a\in \sM(\eta_C)$ and $f\in \G_m(\eta_C)$ we have 
           \[\sum_{P\in C} (a, f)_P=0.\] 
\end{enumerate}
Furthermore $(-,-)_P$ is uniquely determined by the properties above. We call $(-,-)_P$ the {\em local symbol at $P$ attached to $\sM$}.
\end{propositiondefinition}

\begin{proof}
The Zariski-stalk $\sM_{C,P}$ is a submodule of $\sM(\eta_C)$ and since $\sM_C$ is a Zariski sheaf on $C$, we have $\sM_C(U)=\cap_{P\in U}\sM_{C,P}$ for all open subsets $U\subset C$.
Thus we are in the situation of Proposition \ref{prop-modulus=symbol}. For $a\in \sM(\eta_C)$ choose a modulus $\fm$, then condition  (2) in Proposition \ref{prop-modulus=symbol} is satisfied.
Let $\{\rho_{a,P}:\G_m(\eta_C)\to \sM(x_C)\}_{P\in C}$ be the family of continuous homomorphisms from Proposition \ref{prop-modulus=symbol}, (1) constructed for $(a,\fm)$.
Notice that this family does not depend on the choice of the modulus $\fm$ for $a$: Indeed if $\fm'$ is another modulus for $a$ then so is $\fm''=\fm+\fm'$. But the family $\{\rho_{a,P}''\}$ constructed
via $\fm''$ satisfies in particular, the same properties as $\{\rho_{P,a}\}$ and so by uniqueness they have to be the same.
Then for $a\in \sM(\eta_C)$ and $f\in \G_m(\eta_C)$ we set
\[(a,f)_P:= \rho_{a,P}(f).\]
It follows from the uniqueness of the $\rho$'s that this defines a biadditive pairing, which is $R$-linear in the first argument. The properties (1)-(3) now follow immediately from the corresponding property of the $\rho$'s.
\end{proof}

\begin{corollary}[cf. {\cite[III,\S1, Prop. 2]{SerreGACC}}]\label{cor-LS-functoriality}
Let $\Phi: \sM\to \sN$ be a morphism between reciprocity functors. Then for all $C\in (\sC/S)$, $P\in C$, $a\in \sM(\eta_C)$ and $f\in \G_m(\eta_C)$ we have
       \[\Phi((a,f)^\sM_P)=(\Phi(a),f)^\sN_P.\]
\end{corollary}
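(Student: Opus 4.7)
The plan is to deduce the statement from the uniqueness of the local symbol, as encoded in Proposition \ref{prop-modulus=symbol}. Fix $a\in \sM(\eta_C)$ and $f\in \G_m(\eta_C)$. For each $P\in C$ define
\[\tilde{\rho}_P : \G_m(\eta_C)\to \sN(x_C),\qquad \tilde{\rho}_P(f):=\Phi((a,f)^\sM_P).\]
The strategy is to show that the family $\{\tilde{\rho}_P\}_{P\in C}$ satisfies the three conditions (a)--(c) of Proposition \ref{prop-modulus=symbol}, (1) with respect to the element $\Phi(a)\in\sN(\eta_C)$, with $M=\sN_{|(\pt/S)}$ and $M_P(\eta_C)=\sN_{C,P}$. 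The uniqueness part of that proposition will then force $\tilde{\rho}_P = (\Phi(a),-)^\sN_P$, which is exactly the identity we want.

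For (a), pick a non-empty open $V\subset C$ with $a\in\sM(V)$; since $\Phi$ is a morphism of presheaves on $\Reg$, it commutes with restriction, so $\Phi(a)\in\sN(V)$ and hence $\Phi(a)\in\sN_{C,P}$ for all $P\in V$. For (b), note that $\Phi$ is in particular a morphism of Mackey functors with specialization map, so it commutes with the pushforward $\Tr_{P/x_C}$ and the specialization map $s_P$ (these being pushforward and pullback along morphisms in $\Reg\Cor$). For $P\in V$ we therefore compute
\[\tilde{\rho}_P(f)=\Phi\bigl(v_P(f)\Tr_{P/x_C}(s_P(a))\bigr)=v_P(f)\Tr_{P/x_C}\bigl(s_P(\Phi(a))\bigr),\]
using property (2) of Corollary-Definition \ref{cor-defn-localsymbol} and $R$-linearity of $\Phi$. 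For (c), the reciprocity law for $(-,-)^\sM_P$ gives $\sum_{P\in C}(a,f)^\sM_P=0$ (a finite sum), and applying the group homomorphism $\Phi$ yields $\sum_{P\in C}\tilde{\rho}_P(f)=0$.

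Finally, the continuity of $\tilde{\rho}_P$ is automatic: $(a,-)^\sM_P$ is continuous for the discrete topology on $\sM(x_C)$ and the $\fm_P$-adic topology on $\G_m(\eta_C)$, so $\tilde{\rho}_P=\Phi\circ(a,-)^\sM_P$ factors through a quotient of $\G_m(\eta_C)$ by some $U^{(n)}_P$, whence continuous for the discrete topology on $\sN(x_C)$. By the uniqueness clause of Proposition \ref{prop-modulus=symbol} we conclude $\tilde{\rho}_P(f)=(\Phi(a),f)^\sN_P$, which is the claim.

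No step looks genuinely hard; the only point to watch is that $\Phi$ must commute with $\Tr_{P/x_C}$ and $s_P$, but this is built into the definition of a morphism of reciprocity functors (morphism in $\PT$, hence in particular in $\MFsp$), so all verifications are routine.
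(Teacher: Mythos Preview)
Your proof is correct and follows exactly the approach indicated by the paper, which simply says ``Follows from the uniqueness statement in Proposition \ref{prop-modulus=symbol}.'' You have faithfully unpacked this one-line argument: defining $\tilde{\rho}_P=\Phi\circ(a,-)^\sM_P$, verifying properties (a)--(c) and continuity for the element $\Phi(a)$, and invoking uniqueness to identify $\tilde{\rho}_P$ with $(\Phi(a),-)^\sN_P$.
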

\begin{proof}
Follows from the uniqueness statement in Proposition \ref{prop-modulus=symbol}.
\end{proof}

\begin{proposition}[cf. {\cite[III, Prop. 3 and 4]{SerreGACC}}]\label{prop-pfpb-localsymbol}
Let $\sM$ be a reciprocity functor and $\pi: D\to C$ a finite morphism in $(\sC/S)$.
\begin{enumerate}
 \item For all $b\in \sM(\eta_D)$, $f\in \G_m(\eta_C)$ and $P\in C$ we have in $\sM(x_C)$
        \[(\pi_*b, f)_P=\sum_{Q\in \pi^{-1}(P)}\Tr_{x_D/x_C}(b,\pi^*f)_Q.\]
 \item For all $a\in \sM(\eta_C)$, $g\in \G_m(\eta_D)$ and $P\in C$ we have in $\sM(x_C)$
       \[(a,\pi_*g)_P=\sum_{Q\in\pi^{-1}(P)}\Tr_{x_D/x_C}(\pi^*a,g)_Q.\]
\end{enumerate}
\end{proposition}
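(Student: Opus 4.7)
\smallskip

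The strategy is, for both parts, to apply the uniqueness of the local symbol (Proposition \ref{prop-modulus=symbol}) together with the continuity in Corollary-Definition \ref{cor-defn-localsymbol} and the reciprocity law on $D$, following the spirit of \cite[III, §1, Prop. 3--4]{SerreGACC}.

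For (1), fix $b\in\sM(\eta_D)$ and, for each $P\in C$, define
\[\rho_P:\G_m(\eta_C)\to\sM(x_C),\qquad \rho_P(f):=\sum_{Q\in\pi^{-1}(P)}\Tr_{x_D/x_C}(b,\pi^*f)_Q.\]
Each $\rho_P$ is a continuous group homomorphism since $\pi^*:\G_m(\eta_C)\to\G_m(\eta_D)$ is continuous for the $P$- and $Q$-adic topologies and each $(b,-)_Q$ is continuous. I verify the three conditions of Proposition \ref{prop-modulus=symbol}(1) for $a:=\pi_*b$. Condition (a) holds by Proposition \ref{prop-modulus}(2): if $\fn$ is a modulus for $b$, any effective $\fm$ on $C$ with $\pi^*\fm\geqslant\fn$ is a modulus for $\pi_*b$. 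For (b), if $P\notin|\fm|$ then $Q\notin|\fn|$ for every $Q\in\pi^{-1}(P)$, so $(b,\pi^*f)_Q=v_Q(\pi^*f)\Tr_{Q/x_D}(s_Q(b))=e(Q/P)v_P(f)\Tr_{Q/x_D}(s_Q(b))$; applying (S2) and the transitivity $\Tr_{x_D/x_C}\circ\Tr_{Q/x_D}=\Tr_{P/x_C}\circ\Tr_{Q/P}$ exactly as in \eqref{prop-modulus1} yields $\rho_P(f)=v_P(f)\Tr_{P/x_C}(s_P(\pi_*b))$. Condition (c) is the computation $\sum_{P\in C}\rho_P(f)=\Tr_{x_D/x_C}\!\sum_{Q\in D}(b,\pi^*f)_Q=0$ from reciprocity for $b$ on $D$. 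By uniqueness, $\rho_P=(\pi_*b,-)_P$.

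For (2), fix $a\in\sM(\eta_C)$ and, for each $P\in C$, define continuous maps $\G_m(\eta_D)\to\sM(x_C)$
\[\sigma_P(g):=(a,\pi_*g)_P,\qquad \tau_P(g):=\sum_{Q\in\pi^{-1}(P)}\Tr_{x_D/x_C}(\pi^*a,g)_Q,\]
where continuity of $\sigma_P$ uses Lemma \ref{lem-mod-norm} (so that $\pi_*=\Nm$ is continuous from the $\pi^{-1}(P)$-adic topology on $\G_m(\eta_D)$ to the $P$-adic topology on $\G_m(\eta_C)$), and continuity of $\tau_P$ is clear. Set $\delta_P:=\sigma_P-\tau_P$. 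If $\fm$ is a modulus for $a$ on $C$ and $\fn\geqslant\pi^*\fm/[D:C]_{\mathrm{insep}}$ is chosen as in Proposition \ref{prop-modulus}(3), the same computation as \eqref{prop-modulus2}, using (S1) and (MF2) together with $v_P(\pi_*g)=\sum_{Q\mapsto P}f(Q/P)v_Q(g)$, shows $\delta_P(g)=0$ for all $P\notin|\fm|$ and all $g$. Moreover $\sum_{P\in C}\delta_P(g)=0$ by the two reciprocity laws.

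It remains to show $\delta_{P_0}(g)=0$ for each $P_0\in|\fm|$, and this is the main obstacle. By continuity, for every $P\in|\fm|$ there exist integers $N^{(P)}_Q\geqslant 1$ indexed by $Q\in\pi^{-1}(P)$ with
\[\delta_P\Bigl(\,\bigcap_{Q\in\pi^{-1}(P)}U^{(N^{(P)}_Q)}_Q\Bigr)=0.\]
Since the fibres $\pi^{-1}(P)$ for distinct $P\in|\fm|$ are pairwise disjoint, the approximation lemma applied to the regular curve $D$ produces $g'\in\G_m(\eta_D)$ with $g/g'\in U^{(N^{(P_0)}_Q)}_Q$ for all $Q\in\pi^{-1}(P_0)$ and $g'\in U^{(N^{(P)}_Q)}_Q$ for all $Q\in\pi^{-1}(P)$ when $P\in|\fm|\setminus\{P_0\}$. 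Then $\delta_{P_0}(g)=\delta_{P_0}(g')$ and $\delta_P(g')=0$ for every $P\neq P_0$, so summing $\sum_{P\in C}\delta_P(g')=0$ collapses to $\delta_{P_0}(g)=0$. This finishes (2); the main delicate point is the simultaneous use of continuity of $\delta_P$ in the $\pi^{-1}(P)$-adic topology on $\G_m(\eta_D)$ and the fact that on the good locus $P\notin|\fm|$ the ramification indices $e(Q/P)$ and residue degrees $f(Q/P)$ conspire (via (S1), (S2), (MF2)) to give the expected cancellation.
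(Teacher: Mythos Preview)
Your proof is correct and follows essentially the same approach as the paper's (commented-out) argument, which also appeals to the uniqueness in Proposition \ref{prop-modulus=symbol} for part (1) and to continuity plus approximation on $D$ together with the two reciprocity laws for part (2). The only cosmetic difference is that in (2) you package the two sides into $\delta_P=\sigma_P-\tau_P$ and argue with this single function, whereas the paper keeps $(a,\pi_*g)_P$ and $\sum_Q\Tr_{x_D/x_C}(\pi^*a,g)_Q$ separate and chases the approximating element $h$ through both computations; the content is identical.
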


\begin{proof}The proof is along the lines of the proof of \cite[III, Prop. 3 and 4]{SerreGACC}. \end{proof}

\begin{definition}\label{defn-filtration}
Let $\sM$ be a reciprocity functor, $C\in (\sC/S)$ a curve and $P\in C$ a closed point.
Then we define 
\[\Fil^0_P \sM(\eta_C):=\sM_{C,P}\]
and for $n\geqslant 1$
\[\Fil^n_P \sM(\eta_C):=\{a\in \sM(\eta_C)\,|\, (a,u)_P=0\text{ for all } u\in U^{(n)}_P\}.\]
Clearly $\Fil^\bullet_P \sM(\eta_C)$ forms an increasing and exhaustive filtration of sub-$R$-modules of $\sM(\eta_C)$.
\end{definition}

\begin{lemma}\label{lem-alt-descr-modulus-space}
Let $\sM$ be a reciprocity functor and $\fm=\sum_{P\in C} n_P P$  an effective divisor on a curve  $C\in (\sC/S)$. Then
\[\sM(C,\fm)= \bigcap_{P\in C} \Fil^{n_P}_P \sM(\eta_C)= \bigcap_{P\in |\fm|} \Fil^{n_P}_P \sM(\eta_C)\cap \sM(C\setminus|\fm|).\]
\end{lemma}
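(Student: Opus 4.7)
The plan is to use the characterization of the local symbol $(-,-)_P$ from Corollary-Definition \ref{cor-defn-localsymbol} together with the explicit formula for $\rho_{a,P}$ coming out of the proof of Proposition \ref{prop-modulus=symbol}. The second equality will be a direct consequence of the Zariski sheaf property of $\sM_C$.

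First I would dispose of the second equality. Since $n_P=0$ for $P\notin |\fm|$, we have $\Fil^0_P\sM(\eta_C)=\sM_{C,P}$ by definition \ref{defn-filtration}. Using that $\sM_C$ is a Zariski sheaf on $C$ (Proposition \ref{prop-alt-MFsp}), so that $\sM(C\setminus|\fm|)=\bigcap_{P\notin|\fm|}\sM_{C,P}$, we obtain
\[\bigcap_{P\in C}\Fil^{n_P}_P\sM(\eta_C)=\bigcap_{P\in|\fm|}\Fil^{n_P}_P\sM(\eta_C)\cap\bigcap_{P\notin|\fm|}\sM_{C,P}=\bigcap_{P\in|\fm|}\Fil^{n_P}_P\sM(\eta_C)\cap\sM(C\setminus|\fm|).\]

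Next I would prove the inclusion $\sM(C,\fm)\subset\bigcap_{P\in C}\Fil^{n_P}_P\sM(\eta_C)$. Let $a\in\sM(C,\fm)$. Then $a\in\sM(C\setminus|\fm|)$, so $a\in\sM_{C,P}=\Fil^0_P\sM(\eta_C)$ for all $P\notin|\fm|$. For $P\in|\fm|$ and $f\in U^{(n_P)}_P$, I want to show $(a,f)_P=0$. The key point is that by the uniqueness part of Proposition \ref{prop-modulus=symbol} the local symbol $(a,-)_P$ agrees with the homomorphism $\rho_{a,P}$ constructed from the modulus $\fm$ in the proof of $(2)\Rightarrow(1)$. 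That construction gives $\rho_{a,P}(f)=-\sum_{Q\in C\setminus|\fm|}v_Q(f_P)\Tr_{Q/x_C}(s_Q(a))$ for any choice of $f_P\in\G_m(\eta_C)$ with $f/f_P\in U_P^{(n_P)}$ and $f_P\in U_{Q}^{(n_Q)}$ for all $Q\in|\fm|\setminus\{P\}$. Since $f\in U_P^{(n_P)}$ already, we can take $f_P=1$, giving $(a,f)_P=\rho_{a,P}(f)=0$.

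Conversely, assume $a\in\bigcap_{P\in|\fm|}\Fil^{n_P}_P\sM(\eta_C)\cap\sM(C\setminus|\fm|)$. For any $f\in\G_m(\eta_C)$ with $f\equiv 1$ mod $\fm$, i.e.\ $f\in U_P^{(n_P)}$ for all $P\in|\fm|$, the hypothesis gives $(a,f)_P=0$ for every $P\in|\fm|$, while for $P\in C\setminus|\fm|$ property (2) of Corollary-Definition \ref{cor-defn-localsymbol} applies and gives $(a,f)_P=v_P(f)\Tr_{P/x_C}(s_P(a))$. The reciprocity law (3) then yields
\[\sum_{P\in C\setminus|\fm|}v_P(f)\Tr_{P/x_C}(s_P(a))=\sum_{P\in C}(a,f)_P=0,\]
which is exactly the modulus condition (MC) for $(a,\fm)$, so $a\in\sM(C,\fm)$.

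The only subtle point is really the first direction, where one must recognize that the local symbol $(a,-)_P$ is the one produced by Proposition \ref{prop-modulus=symbol} from any chosen modulus, so that the explicit formula there can be fed a convenient choice of $f_P$. Everything else is bookkeeping with the reciprocity law and the Zariski sheaf property.
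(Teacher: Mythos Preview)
Your proof is correct. The second equality and the inclusion $\supset$ are handled exactly as in the paper (Zariski sheaf property, then reciprocity). For the inclusion $\subset$ your route is slightly different: the paper redoes the approximation argument directly---choosing $N_Q\geqslant n_Q$ by continuity, approximating $u\in U^{(n_P)}_P$ by a global $u_P$ with $u_P/u\in U^{(N_P)}_P$ and $u_P\in U^{(N_Q)}_Q$ for $Q\in|\fm|\setminus\{P\}$, then using reciprocity and the modulus condition to get $(a,u)_P=(a,u_P)_P=\sum_{Q\in|\fm|}(a,u_P)_Q=0$---whereas you quote the explicit formula for $\rho_{a,P}$ from the construction in the proof of Proposition~\ref{prop-modulus=symbol} and observe that $f_P=1$ is an admissible choice when $f\in U^{(n_P)}_P$. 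These are really two packagings of the same approximation-plus-reciprocity idea: your version is a bit slicker because it does not reintroduce the auxiliary integers $N_Q$, at the cost of relying on the internal details of Serre's construction rather than only on the properties (1)--(3) of the local symbol.
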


\begin{proof}
The second equality is clear and the inclusion $\supset$ for the first equality follows immediately from reciprocity.
For the other inclusion take $a\in \sM(C,\fm)$, $P\in |\fm|$ and $u\in U^{(n_P)}_P$.  Take $N_Q\geqslant n_Q$ with $(a, U^{(N_Q)}_Q)_Q=0$ for all $Q\in |\fm|$ and choose
$u_P\in \G_m(\eta)$ with $u_P/u\in U^{(N_P)}_P$ and $u_P\in U^{(N_Q)}_Q$ for $Q\in |\fm|\setminus\{P\}$. Then $u_P\equiv 1$ mod $\fm$ and
\[(a,u)_P=(a,u_P)_P=\sum_{Q\in |\fm|} (a,u_P)_Q=0.\]
Thus $a\in {\rm Fil}^{n_P}_P \sM(\eta)$, which gives the statement.
\end{proof}

\begin{definition}\label{defn-RFn}
For $n\geqslant 0$ we define $\RF_n$ to be the full subcategory of $\RF$ formed by the reciprocity functors $\sM$, which have the property that for any curve $C\in(\sC/S)$ and any closed point $P$ in $C$
$$\Fil^n_P M(\eta_C)=M(\eta_C).$$
\end{definition}

\begin{example}\label{ex-RF0RF1}
Let $\sM$ be a reciprocity functor. Then we have 
\begin{enumerate}
 \item $\sM\in \RF_0$ iff $\sM(\eta_C)=\sM(C)$ for all $C\in (\sC/S)$.
 \item $\sM\in \RF_1$ iff  $s^\sM_0=s^{\sM}_1:\sM_{\P^1_x}(\A^1_x)\to M(x)$.
\end{enumerate}
Indeed (1) follows immediately from the definition. For (2) first notice that $\sM\in \RF_1$ iff
for all $C\in (\sC/S)$,  all non-empty open subsets $U\subset C$ we have 
$\sM(U)=\sM(C,\sum_{P\in C\setminus U}[P])$, as follows from Lemma \ref{lem-alt-descr-modulus-space}.
Now the statement follows from Lemma \ref{lem-alt-defn-MC} up to applying the isomorphism 
$\A^1_x\cong \P^1_x\setminus\{1\}$, which sends $0,1$ to $0,\infty$, respectively.
(For this $\Rightarrow$ direction apply Lemma \ref{lem-alt-defn-MC} in the case
 $\gamma=$ graph of identity $\in \Cor((\P^1_x,\{1\}),(\P^1_x,\{1\}))$.)
\end{example}

\section{Examples}
The examples in \S\ref{EXA} and \S\ref{EXB} were suggested by \name{B. Kahn}.
\subsection{Constant reciprocity functors}\label{ex-ModisRF}
Let $M$ be an $R$-module. Then $M$ defines a constant Nisnevich sheaf with transfers on $\Reg$, defined by $M(X):=M^{\oplus r}$, for $X\in \Regone$ with $r$ connected components, and 
an elementary correspondence $Z\in \Cor(X,Y)$ acts via multiplication with its degree $\deg(Z/X)$.
One easily checks that in this way $M$ defines a reciprocity functor and that we obtain
a fully faithful functor
\[(R-{\rm mod})\to \RF_0\subset \RF.\]

\subsection{Algebraic groups}\label{EXA}
In the following by an {\em algebraic group} we mean a smooth connected and commutative group scheme over $S$.
\subsubsection{}\label{trace-for-alg-gps}
{\em Trace for algebraic groups.} Let $G$ be an algebraic group and $\pi: Y\to X$ a finite and flat morphism of degree $d$ between noetherian $S$-schemes. 
Then (see e.g.  \cite[Exp. XVII, (6.3.4.2)]{SGA4III}) there exists a canonical $X$-morphism
\eq{trace-for-alg-gps1}{X\to {\rm Sym}_X^d Y.}
We quickly recall how this is constructed locally: Assume $\pi$ corresponds to a ring map $A\to B$ which makes $B$ a free $A$-module of rank $d$.
For a not necessarily commutative $A$-algebra $D$ denote by $\mathrm{TS}^d_A(D)=(D^{\otimes_A d})^{\Sigma_d}$ the $A$-algebra of symmetric tensors.
Then (see e.g. \cite[Exp. XVII, (6.3.1.4)]{SGA4III}) the determinant ${\rm End}_A(B)\to A$ induces an homomorphism of $A$-algebras
$\mathrm{TS}^d_A({\rm End}_A(B))\to A$, whose composition with ${\rm End}_A(B)\to \mathrm{TS}^d_A({\rm End}_A(B))$, $x\mapsto x\otimes\ldots\otimes x$, gives back
the determinant. Then \eqref{trace-for-alg-gps1} is locally given by taking $\Spec$ of the composition
\[\mathrm{TS}^d_A(B)\to\mathrm{TS}^d_A({\rm End}_A(B))\to A,\]
where the first map is induced by the natural map $B\to {\rm End}_A(B)$.

Now in \cite[Exp. XVII, (6.3.13.2)]{SGA4III} the trace morphism 
\[\pi_*= \Tr_{Y/X}: G(Y)\to G(X)\]
is defined as follows: Let $u: Y\to G$ be a $S$-morphism, then $\Tr_{Y/X}(u): X\to G$ is given by the composition
\[X\xr{\eqref{trace-for-alg-gps1}} {\rm Sym}_X^d Y\xr{\sum_{1=1}^d u} G. \] 

By \cite[Exp. XVII, Ex. 6.3.18]{SGA4III} the trace thus defined equals the usual trace $\Gamma(Y,\sO_Y)\xr{\Tr_{Y/X}}\Gamma(X,\sO_X)$ (resp. norm $\Gamma(Y,\sO_Y^\times)\xr{\Nm_{Y/X}} \Gamma(X,\sO_X^\times)$)
for $G=\G_a$ (resp. $G=\G_m$).

\begin{proposition}[cf. {\cite[III]{SerreGACC}}]\label{prop-alg-gp-RF}
Let $G$ be an algebraic group. Then the Nisnevich sheaf  $\Regone\ni U\mapsto G(U)$ extends to a functor on $\Regone\Cor$ (via the trace recalled above) and
as such is a $\Z$-reciprocity functor.
Furthermore a morphism of algebraic groups induces a morphism of the corresponding reciprocity functors.
\end{proposition}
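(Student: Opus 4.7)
My plan is to first promote $G$ to a presheaf with transfers on $\Regone$ via Lemma \ref{CarPSTRegone}, then verify the structural axioms (Nis), (Inj), (FP), and finally establish the modulus condition using Rosenlicht--Serre theory.

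Applying Lemma \ref{CarPSTRegone}, I set $\sM^*(f):=G(f)$ for any morphism $f$ in $\RegCon$ and $\sM_*(\pi):=\Tr_\pi$ for any finite flat morphism $\pi$, using the trace recalled in \S\ref{trace-for-alg-gps}. Axiom (1) of that lemma is tautological; axiom (2), $\pi_*\pi^* = \deg(\pi)\cdot\id$, is encoded in the local construction of the trace (for a trivial cover one has $\pi^*u=(u,\ldots,u)$, whose trace is $d\cdot u$, and the general case follows \'etale-locally); axiom (3), the base-change formula, is a standard property of the symmetric-product trace under flat base change, see \cite[Exp.\ XVII]{SGA4III}. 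The axioms (Nis), (Inj), (FP) are then straightforward: (Nis) is representability of $G$; (Inj) uses separatedness of $G$ (the equalizer of two sections of $G$ over a connected $X\in\Regone$ is a closed subscheme containing the given non-empty open $U$, hence equals $X$); (FP) follows from $G$ being locally of finite presentation over $S$, which gives $\varinjlim_U G(U)=G(\eta)$. Thus $G$ becomes a Mackey functor with specialization map.

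The heart of the argument is the modulus condition (MC) together with the existence of modulus expressed by \eqref{defn-RF1}, which is the content of the Rosenlicht--Serre theory. For a regular projective connected curve $C$ over an algebraically closed field and a rational section $a$ of $G$ on $C$, the theorems of \cite{Rosenlicht} and \cite[III, \S 1--2]{SerreGACC} produce an effective divisor $\fm$ supported on the indeterminacy locus of $a$ that serves as a modulus, along with local symbols $(a,f)_P$ satisfying continuity, locality, and reciprocity. To descend this to $C$ over the possibly non-algebraically-closed (but perfect) field $\kappa(x_C)$, the plan is to base-change along $C_{\overline{\kappa(x_C)}}\to C$, invoke the classical theorem upstairs, and apply Galois descent: the support of the modulus is Galois-stable and hence comes from an effective divisor on $C$; the sum of local symbols over a Galois orbit above a closed point $P\in C$ must be identified with $\Tr_{P/x_C}(s_P(a))$ via the Galois-equivariance of the trace from \S\ref{trace-for-alg-gps}, using axiom (2) of Lemma \ref{CarPSTRegone} to match this trace with the categorical transfer. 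This identification of the group-theoretic and categorical traces is the principal technical obstacle; once it is in place, (MC) follows by testing the resulting identity against functions $f\equiv 1\pmod{\fm}$, and \eqref{defn-RF1} is an immediate consequence.

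Finally, functoriality in $G$ is clear: a morphism $\varphi:G\to H$ commutes with pullback trivially and with the trace because the trace is built from $\mathrm{Sym}^d$, which is functorial in the target. Hence $\varphi$ induces a morphism in $\PT$, and since both source and target are reciprocity functors, this is automatically a morphism in $\RF$.
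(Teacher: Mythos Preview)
Your overall architecture matches the paper's: promote $G$ to a presheaf with transfers via Lemma \ref{CarPSTRegone}, verify (Nis), (Inj), (FP), and then reduce (MC) to the Rosenlicht--Serre theorem over an algebraically closed field. There are, however, two genuine gaps.

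First, axiom (3) of Lemma \ref{CarPSTRegone} is \emph{not} the base-change formula for the symmetric-product trace. Base change in \cite[Exp.\ XVII, Prop.\ 6.3.15]{SGA4III} gives you $g^*f_*=f'_*g'^*$ for the honest fiber product $Y'=Y\times_X X'$, whereas axiom (3) requires the expression $\sum_T \length(\sO_{Y',\eta_T})\,f_{T*}g_T^*$ with $T$ ranging over the irreducible components and $\tilde T$ their normalizations. Since $Y'$ may be non-reduced, one must show that for a finite local $k$-algebra $A$ the trace $G(A)\to G(k)$ factors as $G(A)\to G(A_{\mathrm{red}})\xrightarrow{\length(A)\cdot\Tr_{A_{\mathrm{red}}/k}}G(k)$. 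The paper does this by an explicit computation with the map $\mathrm{TS}^d_k(A)\to\mathrm{TS}^d_k(A_{\mathrm{red}})^{\otimes\length(A)}$ and \cite[Exp.\ XVII, Prop.\ 6.3.5]{SGA4III}; it is not immediate from the reference you cite.

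Second, your parenthetical ``(but perfect)'' is false: $\kappa(x_C)$ is a finitely generated extension of the perfect field $F$ and in positive characteristic need not itself be perfect. Consequently $\overline{\kappa(x_C)}/\kappa(x_C)$ is not Galois, and $C_{\overline{\kappa(x_C)}}$ may fail to be reduced or regular, so ``Galois descent'' and ``sum over a Galois orbit'' do not apply as stated. The paper handles this correctly: it passes to the normalization $D$ of $(C\times_x\bar x)_{\mathrm{red}}$, records the generic length $l_{\eta_D}=\length(\sO_{C\times_x\bar x,\eta_D})$, chooses $\fm$ on $C$ so that $\pi^*\fm$ dominates the modulus obtained on $D$ by Rosenlicht, and then checks that $\sigma^*\big(\sum_P v_P(f)\Tr_{P/x_C}s_P(a)\big)=l_{\eta_D}\sum_Q v_Q(\pi^*f)s_Q(\pi^*a)$ via \cite[Lem.\ 1.7.2, Ex.\ 1.2.3]{F}, which vanishes. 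Injectivity of $\sigma^*:G(x)\hookrightarrow G(\bar x)$ (fppf sheaf) then gives the conclusion. Your sketch can be repaired along exactly these lines, but the Galois framing as written does not go through.
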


\begin{proof}
It is well-known that $U\to G(U)$ is a Nisnevich sheaf on $\Regone$. Further, if $U\in \RegCon$ is 1-dimensional with generic point $\eta$
the natural map $G(U)\to G(\eta)$ is injective. (This follows from the valuative criterion for properness, since the neutral section $e_G: S\to G$ is a proper map.)
Thus the sheaf $G$ satisfies the conditions (Inj) and (FP) from Definition \ref{defn-MFsp}. Furthermore by \cite[Exp. XVII, Prop. 6.3.15]{SGA4III} the trace recalled above yields a functor
$G: \RegCon_*\to (\Z-{\rm mod})$, such that for a finite and surjective morphism $\pi: X\to Y$ in $\RegCon$ the composition $\pi_*\pi^*$ is multiplication with $\deg \pi$ on $G(Y)$.
Thus $G$ is a Mackey functor with specialization maps if it satisfies condition (3) of Lemma \ref{CarPSTRegone}. So let $Y, X, X'$ and $f, g$ be as in Lemma \ref{CarPSTRegone}, (3).
Since $G$ satisfies (Inj) we may assume that $X'$ actually is an $S$-point (Then $g: X'\to X$ is either dominant or the inclusion of a closed point.)
By \cite[Exp. XVII, Prop. 6.3.15]{SGA4III} the trace is compatible with base change and decomposes as a sum on disjoint schemes.
Hence it suffices to show, that if $k$ is a field and $A$ is a finite local $k$-algebra, then $\Tr_{A/k}: G(A)\to G(k)$ equals the composition
\[G(A)\to G(A_{\rm red})\xr{\length(A)\cdot \Tr_{A_{\rm red}/k}} G(k).\]
This follows immediately from \cite[Exp. XVII, Prop. 6.3.5]{SGA4III}, which implies that if $d_0=[A_{\rm red}:k]$
and $d=[A:k]=d_0\cdot\length(A)$, then the map $\Spec k\to {\rm Sym}_k^d (\Spec A)$ factors via 
\[\Spec k \to ({\rm Sym}^{d_0}_k (\Spec A_{\rm red}))^{\times \length(A)}\xr{\rm can.} {\rm Sym}_k^d (\Spec A),\]
where the arrow labeld ``can.'' is the canonical morphism induced by the composition
\mlnl{\mathrm{TS}^d_k(A)\to \mathrm{TS}^d_k(A_{\rm red})\xr{\mathrm{TS}({\rm diagonal})}  \mathrm{TS}^d_k( A_{\rm red}^{\oplus \length(A)}))\\
      \cong \bigoplus_{\sum n_i=d} \mathrm{TS}^{n_1}_k(A_{\rm red})\otimes_k\ldots \otimes_k \mathrm{TS}^{n_{\length(A)}}_k(A_{\rm red})
        \xr{\rm proj. } \mathrm{TS}^{d_0}_k(A_{\rm red})^{\otimes \length(A)}. }
Thus $G$ is a Mackey functor with specialization maps. It remains to check that for $C\in (\sC/S)$ and any non-empty open subset $U\subset C$ any section $a\in G(U)$ admits a modulus (see Definition \ref{defn-modulus}).
In case $\kappa(x_C)$ is algebraically closed this is a theorem due to M. Rosenlicht (see \cite[III, \S 1, Thm. 1]{SerreGACC}). The general case follows from this as follows:
Let $\sigma:\bar{x}\to x:=x_C$ be a geometric point (i.e. $\kappa(\bar{x})$ is algebraically closed), denote by $\nu : D\to (C\times_x \bar{x})_{\rm red}$ the normalization map
(notice that $(C\times_x \bar{x})_{\rm red}$ is integral since $C\to x$ is geometrically connected) and by $\pi$ the composition
\[\pi: D\xr{\nu} (C\times_x\bar{x})_{\rm red}\subset C\times_x\bar{x}\xr{\id\times\sigma} C.\]
Take $a\in G(U)$. Then by the theorem of M. Rosenlicht $\pi^*a\in G(\pi^{-1}(U))$ has a modulus and thus we find an effective divisor $\fm$ on $C$ with support equal to $C\setminus U$ and such that $\pi^*a$ has modulus $\pi^*\fm$.
We claim that $a$ has modulus $\fm$. For this take $f\in \G_m(\eta_C)$ with $f\equiv 1$ mod $\fm$ (in particular $\pi^*f\equiv 1$ mod $\pi^*\fm$ ). 
Since $\sigma^*:G(x)\to G(\bar{x})$ is injective ($G$ being an fppf-sheaf) it suffices to show 
\[0=\sum_{P\in C\setminus|\fm|} v_P(f)\sigma^*\Tr_{P/x}(s_P(a))= \sum_{P\in C\setminus|\fm|}\sum_{P'\in P\times_x \bar{x}}v_P(f) \cdot l_{P'} \cdot\sigma_{P'}^*s_P(a), \]
where $\sigma_{P'}: P'\to P$ is induced by $\sigma$ and $l_{P'}=\length(\sO_{P\times_x\bar{x}, P'})$; notice that we can drop the $\Tr_{P'/\bar{x}}$ on the right since $P'\cong\bar{x}$.
For this, let $t\in \sO_{C,P}$ be a local parameter, then by \cite[Lem. 1.7.2, Ex. 1.2.3]{F} we have 
\[l_{P'}\cdot [P'] = [\Div(t_{|C\times_x \bar{x}})]_{|P'}= l_{\eta_D}\cdot\sum_{Q\in \nu^{-1}(P')} e(Q/P) )\cdot[P'],\]
where $l_{\eta_D}:=\length(\sO_{C\times_x\bar{x},\eta_D})$ (notice that $[Q:P']=1$).  Thus we obtain
\[ \sum_{P\in C\setminus|\fm|}\sum_{P'\in P\times_x \bar{x}} v_P(f) \cdot l_{P'} \cdot \sigma_{P'}^*s_P(a) = l_{\eta_D} \sum_{Q\in D\setminus|\pi^*\fm|}  v_Q(\pi^*f) s_Q(\pi^*a),\]
which is zero by our choice of $\fm$. This finishes the proof.
\end{proof}

\begin{remark}\label{rmk-gps-Fil}
It follows from the proof of \cite[III, \S1, Theorem 1]{SerreGACC}, the above proof and Example \ref{ex-RF0RF1} that 
\[G\in \begin{cases}
        \RF_0, &\text{if $G$ is an Abelian variety}\\
        \RF_1, &\text{if $G$ is a semi-Abelian variety}\\
        \RF\setminus\cup_n \RF_{n\geqslant 0}, & \text{if $G$ is unipotent.}
       \end{cases}
\]
\end{remark}

\subsection{Homotopy invariant Nisnevich sheaves with transfers}\label{EXB}

\subsubsection{Recollections on PST}\label{PST}

Let $\SmCor_S$ be the category defined in \cite[2.1]{VoDM} whose objects are smooth $S$-schemes and the morphisms are given by finite correspondences, i.e.
$\Hom_{\SmCor_S}(X,Y)=\Cor_S(X,Y)$.
A {\em presheaf with transfers over $S$} is a contravariant and additive functor from $\SmCor_S$ to the category of Abelian groups, $\sF: (\SmCor_S)^\op\to ({\rm Ab})$.
The category of presheaves with transfers is denoted by $\PST$; it is an Abelian category.
Inside $\PST$ we have the full subcategories $\NST$ of Nisnevich sheaves with transfers, $\HI$  of homotopy invariant presheaves with transfers and $\HINis=\HI\cap\NST$ of homotopy invariant Nisnevish sheaves with transfers. 
The inclusion functor $\HI\ra\PST$ admits a left adjoint 
$$h_0:\PST\ra\HI $$
where $h_0(\sF)$ is the presheaves with transfers defined for any smooth $S$-scheme $X$ by
$$h_0(\sF)(X):=\Coker\left[\sF(\A^1_X)\xrightarrow{s_0^*-s_1^*} \sF(X)\right].$$
By \cite[Theorem 3.1.4]{VoDM} the sheafification functor $\sF\mapsto \sF_\Nis$ restricts to a functor $\PST\ra\NST$ which is the left adjoint to the inclusion functor $\NST\ra\PST$.

\begin{definition}\label{defn-model}
A {\em model} of $X\in\Reg$ is a separated and smooth $S$-scheme $U$ together with an affine morphism $u:X\to U$ such that for any open affine $\Spec A\subset U$ the ring $\Gamma(\Spec A, u_*\sO_X)$ is a localization
of $A$ by a multiplicative subset.  A morphism between two models is a morphism of schemes under $X$. We denote by $\fM_X$ the category of all models of $X$.
\end{definition}

\begin{lemma}\label{defn-models-cofil}
The category $\fM_X$ of models of $X\in \Regone$ is cofiltered and in particular non-empty.
\end{lemma}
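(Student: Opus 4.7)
I will check the three axioms defining a cofiltered category: $\fM_X$ is non-empty, any pair of objects admits a common refinement, and any parallel pair of morphisms is equalized by a morphism from some third object. Non-emptiness is immediate: since $X\in\Regone$ is regular, separated, and of finite type over $S$, and $F$ is perfect, $X$ is smooth over $S$, whence $(X,\id_X)\in\fM_X$.

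For common refinements, given $u_i\colon X\to U_i$ ($i=1,2$), I would form the morphism $(u_1,u_2)\colon X\to U_1\times_S U_2$ --- which is affine since it factors as the closed immersion $X\hookrightarrow X\times_S U_2$ (the graph of $u_2$, using separatedness of $U_2/S$) followed by the affine base change $u_1\times\id_{U_2}$ of $u_1$ --- take its scheme-theoretic image $Z\subseteq U_1\times_S U_2$, and pass to the smooth-over-$S$ locus of $Z$. For the equalizer axiom, given parallel $f,g\colon V\rightrightarrows U$ in $\fM_X$, the equalizer $E\hookrightarrow V$ is closed (since $U/S$ is separated), $u_V$ factors through $E$ (because $f\circ u_V=u_U=g\circ u_V$), and I would again pass to the smooth locus of $E$.

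The main obstacle is to verify that the candidate $W$ produced in each case is genuinely a model. Affineness of $u_W\colon X\to W$ will follow from the affineness of $X\to U_1\times_S U_2$ (resp.\ of $X\to V$) by the principle that an affine morphism factoring through a separated morphism remains affine. The localization property on affine opens reduces, by a local ring-theoretic argument, to the hypothesis that $u_i$ and $u_V$ are themselves localizations: the image ring cutting out $Z$ on an affine chart $\Spec B_1\otimes_F B_2\subseteq U_1\times_S U_2$ is itself then a further localization of the coordinate ring of $X$ on the appropriate preimage. The most delicate point is to ensure that the smooth locus of $Z$ (resp.\ of $E$) contains the \emph{entire} image of $X$, not only its generic points: at the generic points perfectness of $F$ makes the function fields of the components of $X$ separable over $F$, hence smooth; at a closed point $P\in X$ mapping to $q\in Z$, the crucial observation is that the $1$-dimensional local ring $\sO_{Z,q}$ admits a localization equal to the DVR $\sO_{X,P}$, which forces $\sO_{Z,q}$ itself to be that DVR, with residue field $\kappa(P)$ that is separable over $F$ by smoothness of $X$.
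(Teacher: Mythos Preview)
There is a genuine gap in your non-emptiness argument. You assert that $X$ is ``of finite type over $S$'', but by definition an object of $\Regone$ is only required to be separated and of finite type over some $S$-\emph{point} $x=\Spec k$, where $k$ is a finitely generated field \emph{extension} of $F$. When $k$ has positive transcendence degree over $F$ the ring $k$ is not a finitely generated $F$-algebra, so neither $x$ nor $X$ is locally of finite type over $S$; in particular $X$ is not smooth over $S$ and $(X,\id_X)$ is \emph{not} a model. The simplest counterexample is $X=x=\Spec F(t)$ itself: a model here is, for instance, $\A^1_F$ with $u$ the inclusion of the generic point --- not $X$. The correct argument for non-emptiness is a spreading-out: choose a smooth integral $F$-scheme $V$ with function field $k$ (possible since $F$ is perfect), spread $X\to x$ out to a separated scheme of finite type $\mathcal X\to V'$ over some non-empty open $V'\subset V$, and shrink $V'$ until $\mathcal X$ is smooth over $S$; then $X\to \mathcal X$ is the desired model.

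Your approach to the remaining two axioms is along the right lines, but one formulation should be tightened. You write that ``the $1$-dimensional local ring $\sO_{Z,q}$ admits a localization equal to the DVR $\sO_{X,P}$, which forces $\sO_{Z,q}$ itself to be that DVR''. As stated this is not a valid implication (a $2$-dimensional regular local ring has DVRs among its localizations). What actually happens is stronger and simpler: once you know $X\to Z$ is a localization on affine charts, the induced map $\sO_{Z,q}\to \sO_{X,P}$ is already an \emph{isomorphism} of local rings (localizing $A$ at a prime not meeting the multiplicative set $S$ agrees with localizing $S^{-1}A$ at the corresponding prime), so $q$ is a regular point of $Z$ with residue field $\kappa(P)$, and perfectness of $F$ then gives smoothness.
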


\begin{proof}
 This is straightforward.
\end{proof}

\begin{proposition}\label{prop-PSTtoPT}
Let $\sF\in\PST$ be a presheaf with transfers.
Then 
$$\Regone\ni X\mapsto \hat{\sF}(X):=\colim_{U\in\fM_X^\op}\sF(U)$$
naturally defines a presheaf with transfers on $\Regone$ (in the sense of Definition \ref{defn-PT}, with $R=\Z$)
and we obtain an exact functor
\[\PST\to \PT,\quad \sF \mapsto \hat{\sF},\]
which restricts to $\NST\to \NT$.

\end{proposition}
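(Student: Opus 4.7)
The strategy is to exploit the alternative description of presheaves with transfers on $\Regone$ provided by Lemma~\ref{CarPSTRegone}: it suffices to construct a contravariant functor $\hat{\sF}^{*}\colon\RegCon^{\op}\to (\Z\text{-mod})$ and a covariant functor $\hat{\sF}_{*}\colon\RegCon_{*}\to (\Z\text{-mod})$ which agree on objects and satisfy the degree relation (2) and the base-change relation (3) of that lemma.

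First, I would establish the following spreading-out assertion: for every morphism $f\colon X\to Y$ in $\RegCon$ and every model $u_{Y}\colon Y\to V$, there exist a model $u_{X}\colon X\to U$ of $X$ and an $S$-morphism $g\colon U\to V$ with $g\circ u_{X}=u_{Y}\circ f$. Locally $Y\to V$ is a localization of rings and $X\to V$ is encoded by a ring map into such a localization, so standard noetherian approximation (EGA IV, §8) produces the desired $U$. Combined with the cofilteredness of $\fM_{X}$ and $\fM_{Y}$ (Lemma~\ref{defn-models-cofil}), this constructs $f^{*}\colon\hat{\sF}(Y)\to\hat{\sF}(X)$ as a compatible map on colimits, independent of the choices of spread-outs. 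Similarly a finite flat morphism $\pi\colon X\to Y$ in $\RegCon_{*}$ spreads out, after shrinking the model of $Y$ if necessary, to a finite flat morphism $\pi_{V}\colon U\to V$ between smooth $S$-schemes; the transpose graph $[\Gamma_{\pi_{V}}^{t}]\in\Cor_{S}(V,U)$ induces a trace $\sF(U)\to\sF(V)$ which passes in the colimit to $\pi_{*}\colon\hat{\sF}(X)\to\hat{\sF}(Y)$.

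Next I would check the conditions (2) and (3) of Lemma~\ref{CarPSTRegone}. After replacing all relevant schemes and morphisms by compatible spread-outs to smooth $S$-schemes, condition (2), namely $\pi_{*}\pi^{*}=\deg(\pi)\cdot\id$, becomes the identity $[\Gamma_{\pi_{V}}^{t}]\circ[\Gamma_{\pi_{V}}]=\deg(\pi)\cdot[\Delta_{U}]$ in $\Cor_{S}(U,U)$, which holds because $\sF\in\PST$. Condition (3) reduces similarly to the finite flat base-change identity in $\Cor_{S}$, whose validity between the smooth models follows from Lemma~\ref{lem-basic-corr-formulas}(3) applied to the spread-out data. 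Hence $\hat{\sF}\in\PT$. The functor $\sF\mapsto\hat{\sF}$ is functorial by construction, and since $\fM_{X}^{\op}$ is filtered, each $\hat{\sF}(X)=\colim_{U\in\fM_{X}^{\op}}\sF(U)$ is a filtered colimit, so exactness of filtered colimits in $(\Z\text{-mod})$ yields exactness of the functor.

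Finally, for the restriction $\NST\to\NT$, let $\sF\in\NST$ and let $\{V_{i}\to V\}_{i}$ be a Nisnevich cover in $\Regone$. Each étale morphism $V_{i}\to V$ spreads out, after possibly shrinking the model $U$ of $V$, to an étale morphism $U_{i}\to U$ of smooth $S$-schemes; the Nisnevich splittings over residue fields spread out as well, so the family $\{U_{i}\to U\}$ may be taken to be a Nisnevich cover of $U$. The equalizer sequence expressing the Nisnevich sheaf property for $\hat{\sF}$ on $\{V_{i}\to V\}$ is then the filtered colimit over $U\in\fM_{V}^{\op}$ of the corresponding sequences for $\sF$ on $\{U_{i}\to U\}$, each of which is exact by assumption; exactness of filtered colimits concludes. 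The main obstacle is the bookkeeping for the spreading-out arguments and the verification that the maps produced are independent of the various choices (of model, of étale lift, of finite flat spread), but every ingredient is a classical application of EGA IV.
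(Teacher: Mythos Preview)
Your argument is correct, but it takes a different route from the paper.  The paper does not pass through Lemma~\ref{CarPSTRegone} at all: given an elementary correspondence $[V]\in\Cor(X,Y)$ with $X,Y\in\Regone$, it directly spreads out the closed subscheme $V\subset X\times Y$ to an elementary correspondence $[W]\in\Cor_S(U,U')$ between smooth models $U$ of $X$ and $U'$ of $Y$, and defines $\hat{\sF}([V])$ as the map induced by $\sF([W])$ in the colimit.  Independence of choices and compatibility with composition of correspondences are then checked at once.

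Your approach instead spreads out only morphisms and finite flat morphisms, and appeals to the characterization of $\PT$ via pullback and pushforward data.  This is perfectly legitimate and perhaps pedagogically pleasant, since it reuses Lemma~\ref{CarPSTRegone} and avoids manipulating general correspondences in models.  The cost is that verifying condition~(3) of Lemma~\ref{CarPSTRegone} (the base-change formula) requires some care: when you spread out the cartesian square, the fibre product of the models need not be smooth, and you must argue that after sufficient shrinking the irreducible components, their lengths, and their normalizations all match those of $Y'=X'\times_X Y$.  This is true by generic flatness and standard limit arguments, but it is the one place where the bookkeeping is heavier than in the paper's direct spreading-out of correspondences.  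Your treatment of exactness and of the Nisnevich sheaf condition via filtered colimits is fine and is what the paper means by ``the other statements are immediate.''
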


\begin{proof}
For $X,Y\in \Regone$ and $[V]\in \Cor(X,Y)$ an elementary correspondence, we find models $X\to U$ and $Y\to U'$ and an elementary correspondence $[W]\in \Cor(U,U')$ which pulls back to $[V]$ under $X\times Y\to U\times U'$.
We get a map $\sF([W]): \sF(U')\to \sF(U)$ and it is straightforward to see that it induces a map $\hat{\sF}(Y)\to \hat{\sF}(X)$ which is independent of the choices of $U, U', W$ and is compatible with composition.
This proves the first statement and the other statements are immediate.
\end{proof}


The following statement is a collection of results of V. Voevodsky.
 
\begin{proposition}\label{HI=RF0}
The above functor $\PST\to \PT$ restricts to a conservative functor
\[\HI_{\Nis}\to \RF_1, \quad \sF\mapsto \hat{\sF},\]
where $\RF_1$ is defined in \ref{defn-RFn}.
Furthermore, restricting $\hat{\sF}$ to $\pt\Cor^\op$ we obtain a functor $\HI_\Nis\to \MF$, which still is conservative.
\end{proposition}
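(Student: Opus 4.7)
The plan is to verify, in order: the sheaf property, the axioms (Inj) and (FP), the modulus condition (MC), the sharper claim that $\hat{\sF}\in\RF_1$, and finally the two conservativity statements. The sheaf property on $\Regone$ comes for free from Proposition \ref{prop-PSTtoPT} once I know that a cofiltered colimit of Nisnevich sheaves over models commutes with the Nisnevich topology on points of $\Regone$, so this step is immediate.

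For (Inj) and (FP), the key input is Voevodsky's injectivity theorem: for $\sF\in\HI_\Nis$ and $U$ a smooth connected $S$-scheme with function field $K$, the restriction $\sF(U)\inj \sF(\Spec K)$ is injective. Passing to the filtered colimit over models (which are smooth by definition) then gives (Inj) for $\hat{\sF}$, because the index category $\fM_X$ is cofiltered (Lemma \ref{defn-models-cofil}) and because if $V\subset U$ is an open immersion of models the induced map $\sF(U)\to \sF(V)$ is injective by Voevodsky. The identification of the generic stalk with $\hat{\sF}(\eta)$ required by (FP) follows from the same cofinality argument, since every open subset of a curve in $\Reg$ admits a model which is a corresponding open in a smooth $S$-scheme and one can take the colimit over shrinking neighborhoods.

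The heart of the argument is (MC), and I will prove directly the stronger assertion $\hat{\sF}\in\RF_1$, which by Example \ref{ex-RF0RF1}(2) amounts to the equality $s_0^*=s_1^*\colon \hat{\sF}(\A^1_x)\to \hat{\sF}(x)$ for every $S$-point $x$. This is exactly homotopy invariance of $\sF$ transported through the model construction, so it is immediate. To deduce the full modulus condition from this, I will invoke Lemma \ref{lem-alt-defn-MC}: given $a\in \hat{\sF}(U)$ with $U\subset C$ open in a curve $C\in(\sC/S)$, I take $\fm_{\mathrm{red}}=\sum_{P\in C\setminus U}[P]$ as candidate modulus. For any $\gamma\in\Cor((C,\fm_{\mathrm{red}}),(\P^1_{x_C},\{1\}))$ whose components are also finite surjective over $\P^1_{x_C}\setminus\{1\}$, the element $\hat{\sF}(\gamma^t)(a)$ lies in $\hat{\sF}(\P^1_{x_C}\setminus\{1\})$. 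Choosing the isomorphism $\P^1\setminus\{1\}\cong\A^1$ that carries $\{0,\infty\}$ to $\{0,1\}$ and applying the $s_0^*=s_1^*$ equality for $\A^1$ then yields $i_0^*\hat{\sF}(\gamma^t)(a)=i_\infty^*\hat{\sF}(\gamma^t)(a)$, which is exactly the criterion of Lemma \ref{lem-alt-defn-MC}. So $\fm_{\mathrm{red}}$ is a modulus for $a$, which simultaneously establishes (MC), the definitional identity \eqref{defn-RF1}, and membership in $\RF_1$ via Lemma \ref{lem-alt-descr-modulus-space}.

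For conservativity, suppose $\Phi\colon \sF\to\sG$ in $\HI_\Nis$ induces an isomorphism $\hat{\Phi}\colon\hat{\sF}\to\hat{\sG}$; I want to conclude $\Phi$ is an isomorphism. Since $\NST$-morphisms are isomorphisms iff they are isomorphisms on all Nisnevich stalks at points of smooth $S$-schemes, and since such stalks are colimits of sections on \'etale neighborhoods (hence on models of the generic point), it is enough to check $\Phi$ is an isomorphism on $\sF(K)$ for every finitely generated field extension $K/F$ --- but this is the assertion that the further restriction to Mackey functors is conservative. So both conservativity statements reduce to the same claim, and I will prove the stronger one. Given that $\hat{\Phi}(x)\colon \sF(x)\to\sG(x)$ is an isomorphism for every $S$-point $x=\Spec K$, Voevodsky's injectivity $\sF(U)\inj\sF(K(U))$ for connected smooth $U$ forces $\Phi(U)$ to be injective, and a standard Gersten-type argument on the stalks (using that the cokernel sheaf has zero generic stalks on every smooth connected scheme, hence vanishes) then yields surjectivity. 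The main obstacle is to package these standard Voevodsky facts cleanly at the level of $\hat{\sF}$ rather than $\sF$ itself, i.e., to verify that passing through the functor of Proposition \ref{prop-PSTtoPT} loses no information about stalks; this is the step where one must be careful about the interaction between the model colimit and the Nisnevich stalks.
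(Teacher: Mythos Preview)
Your proof follows the same route as the paper's: (Inj) from Voevodsky's injectivity theorem for $\HI_\Nis$ sheaves, (MC) and membership in $\RF_1$ from Lemma~\ref{lem-alt-defn-MC} applied with the reduced boundary divisor using $i_0^*=i_\infty^*$ on $\P^1\setminus\{1\}\cong\A^1$, and conservativity from the fact that $\HI_\Nis$ sheaves are detected on fields.

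Two small corrections, neither of which affects the overall validity. First, your appeal to Example~\ref{ex-RF0RF1}(2) is premature, since that example is stated for objects already known to lie in $\RF$; but your subsequent direct argument via Lemma~\ref{lem-alt-defn-MC} is exactly right and is what the paper does. Second, the sentence asserting that Nisnevich stalks reduce to values on fields is incorrect as written (stalks at non-generic points are sections over henselian local rings, not fields); the conclusion that both conservativity statements reduce to the same field-level claim is nevertheless correct, for the trivial reason that an isomorphism in $\RF_1$ restricts to one in $\MF$. Your subsequent injectivity-plus-cokernel argument is then precisely the content of the Voevodsky results the paper cites (\cite[Proposition~4.20]{VoPST}, \cite[Proposition~3.1.11]{VoDM}); just be aware that ``the cokernel has zero generic stalks, hence vanishes'' is exactly the nontrivial input from those results and not something that follows formally.
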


\begin{proof}
Take $\sF\in \HI_\Nis$. By Proposition \ref{prop-PSTtoPT} $\hat{\sF}$ is a Nisnevich sheaf with transfers on $\Regone$ and it automatically satisfies condition (FP) from Definition \ref{defn-MFsp}.
Further, by \cite[Corollary 4.19]{VoPST} and \cite[Proposition 3.1.11]{VoDM}, the restriction map $\sF(X)\inj \sF(U)$, for $U\inj X$ a dense open immersion in $\Sm_S$, is injective. 
This implies (Inj) from Definition \ref{defn-MFsp} and hence $\hat{\sF}\in \MFsp$. Furthermore, since $\sF$ is homotopy invariant we have $i_0^*=i_\infty^*$ on $\hat{\sF}(\P^1_x\setminus\{1\})$ with $x$ an $S$-point.
Thus $\hat{\sF}\in\RF_1$ by Lemma \ref{lem-alt-defn-MC}. The conservativity statement follows immediately from
\cite[Proposition 4.20]{VoPST} and \cite[Proposition 3.1.11]{VoDM}.
\end{proof}

\subsection{Connection with classical class field theory in the function field case}

{\em In this section we assume that $F$ is a perfect field of positive characteristic $p$.}

We denote by $W_n$ the Witt-scheme of length $n$ over $S$. 
It is in particular a unipotent algebraic group in the sense of \ref{EXA} and hence defines a reciprocity functor.
For an $F$-algebra $A$, the $A$-rational points of $W_n$ form the ring of Witt vectors $W_n(A)$. 
There is a Frobenius morphism
${\rm F}: W_n\to W_n$ which sends an $A$-rational point $(a_0,\ldots, a_{n-1})\in W_n(A)$ to $(a_0^p,\ldots, a_{n-1}^p)$.
We also have the morphism of algebraic groups 
\eq{p-map}{p:W_n\to W_{n+1},\quad (a_0,\ldots, a_{n-1})\mapsto p\cdot (a_0,\ldots, a_{n-1},0),}
which commutes with the Frobenius ${\rm F}$.

\begin{proposition}\label{prop-H1=RF}
Let $\sH^1$ be the Nisnevich sheafification of the presheaf 
\[\Reg\ni X\mapsto H^1_{\rm \'et}(X,\Q/\Z).\]
Then $\sH^1$ is a reciprocity functor and it decomposes in $\RF$ as 
\eq{prop-H1=RF1}{\sH^1= (\sH^1)'\oplus \varinjlim_n \frac{W_n}{({\rm F}-1)W_n},}
where $(\sH^1)'$ is induced by a homotopy invariant Nisnevich sheaf with transfers (in the sense of Proposition \ref{HI=RF0}),
$W_n/({\rm F}-1)W_n$ is the quotient in the category of Nisnevich sheaves  and the inductive limit is over the morphisms
\[p: \frac{W_n}{(F-1)W_n}\to \frac{W_{n+1}}{(F-1)W_{n+1}}\]
induced by \eqref{p-map}.
\end{proposition}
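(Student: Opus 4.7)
The plan is to establish the decomposition \eqref{prop-H1=RF1} via the canonical splitting $\Q/\Z = (\Q/\Z)' \oplus \Q_p/\Z_p$ of coefficients, and to identify each summand separately as a reciprocity functor. Denote by $(\sH^1)'$ and $\sH^{1,(p)}$ the Nisnevich sheafifications of $X \mapsto H^1_{\rm \'et}(X,(\Q/\Z)')$ and $X \mapsto H^1_{\rm \'et}(X,\Q_p/\Z_p)$ respectively; the decomposition $\sH^1 = (\sH^1)' \oplus \sH^{1,(p)}$ holds already as Nisnevich sheaves with transfers, since the finite correspondence action is additive and preserves torsion orders of local sections.

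For $(\sH^1)'$ I would invoke the classical rigidity theorem of \name{Suslin}--\name{Voevodsky}: for each integer $n$ coprime to $p$ the Nisnevich sheaf $\sH^1_{\rm \'et}(-,\Z/n)_\Nis$ is the restriction to $\Regone$ of a homotopy invariant Nisnevich sheaf with transfers on $\Sm_S$ (the transfers come from rigidity, $\A^1$-invariance is standard). Taking the colimit over $n$ and applying Proposition \ref{HI=RF0} identifies $(\sH^1)'$ with a reciprocity functor lying in $\RF_1 \subset \RF$.

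For $\sH^{1,(p)}$ I would use Artin--Schreier--Witt theory. The short exact sequence of \'etale sheaves $0 \to \Z/p^n \to W_n \xrightarrow{F-1} W_n \to 0$, together with the vanishing $\mathrm{R}^i\epsilon_* W_n = 0$ for $i \geq 1$ (where $\epsilon\colon(-)_{\rm \'et}\to(-)_\Nis$) --- which follows inductively from the filtration of $W_n$ by copies of $\G_a$ and the equality $H^i_{\rm \'et}(\Spec A,\G_a) = H^i_\Zar(\Spec A,\sO_A) = 0$ on Henselian local $A$ for $i\geq 1$ --- produces a distinguished triangle
\[\mathrm{R}\epsilon_*\Z/p^n \to W_n \xrightarrow{F-1} W_n \to \mathrm{R}\epsilon_*\Z/p^n[1]\]
in the derived category of Nisnevich sheaves. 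Taking $H^1$ yields $\sH^1_{\rm \'et}(-,\Z/p^n)_\Nis \cong W_n/(F-1)W_n$, and passing to the colimit over the morphisms $p\colon W_n \to W_{n+1}$ gives precisely the second summand in \eqref{prop-H1=RF1}. Each $W_n$ is a reciprocity functor by Proposition \ref{prop-alg-gp-RF}, and $F-1$ is a morphism of reciprocity functors; it remains to show that the cokernel in $\NST$, and subsequently the filtered colimit, still lies in $\RF$.

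For the cokernel $W_n/(F-1)W_n$, the axioms (Inj) and (FP) follow from those of $W_n$ by a diagram chase, using that $F-1$ is injective on sections over open subsets of integral curves. The modulus condition (MC) is the key point: given a section $\bar a$ of $W_n/(F-1)W_n$ over $U\subset C$, the \'etale surjectivity of $F-1$ produces a Nisnevich (in fact \'etale) cover $\pi\colon V\to U$ and a lift $a\in W_n(V)$ of $\pi^*\bar a$; the fact that $W_n \in \RF$ provides a modulus $\fn$ for $a$ on the compactification of $V$, and Theorem \ref{thm-modulus-Nis-local} then descends this to a modulus $\fm$ for $\bar a$ on $C$. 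Finally, the filtered colimit $\varinjlim_n W_n/(F-1)W_n$ lies in $\RF$ because (Inj), (FP) and (MC) --- in the form of Remark \ref{rmk-defn-RF} --- are all preserved by filtered colimits. The main obstacle is this descent of the modulus: one must verify the hypotheses of Theorem \ref{thm-modulus-Nis-local} in the specific situation of an Artin--Schreier--Witt torsor, which ultimately reduces to the compatibility of moduli with the reciprocity functor morphism $F-1\colon W_n\to W_n$.
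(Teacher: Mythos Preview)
Your overall strategy --- decomposing $\Q/\Z$ into its prime-to-$p$ and $p$-primary parts, handling the former via homotopy invariance and the latter via Artin--Schreier--Witt --- is exactly what the paper does, and your identification of the $p$-primary summand with $\varinjlim_n W_n/(F-1)W_n$ is correct.

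The genuine gap is in your argument for (Inj). You assert that ``$F-1$ is injective on sections over open subsets of integral curves,'' but this is false: for any connected $U$ the kernel of $F-1\colon W_n(U)\to W_n(U)$ is $W_n(\mathbb F_p)\cong\Z/p^n\Z$, the Witt vectors with entries in the prime field. So the diagram chase you propose does not go through, and in fact a snake-lemma style argument cannot avoid the need to show that a preimage under $F-1$ over the fraction field already lies over the ring. The paper establishes (Inj) by exactly this direct computation: if $A$ is a henselian DVR with fraction field $K$ and $(a_0,\ldots,a_{n-1})\in W_n(A)$ equals $(F-1)(b_0,\ldots,b_{n-1})$ with $b_i\in K$, then each $b_i\in A$. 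The proof is a short induction on $i$: once $b_0,\ldots,b_{i-1}\in A$, the Witt subtraction formula gives $a_i=b_i^p-b_i+P(b_0,\ldots,b_{i-1})$ for some integral polynomial $P$, so $b_i^p-b_i\in A$, and a valuation comparison forces $b_i\in A$.

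A secondary remark: you single out (MC) as ``the key point'' and ``the main obstacle,'' but the paper treats (MC) as clear once (Inj) is in hand (so that $W_n/(F-1)W_n\in\MFsp$ and Theorem~\ref{thm-modulus-Nis-local} applies), and your own (MC) argument via Nisnevich-local lifting to $W_n$ is correct and is presumably what the paper's ``clearly'' abbreviates. The real work is (Inj), not (MC); you have the emphasis reversed.
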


\begin{proof}
Set $(\Q/\Z)':=\oplus_{\ell\neq p}\Q_\ell/\Z_\ell$, where the sum is over all prime numbers $\ell$ unequal to $p$. We obtain
$\Q/\Z= (\Q/\Z)'\oplus \varinjlim_n \Z/p^n\Z$, where the transition maps $\Z/p^n\Z\to \Z/p^{n+1}\Z$ are induced by 
multiplication with $p$. Using Artin-Schreier-Witt theory we see that for an affine $F$-scheme $X=\Spec A$ 
we have the decomposition
\[H^1_{\rm \'et}(X, \Q/\Z)=H^1_{\rm \'et}(X, (\Q/\Z)')\oplus \varinjlim_n \frac{W_n(A)}{({\rm F}-1)W_n(A)}. \]
Denote by $(\sH^1)'$ the Nisnevich sheafification of $\Reg\ni X\to H^1_{\rm \'et}(X, (\Q/\Z)')$. This
gives the decomposition \eqref{prop-H1=RF1} in the category of Nisnevich sheaves. 
It follows from \cite[Lemma 6.21]{MVW}, \cite[Corollary 5.29]{VoPST}, \cite[Theorem 3.1.12]{VoDM} and 
Proposition \ref{HI=RF0}, that $(\sH^1)'$ is a reciprocity functor induced by a sheaf in $\HI_{\Nis}$.
Further $W_n/({\rm F}-1)W_n$ clearly is a Nisnevich sheaves with transfers, which satisfies (FP) and (MC).
For (Inj) it suffices to check that if $A$ is a henselian DVR with fraction field $K$ and we are
given a Witt vector $(a_0,\ldots, a_{n-1})\in W_n(A)$ such that there exist elements $b_i\in K$ with
\[(a_0,\ldots, a_{n-1})= (b_0^p,\ldots, b_{n-1}^p)- (b_0,\ldots, b_{n-1}), \]
then the $b_i$'s are in $A$. Suppose we already showed that for $i\ge 0$ the elements $b_0,\ldots, b_{i-1}$ are in $A$.
Then we get
\[a_i= b_i^p-b_i+ P(b_0,\ldots, b_{i-1})\]
for some polynomial $P\in \Z[x_0,\ldots, x_{i-1}]$. Thus $b_i(b_i^{p-1}-1)\in A$, i.e. $b_i\in A$. 
Thus $W_n/({\rm F}-1)W_n$ is a reciprocity functor and hence so is $\varinjlim_n W_n/({\rm F}-1)W_n$. This 
finishes the proof.\end{proof}

\subsubsection{}
Now assume that $F$ is a finite field and $C$ is a smooth projective geometrically connected
curve over $S=\Spec F$ with function field $K$
and generic point $\eta$.
For a closed point $P\in C$ denote by $K_P$ the completion of $K$ along $P$.
Further denote by $G^{\rm ab}(K)$, $G^{\rm ab}(K_P)$ and $G^{\rm ab}(F)$ the maximal abelian quotient of 
the absolute Galois groups of the fields $K, K_P, F$, respectively. We have
\eq{H1(S)}{\sH^1(S)=\Hom_{\rm conts}(G^{\rm ab}(F), \Q/\Z)=\Hom_{\rm conts}(\hat{\Z},\Q/\Z)=\Q/\Z.}
Thus the {\em local reciprocity  map}   (see e.g. \cite[XIII, \S 4]{SerreLF})
\[\rho_P: K_P^\times\to G^{\rm ab}(K_P)\cong \Hom(H^1_{\rm \'et}(K_P,\Q/\Z),\Q/\Z) \]
induces a morphism
\eq{CFT-symbol}{\sH^1(\eta)\times \G_m(\eta)\to \Q/\Z\cong \sH^1(S), \quad (\chi,f)\mapsto \rho_P(f)(\chi).}
Notice, if we identify $\chi$ with a character $\chi: G^{\rm ab}(K)\to \Q/\Z$, then $\rho_P(f)(\chi)= \chi(\rho_P(f))$.

\begin{proposition}\label{prop-CFT-symbol}
With the notation above, 
the symbol at $P$ attached to $\sH^1$ (see Proposition-Definition \ref{prop-defn-localsymbol})
is given by \eqref{CFT-symbol}, i.e.
\[(\chi, f)_P=\rho_P(f)(\chi),\quad f\in \G_m(\eta), \chi\in \sH^1(\eta).\]
Furthermore the filtration of $\sH^1(\eta)$ at $P$ (see Definition \ref{defn-filtration}) is given by
\[{\rm Fil}_P^n\sH^1(\eta)= \Hom_{\rm conts}(\frac{G^{\rm ab}(K)}{G^{{\rm ab},n}(K_P)},\Q/\Z),\]
where $G^{{\rm ab},n}(K_P)$ is the $n$-th ramification subgroup of $G^{\rm ab}(K_P)$ in the upper numbering
(see \cite[IV, \S3]{SerreLF}).
\end{proposition}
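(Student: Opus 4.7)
The plan is to use the uniqueness statement in Corollary-Definition \ref{cor-defn-localsymbol}: it suffices to verify that the pairing
\[
(\chi,f)\longmapsto \rho_P(f)(\chi)
\]
satisfies the three properties (1)--(3) characterizing the local symbol of $\sH^1$ at $P$, together with biadditivity (which is clear from the bi-additivity of $(f,\chi)\mapsto\chi(\rho_P(f))$). Continuity is immediate: any $\chi\in\sH^1(\eta)$ factors through a finite quotient of $G^{\mathrm{ab}}(K)$, hence $\chi\circ\rho_P\colon K_P^\times\to\Q/\Z$ factors through a finite quotient; by local class field theory the kernel of such a quotient is open and therefore contains some $U_P^{(n)}$.

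Next I would check property~(2). If $\chi\in\sH^1_{C,P}$ then $\chi$ is unramified at $P$, so $\chi|_{G^{\mathrm{ab}}(K_P)}$ factors through the Frobenius quotient and $\rho_P(f)$ acts on $K_P^{\mathrm{unr}}$ as $\mathrm{Frob}_P^{v_P(f)}$. Hence $\chi(\rho_P(f))=v_P(f)\cdot\chi(\mathrm{Frob}_P)$, and $\chi(\mathrm{Frob}_P)=s_P(\chi)(\mathrm{Frob}_{\kappa(P)})$ by construction of the specialization map of $\sH^1$. It then remains to identify the push-forward $\Tr_{P/x_C}\colon\sH^1(P)\to\sH^1(x_C)=\Q/\Z$ with Galois-theoretic corestriction $\mathrm{Cor}\colon H^1(\kappa(P),\Q/\Z)\to H^1(F,\Q/\Z)$, which follows from Proposition~\ref{prop-H1=RF} (the transfer on $(\sH^1)'$ coming from finite correspondences is corestriction, and on the Artin--Schreier--Witt part it is the Witt-vector trace, which again agrees with corestriction). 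A direct computation with the transfer $V\colon G^{\mathrm{ab}}(F)\to G^{\mathrm{ab}}(\kappa(P))$ between the (procyclic) absolute Galois groups of finite fields shows $\mathrm{Cor}(s_P(\chi))(\mathrm{Frob}_F)=s_P(\chi)(\mathrm{Frob}_{\kappa(P)})$, yielding $\Tr_{P/x_C}s_P(\chi)=\chi(\mathrm{Frob}_P)$ in $\Q/\Z$, exactly as required.

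Property~(3), the reciprocity law $\sum_P\chi(\rho_P(f))=0$, is precisely Artin's reciprocity law (product formula) for the global function field $K$: the diagonal embedding $K^\times\hookrightarrow\bigoplus_P K_P^\times$ lands in the kernel of $\sum_P\rho_P\colon\bigoplus_P K_P^\times\to G^{\mathrm{ab}}(K)$, so $\chi\bigl(\sum_P\rho_P(f)\bigr)=0$ for every $\chi\in\sH^1(\eta)$. Combining (1)--(3) with the uniqueness in Corollary-Definition~\ref{cor-defn-localsymbol} gives the formula for $(\chi,f)_P$.

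For the filtration, by definition $\chi\in\mathrm{Fil}^n_P\sH^1(\eta)$ iff $\chi(\rho_P(u))=0$ for every $u\in U_P^{(n)}$, i.e.\ iff $\chi$ vanishes on $\rho_P(U_P^{(n)})\subset G^{\mathrm{ab}}(K_P)$. By the very definition of the ramification filtration in upper numbering combined with the local reciprocity isomorphism (see \cite[XV, \S2]{SerreLF}), one has $\rho_P(U_P^{(n)})=G^{\mathrm{ab},n}(K_P)$ for all $n\geqslant 1$. Viewing $G^{\mathrm{ab},n}(K_P)$ as a subgroup of $G^{\mathrm{ab}}(K)$ via the natural map $G^{\mathrm{ab}}(K_P)\to G^{\mathrm{ab}}(K)$, this identifies $\mathrm{Fil}^n_P\sH^1(\eta)$ with $\Hom_{\mathrm{conts}}(G^{\mathrm{ab}}(K)/G^{\mathrm{ab},n}(K_P),\Q/\Z)$. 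The main point requiring care here (and the principal obstacle of the whole argument) is the careful matching between the reciprocity-functor trace $\Tr_{P/x_C}$ on $\sH^1$ and Galois corestriction, especially across the decomposition of $\sH^1$ given in Proposition~\ref{prop-H1=RF}; once this bookkeeping is in place, everything else reduces to standard (local and global) class field theory.
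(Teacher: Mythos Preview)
Your proof is correct and follows essentially the same approach as the paper: verify properties (1)--(3) of Corollary-Definition~\ref{cor-defn-localsymbol} and invoke uniqueness, then deduce the filtration from $\rho_P(U_P^{(n)})=G^{\mathrm{ab},n}(K_P)$. The only difference is one of emphasis: the paper dispatches property~(2) in a single line by asserting that under the identification $\sH^1(S)\cong\Q/\Z$ the element $\Tr_{P/S}(s_P(\chi))$ corresponds to $\chi(F_S^{[P:S]})=\chi(F_P)$, whereas you unpack this as the statement that the reciprocity-functor trace equals Galois corestriction and verify it via the transfer map on procyclic groups---so what you flag as ``the principal obstacle'' is precisely the step the paper takes for granted.
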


\begin{proof}
For the first statement we have to check that the symbol \eqref{CFT-symbol} satisfies the properties (1)-(3)
of Proposition-Definition \ref{prop-defn-localsymbol}. By \cite[XV, \S 2, Remark after Theorem 2]{SerreLF}
we have 
\eq{prop-CFT-symbol1}{\rho_P(\hat{U}^{(n)}_P)=G^{\rm ab, n}(K_P),}
where $\hat{U}^{(n)}_P$ denotes the completion of $U_P^{(n)}$. Now since any $\chi\in \sH^1(\eta)$
is in fact induced by a character $\chi': {\rm Gal}(L/K)\to \Q/\Z$ for some finite abelian Galois extension $L/K$
and since the images of $G^{\rm ab, n}(K_P)\to {\rm Gal}(L/K)$ are zero for $n>>0$ we see that
the continuity condition (1) is satisfied. For (2) notice that under the isomorphism \eqref{H1(S)} the 
element $v_P(f)\Tr_{P/S}(s_P(\chi))$, $\chi\in \sH^1_{C,P}, f\in \G_m(\eta)$, corresponds to
\[v_P(f)\chi(F_S^{[P:S]})= v_P(f)\chi(F_P)=\chi(\rho_P(f)),\]
where $F_S\in G^{\rm ab}(F)$ 
(resp. $F_P\in G^{\rm ab}(\kappa(P))\cong\pi_1^{\rm ab}(\Spec \sO_{C,P}^h)$) denotes the Frobenius element
and the second equality holds by \cite[XIII, \S 4, Proposition 13]{SerreLF}. Finally (3) is the Artin reciprocity law,
see e.g. \cite[VII, \S 3]{AT}. The second statement now follows directly from the definition of 
${\rm Fil}^n_P\sH^1(\eta)$ and \eqref{prop-CFT-symbol1}.
\end{proof}

\subsection{Cycle modules}

\subsubsection{}
Notice that the fact that cycle modules define reciprocity functors, may be seen as a consequence of the fact that homotopy invariant sheaves with transfers define reciprocity functors and the work of \name{F. D\'eglise} \cite{Deglise}. However we do not need the equivalence between the category of cycle modules and the category of homotopy modules of \cite{Deglise} to see this, as it follows from the axioms of cycle modules. 
In the following we will freely use the numbering of the data and rules a cycle module should satisfy as introduced by \name{M. Rost} in \cite{Rost}. We will check that a cycle module defines a reciprocity functor in the sense of Definition \ref{defn-RF}. For this we will use Proposition \ref{prop-alt-MFsp}, which provides a description of Mackey functor with specialization which is closer to the definition of cycle modules. 

\subsubsection{Cycle modules are graded reciprocity functors}

Let $M_*$ be a cycle module as defined in \cite[(2.1) Definition]{Rost}. Let $n\in \Z$ be an integer. First note that $M_n$ is a Mackey functor by {\bf{R1a-c}} and {\bf{R2d}}, see Remark \ref{rmk-alt-defn-MF}.
As we shall see, the Mackey functor $M_n$ is canonically the underlying Mackey functor of a Nisnevich sheaf with transfers on $\Regone$ which satisfies the modulus condition (MC). 
Let $C\in\sC/S$ be a curve. To a closed point $P$  in $C$ corresponds a discrete valuation $v_P$ on the function field $\kappa(\eta_C)$ of the curve, and thus a residue homomorphism {\bf{D4}}
$$\partial_P:=\partial_{v_P}:M_n(\eta_C)\ra M_{n-1}(P).$$
Given an open subset $U\subseteq C$, we set 
\begin{equation}\label{DefRegularCM}
(\sM_n)_C(U):=\bigcap_{P\in U}\ker\left(\partial_{P}:M_n(\eta_C)\ra M_{n-1}(P)\right).
\end{equation}
This in fact defines a sheaf $(\sM_n)_C$ in the Zariski topology on C and the stalk at $P$ is the kernel of $\partial_P$.

\subsubsection{}\label{sp-Rostcycle}
 Let $\varpi$ be a uniformizer of the local ring $\sO_{C,P}$, and consider as in \cite[p. 329]{Rost}, the map
$$s^\varpi_P:M_n(\eta_C)\ra M_n(P),\quad a\mapsto s^\varpi_P(a):=\partial_P(\{-\varpi\}\cdot a).$$
The restriction to $(\sM_n)_{C,P}$ is independent of the choice of the uniformizer $\varpi$. Indeed if $\varpi'$ is another uniformizer of the local ring $\sO_{C,P}$, then there exists a unit $u\in\sO_{C,P}^\times$ such that $\varpi'=u\varpi $ and using {\bf{R3e}}, we get for any $a\in(\sM_n)_{C,P}$
\begin{equation*}
\begin{split}
s^{\varpi'}_P(a) &=\partial_P(\{-\varpi'\}\cdot a)=\partial_P(\{-u\varpi\}\cdot a)=\partial_P(\{-\varpi\}\cdot a)+\partial_P(\{u\}\cdot a)\\
&=s^\varpi_P(a)-\{\bar u\}\cdot\partial_P(a)=s^\varpi_P(a).
\end{split}
\end{equation*}
Hence we may denote by
$$s_P:(\sM_n)_{C,P}\ra M_n(P)$$
the restriction of the map $s^\varpi_P$.

\begin{proposition}\label{prop-cycle-modules-are-RF}
Let $M_*$ be a cycle module over $S$. Then for any integer $n\in\Z$, the triple $(M_n,\sM_n,s)$ defines a $\Z$-reciprocity functor over $S$ via Proposition \ref{prop-alt-MFsp}, which
lies in $\RF_1$. Furthermore, its associated symbol (see Proposition-Definition \ref{prop-defn-localsymbol}) is given by
\eq{prop-cycle-modules-are-RF1}{M_n(\eta_C)\times \G_m(\eta_C)\to M_n(x_C), \quad (a,f)\mapsto (a,f)_P=\Tr_{P/x_C}\partial_P(\{f\}\cdot a),}
for $C\in (\sC/S)$ and $P\in C$.
\end{proposition}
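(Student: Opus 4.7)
My plan is to verify that the triple $(M_n, \sM_n, s)$ built from the cycle module matches the characterization of a Mackey functor with specialization map in Proposition \ref{prop-alt-MFsp}, then establish the modulus condition using Rost's reciprocity rule {\bf{C}}, and in the same computation derive the symbol formula and the $\RF_1$ assertion.

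First I verify the axioms (RS0)--(S3). The Mackey functor structure on $M_n$ is furnished by the data {\bf{D1}} (pullback) and {\bf{D2}} (corestriction), with (MF1) being {\bf{R1a--c}} and (MF2) being {\bf{R2d}}. The Zariski sheaf $\sM_n$ on a curve $C \in (\sC/S)$ is defined in \eqref{DefRegularCM} as the common kernel of residues, so (RS1) and (RS2) are immediate (the latter because an element pulled back from the base point $x_C$ is killed by each $\partial_P$, a consequence of {\bf{R3}} applied to valuations trivial on the base); (RS0) follows from the \'etale invariance of residues. Conditions (S1)--(S3) express the compatibility of $s_P = \partial_P \circ \{-\varpi\}$ with pullback and pushforward. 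The key manipulation writes $\pi^*\varpi = u\tau^{e(Q/P)}$ for uniformizers $\varpi$ at $P$ and $\tau$ at $Q$ (with $u$ a unit), expands
\[
\pi^*(\{-\varpi\}\cdot a) = (\{-u\} + e(Q/P)\{\tau\})\cdot \pi^*a,
\]
and applies {\bf{R3e}} to kill the unit contribution on elements of vanishing residue. The ramification factor $e(Q/P)$ in (S2) matches the one in Rost's residue-corestriction compatibility.

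For the modulus condition, let $a \in \sM_C(U)$ with $U \subset C$ a nonempty open subset, and set $\fm_0 := \sum_{P \in C\setminus U} [P]$. Given $f \in \G_m(\eta_C)$ with $f\equiv 1$ mod $\fm_0$, I compute $\partial_P(\{f\} \cdot a)$ at each closed point. At $P \in U$, $\partial_P a = 0$ and splitting $f = u\varpi^{v_P(f)}$ with $u$ a unit, {\bf{R3e}} gives $\partial_P(\{u\}a) = -\{\bar u\}\partial_P a = 0$, so $\partial_P(\{f\}a) = v_P(f)\cdot s_P(a)$. At $P \in C\setminus U$, $f$ is a $1$-unit, so $\bar f = 1$ and {\bf{R3e}} yields $\partial_P(\{f\} a) = -\{\bar f\}\partial_P a = 0$. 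Summing over $P$ and invoking rule {\bf{C}}:
\[
0 = \sum_{P \in C}\Tr_{P/x_C}\partial_P(\{f\}\cdot a) = \sum_{P \in U} v_P(f)\Tr_{P/x_C}s_P(a),
\]
establishing (MC) with modulus $\fm_0$; hence $M_n$ is a reciprocity functor. The same computation shows that $(a,f)_P := \Tr_{P/x_C}\partial_P(\{f\}\cdot a)$ satisfies properties (2) and (3) of Corollary-Definition \ref{cor-defn-localsymbol}. Property (1) (continuity) combines the axiom {\bf{FD}} with the vanishing $(a,u)_P = -\Tr_{P/x_C}(\{\bar u\}\partial_P a) = 0$ for any $u \in U^{(1)}_P$ and any $a \in M_n(\eta_C)$ (since $\bar u = 1$). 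By uniqueness in Corollary-Definition \ref{cor-defn-localsymbol}, the pairing coincides with the local symbol of $M_n$, proving \eqref{prop-cycle-modules-are-RF1}. Since the $1$-unit computation holds for arbitrary $a \in M_n(\eta_C)$, $\Fil^1_P M_n(\eta_C) = M_n(\eta_C)$ and $M_n \in \RF_1$.

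The main obstacle I anticipate is the ramified case of (S1): a direct invocation of the residue-restriction compatibility yields only $e(Q/P)\cdot s_Q(\pi^* a) = e(Q/P)\cdot \pi_Q^* s_P(a)$, and the factor $e(Q/P)$ cannot be canceled in $M_n(Q)$ in general. One resolves this by choosing $\tau$ so that the auxiliary unit $u$ in $\pi^*\varpi = u\tau^{e(Q/P)}$ becomes principal (and invoking independence of $s_Q$ from the uniformizer), or by appealing to Rost's dedicated specialization axiom, which directly encodes the compatibility of $s$ under restriction.
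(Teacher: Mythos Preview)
Your argument is essentially the one in the paper: verify the axioms of Proposition~\ref{prop-alt-MFsp} via Rost's rules, then deduce (MC) and the symbol formula from the reciprocity law {\bf RC} together with {\bf R3e} (the paper invokes {\bf R3f} at points of $U$, but your decomposition $f=u\varpi^{v_P(f)}$ with {\bf R3e} is equivalent once $\partial_P a=0$). The $\RF_1$ assertion and the identification of the local symbol are obtained exactly as you do, via uniqueness.

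Two points deserve sharpening. First, what you call ``Rost's dedicated specialization axiom'' is not an axiom but \cite[Lemma~(1.9)]{Rost}: for $a\in(\sM_n)_{C,P}$ one has
\[
s^{\tau}_Q(\pi^*a)=\pi_Q^*\,s^{\varpi}_P(a)-\{\bar u\}\cdot\pi_Q^*\partial_P(a)=\pi_Q^*s_P(a),
\]
which is precisely the route the paper takes for (S1). Your alternative proposal---choosing $\tau$ so that the unit $u$ in $\pi^*\varpi=u\tau^{e(Q/P)}$ becomes a $1$-unit---does not work in general: it requires the residue class $\bar u\in\kappa(Q)^\times$ to be an $e(Q/P)$-th power, which need not hold. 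Second, your treatment of (S2) and (S3) is only gestured at; the paper spells these out using {\bf R2b}, {\bf R3b}, {\bf R3e} for (S2) and {\bf R3d} for (S3), and the $e(Q/P)$ factor in (S2) arises from expanding $\{-\pi^*\varpi_P\}=\{u\}+e(Q/P)\{-\varpi_Q\}+(e(Q/P)-1)\{-1\}$ and applying {\bf R3e}, not directly from {\bf R3b}.
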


\begin{proof}
Let us first check that $\sM_n$ has the required functoriality. Let $\pi:V\ra U$ be a morphism between
1-dimensional connected schemes in $\Regone$ with compactifications $D$ and $C$, respectively.
Then the morphism $\pi^*:M_n(\eta_C)\ra M_n(\eta_D)$ induces a morphism 
$\pi^*:(\sM_n)_C(U)\ra(\sM_n)_D(V) $, by {\bf{R3a}}. Similarly, if $\pi$ is a finite morphism, 
{\bf{R3b}} ensures that the morphism $\pi_*:M_n(\eta_D)\ra M_n(\eta_C)$ induces a morphism $\pi_*:(\sM_n)_D(V)\ra(\sM_n)_C (U)$. 
It follows from {\bf{R1a}} and {\bf{R1b}} that the pushforward and the pullback thus defined are functorial.

Thus the triple $(M_n,\sM_n, s)$ is a data (1)-(3) from Proposition \ref{prop-alt-MFsp}.
It remains to check the conditions (R0)-(S3) from Proposition \ref{prop-alt-MFsp} and (MC) from Definition \ref{defn-modulus}.

Condition (RS0) is Lemma \ref{lem-CM-sheaf} below and (RS1) is an immediate consequence of the condition {\bf{(FD)}} that cycle modules are required to fulfill. Let $C\in (\sC/S)$ and $a\in M_n(x_C)$. Then by {\bf{R3c}}, we have $\partial_P(\rho^*a)=0$ for all $P\in C$, which means that (RS2) holds. 
The condition (S3) follows from {\bf{R3d}}. Let $\pi:D\ra C$ be a morphism in $(\sC/S)$, $Q\in D$ and denote by $\pi_Q: Q\to \pi(Q)=:P$ the map induced by $\pi$. Let $\varpi_Q$ (resp. $\varpi_P$) be a uniformizer of the local ring $\sO_{D,Q}$ (resp. $\sO_{C,P}$). We may write 
$$\pi^*\varpi_P=u\cdot\varpi_Q^{e(Q/P)}, $$
where $u\in\sO_{D,Q}^\times$. Relation (S1) follows from \cite[(1.9) Lemma]{Rost}, which gives for any $a\in (\sM_n)_{C,P}$
$$s^{\varpi_Q}_Q(\pi^*a) =\pi_Q^*s^{\varpi_P}_P(a) -\{\bar u\}\cdot\pi_Q^*\partial_P(a)=\pi_Q^*s_P(a).$$
Assume now that $\pi$ is finite. For any $a\in \pi_*((\sM_n)_{D})_P$, we have 
\begin{align}
s_P(\pi_*a)& = \partial_P(\{-\varpi_P\}\cdot \pi_*a)=\partial_P\pi_*\left(\{-\pi^*\varpi_P\}\cdot a\right) &\text{{by \bf{R2b}}}\notag\\
                        &= \sum_{Q\in\pi^{-1}(P)}\pi_{Q*}\partial_Q\left(\{-\pi^*\varpi_P\}\cdot a\right) &\text{{by \bf{R3b}}}\notag
\end{align}
On the other hand by {\bf{R3e}}
\begin{equation*}
\begin{split}
\partial_Q\left(\{-\pi^*\varpi_P\}\cdot a\right)&=e(Q/P)\cdot \partial_Q(\{-\varpi_Q\}\cdot a)+\partial_Q(\{u\}\cdot a)\\
&=e(Q/P)\cdot s_Q(a)-\{\bar u\}\cdot\partial_Q(a)\\
&=e(Q/P)\cdot s_Q(a).
\end{split}
\end{equation*}
This implies (S2). It remains to prove that the modulus condition (MC) is fulfilled. Let $C\in(\sC/S)$ be a curve and $U\subset C$ be a non-empty open subset and take $a\in (\sM_n)_C(U)$. The closed subset $C\setminus U$ consists then of finitely many closed points $P_1,\ldots,P_r$ and let $\fm$ be the effective divisor $\fm:=\sum_{i=1}^rP_i$. Let $f\in\G_m(\eta_C)$ be a rational function such that $f\equiv 1\mod \fm$.
By the reciprocity law for curves {\bf{RC}}, we know that
\begin{equation}\label{RCCycleMod}
\sum_{P\in C}\Tr_{P/x_C}\partial_P(\{f\}\cdot a)=0.
\end{equation}
If $P=P_i$ for some $i\in\{1,\ldots,r\}$, then by definition $f\in U^{(1)}_P=1+\fm_P$. Hence $f$ is a unit and {\bf{R3e}} implies that
\begin{equation}\label{ComputationFilCyclMod}
\partial_P(\{f\}\cdot a)=-\{\bar f\}\cdot \partial_P(a)=-\{1\}\cdot\partial_P(a)=0.
\end{equation}
On the other hand, if $P\in U$ and $\varpi$ is a uniformizer of the local ring $\sO_{C,P}$, then by {\bf{R3f}} we have
\begin{equation*}
\begin{split}
\partial_P(\{f\}\cdot a)&=\partial_P(\{f\})\cdot s^\varpi_P(a)-s^\varpi_P(f)\cdot\partial_P(a)+\{-1\}\cdot\partial_P(\{f\})\cdot\partial_P(a)\\
&=\partial_P(\{f\})\cdot s^\varpi_P(a)=v_P(f)\cdot s_P(a)
\end{split}
\end{equation*}
since $a\in (\sM_n)_C(U)$. Hence the reciprocity law (\ref{RCCycleMod}), may be rewritten as
$$\sum_{P\in C\setminus|\fm|}v_P(f)\cdot \Tr_{P/x_C}(s_P(a))=0.$$
This shows that the modulus condition (MC) is fulfilled. Hence $(M_n,\sM_n, s)$ defines a reciprocity functor.
The formula for the associated local symbol follows from the computation above and the uniqueness statement in 
Proposition-Definition \ref{prop-defn-localsymbol}.
Hence $(M_n,\sM_n, s)\in \RF_1$ by (\ref{ComputationFilCyclMod}).
\end{proof}

\begin{lemma}\label{lem-CM-sheaf}
For all $C\in (\sC/S)$ the functor $\sM_n$ on the small Nisnevich site  $C_\Nis$, 
which sends an \'etale $C$-scheme $U$ to $\oplus_i(\sM_n)_{C_i}(U_i)$, 
where the $U_i$'s are the connected components of $U$ with unique compactification $C_i$ over $C$, is a sheaf.
\end{lemma}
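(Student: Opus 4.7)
My plan is to verify the Nisnevich sheaf property via the Morel--Voevodsky criterion: a presheaf on $C_\Nis$ that is additive on finite disjoint unions is a Nisnevich sheaf iff it sends every elementary distinguished Nisnevich square to a cartesian square. Additivity on disjoint unions is built into the definition of $\sM_n$, so I reduce to checking cartesianity for a distinguished square
\[
\begin{array}{ccc}
W & \longrightarrow & V \\
\downarrow & & \downarrow \pi \\
U' & \longrightarrow & U,
\end{array}
\]
with $U'\hookrightarrow U$ an open immersion, $\pi$ \'etale, $W=\pi^{-1}(U')$, and $\pi$ inducing an isomorphism $V\setminus W\xrightarrow{\sim} U\setminus U'$. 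Decomposing by components I further assume $U$ is connected, and write $V=\bigsqcup_i V_i$.

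Injectivity of $\sM_n(U)\to \sM_n(V)\times_{\sM_n(W)}\sM_n(U')$ is immediate, since by (RS1) both $\sM_n(U)$ and $\sM_n(U')$ sit as subgroups of $M_n(\eta_U)=M_n(\eta_{U'})$ with $\sM_n(U)\hookrightarrow \sM_n(U')$. The substantive content is surjectivity, for which the key input is the behaviour of the residue under an \'etale pullback. For each $P\in U\setminus U'$ the unique lift $Q_P\in V_{i(P)}$ satisfies $\pi(Q_P)=P$ and $\kappa(Q_P)=\kappa(P)$, and the extension $\sO_{U,P}\to \sO_{V_{i(P)},Q_P}$ is unramified with trivial residue field extension. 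Rost's axiom {\bf R3a}, specialised to ramification index $e=1$ and identity residue extension, then gives
\[
\partial_{Q_P}\circ\pi^* \;=\; (\pi_{Q_P})^*\circ\partial_P\qquad\text{on }M_n(\eta_U),
\]
in which $(\pi_{Q_P})^*\colon M_{n-1}(P)\xrightarrow{\sim}M_{n-1}(Q_P)$ is itself an isomorphism.

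Given a compatible pair $(a,b)\in\sM_n(V)\times_{\sM_n(W)}\sM_n(U')$, the candidate glueing is $b$ itself, viewed inside $M_n(\eta_U)=M_n(\eta_{U'})$. By hypothesis $\partial_P(b)=0$ for every $P\in U'$, so the remaining task is the vanishing at each $P\in U\setminus U'$. The compatibility on $W$ translates, on the component $V_{i(P)}$, into the equality $\pi^*b=a_{i(P)}$ in $M_n(\eta_{V_{i(P)}})$: indeed $W_{i(P)}:=V_{i(P)}\cap W$ is a non-empty Zariski open of $V_{i(P)}$, and (RS1) gives the injection $\sM_n(W_{i(P)})\hookrightarrow M_n(\eta_{V_{i(P)}})$ identifying the two elements. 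Applying $\partial_{Q_P}$ and the key identity yields $(\pi_{Q_P})^*\partial_P(b)=\partial_{Q_P}(a_{i(P)})=0$ since $a_{i(P)}\in\sM_n(V_{i(P)})$, and the fact that $(\pi_{Q_P})^*$ is an isomorphism forces $\partial_P(b)=0$. Hence $b\in\sM_n(U)$ and its image in the fibre product is the given $(a,b)$.

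The main obstacle I expect is the precise invocation of {\bf R3a} for the \'etale extension at $(P,Q_P)$: one must carefully verify that it is exactly the unramified case with trivial residue field extension, so that Rost's general formula $\partial_v\circ\phi^*=e\cdot(\phi_v)^*\circ\partial_{v|_E}$ collapses to the clean equality above without any residual correction. Once this reduction is made, the remainder of the argument is essentially bookkeeping with connected components of $V$ (those dominating $U$ versus those factoring through $U'$).
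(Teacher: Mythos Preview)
Your proof is correct and follows essentially the same approach as the paper: both reduce to the Morel--Voevodsky criterion on distinguished Nisnevich squares and both hinge on Rost's axiom {\bf R3a} (with $e=1$ and trivial residue extension) to transport the residue vanishing from the unique lift $Q_P\in V$ back to $P\in U\setminus U'$. The only cosmetic difference is packaging: the paper introduces the auxiliary presheaf $\sN_n(Y)=\bigoplus_T M_n(\eta_T)$ and first checks that \emph{its} square is cartesian (which absorbs the component bookkeeping and the injectivity), whereas you make the preliminary reduction to $U$ connected and argue directly; one small point you leave implicit is that the image of $b$ in $\sM_n(V)$ equals $a$ on \emph{all} components $V_i$, not just those containing some $Q_P$, but the same argument via $W_i\neq\emptyset$ and (RS1) covers this immediately.
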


\begin{proof}
Let $X\in\Regone$ and
$$\xymatrix{{W}\ar[r]\ar[d]\ar@{}[rd]|{\square} & {V}\ar[d]^{\pi}\\
{U}\ar[r]^j & {X}} $$
be a distinguished Nisnevich square for the Nisnevich topology \emph{i.e.} $\pi$ is an \'etale morphism, $j$ an open immersion and if $Z:=X\setminus U$ with its reduced scheme structure, then $\pi^{-1}(Z)\ra Z$ is an isomorphism. By \cite[Proposition 2.17]{MR2593671}, it is enough to check that the square
\begin{equation}\label{DistSqNis}
\xymatrix{{\sM_n(X)}\ar[r]\ar[d] & {\sM_n(V)}\ar[d]\\
{\sM_n(U)}\ar[r] & {\sM_n(W)}}
\end{equation}
is cartesian. Let for any scheme $Y\in\Regone$, $\sN_n(Y)$ be the direct sum over the irreducible components $T$ of $Y$ of the $M_n(\eta_T)$'s. Then the square analogous to (\ref{DistSqNis}) with $\sM_n$ replaced by $\sN_n$ is cartesian. Indeed this follows from the following two observations:
\begin{itemize}
\item if $C$ is an irreducible component of $X$, then either $C\cap U\neq\emptyset$ (and thus $C\cap U$ is an irreducible component of $U$), or $C$ is contained in $Z$, and therefore dominated by a unique irreducible component of $V$, which is moreover isomorphic to $C$;
\item since $\pi$ is \'etale, any irreducible component $D$ of $V$ dominates an irreducible component $C$ of $X$, and $C\cap U\neq\emptyset$ if and only if $D\cap W\neq\emptyset$.
\end{itemize}
Now to see that (\ref{DistSqNis}) is cartesian, it is enough to remark that if $P\in X$ is a closed point that does not lie in $U$, then there exists $Q\in V$ such that $\pi$ induces an isomorphism $\kappa(Q)\simeq\kappa(P)$,
then by {\bf{R3a}}  we have
$$\partial_Q\circ\pi^*=e(Q/P)\cdot \partial_P=\partial_P $$
since $\pi$ is unramified.
\end{proof}

\begin{example}\label{ex-MilnorK}
In particular the $n$-th level of Milnor $K$-theory gives a reciprocity functor for all integers $n\geqslant 0$,
\[\mathrm{K}^\mathrm{M}_n=(\KM_n, \KMsheaf_n, s).\]
Notice that for $C\in (\sC/S)$, the sheaf $\KMsheaf_{n,C}$ is defined by \eqref{DefRegularCM}. If $\kappa(x_C)$ is an infinite field
this coincides with the sheaf $\KM_n(\sO_C)$ by \cite[Thm. 7.1.]{Kerz}.

Also notice that we have
\[\mathrm{K}^{\mathrm{M}}_1=\G_m.\]
\end{example}

\subsubsection{Functors of zero cycles }\label{Chow-as-RF}

Let $X$ be a smooth quasi-projective $S$-scheme, of pure dimension $d$, and $n\in\Z$ an integer. Denote by $\CH^{d+n}(X,n)$ Bloch's higher Chow group of zero cycles, see \cite{Bloch-HigherChow}.
As explained in Proposition \ref{prop-cycle-modules-are-RF}, cycle modules are graded reciprocity functors. 
This implies in particular that the functor
\begin{equation}\label{RFChow}
\CH^{d+n}(X,n): ({\rm pt}/S)\to (\Z-{\rm mod}),\quad x\mapsto \CH^{d+n}(X,n)(x):=\CH^{d+n}(X_x,n),
\end{equation}
defines a $\Z$-reciprocity functor over $S$, denoted by
\[\sC H^{d+n}(X,n).\]
Indeed this functor may be obtained from $\KM_*$ via the fibration technics of \cite[\S7]{Rost}. To see this let $M_*$ be a cycle module over $X$ (e.g. Milnor K-theory $\KM_*$) and $\varrho:X\ra S$ be the structural morphism. As shown by \name{M. Rost} in \cite[(7.3) Theorem]{Rost}, for any integer $q\in\Z$, we get a cycle module $A_q[\varrho;M]$ over $S$. This cycle module is such that for any $S$-point $x$ 
$$A_q[\varrho;M]_n(x)=A_q(X_x;M,n), $$
where $A_q(X_x;M,n)$ is the $q$-th homology of the cycle complex $C_*(X_x;M,n)$ defined in \cite[\S5]{Rost}.
Assume now $M_*=\KM_*$. 
We have then two isomorphisms
\begin{equation}\label{Chow-as-RF1}
A_0[\varrho;\KM_*]_n(x) =A_0(X_x;\KM_*,n)\simeq \HZar^d(X_x;\KMsheaf_{d+n})\simeq\CH^{d+n}(X_x,n).
\end{equation}
The first one is due to \name{K. Kato} \cite{Kato1986} (see also \cite[(6.5) Corollary]{Rost}), while the second one may be found in \cite[Theorem 5.5]{Akhtar}. Since $A_0[\varrho,\KM_*]$ is a cycle module, it follows from Proposition \ref{prop-cycle-modules-are-RF} that (\ref{RFChow}) defines a reciprocity functor.
 
\begin{remark} Let $C\in (\sC/S)$ be a curve and $P$ be a closed point in $C$. As part of the cycle module structure, we have a residue homomorphism
$$\partial_P: A_0[\rho;\KM_*]_n(\eta_C)\ra A_0[\rho;\KM_*]_{n-1}(P).$$
For $n=0$ the right hand side vanishes and so does the residue map $\partial_P$. Hence it follows from the definition (\ref{DefRegularCM}) of the regular structure on the reciprocity functor $\sC H^d(X,0):=\sC H_0(X)$ 
that for any open subset $U\subseteq C$  
$$\sC H_0(X)(U)=\CH_0(X_{\eta_C}).$$
In particular, $\sC H_0(X)\in \RF_0$.  Moreover the specialization map is the one defined \emph{e.g.} in  \cite[20.3]{F}.
\end{remark}

\subsubsection{}\label{Chow-as-HI}
By \cite[Thm 2.1]{Bloch-HigherChow} and \cite[Thm 17.21]{MVW} the higher Chow groups
$CH^i(-,n)$ have the structure of a homotopy invariant presheaf with transfers.
Let $X$ be a smooth and quasi-projective $S$-scheme of equidimension $d$ and $n\geqslant 0$. 
Then we also obtain a homotopy invariant presheaf with transfers
\[\CH^{d+n}(X,n): Sm\Cor \to (\Z-{\rm mod}),\quad  Y\mapsto \CH^{d+n}(X,n)(Y):=\CH^{d+n}(X\times Y, n),\]
where a finite correspondence $\gamma\in \Cor(Y,Z)$ acts via $\Delta_X\times\gamma\in \Cor(X\times Y, X\times Z)$,
with $\Delta_X\subset X\times X$ the diagonal.
We denote by $\CH_\Nis^{d+n}(X, n)$ its Nisnevich sheafification. By \cite[Thm 13.8]{MVW} we have
$\CH_\Nis^{d+n}(X, n)\in\HINis$. 
 Applying the functor $\widehat{\phantom{-}}:\HINis\to \RF$ from Proposition \ref{HI=RF0}
we get a canonical isomorphism of reciprocity functors
\eq{Chow-as-HI1}{\sC H^{d+n}(X,n)\cong \CH_\Nis^{d+n}(X, n)\widehat{\phantom{-}}.    }
Indeed by construction we get a canonical isomorphism of Mackey functors (see Definition \ref{defn-PST-MF}) and thus by Proposition \ref{prop-alt-MFsp} we have to check that
the specialization maps are the same. But for $C\in (\sC/S)$, $U\subset C$ open and $P\in U$ the specialization map  $s_P: \sC H^{d+n}(X,n)(U)\to \sC H^{d+n}(X,n)(P)$ is defined using the residue homomorphism $\partial_P$ 
of the underlying Rost's cycle module (see \ref{sp-Rostcycle}), which in turn is defined using the differentials in the  cycle complex $C_*(X\times V;\KM_*,n)$, where $V\in Sm$ is a model of $U$, see \cite[7.]{Rost}.
On the other hand the cycle complex $C_*(X\times V;\KM_*,n)$ is nothing but the Gersten resolution on $X\times V$ of $\KMsheaf_{d+n}$, which is isomorphic to the Gersten resolution of $\CH_\Nis^{d+n}(X, n)$ 
as a Zariski sheaf on $X\times V$ (see e.g. \cite[Lemma 3.2]{GL}). It is this isomorphism which induces the isomorphism \eqref{Chow-as-RF1}. 
But the specialization map for $\CH_\Nis^{d+n}(X, n)$ is again induced by the differentials of the Gersten resolution (see \cite[Lem. 6.3]{Akhtar}), thus we get the compatibility with the specialization maps.

\subsection{K\"ahler Differentials}
\subsubsection{Trace for K\"ahler differentials}\label{KDtrace}
Let $T$ be a scheme, $Y$ a noetherian $T$-scheme and $f :X\to Y$ a finite morphism, which is a complete intersection, i.e. $f$ is flat and for any $x\in X$ there exists an open neighborhood $U$ of $x$, such that
$f_{|U}$  factors as the composition of a regular closed embedding followed by a smooth
morphism, $U\inj P\to Y$. (This is for example satisfied if $X$ and $Y$ are regular and $f$ is finite and flat; also any base change with respect to $Y$ will again be a complete intersection.)
Then in \cite[\S 16]{Ku} there is constructed for all $q$ a trace (pushforward)
\[f_*=\Tr_{X/Y}: f_*\Omega^q_{X/T}\to \Omega^q_{Y/T},\]
which has the following properties:
\begin{enumerate}
 \item[(Tr1)] For $\alpha\in \Omega^i_{Y/T}$ and $\beta\in \Omega^{q-i}_{X/T}$, we have $f_*(f^*(\alpha)\beta)=\alpha f_*(\beta)$.
 \item[(Tr2)] For $q=0$ the map $f_*: f_*\sO_X\to \sO_Y$ is the usual trace for a finite and locally free extension of algebras.
\item[(Tr3)] If $g: Y'\to Y$ is a $T$-morphism of noetherian schemes, then the base change $f': X'=X\times_Y Y'\to Y'$ of $f$ is finite and a complete intersection and the following diagram commutes
             \[\xymatrix{ f_*\Omega^q_{X/T}\ar[r]^{{g'}^*}\ar[d]_{f_*} & h_*\Omega^q_{X'/T}\ar[d]_{f'_*}\\
                               \Omega^q_{Y/T}\ar[r]^{g^*} &  g_*\Omega^q_{Y'/T}, }\]
            where $g': X'\to X$ is the base change of $g$ and $h:=f\circ g'= g\circ f'$.
\item[(Tr4)] If $f_i: X_i\to Y$, $i=1,\ldots,n$, are finite morphisms, which are local complete intersections, then so is $f:=\sqcup f_i: X:=\sqcup_{i=1}^n X_i\to Y$ and 
             for $\beta=(\beta_1,\ldots,\beta_n)\in f_*\Omega^q_{X/T}=\oplus_i f_{i*}\Omega^q_{X_i/T}$ the following equality holds in $\Omega^q_{Y/T}$
    \[f_*(\beta)=\sum_i f_{i*}(\beta_i).\]
\item[(Tr5)] If $g: W\to X$ is finite and a complete intersection, then so is $f\circ g: W\to Y$ and we have 
       \[f_*g_*=(f\circ g)_*: (f\circ g)_*\Omega^q_{W/T}\to \Omega^q_{Y/T}.\]
\item[(Tr6)] $f_*\circ d= d\circ f_*: f_*\Omega^q_{X/T}\to \Omega^{q+1}_{Y/T}$.
\item[(Tr7)] For all $a\in f_*\sO_{X}^\times$, we have
              \[f_*(\frac{da}{a})= \frac{d\Nm_{X/Y}(a)}{\Nm_{X/Y}(a)},\]
          where $\Nm_{X/Y}: f_*\sO_X^\times\to \sO_Y^\times$ is the norm map.
\end{enumerate}
Furthermore using \cite[Thm 16.1]{Ku} and \cite[\S 16, Exercise 5)]{Ku} we obtain:
Let $x$ be a $T$-scheme and $\varphi: Y\to x$ a finite morphism, which is a complete intersection. 
Then the following diagram commutes for all $q\geqslant 0$
\eq{eqtracemultiplicities}{\xymatrix{ \Omega^q_{Y/T}\ar[d]_{\varphi_*}\ar[r] & 
          \bigoplus_{y\in Y}\Omega^q_{y/T}\ar[dl]^{\sum_{y} l_y \varphi_{y*}}\\
                \Omega^q_{x/T}, }
}
where $\varphi_y$ is the composition $y\inj Y\xr{\varphi} x$, $l_y={\rm length}(\sO_{Y,y})$ and the horizontal map is induced by the natural surjections $\Omega^q_{Y/T}\to \Omega^q_{y/T}$.

\subsubsection{Residue map for K\"ahler differentials}\label{KDresidue}
Take $C$ in $(\sC/S)$ and set $K=\kappa(C)$, $k=\kappa(x_C)$ and $x=x_C$. 
For $n\in \N$ set 
\[K_n:=\begin{cases} k(K^{p^n}), & \text{if } {\rm char}(F)=p>0,\\ K, &\text{if } {\rm char}(F)=0.\end{cases}\]
Viewing $K_n$ and $\sO_C$ as subsheaves
of the constant sheaf $K$ on $C$ we define $C_n=\Spec (\sO_C\cap K_n)$. Then (see \cite[p. 88 and Thm 3]{HuKu}) we obtain maps
\[C=C_0\to C_1\to C_2\to\ldots,\]
which are homeomorphisms and each $C_n$ is a regular projective and connected curve over $x$ ($=x_{C_n}$). Further, for $P\in C$ denote its image in $C_n$ by $P_n$. Then, by \cite[Thm1, Thm4]{HuKu}, for almost all $n$ the curve
$C_n$ is smooth over $x$ and  $\kappa(P_n)$ is the separable closure of $k$ in $\kappa(P)$.

In \cite[\S 17, Def. 17.4.]{Ku} the residue map  
\[\Res_P: \Omega^1_{\eta/x}\to k\]
is defined by 
\[\Res_P(\alpha):= \Tr_{P_n/x}(\Res_t(\Tr_{C/C_n}\alpha)), \quad \alpha\in \Omega^1_{\eta/x},\]
where  $\Tr_{C/C_n}$ and $\Tr_{P_n/x}$ are the corresponding traces from \ref{KDtrace}, $n$ is chosen such that $\kappa(P_n)$ 
is the separable closure of $k$ in $\kappa(P)$, which implies that the choice of a local parameter $t$  at $P_n$ determines a unique and continuous isomorphism of $k$-algebras between 
$\kappa(P_n)((t))$ and the completion of $K_n$ at $P_n$, and $\Res_t: \Omega^1_{\kappa(P_n)((t))/\kappa(P_n)}\to \kappa(P_n)$ is the usual residue. 
It is shown in \cite[\S 17]{Ku}, that this definition does not depend on the choices made. Furthermore the following properties are proven:
\begin{enumerate}
 \item $\Res_P$ is $k$-linear and factors through 
              $\Omega^1_{\eta/x}\to H^1_P(\Omega^1_{\eta/x})=\Omega^1_{\eta/x}/\Omega^1_{C/x,P}$.
 \item Let $\pi: D\to C$ be a finite morphism in $(\sC/S)$ and  $P\in C$,  then on $\Omega^1_{\eta_D/x_D}$
           \[\Res_{P}\circ \Tr_{D/C} =\sum_{Q\in \pi^{-1}(P)} \Tr_{x_D/x}\circ \Res_Q\]
 \item For $C\in (\sC/S)$ we have for all $\alpha\in \Omega^1_{\eta/x}$
         \[\sum_{P\in C} \Res_P(\alpha)=0. \] 
\end{enumerate}
\begin{remark}\label{rmk-fineresidue}
Notice that if $\kappa(P)$ is separable over $k$, then $\Res_P: \Omega^1_{\eta/x}\to k$, factors via a {\em fine residue map}
\[\Omega^1_{\eta/x}\xr{\widetilde{\Res}_P} \kappa(P)\xr{\Tr} k,\]
where $\widetilde{\Res}_P$ is defined to be $\Res_t$, with $t$ a local parameter at $P$. This is independent of the choice of $t$ by the same argument as in \cite[II, \S 11]{SerreGACC}.
\end{remark}

\subsubsection{Higher residues.}\label{KDRF}
Let $q\geqslant 0$ be a non-negative integer. For any $X$ in $\Regone$ we can consider the $q$-th absolute K\"ahler differentials of $X$ over $\Spec \Z$, 
\[\Omega^q_X:=\Omega^q_{X/\Spec \Z}.\]
For $C\in (\sC/S)$ and $P\in C$  we define 
      \[\Res^{q+1}_P: \Omega^{q+1}_\eta\to \Omega^q_{x_C}\]
  as follows:
       Take $\pi: C\to C'$ a finite, surjective and purely inseparable morphism in $(\sC/S)$ such that $C'$ is generically smooth over $x=x_C=x_{C'}$.
        (Notice that such a $\pi$ is automatically a homeomorphism.)
         Let $P':=\pi(P)$ be the image of $P$ in $C'$ and $\eta'=\pi(\eta)$ the generic point of $C'$. Then define $\Res^{q+1}_P$ as the composition
        \[\Omega^{q+1}_{\eta}\xr{\pi_*}\Omega^{q+1}_{\eta'}\surj \Omega^1_{\eta'/x}\otimes_{\kappa(x)}\Omega^q_x\xr{\Res_{P'}\otimes\id} \Omega^q_x,\]
 where $\Res_{P'}:\Omega^1_{\eta/x}\to \kappa(x)$ is the residue map from \ref{KDresidue} and the surjection is induced by the short exact sequence
        \[0\to \kappa(\eta')\otimes \Omega^1_{\kappa(x)}\to \Omega^1_{\eta'}\to \Omega^1_{\eta'/x}\to 0.\]

This definition is independent of the chosen finite, surjective and purely inseparable $x_C$-morphism $\pi: C\to C'$ with $C'$ generically smooth over $x_C$. Indeed since such a morphism corresponds (up to isomorphism)
to giving a finite purely inseparable field extension $\kappa(C)/K'$ such that $K'$ is separable over $\kappa(x_C)$ we only have to show, that if $C'\to C''$ is a finite, surjective and purely inseparable $x_C$-morphism
between generically smooth curves, then
the residues constructed with respect to $\pi: C\to C'$ and $\pi': C\to C'\to C''$ coincide. 
But since the kernel of the surjection $\Omega^{q+1}_{\eta'}\surj \Omega^1_{\eta'/x}\otimes_{\kappa(x)}\Omega^q_x$ equals the image of
$\kappa(\eta')\otimes_{\kappa(x)}\Omega^{q+1}_x\to \Omega^{q+1}_{\eta'}$ and the same for $\eta''$ the linearity of the trace yields a commutative diagram
\[\xymatrix{\Omega^{q+1}_{\eta'}\ar@{->>}[r]\ar[d]_{\Tr_{\eta'/\eta''}} & \Omega^1_{\eta'/x}\otimes_{\kappa(x)}\Omega^q_x\ar[d]^{\Tr_{\eta'/\eta''}\otimes \id}\\
            \Omega^{q+1}_{\eta''}\ar@{->>}[r] &   \Omega^1_{\eta''/x}\otimes_{\kappa(x)}\Omega^q_x.  }\]
Now it follows from \ref{KDresidue}, (2) and the transitivity of the trace that the definition of $\Res^{q+1}_{P}$ does not depend on the choice of $\pi$.
In particular (see \ref{KDresidue}) we can choose $\pi:C\to C'$ in such a way that $C'$ is smooth over $x$ and that $\kappa(P')/\kappa(x)$ is separable, which simplifies the definition of $\Res_P$.
It also follows that $\Res^1_P=\Res_P$ and that $\Res^{q+1}_P$ satisfies the analog of the properties \ref{KDresidue}, (1), (2), (3).

\begin{thm}\label{thm-KD}
For all $q\geqslant 0$ the absolute K\"ahler differentials define a reciprocity functor 
\[\Regone\Cor\to (\Z-{\rm mod}),\quad X \mapsto \Omega^q_X,\]
such that $\Omega^q([\Gamma_f])=f^*$ is the usual pullback on $\Omega^q$ (resp. $\Omega^q([\Gamma_f^t])=f_*$ is the trace recalled in \ref{KDtrace}) for $f:X\to Y$ a map in $\Regone$ (resp. a finite and flat map in $\Regone$).
Furthermore the local symbol attached to $\Omega^q$ (see Proposition-Definition \ref{prop-defn-localsymbol}) 
is given by
\[(\alpha,f)_P= \Res_P^{q+1}(\tfrac{df}{f} \alpha),\quad \alpha\in \Omega^q_{\eta_C}, f\in \G_m(\eta_C) \]
for all $C\in (\sC/S)$ and $P\in C$.
\end{thm}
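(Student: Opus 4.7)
The plan is to verify the axioms of a reciprocity functor (Definition \ref{defn-RF}) for $\Omega^q$ in three stages, using the characterisations in Lemma \ref{CarPSTRegone} and Proposition \ref{prop-modulus=symbol}, and then to read off the local symbol from the uniqueness statement in Corollary-Definition \ref{cor-defn-localsymbol}.

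First I would establish that $\Omega^q$ is a Nisnevich sheaf with transfers on $\Regone$ satisfying (FP) and (Inj). The transfer structure is built from the pullback and the trace recalled in \ref{KDtrace}. The projection relation in Lemma \ref{CarPSTRegone}(2) is immediate from (Tr1) applied to $\beta = 1$ combined with (Tr2). The base-change identity in Lemma \ref{CarPSTRegone}(3) follows from (Tr3) applied to the Cartesian square, which yields $g^* \circ f_* = f'_* \circ g'^*$ for the base-changed map $f' \colon Y' \to X'$; the decomposition over the components $T$ of $Y'$ with the prescribed multiplicities is then \eqref{eqtracemultiplicities} applied at each point of $X'$ to $f'$ (which is a complete intersection, since $X'$ is regular and $f'$ is finite flat). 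The Nisnevich sheaf property is inherited from the fact that for an \'etale morphism $u \colon U \to V$ in $\Regone$ one has $\Omega^q_U = u^*\Omega^q_V$. Condition (FP) is clear since the Zariski stalk of $\Omega^q_C$ at the generic point of $C$ is $\Omega^q_{\kappa(C)}$, and (Inj) follows because $\Omega^q_C$ is locally free on the regular curve $C$ and hence embeds into its generic stalk.

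The essential content is the modulus condition (MC). Given $C \in (\sC/S)$, an open $U \subseteq C$, and $\alpha \in \Omega^q_U$, I would define
\[
\rho_{P,\alpha}(f) := \Res^{q+1}_P\!\left( \tfrac{df}{f} \, \alpha \right), \qquad P \in C,\ f \in \G_m(\eta_C),
\]
and verify conditions (1)(a)--(c) of Proposition \ref{prop-modulus=symbol} with this family. Global reciprocity $\sum_{P \in C} \rho_{P,\alpha}(f) = 0$ is the higher-$q$ analog of \ref{KDresidue}(3) for $\Res^{q+1}$, inherited from that for $\Res$ via the definition. The remaining two conditions are local at each closed point $P$: I would show that $\rho_{P,\alpha}$ kills $U^{(n)}_P$ for $n$ larger than the order of the poles of $\alpha$ at $P$ (using that $f \in U^{(n)}_P$ forces $\tfrac{df}{f} \in \fm_P^{n-1}\Omega^1_{\sO_{C,P}}$, so that $\tfrac{df}{f}\,\alpha$ becomes regular for $n$ large and has vanishing residue), and that for $P$ in the nonempty open on which $\alpha$ is regular one has the tame identity
\[
\Res^{q+1}_P\!\left( \tfrac{df}{f}\, \alpha \right) = v_P(f) \cdot \Tr_{P/x_C}\!\bigl(s_P(\alpha)\bigr).
\]
Both local statements I would reduce, via the purely inseparable morphism $\pi \colon C \to C'$ used to define $\Res^{q+1}$, to the case where $C'$ is generically smooth over $x_C$ and $\kappa(P')/\kappa(x_C)$ is separable. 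In that case $\Res^{q+1}_{P'}$ factors through the fine residue $\widetilde{\Res}_{P'} \otimes \id_{\Omega^q_{x_C}}$ of Remark \ref{rmk-fineresidue}, and the identity reduces to a direct Laurent-series computation with $f = u\, t^{v_P(f)}$ and $u$ a local unit.

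Once (MC) is verified, $\Omega^q$ is a reciprocity functor, and the family $\{\rho_{P,\alpha}\}$ constructed above satisfies properties (1)--(3) of Corollary-Definition \ref{cor-defn-localsymbol}. The uniqueness clause there then forces $(\alpha,f)_P = \Res^{q+1}_P(\tfrac{df}{f}\,\alpha)$. The main obstacle is the local tame-residue computation in the presence of potentially inseparable residue extensions $\kappa(P)/\kappa(x_C)$; the splitting built into the definition of $\Res^{q+1}$, namely the projection $\Omega^{q+1}_{\eta_{C'}} \twoheadrightarrow \Omega^1_{\eta_{C'}/x_C} \otimes \Omega^q_{x_C}$ after pushforward along $\pi$, is precisely the device that reduces the computation to the one-dimensional relative residue and thereby isolates this difficulty.
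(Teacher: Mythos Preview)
Your strategy matches the paper's proof almost exactly: verify the presheaf-with-transfers axioms via Lemma~\ref{CarPSTRegone} and the trace properties (Tr1)--(Tr5) together with \eqref{eqtracemultiplicities}, then establish (MC) by exhibiting the family $\rho_{P,\alpha}(f)=\Res^{q+1}_P(\tfrac{df}{f}\alpha)$ and invoking Proposition~\ref{prop-modulus=symbol}, after which the local-symbol formula follows from uniqueness. Two places in your outline are looser than the paper and would need to be tightened. First, for (Inj) you assert that $\Omega^q_C$ is locally free on a regular curve $C$; this is not automatic when $C$ is regular but not smooth over $x_C$ (absolute differentials, positive characteristic), and the paper invokes \cite[Thm.~7.5]{Ku} precisely for that case. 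Second, and more substantially, your reduction of the tame identity $\Res^{q+1}_P(\tfrac{dt}{t}\alpha)=\Tr_{P/x_C}(s_P(\alpha))$ ``via the purely inseparable morphism $\pi$'' hides the actual work: one must show that $\pi_*(\tfrac{dz}{z}\alpha)$, with $z$ a local parameter upstairs, is congruent modulo regular forms to $\tfrac{dt}{t}\beta$ with $s_{P'}(\beta)=\Tr_{P/P'}(s_P(\alpha))$. The paper isolates this as a separate lemma (Lemma~\ref{lem-KD-inseptrace}) and proves it by a case analysis on whether the degree-$p$ step is totally ramified or purely residual; without that computation the reduction does not go through, so you should supply it.
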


\begin{proof}
It follows from Lemma \ref{CarPSTRegone} and (Tr1)-(Tr5) and \eqref{eqtracemultiplicities} in \ref{KDtrace}, that $\Omega^q$ is a presheaf with transfers on $\Regone$ with pullback and pushforward
as in the statement. Further it clearly is a Nisnevich sheaf and satisfies (FP) from Definition \ref{defn-MFsp}.
Next observe that for  $C\in (\sC/S)$ and $P\in C$ the module $\Omega^q_{C,P}$ is free (of maybe infinite rank).
Indeed if $C/x_C$ is smooth we have the exact sequence 
\[0\to \sO_{C,P}\otimes_{\kappa(x_C)}\Omega^1_{x_C}\to \Omega^1_{C,P}\to \Omega^1_{C/x_C,P}\to 0,\]
which implies that $\Omega^1_{C,P}$ and hence also all its exterior powers are free. In case $\kappa(x_C)$ has positive characteristic and $C$ is only regular this is e.g. \cite[Thm. 7.5]{Ku}.
In particular $\Omega^q_{C,P}$ is a submodule of $\Omega^q_\eta$ and hence property (Inj) from Definition \ref{defn-MFsp} holds. Thus $\Omega^q\in\MFsp$ and it remains to check condition (MC).
For this we claim that for  $C\in(\sC/S)$, $P\in C$, $\alpha\in\Omega^q_{C,P}$ and $t$ a local parameter at $P$,  we have (with $x:=x_C$ and the notation from \ref{not-pb-pf-sp})
\eq{eq-thm-KD1}{\Tr_{P/x}(s_P(\alpha))= \Res^{q+1}_P(\tfrac{dt}{t}\alpha).}
Then $\G_m(\eta_C)\to \Omega^q_{x}$, $f\mapsto\Res^{q+1}_P(\tfrac{df}{f}\alpha)$ clearly is a group homomorphism, which is continuous (by \ref{KDresidue}, 1.) and satisfies the conditions (b) and (c) from Proposition \ref{prop-modulus=symbol}. Hence $\Omega^q$ satisfies (MC) and is a reciprocity functor and the explicit description for the local symbol given in the theorem follows from the uniqueness statement 
in Proposition \ref{prop-modulus=symbol}. Thus it suffices
to prove the claim \eqref{eq-thm-KD1}. In case $C/x$ is smooth and  $P/x$ is separable we have that $\sO_{C,P}$ is \'etale over $\kappa(x)[t]$. Thus 
\[ \Omega^q_{C,P}= (\sO_{C,P}\otimes \Omega^q_{\kappa(x)})\oplus (\sO_{C,P}\otimes \Omega^{q-1}_{\kappa(x)}\cdot dt).\]
Therefore we can write $\alpha= \alpha_0+ \alpha_1 dt$ with $\alpha_i\in \sO_{C,P}\otimes \Omega^{q-i}_{\kappa(x)}$ and we have by definition
\[\Res^{q+1}_P(\tfrac{dt}{t}\alpha)=\Tr_{P/x}(s_P(\alpha_0))=\Tr_{P/x}(s_P(\alpha)).\]
The general case thus follows from the analog of \ref{KDresidue}, 1. and 2. for $\Res^{q+1}_P$ and the following lemma.
\end{proof}

\begin{lemma}\label{lem-KD-inseptrace}
Let $\pi:D\to C$ be a finite, surjective and purely inseparable morphism between regular curves over a field $k$ of characteristic $p>0$.
Let $Q\in D$ be a closed point and $P=\pi(Q)$ its image in $C$ and let $z\in \sO_{D,Q}$ and $t\in \sO_{C,P}$ be local parameters at $Q$ and $P$ respectively.
Then for all $\alpha\in \Omega^{q}_{D,Q}$ there exists a $\beta\in \Omega^q_{C,P}$ such that
\[\Tr_{D/C}(\tfrac{dz}{z}\alpha)\equiv \tfrac{dt}{t}\beta \text{ mod } \Omega^{q+1}_{C,P} \quad \text{and} \quad s_P(\beta)= \Tr_{Q/P}(s_Q(\alpha))\quad \text{in } \Omega^{q}_{P}.\]
\end{lemma}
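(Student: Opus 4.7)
The plan is to reduce to two elementary types of purely inseparable extensions of $p$-power degree and handle each separately. I factor the local extension $\sO_{C,P}\hookrightarrow\sO_{D,Q}$ as $\sO_{C,P}\hookrightarrow A'\hookrightarrow\sO_{D,Q}$, where $A':=\sO_{C,P}[w_1,\ldots,w_s]$ for lifts $w_i\in\sO_{D,Q}$ of a $\kappa(P)$-basis of $\kappa(Q)$. Then $A'$ is a DVR with uniformizer $t$ and residue field $\kappa(Q)$; the lower extension $A'/\sO_{C,P}$ is purely inseparable with $e=1$ (the ``unramified'' case below), while the upper extension $\sO_{D,Q}/A'$ is purely inseparable with $f=1$ (the ``totally ramified'' case). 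By transitivity of the trace (Tr5), the lemma for $\pi$ follows from the analogous statements for each of these two factors, which I now treat separately (writing $\pi$ generically for each factor).

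\emph{Unramified case} ($e(Q/P)=1$, $f(Q/P)=p^b$). Write $\pi^*t=uz$ with $u\in\sO_{D,Q}^\times$. Then $\pi^*(dt/t)=du/u+dz/z$ in $\Omega^1_{D,Q}$, so $dz/z=\pi^*(dt/t)-du/u$. The projection formula (Tr1) gives
\[
\Tr_{D/C}\!\left(\tfrac{dz}{z}\wedge\alpha\right)=\tfrac{dt}{t}\wedge\Tr_{D/C}(\alpha)-\Tr_{D/C}\!\left(\tfrac{du}{u}\wedge\alpha\right),
\]
where the last term lies in $\Omega^{q+1}_{C,P}$. Set $\beta:=\Tr_{D/C}(\alpha)$; this yields the first congruence. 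The condition $s_P(\beta)=\Tr_{Q/P}(s_Q(\alpha))$ follows by combining (Tr3) (base change along $P\hookrightarrow C$) with \eqref{eqtracemultiplicities} applied to the fiber $D\times_CP=\Spec\kappa(Q)\to P$ (which is reduced since $e=1$).

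\emph{Totally ramified case} ($f(Q/P)=1$, $e(Q/P)=p^a$, $\kappa(Q)=\kappa(P)$). Here $z^e\in\sO_{C,P}$ (being integral over $\sO_{C,P}$ and lying in $\kappa(C)$, by pure inseparability), and comparing valuations yields $\pi^*t=vz^e$ with $v\in\sO_{C,P}^\times$. In particular $\sO_{D,Q}$ is a free $\sO_{C,P}$-module with basis $\{1,z,\ldots,z^{e-1}\}$. Decomposing $\alpha=\alpha^{(0)}+\alpha^{(1)}\wedge dz$ with $\alpha^{(0)}$ the $dz$-free part, one has $\frac{dz}{z}\wedge\alpha=\frac{dz}{z}\wedge\alpha^{(0)}$ (since $dz\wedge dz=0$). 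Expanding $\alpha^{(0)}=\sum_{i=0}^{e-1}z^i\omega_i$ with $\omega_i\in\Omega^q_{C,P}$ lifted via $\pi^*$, the projection formula reduces the computation to evaluating each $\Tr_{D/C}(z^{i-1}dz)$. Using (Tr6), (Tr7), and the vanishing $\Tr_{D/C}(z^j)=0$ for $0\leq j<e$ in characteristic $p$ (for $1\leq j<e$ the companion matrix of multiplication by $z^j$ on the chosen basis has zero diagonal, and $\Tr(1)=e\equiv 0\pmod p$), one shows that only the $i=0$ term produces a genuine logarithmic pole, yielding
\[
\Tr_{D/C}\!\left(\tfrac{dz}{z}\wedge\alpha\right)\equiv\tfrac{dt}{t}\wedge\omega_0\pmod{\Omega^{q+1}_{C,P}}.
\]
Setting $\beta:=\omega_0$ verifies the first condition. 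The second follows from $\kappa(Q)=\kappa(P)$ (so $\Tr_{Q/P}=\id$) and $s_Q(\alpha)=s_P(\omega_0)$, obtained by reducing the basis expansion modulo $z$.

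The principal obstacle lies in the totally ramified case: in characteristic $p$ the naive identity $\pi^*(dt/t)=e\cdot dz/z+du/u$ degenerates to $\pi^*(dt/t)=du/u$ (as $p\mid e$), providing no direct handle on $dz/z$. The workaround is to exploit the explicit $\sO_{C,P}$-module basis $\{z^i\}$ of $\sO_{D,Q}$ together with (Tr7) (giving $\Tr_{D/C}(dz/z)=d\Nm(z)/\Nm(z)$) to isolate the logarithmic contribution to the trace.
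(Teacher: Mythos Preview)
Your unramified and totally ramified cases are handled correctly in spirit, and the unramified argument matches the paper essentially verbatim. The problem is the factorization step.

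The assertion that $A':=\sO_{C,P}[w_1,\ldots,w_s]$ is a DVR with uniformizer $t$ fails for arbitrary lifts $w_i$. Take $k=\mathbb{F}_p(s)$, $\sO_{C,P}=k[x]_{(x)}$, and $L=K(w)$ with $w^{p^2}=sx^p$. Then $[L:K]=p^2$, $e(Q/P)=f(Q/P)=p$, $\kappa(Q)=k(s^{1/p})$, and $u:=w^p/x\in\sO_{D,Q}^\times$ satisfies $u^p=s$, so $u$ lifts $s^{1/p}$. But so does $w':=u+w$; and $(w')^{p^2}=s^p+sx^p\in K$ while $(w')^p=s+ux\notin K$, so $K(w')=L$. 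Thus $A'=\sO_{C,P}[w']$ has fraction field $L$ itself, and $A'/(x)\cong k[T]/((T^p-s)^p)$ is not a field. Since $A'\subsetneq\sO_{D,Q}$ (one checks $u\notin A'$) with the same fraction field, $A'$ is not integrally closed, hence not a DVR. Your intended splitting into an unramified piece followed by a totally ramified piece therefore collapses for this choice of lift; producing \emph{good} lifts (for which $A'$ is a DVR) would require a separate argument you have not given.

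The paper avoids this by a different reduction: any purely inseparable extension factors as a tower of degree-$p$ extensions, and the trace is transitive (Tr5), so it suffices to treat $[D:C]=p$. In degree $p$ one automatically has either $(e,f)=(1,p)$ or $(e,f)=(p,1)$, so no intermediate factorization is needed. The paper's totally ramified case then uses the decomposition $\alpha=\pi^*\beta+z\gamma_0+dz\cdot\gamma_1$ (available since $\kappa(P)=\kappa(Q)$ makes $\Omega^q_P\cong\Omega^q_Q$) together with a single application of (Tr7) to $\Tr_{D/C}(dz/z)$. Your basis expansion would work too once $[D:C]=p$, but note that your stated tools (Tr6) and $\Tr_{D/C}(z^j)=0$ do not handle $\Tr(z^{i-1}dz)$ when $p\mid i$ (which arises for $e=p^a$, $a\ge2$); fortunately all you actually need for those terms is that $\Tr(z^{i-1}dz)\in\Omega^1_{C,P}$ for $i\ge1$, which is automatic.
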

\begin{proof}
Notice first that $\Spec\sO_{D,Q}\to \Spec\sO_{C,P}$ is a complete intersection morphism and a homeomorphism. Thus  $\Tr_{C/D}$ maps $\Omega^{q+1}_{D,Q}$ to $\Omega^{q+1}_{C,P}$ (by (Tr3)).
Hence it suffices to prove the statement in the case $[D:C]=p=e(Q/P)\cdot f(P/Q)$.

{\em 1. case: $e(Q/P)=1$, $f(Q/P)=p$.} We can write $t=zu$ with $u\in \sO_{D,Q}^\times$. Thus 
 \[\Tr_{D/C}(\tfrac{dz}{z}\alpha)=\Tr_{D/C}(\tfrac{dt}{t}\alpha)-\Tr_{D/C}(\tfrac{du}{u}\alpha)\equiv \tfrac{dt}{t}\Tr_{D/C}(\alpha)\quad \text{mod }\Omega^{q+1}_{C,P}.\]
 Further by (S2) for $\Omega^q$ we have
\[s_P(\Tr_{D/C}(\alpha))= e(Q/P) \Tr_{Q/P}(s_Q(\alpha))=\Tr_{Q/P}(s_Q(\alpha)).\]
 Hence we can take $\beta= \Tr_{D/C}(\alpha)\in \Omega^q_{C,P}$.

{\em 2. case: $e(Q/P)=p$, $f(Q/P)=1$.} In this case we have $t=z^p u$ for some $u\in \sO_{C,P}^\times=\sO_{D,Q}^\times\cap \kappa(C)$ (notice that $z^p\in \kappa(C)$). 
Further, since $\pi^*: \Omega^q_{C,P}\to \Omega^q_{D,Q}$ induces an isomorphism $\Omega^q_{P}\to \Omega^q_Q$, there exists a $\beta\in \Omega^q_{C,P}$ and $\gamma_i\in \Omega^{q-i}_{D,Q}$, $i=0,1$,
such that
\[\alpha= \pi^*\beta+ z\gamma_0+ dz\cdot\gamma_1,\]
in particular $s_Q(\alpha)=s_P(\beta)$.
We obtain
\[\Tr_{D/C}(\tfrac{dz}{z}\alpha)\equiv \Tr_{D/C}(\tfrac{dz}{z})\beta\stackrel{\text{(Tr7)}}{\equiv} \tfrac{d(t/u)}{(t/u)}\beta\equiv \tfrac{dt}{t}\beta\quad \text{mod }\Omega^{q+1}_{C,P}.\]
Hence the lemma.
\end{proof}

\begin{example}
We have  $\Omega^0=\G_a.$
\end{example}

\section{First properties of the category of reciprocity functors}

\subsection{Lax Mackey functors with specialization maps}

\begin{definition}\label{defn-quotients-by-subsets}
 Let $\sM: \Reg\Cor^\op\to R-{\rm mod}$ be a presheaf with transfers on $\Reg$. A {\em subset $\sR$ of $\sM$} is a collection of subsets $\sR(X)\subset \sM(X)$ for $X\in \Reg$.
Given such a subset we define $<\sR>(X)$ to be the $R$-submodule of $\sM(X)$ generated by all $\sM(\gamma)(a)$, with $\gamma\in \Cor(X,Y)$, $Y\in \Reg$, and $a\in \sR(Y)$.
Clearly $\Reg\ni X\mapsto <\sR>(X)$ is a subpresheaf with transfers of 
$\sM$. We define the {\em quotient $\sM/\sR$ of $\sM$ by the subset $\sR$}   to be the presheaf with transfers on $\Reg$ given by
\[X\mapsto \frac{\sM(X)}{<\sR>(X)}=:\sM/\sR(X).\]
\end{definition}
It is immediate that $\sM/\sR$ satisfies the following universal property:
Any morphism $\Phi:\sM\to \sN$ of presheaves with transfers on $\Reg$ with $\Phi(\sR(X))=0$ for all $X\in \Reg$ 
factors uniquely through $\sM/\sR$.
In particular, if $\sR\subset\sM$ already is a subpresheaf with transfers on $\Reg$, then $\sM/\sR$ is the usual quotient in the Abelian category of presheaves with transfers on $\Reg$.

We will need the following auxiliary category.

\begin{definition}\label{defn-LMFsp}
A {\em lax Mackey functor with specialization maps} $\sL$ is a presheaf with transfers on $\Reg$ which satisfies the following condition:
\begin{enumerate}
 \item[(W.F.P.)] For all $C\in (\sC/S)$ with generic point $\eta$ the natural map \[\varinjlim_{U\subset C} \sL(U)\surj \sL(\eta)\] is surjective,
                    where the limit is over all non-empty open subsets $U\subset C$.
\end{enumerate}
We denote by $\LMFsp$ the full subcategory of $\PT$ whose objects are lax Mackey functors with specialization maps.
\end{definition}

Notice that the forgetful functor $\MFsp\to \LMFsp$ is fully faithful.

\begin{remark}\label{remark-quotientLMFsp}
Note that the quotient of a lax Mackey functor with specialization maps $\sL$ by a subset, as defined in Definition \ref{defn-quotients-by-subsets}, is again in the category $\LMFsp$.
\end{remark}

\begin{proposition}\label{prop-adjoint-of-LMF}
There exists a functor 
\[\Sigma: \LMFsp\to \MFsp.\]
such that the composition
\[\MFsp\xr{\rm forget}\LMFsp\xr{\Sigma} \MFsp\]
is the identity functor and for any $\sL\in \LMFsp$ we have a functorial map 
\[\sL\to \Sigma(\sL) \quad \text{in } \LMFsp,\]
which $\Sigma$ maps to the identity on $\Sigma(\sL)$ in $\MFsp$ and which is surjective on Nisnevich stalks.
In particular $\Sigma$ is left adjoint to the natural functor $\MFsp\to\LMFsp$, with 
adjunction maps given by $\sL\to\Sigma(\sL)$, for $\sL\in \LMFsp$ and $\sM\xr{\id}\Sigma(\sM)$, for $\sM\in\MFsp$.
\end{proposition}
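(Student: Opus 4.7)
The plan is to construct $\Sigma(\sL)$ as the Nisnevich sheafification of a quotient of $\sL$ that forces the condition (Inj), and then verify the universal property of a left adjoint. For $\sL \in \LMFsp$, I would first define the subset $\sR \subset \sL$ by
\[\sR(X) := \ker\!\Bigl(\sL(X) \to \bigoplus_i \sL(\eta_i)\Bigr),\]
for $X \in \Reg$ with connected components having generic points $\eta_i$. Let $\langle\sR\rangle \subset \sL$ be the sub-presheaf-with-transfers generated by $\sR$ in the sense of Definition \ref{defn-quotients-by-subsets}; by Remark \ref{remark-quotientLMFsp}, the quotient $\bar\sL := \sL/\langle\sR\rangle$ still lies in $\LMFsp$. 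Set $\Sigma(\sL) := \bar\sL_{\Nis}$, the Nisnevich sheafification, which by Lemma \ref{lem-Nis-keeps-transfer} remains a Nisnevich sheaf with transfers. The canonical morphism $\sL \surj \bar\sL \to \bar\sL_{\Nis}$ is a composition of maps each surjective on Nisnevich stalks.

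To verify $\Sigma(\sL) \in \MFsp$, I would check (Nis), (FP) and (Inj). The first is automatic after sheafification. For (FP), surjectivity of $\varinjlim_{U}\Sigma(\sL)(U) \to \Sigma(\sL)(\eta_C)$ descends from (W.F.P.) for $\sL$; injectivity follows by a spread-out argument: if a class $[a] \in \bar\sL(U)$ maps to zero in $\bar\sL(\eta_C)$, write $a|_{\eta_C} = \sum_i \sL(\gamma_i)(b_i)$ with $\gamma_i \in \Cor(\eta_C, Y_i)$ and $b_i \in \sR(Y_i)$; each $\gamma_i$ extends to some $\widetilde\gamma_i \in \Cor(V, Y_i)$ for a non-empty open $V \subset U$, and then $a|_V - \sum_i \sL(\widetilde\gamma_i)(b_i) \in \sR(V) \subset \langle\sR\rangle(V)$, so $[a]|_V = 0$. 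Condition (Inj) is obtained from this combined with the Nisnevich sheafification, which allows one to pass from vanishing on a dense Zariski open to vanishing in every Nisnevich stalk of $X$.

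For the universal property: given $\sM \in \MFsp$ and $f: \sL \to \sM$ in $\LMFsp$, the combination of (Inj) and (FP) on $\sM$ forces $\sM(X) \hookrightarrow \sM(\eta_X)$ for every connected $X$, so $f(\sR(X)) = 0$ and, by the universal property of the generated sub-presheaf-with-transfers, $f$ factors uniquely through $\bar\sL$; since $\sM$ is already a Nisnevich sheaf, this further factors uniquely through $\Sigma(\sL) = \bar\sL_{\Nis}$. Applied to $\sM \in \MFsp$ itself we have $\sR_{\sM} = 0$, so $\bar\sM = \sM$ and $\Sigma(\sM) = \sM$, giving that the composition $\MFsp \to \LMFsp \xrightarrow{\Sigma} \MFsp$ is the identity functor; functoriality of $\Sigma$ in $\sL$ is formal as $\sR$ and $\langle\sR\rangle$ are natural. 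The main technical hurdle is the verification of (Inj): the spread-out argument gives vanishing only on a Zariski-dense open of $X$, and showing that Nisnevich sheafification of $\bar\sL$ then produces global vanishing at every closed point (perhaps via iterating the quotient-plus-sheafification step and passing to a filtered colimit, which stabilizes because each iteration strictly shrinks the kernel of restriction to the generic point) is the essential nontrivial step.
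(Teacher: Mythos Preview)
Your construction and the universal-property argument are on the right track, and your spread-out argument does establish (FP) for the single quotient $\bar\sL=\sL/\langle\sR\rangle$. The gap is exactly where you locate it: (Inj) for $\bar\sL$ (or for $\bar\sL_{\Nis}$) is not established, and your suggested repair via Nisnevich sheafification does not work. If $[a]\in\bar\sL(X)$ restricts to zero on a dense Zariski open $V\subsetneq X$, this says nothing about the Nisnevich stalks at the closed points of $X\setminus V$, since $V$ is not a Nisnevich cover of $X$; so $[a]$ need not vanish in $\bar\sL_{\Nis}(X)$. Lemma \ref{lem-Nis-keeps-transfer} requires (Inj) \emph{before} sheafifying, and you have not produced it.

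The fix is precisely the iteration you gesture at in your last sentence, and this is what the paper does: set $\sL^1:=\sL'=\sL/\langle\sR\rangle$, $\sL^n:=(\sL^{n-1})'$, and $\sL^\infty:=\varinjlim_n\sL^n$; then put $\Sigma(\sL):=(\sL^\infty)_{\Nis}$. The point is not that the process ``stabilizes'' (it need not terminate in finitely many steps), but that (Inj) holds for $\sL^\infty$ for a trivial reason: if $a\in\sL^\infty(U)$ dies in $\sL^\infty(\eta)$, choose a representative $a_n\in\sL^n(U)$ with $a_n|_{\eta}=0$ in $\sL^n(\eta)$; then $a_n\in\sR_{\sL^n}(U)$ by definition, so $a_n$ already vanishes in $\sL^{n+1}(U)$, hence $a=0$ in $\sL^\infty(U)$. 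Your (FP) argument goes through for $\sL^\infty$ as well, and then Lemma \ref{lem-Nis-keeps-transfer} applies. Note that one iterates only the quotient, sheafifying once at the end; there is no need to interleave sheafification. Your adjunction argument then carries over verbatim, since any $f:\sL\to\sM$ with $\sM\in\MFsp$ kills $\sR$ at every stage by (Inj) for $\sM$, hence factors through each $\sL^n$ and thus through $\sL^\infty$ and its sheafification.
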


\begin{proof}
Let $\sL$ be a lax Mackey functor with specialization maps. For $X\in \Reg$ with generic points $\eta_i$ set
\[\sR(X):=\Ker(\sL(X)\to \oplus_i\sL(\eta_i)).\]
Then $\sR$ is a subset of $\sL$ and we set 
\[\sL':=\sL/\sR,\]
in the sense of Definition \ref{defn-quotients-by-subsets}. Clearly we obtain in this way an endofunctor $\LMFsp\to \LMFsp$, $\sL\mapsto \sL'$, which is the identity on the full subcategory
$\MFsp$ and the natural surjection of presheaves $\sL\surj \sL'$ is functorial.
Now for $\sL\in \LMFsp$ we define recursively 
\[\sL^1:=\sL',\quad \sL^n:=(\sL^{n-1})'\quad, n\geqslant  2,\]
and set
\[\sL^\infty=\varinjlim_n \sL^n.\]
We obtain a functor $\LMFsp\to \LMFsp$, $\sL\mapsto \sL^\infty$, which is the identity on $\MFsp$ together with a functorial surjection $\sL\surj \sL^\infty$. 
Further the restriction map $\sL^\infty(U)\to \sL^\infty(\eta)$ is injective for all integral $U\in \Reg$ with generic point $\eta$.
Indeed, if $a\in \sL^\infty(U)$ maps to zero in $\sL^\infty(\eta)$, there exists a representative $a_n\in\sL^n(U)$ of $a$ such that $a_n$ maps to zero in $\sL^n(\eta)$. 
But then $a_n$ maps to zero in $\sL^{n+1}(U)=(\sL^n)'(U)$ by definition and hence $a=0\in \sL^\infty(U)$. Thus $\sL^\infty$ satisfies the conditions (Inj) and (FP) from Definition \ref{defn-MFsp}.
Hence
\[\Sigma(\sL):=(\sL^\infty)_\Nis\]
is a Mackey functor with specialization maps by Lemma \ref{lem-Nis-keeps-transfer} and we obtain 
a functor $\Sigma$ as in the statement.
\end{proof}

\subsection{Truncated reciprocity functors}

\begin{lemma}\label{hnLFAdj}
For all $n\geqslant 1$, the forgetful functor $\Forget_n:\RF_n\ra\LMFsp$ has a left adjoint 
$$\varrho_n:\LMFsp\ra \RF_n$$
such that $\varrho_n\circ\Forget_n=\id$.
\end{lemma}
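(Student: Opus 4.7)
The plan is to construct $\varrho_n(\sL)$ for $\sL\in\LMFsp$ by alternately quotienting out the relations that enforce membership in $\RF_n$ and applying the reflection $\Sigma:\LMFsp\to\MFsp$ from Proposition~\ref{prop-adjoint-of-LMF}. First I would define, for each $\sL\in\LMFsp$, a subset $\sR(\sL)\subset \sL$ in the sense of Definition~\ref{defn-quotients-by-subsets}, consisting of all elements of the form
\[
\sum_{P\in C\setminus D} v_P(f)\,\Tr_{P/x_C}\,s_P(a)\;\in\;\sL(x_C),
\]
where $C\in(\sC/S)$, $D\subset C$ is a finite set of closed points, $a\in\sL(C\setminus D)$, and $f\in\G_m(\eta_C)$ satisfies $f\equiv 1$ mod $n[D]$. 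The specialization maps $s_P$ and the trace $\Tr_{P/x_C}$ make sense for an arbitrary presheaf with transfers on $\Regone$. By Lemma~\ref{lem-alt-descr-modulus-space}, annihilating $\sR(\sL)$ is exactly what forces every section to admit $n[D]$ as a modulus, i.e.\ membership in $\RF_n$.

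Next I would iterate: set $\sL_0:=\sL$ and inductively
\[
\sL_{i+1}:=\Sigma\bigl(\sL_i/\sR(\sL_i)\bigr),
\]
using the quotient morphism composed with the unit $\sL_i/\sR(\sL_i)\to \Sigma(\sL_i/\sR(\sL_i))$ from Proposition~\ref{prop-adjoint-of-LMF} to obtain natural maps $\sL_i\to\sL_{i+1}$ in $\LMFsp$ (note Remark~\ref{remark-quotientLMFsp} keeps the quotient inside $\LMFsp$). Then I would define
\[
\varrho_n(\sL):=\varinjlim_i \sL_i,
\]
the colimit computed sectionwise. Iteration is needed because each application of $\Sigma$ may introduce new sections (through the generic-fiber purification and the Nisnevich sheafification used in the proof of Proposition~\ref{prop-adjoint-of-LMF}) that have not yet been subjected to the $\RF_n$ relations.

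To verify $\varrho_n(\sL)\in\RF_n$ I would argue as follows. The transition maps are surjective on Nisnevich stalks (quotients are surjective, and $\sM\to\Sigma(\sM)$ is surjective on stalks by Proposition~\ref{prop-adjoint-of-LMF}), so filtered colimits along them preserve both (FP) and (Inj): an element of $\varrho_n(\sL)(X)$ that restricts to $0$ on a dense open $U$ is represented by some $a_i\in\sL_i(X)$ whose restriction dies at some later stage, hence $a_i$ itself dies at that stage by (Inj) for $\sL_j$. Thus $\varrho_n(\sL)\in\MFsp$. For the modulus condition, any element of $\varrho_n(\sL)$ is represented at some finite stage $i$, at which the corresponding relation sits in $\sR(\sL_i)$ and therefore becomes zero in $\sL_{i+1}$, hence in $\varrho_n(\sL)$; by Lemma~\ref{lem-alt-descr-modulus-space} this places $\varrho_n(\sL)$ in $\RF_n$. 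The universal property follows formally: any $\Phi:\sL\to\sM$ with $\sM\in\RF_n$ kills $\sR(\sL)$ (again by Lemma~\ref{lem-alt-descr-modulus-space} applied in $\sM$), descends to $\sL/\sR(\sL)\to\sM$ in $\LMFsp$, extends by adjunction (Proposition~\ref{prop-adjoint-of-LMF}) to $\sL_1\to\sM$, and iterates to a unique compatible factorization $\varrho_n(\sL)\to\sM$. Finally, if $\sL\in\RF_n$ to begin with, then $\sR(\sL)=0$ and $\Sigma(\sL)=\sL$, so the tower is constant and $\varrho_n\circ\Forget_n=\id$.

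The main obstacle I expect is not the construction itself but the bookkeeping in step three: showing that $\varrho_n(\sL)$ really lands in $\RF_n$ in the colimit. Concretely, one has to rule out the possibility that the functor $\Sigma$ creates, at some stage, a section whose symbols $(-,-)_P$ are not controlled by relations already imposed at earlier stages; the saving grace is the surjectivity of $\sL_i\to\Sigma(\sL_i)$ on Nisnevich stalks, which ensures every new section is locally the image of a previous one, so its modulus-type relations pass to the quotient at the next stage. The same surjectivity is what makes (Inj) survive the filtered colimit.
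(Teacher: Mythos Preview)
Your construction works, but the iteration is unnecessary: the paper defines $\varrho_n(\sM):=\Sigma(\sM/\eusm R_n)$ in a single step and argues directly that this already lies in $\RF_n$, invoking Theorem~\ref{thm-modulus-Nis-local}. The point is that both the quotient $\sM\to\sM/\eusm R_n$ and the unit $\sM/\eusm R_n\to\Sigma(\sM/\eusm R_n)$ are surjective on Nisnevich stalks, so every section of $\Sigma(\sM/\eusm R_n)$ lifts Nisnevich-locally all the way back to $\sM$; the modulus relation computed from such a lift is literally one of the elements of $\eusm R_n$ and hence vanishes in the quotient, and Theorem~\ref{thm-modulus-Nis-local} then upgrades these local moduli to a global one. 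Your ``main obstacle''---that $\Sigma$ may create sections not yet subjected to the $\RF_n$ relations---is thus exactly what that theorem resolves in one stroke, and no transfinite bookkeeping is needed. In particular your tower is constant from stage~$1$ onward: once $\sL_1\in\RF_n$, one has $\sR(\sL_1)=0$ and $\Sigma(\sL_1)=\sL_1$, so your $\varrho_n(\sL)=\sL_1$ coincides with the paper's definition.

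One small gap in your write-up: to conclude $\varrho_n(\sL)\in\MFsp$ you must also check that the filtered colimit of the Nisnevich sheaves $\sL_i$ is again a Nisnevich sheaf. This is standard (Nisnevich covers of objects of $\Regone$ can be refined by finite ones), but it should be said.
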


\begin{proof}
Let $\sM\in\LMFsp$ be a lax Mackey functor with specialization maps. Consider the quotient as presheaves 
with transfers (in the sense of \ref{defn-quotients-by-subsets})
$$
\mathrm{L}\varrho_n(\sM):=\sM/\eusm R_n $$
 of $\sM$ by the subset $\eusm R_n$ consisting of the elements
$$\sum_{P\in U}v_P(f)\cdot\Tr_{P/x_C}(s^{\sM}_P(a)),$$
where $C\in(\sC/S)$ is a curve, $U\subseteq C$ is a non-empty open subset, $a\in \sM(U)$ and $f\in\G_m(\eta_C)$ with $f\equiv 1$ modulo $\sum_{P\in C\setminus U} n [P]$. Then $\mathrm{L}\varrho_n(\sM)$ is a lax Mackey functor with specialization, and we define a Mackey functor with specialization maps
$$\varrho_n(\sM):=\Sigma(\mathrm{L}\varrho_n(\sM)),$$
with $\Sigma$ as in Proposition \ref{prop-adjoint-of-LMF}.
We claim that $\varrho_n(\sM)$ is a reciprocity functor which lies in $\RF_n$. Indeed take $C\in (\sC/S)$ and
$U\subseteq C$ a non-empty open subset and $a\in \varrho_n(\sM)(U)$. We have to show 
$a\in \Fil^n_P\varrho_n(\sM)(\eta_C)$ for all closed points $P\in C$. By Lemma \ref{lem-alt-descr-modulus-space}
this is equivalent to show that $a\in \varrho_n(\sM)(C,\fm_n)$, where $\fm_n=\sum_{P\in C\setminus U} n[P]$.
By definition of $\varrho_n(\sM)$ we find a covering $\coprod_i V_i\to U$ in the Nisnevich topology 
with $V_i$ connected, such that
$a_{|V_i}\in \varrho_n(\sM)(D_i, \fn_{i})$, where $D_i\to C$ is the compactification of $V_i\to U$ and
$\fn_i=\sum_{Q\in D_i\setminus V_i} n [Q]$. By Theorem \ref{thm-modulus-Nis-local} there exists an
effective divisor $\fm\leqslant \fm_n$ such that $a\in \varrho_n(\sM)(C,\fm)\subset \varrho_n(\sM)(C,\fm_n)$.

Clearly, if $\sM\in \RF_n$, then $\eusm R_n=0$ and thus $\varrho_n\circ\Forget_n(\sM)=\Sigma(\Forget_n(\sM))=\sM$, by Proposition \ref{prop-adjoint-of-LMF}.
Finally by the left adjoint property of $\Sigma$ we have 
\begin{equation*}
\begin{split}
\Hom_{\RF_n}(\varrho_n(\sM),\sN)&\xrightarrow{\sim}\Hom_{\LMFsp}(\mathrm{L}\varrho_n(\sM),\Forget_n(\sN))\\
&\xrightarrow{\sim}\Hom_{\LMFsp}(\sM,\Forget_n(\sN)).
\end{split}
\end{equation*}
Hence $\varrho_n$ is left adjoint to $\Forget_n$.
\end{proof}

\begin{example}
Here is an example to show that the truncation functor can be quite brutal: $\varrho_1(\G_a)=0$.
Indeed, if $x=\Spec k$ is an $S$-point and we take $a\in k\setminus\{0\}$, we consider $\A^1=\Spec k[t]\subset \P^1$
and the function $f=t/(t+a)\in k(t)^\times$, which is congruent to 1 modulo $\{\infty\}$.
Then the image of
\[\sum_{P\in \A^1} v_P(f)\Tr_{P/x}s_P(t+a)=a\]
is zero in $\varrho_1(\G_a)$. 

In particular the left adjoint property of $\varrho_1$ implies that any map of reciprocity functors $\G_a\to \sM$ with $\sM\in \RF_1$ is the zero map.
\end{example}

\section{K-groups of reciprocity functors}

\subsection{Tensor products in {\bf PT} }

\subsubsection{}\label{tensorPT} For $\sM,\sN\in \PT$ and $X\in\RegCon$, we set 
$$(\sM\otimes\sN)(X):=\left(\bigoplus_{Y\xr{\textrm{fin. fl.}}X}\sM(Y)\otimes_R \sN(Y)\right)/\eusm R(X)$$
where the sum is taken over all finite flat morphisms $Y\ra X$ in $\RegCon$ and $\eusm R(X)$ is the submodule of the direct sum generated by the elements
$$(a\otimes g_*b')-(g^*a\otimes b'),\qquad (g_*a'\otimes b)-(a'\otimes g^*b)$$
where $g:Y'\ra Y$ is a finite flat morphism over $X$ and $a\in \sM(Y)$, $a'\in\sM(Y')$, $b\in \sN(Y)$, $b'\in\sN(Y')$.
For $X\in \Regone$ we extend the definition additively.

\begin{lemma}\label{LemmaTensTr}
Let $\sM,\sN\in \PT$ be two presheaves with transfers on $\Regone$. Then $\sM\otimes\sN$ is canonically a presheaf with transfers on $\Regone$.
\end{lemma}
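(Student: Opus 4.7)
My plan is to invoke the reformulation given in Lemma \ref{CarPSTRegone}: it suffices to define compatible functors $(\sM\otimes\sN)_*\colon\RegCon_*\to(R\text{-mod})$ and $(\sM\otimes\sN)^*\colon\RegCon^\op\to(R\text{-mod})$, agreeing on objects, and to verify axioms (1)--(3) of that lemma.

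For a finite flat morphism $f\colon X'\to X$ in $\RegCon$, the pushforward
\[
f_*\colon (\sM\otimes\sN)(X')\to (\sM\otimes\sN)(X)
\]
will be tautological: the summand $\sM(Y)\otimes_R\sN(Y)$ indexed by a finite flat $Y\to X'$ is sent to the summand indexed by the composite $Y\to X'\to X$, which is again finite flat and in $\RegCon$. The relations $\eusm R(X')$ clearly map into $\eusm R(X)$, and functoriality in $f$ is immediate.

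For an arbitrary morphism $g\colon X'\to X$ in $\RegCon$ and a summand indexed by $f\colon Y\to X$ finite flat, I form $Y':=Y\times_X X'$, which is finite flat over $X'$. Let $T_i$ be the irreducible components of $Y'$, with normalizations $\tilde T_i\in\RegCon$ and canonical maps $g_{T_i}\colon \tilde T_i\to Y$ and $f_{T_i}\colon\tilde T_i\to X'$ (the latter being finite flat). Guided by Lemma \ref{lem-basic-corr-formulas}(3), I define
\[
g^*(a\otimes b)_Y \; := \; \sum_i \length(\sO_{Y',\eta_{T_i}})\cdot (g_{T_i}^*a\otimes g_{T_i}^*b)_{\tilde T_i},
\]
using the pullbacks from the PT-structures on $\sM$ and $\sN$.

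The heart of the argument, and the main obstacle, is well-definedness of $g^*$ on the quotient. For a finite flat $h\colon Y''\to Y$ over $X$, $a\in\sM(Y)$ and $b'\in\sN(Y'')$, I must check
\[
g^*\bigl((a\otimes h_*b')_Y\bigr)\;\equiv\; g^*\bigl((h^*a\otimes b')_{Y''}\bigr)\pmod{\eusm R(X')}
\]
(and similarly for the symmetric relation). Expanding each side, I apply the base change axiom Lemma \ref{CarPSTRegone}(3) to $\sM$ in order to rewrite $g_{T_i}^*h_*$, and to $\sN$ in order to rewrite $h^*$-terms on the $\tilde S_j$ arising from the components of $Y''\times_X X'$. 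The length identity of Lemma \ref{RemaLength}, applied to the nested fibre product $Y''\times_X X' = Y''\times_Y(Y\times_X X')$, matches the multiplicities on both sides; what remains is killed by the defining projection-formula relations in each $\tilde T_i$- and $\tilde S_j$-summand. This is essentially a bookkeeping exercise in intersection multiplicities, but it is the step where all the axioms conspire.

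Once $g^*$ is well-defined, functoriality in $g$ will follow by exactly the same mechanism (base change in $\sM,\sN$ plus Lemma \ref{RemaLength}). Axiom (1) of Lemma \ref{CarPSTRegone} is tautological; axiom (2), $g_*g^*=\deg(Y/X)\cdot\id$ for $g$ finite flat, follows termwise from axiom (2) for $\sM$ (or $\sN$) together with the projection-formula relations defining $\sM\otimes\sN$; and axiom (3) for $\sM\otimes\sN$ is inherited directly from axiom (3) for $\sM$ and $\sN$, reusing the same fibre-product analysis as above.
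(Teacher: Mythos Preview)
Your proposal is correct and follows essentially the same approach as the paper's proof: both invoke Lemma \ref{CarPSTRegone}, define the pushforward tautologically and the pullback by the length-weighted sum over normalized components of the base change, and then verify well-definedness and axioms (2)--(3) using the base-change axiom for $\sM$, $\sN$ together with a length-multiplicativity identity (the paper uses \cite[Lemma A.4.1]{F} directly rather than Lemma \ref{RemaLength}, but the content is the same). One minor slip: in your displayed relation it is $h_*b'\in\sN(Y)$, so rewriting $g_{T_i}^*h_*$ requires axiom (3) for $\sN$, not $\sM$, and on the other side $g_{S_j}^*h^*a$ needs only functoriality of pullback in $\sM$, not base change.
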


\begin{proof}
It is enough to check that $\sM\otimes\sN$ is equipped with pullback maps and pushforward maps on $\RegCon$  satisfying the conditions of Lemma \ref{CarPSTRegone}.
\par\medskip\noindent
 {\itshape{Pushforward:}}  Let $f:X\ra Y$ be a finite flat morphism in $\RegCon$. Then the natural inclusions, for  finite flat morphisms $X'\ra X$ in $\RegCon$,
       $$\sM(X')\otimes_R\sN(X')\ra\bigoplus_{Y'\xrightarrow{\textrm{fin. fl.}} Y}\sM(Y')\otimes_R\sN(Y')$$
        induce a morphism of $R$-modules
        $$f_*:(\sM\otimes\sN)(X)\ra (\sM\otimes\sN)(Y),$$
      which clearly is functorial.
\par\medskip\noindent
 {\itshape{Pullback:}}  Let $g:Y\ra X$ be a morphism in $\RegCon$. Given a finite flat morphism $X'\ra X$ in $\RegCon$, consider the $R$-linear morphism
 \begin{equation}\label{PullbackTens}
  g^*_{X'}:\sM(X')\otimes_R\sN(X')\ra (\sM\otimes\sN)(Y),\quad a\otimes b\mapsto \sum_{T\subseteq Y'}\length(\sO_{Y',\eta_T})\cdot (g_T^*a\otimes g_T^*b),
  \end{equation}
where the sum is taken over the irreducible components of $Y'=Y\times_XX'$ and $g_T:\tilde{T}\ra X'$ is the canonical morphism with $\tilde{T}$ the normalization of $T$.
Note that the definition makes sense since the canonical morphism $f_T:\tilde{T}\ra Y$ is finite and flat. Let us show that these morphisms induce a $R$-linear morphism
 $$g^*:(\sM\otimes\sN)(X)\ra (\sM\otimes\sN)(Y).$$
 Let $f:X''\ra X'$ be a finite flat $X$-morphism between two finite flat connected $X$-schemes in $\Regone$.  Let us use the following notation
$$\xymatrix{
{\tilde{T}'}\ar@/_3em/[ddd]_{g_{T'}}\ar[r]^{f'}\ar[d]_{\nu_{T'}} &{\tilde{T}}\ar[d]^{\nu_T}\ar@{.>}@/^3em/[ddd]^(.3){g_T}  & {}\\
{T'}\ar[r]\ar[d] & {T}\ar[d]& {}\\
{Y''}\ar[r]\ar[d]\ar@{}[rd]|{\square} & {Y'}\ar[r]\ar[d]\ar@{}[rd]|{\square} & {Y}\ar[d]^{g} \\
{X''}\ar[r]^{f} & {X'}\ar[r] & {X} } $$
and 
$$l_{T'}=\length(\sO_{Y'',\eta_{T'}}),\quad l_{T}=\length(\sO_{Y',\eta_T}),\quad l^{T'}_{T}= \length(\sO_{X''\times_{X'}T,\eta_{T'}})$$
where $T$ is an irreducible component of $Y'$ and $T'$ is an irreducible component of $Y''$ that dominates $T$.
By \cite[Lemma A.4.1]{F}, applied to the flat local homomorphism $\sO_{Y',\eta_T}\ra\sO_{Y'',\eta_{T'}}$, we have
\eq{prop-MFtensor1}{l_{T'}=l_T\cdot l^{T'}_{T}.}
Note that since $f$ is universally equidimensional (being finite and flat), any irreducible component $T$ of $Y'$ is dominated by an irreducible component $T'$ of $Y''$.
Now for $a'\in \sM(X'')$ and  $b\in \sN(X')$ we have in $(\sM\otimes\sN)(Y)$
\begin{align}
g^*_{X'}((f_*a')\otimes b) & = \sum_{T\subseteq Y'} l_T \cdot(g_T^*f_*a'\otimes g_T^*b), & \text{by definition}\notag\\
                                     & = \sum_{T\subseteq Y'}\sum_{T'\subseteq X''\times_{X'}T} l_T \cdot l^{T'}_T\cdot (f'_*g_{T'}^*a'\otimes g_T^*b), & \text{by Lemma \ref{lem-basic-corr-formulas}}\notag\\
                                     & = \sum_{T'\subseteq Y''} l_{T'} \cdot (f'_*g_{T'}^*a'\otimes g_T^*b), & \text{by \eqref{prop-MFtensor1}}\notag\\
                                     & = \sum_{T'\subseteq Y''} l_{T'} \cdot (g_{T'}^*a'\otimes g_{T'}^*f^*b), & \text{by definition of }\eusm R(Y)\notag\\
                                     & = g_{X''}^*(a'\otimes f^*b), &\text{by definition.}\notag
\end{align}
By symmetry we obtain that $g_{X'}^*$ maps $\eusm R(X)$ to zero. This shows that the map (\ref{PullbackTens}) is well defined. Functoriality is proven by a similar computation.
\par\medskip\noindent 
{\itshape{Degree formula and cartesian square formula:}} Consider a finite and flat morphism $g:Y\ra X$ in $\RegCon$. Let $f:X'\ra X$ be a finite flat morphism in $\RegCon$, and $a\in \sM(X')$, $b\in \sN(X')$. 
Since $g_T$ is a finite flat morphism (with notation as above), the projection formula gives in $(\sM\otimes \sN)(X)$
\begin{align*}
g_*g^*(a\otimes b)&=g^*_{X'}(a\otimes b)=\sum_{T\subseteq Y'}\length(\sO_{Y',\eta_T})\cdot g^*_Ta\otimes g^*_Tb\\
&=\sum_{T\subseteq Y'}\length(\sO_{Y',\eta_T})\cdot a\otimes g_{T*}g^*_Tb=\left(\sum_{T\subseteq Y'}\length(\sO_{Y',\eta_T})\deg(g_T)\right)\cdot a\otimes b\\
 &= \deg(g)\cdot a\otimes b.
\end{align*}
It remains to check the cartesian square formula. Let $g:Y\ra X$ be a morphism in $\RegCon$ and $f:X'\ra X$ be a finite flat morphism in $\RegCon$ and take $a\in \sM(X')$ and  $b\in \sN(X')$. 
Using the above notation and (\ref{prop-MFtensor1}) we obtain
\begin{align*}
\sum_{T\subseteq Y'} l_T\cdot f_{T*}g_T^*(a\otimes b) &= \sum_{T\subseteq Y'} l_T\cdot\left(\sum_{T'\subseteq X''\times_{X'}T}l^{T'}_{T}\left(g^*_{T'}a\otimes g^*_{T'}b\right)\right) \\
 & = \sum_{T'\subseteq Y''} l_{T'} \cdot(g_{T'}^*a\otimes g_{T'}^*b)=g^*f_*(a\otimes b),
\end{align*}
as desired.
\end{proof}

\begin{remark}\label{defn-MFtensor}
Let $M$ and $N$ be two Mackey functors (see Definition \ref{defn-PST-MF}). Recall that the tensor product as Mackey functor of $M$ and $N$ is defined as follows (see e.g. \cite{Kahn}). Let $x$ be an $S$-point 
$$(M\otimesM N)(x):=\left[\bigoplus_{\xi\xrightarrow{\textrm{fin.}} x}M(\xi)\otimes_RN(\xi)\right]/\eusm R(x)$$
where the sum is taken over all finite morphisms of $S$-points and $\eusm R(x)$ is the submodule of the direct sum generated by the elements of the shape
$$(\delta_*a')\otimes b- a'\otimes \delta^*b \quad \text{and} \quad a\otimes\delta_*b'- \delta^*a\otimes b',$$
where $a\in M(\xi)$, $a'\in M(\xi')$, $b\in N(\xi)$, $b'\in M(\xi')$ and $\delta: \xi'\to \xi$ is a morphism between finite $x$-points. It follows from the definition that the underlying Mackey functor of the tensor product in $\PT$ is simply the tensor product of the underlying Mackey functors as defined above.
\end{remark}

\begin{lemma}\label{LemmaLMFsp}
If $\sM\in\LMFsp$ and $\sN\in\LMFsp$ then $\sM\otimes\sN$ is also a lax Mackey functor with specialization maps. 
\end{lemma}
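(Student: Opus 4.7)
The plan is to verify (W.F.P.) for $\sM \otimes \sN$ directly; by Lemma \ref{LemmaTensTr} we already know that it is a presheaf with transfers on $\Regone$, so this is the only thing left to check. Fix $C \in (\sC/S)$ with generic point $\eta = \Spec K$, where $K = \kappa(C)$. I will show that every class in $(\sM \otimes \sN)(\eta)$ lifts, via pullback along an open immersion $\eta \inj U$, to some element of $(\sM \otimes \sN)(U)$ with $U \subset C$ a non-empty open subscheme.

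It suffices to lift generators, so I start with an element $a \otimes b \in \sM(\xi) \otimes_R \sN(\xi)$ with $\xi = \Spec L \to \eta$ a finite flat morphism in $\RegCon$ — that is, with $L/K$ a finite field extension. The first step is to spread $\xi$ out to a curve: let $\pi : D \to C$ be the normalization of $C$ in $L$, so that $D$ is a regular projective connected curve with $\eta_D = \xi$, and $\pi$ is finite (and automatically flat, since a finite morphism between one-dimensional regular integral schemes is flat). Applying (W.F.P.) for $\sM$ and $\sN$ on $D$ and intersecting the resulting opens, I obtain a non-empty open $V \subset D$ together with sections $\tilde a \in \sM(V)$, $\tilde b \in \sN(V)$ that restrict to $a$ and $b$ at $\eta_D = \xi$.

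Next, in order to descend from $D$ to $C$, I set $U := C \setminus \pi(D \setminus V)$, which is open in $C$ because $D \setminus V$ is a finite set of closed points. Letting $V_0 := \pi^{-1}(U) \subseteq V$, the morphism $V_0 \to U$ is finite and flat; since $D$ is irreducible, $V_0$ is irreducible (hence connected) and regular, so it is an object of $\RegCon$. Consequently $\tilde a|_{V_0} \otimes \tilde b|_{V_0}$ sits in the summand of the direct sum defining $(\sM \otimes \sN)(U)$ indexed by $V_0 \to U$, and produces a class $\alpha \in (\sM \otimes \sN)(U)$.

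Finally, I check that $\alpha$ pulls back to $a \otimes b$ under $(\sM \otimes \sN)(U) \to (\sM \otimes \sN)(\eta)$ by applying the pullback formula \eqref{PullbackTens} to the open immersion $g : \eta \to U$ and the summand $V_0 \to U$: since $\eta \times_U V_0 = \xi$ is a single point of length one, the formula collapses to $g_\xi^* \tilde a \otimes g_\xi^* \tilde b = a \otimes b$ in the $\xi$-summand of $(\sM \otimes \sN)(\eta)$, giving the desired surjectivity. The only subtle bookkeeping is to ensure that $V_0$ stays connected — so that it is a legitimate index in the defining direct sum of the tensor product — and that the fiber product $\eta \times_U V_0$ is reduced with a single component; both are handled by passing from the a priori chosen open $V$ to $V_0 = \pi^{-1}(U)$, exploiting the irreducibility of $D$. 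Beyond this, no harder ingredient is needed.
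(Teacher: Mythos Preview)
Your proof is correct and follows essentially the same approach as the paper's: spread the finite extension $\xi \to \eta_C$ out to the normalization $D \to C$, use (W.F.P.) for $\sM$ and $\sN$ to lift $a$ and $b$ to a common open $V \subset D$, and then shrink to $V_0 = \pi^{-1}(U)$ so that $V_0 \to U$ is finite flat in $\RegCon$. The paper's argument is terser and omits the explicit verification of connectedness of $V_0$ and of the pullback computation, both of which you spell out carefully.
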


\begin{proof}
Let $C$ be a curve in $(\sC/S)$. We have to show that the condition (W.F.P.) holds \emph{i.e.} that the map
\begin{equation}\label{MorLMFsp}
\colim_{U\subset C}(\sM\otimes\sN)(U)\ra(\sM\otimes\sN)(\eta_C)
\end{equation}
is surjective. Let $\xi\ra \eta_C$ be a \emph{finite} morphism, and $a\in\sM(\xi)$, $b\in\sN(\xi)$. 
There exists an open subset $V$ in the  normalization $C_\xi$ of $C$ in $\xi\ra \eta_C$ such that 
$a$ lifts to $\sM(V)$ and $b$ to $\sN(V)$. We may then find an open subset $U$ in $C$ such that $V$ contains 
the inverse image $U_\xi$ of $U$ in $C_\xi$. Since the morphism $U_\xi\ra U$ is finite and flat, 
we see that $a\otimes b$ lifts to $(\sM\otimes\sN)(U)$, and therefore (\ref{MorLMFsp}) is surjective.
\end{proof}

\begin{definition}\label{defn-bilinearMF}
Let $\sM_i$, $i=1,\ldots,n$, and $\sN$ be objects in the category $\PT$ \emph{i.e.} presheaves with transfers on $\Regone$. Then an {\em $n$-linear map of presheaves with transfers} 
\[\Phi: \prod_{i=1}^n \sM_i\to \sN\]
 is a collection of $n$-linear maps of $R$-modules $\Phi_X: \prod_{i=1}^n \sM_i(X)\to \sN(X)$, where $X$ runs through all schemes $X\in\RegCon$, satisfying the following properties (we simply write $\Phi$ instead of $\Phi_X$):
\begin{enumerate}
 \item[(L1)] For $f: X\to Y$ a morphism of schemes in $\RegCon$ and $a_i\in\sM_i(Y)$, $i=1,\ldots,n$ we have 
          $$f^*\Phi(a_1,\ldots,a_n)=\Phi(f^*a_1,\ldots, f^*a_n).$$
 \item[(L2)] For $f:X\to Y$ a finite flat morphism of schemes in $\RegCon$ and $a_i\in \sM_i(Y)$, $i\neq i_0$, and $b\in \sM_{i_0}(X)$ we have
      \mlnl{\Phi(a_1,\ldots, a_{i_0-1},f_*b, a_{i_0+1},\ldots, a_n)\\ =f_*\Phi(f^*a_1,\ldots f^*a_{i_0-1},b, f^*a_{i_0+1},\ldots, f^*a_n).}
\end{enumerate}
We denote by $\Lin{n}(\sM_1,\ldots, \sM_n; \sN)$ the $R$-module of $n$-linear maps $\prod_i \sM_i\to\sN$ and set $\Bil(\sM_1,\sM_2;\sN):=\Lin{2}(\sM_1,\sM_2;\sN)$.
\end{definition}

\begin{proposition}\label{prop-tensor-PT}
For $\sM,\sN\in\PT$ the map $\sM(X)\times \sN(X)\to (\sM\otimes\sN)(X)$, $(a,b)\mapsto a\otimes b$, $X\in \RegCon$, is a bilinear map and it is universal, i.e. it induces an isomorphism
of functors on $\PT$
\[\Hom_{\PT}(\sM\otimes\sN,-)\simeq\Bil(\sM,\sN;-).\]
Furthermore, the functor $\otimes$ makes $\PT$ a monoidal category with unit object the constant presheaf $R$.
\end{proposition}
\begin{proof}
Straightforward.
\end{proof}

\begin{example}\label{ex-tensor-constant}
Let $\sM$ be a presheaf with transfers on $\Regone$ and $N$ an $R$-module viewed as a constant 
presheaf with transfers (cf. Example \ref{ex-ModisRF}).
Then it is easy to check that $\sM\otimes N$ is the presheaf with transfers given by $X\mapsto \sM(X)\otimes_R N$, $X\in \RegCon$, and a correspondence $\alpha\in \Cor(X,Y)$
between two schemes in $\RegCon$ acts via $\sM(\alpha)\otimes \id_N$.
\end{example}

\subsection{K-groups of reciprocity functors}

\begin{definition}\label{defn-MLmapRF}
Let $\sM_i$, $i=1,\ldots,n$, and $\sN$ be reciprocity functors. Then an {\em $n$-linear map of reciprocity functors} 
\[\Phi: \prod_{i=1}^n \sM_i\to \sN\]
is a $n$-linear map in $\PT$ (see Definition \ref{defn-bilinearMF}) satisfying the following additional property:
\begin{enumerate}
 \item[(L3)] For any positive integer $r\geqslant  1$, any $C\in(\sC/S)$ and $P\in C$ we have
             \[\Phi(\Fil^r_P\sM_1(\eta_C)\times\cdots\times \Fil^r_P\sM_n(\eta_C))\subset \Fil^r_P\sN(\eta_C),\]
              where $\Fil_P$ is the filtration defined in Definition \ref{defn-filtration}.
 \end{enumerate}
We denote by $\Lin{n}_\RF(\sM_1,\ldots,\sM_n; \sN)$ the $R$-module of $n$-linear maps $\prod_i \sM_i\to \sN$ and set $\Bil_\RF(\sM_1, \sM_2;\sN):=\Lin{2}_\RF(\sM_1,\sM_2;\sN)$.
\end{definition}
Notice that if $\Psi_i: \sM_i'\to \sM_i$, $i=1,\ldots, n$, and $\Psi: \sN\to \sN'$ are morphisms of reciprocity functors 
and $\Phi$ is an $n$-linear map as above, then $\Phi\circ (\prod_i\Psi_i)$  
and $\Psi\circ \Phi$ are also $n$-linear maps. Thus $\Lin{n}_\RF(\sM_1,\ldots, \sM_n; \sN)$ has the expected functorial properties.

\begin{remark}\label{rem-altL4}
It follows from Lemma \ref{lem-alt-descr-modulus-space} that condition (L3) is equivalent to the following:
For all $C\in (\sC/S)$ and effective divisors $\fm_1,\ldots,\fm_n$ on $C$ we have
              \[\Phi(\sM_1(C,\fm_1)\times\ldots\times \sM_n(C,\fm_n))\subset \sN(C,\max\{\fm_1,\ldots,\fm_n\}).\]
\end{remark}

\begin{definition}\label{defn-LMFtensor}
Let $\sM_i$, $i=1,\ldots,n$ be reciprocity functors.
\begin{enumerate}
 \item Let
\[\LT(\sM_1,\ldots, \sM_n):=\sM_1\otimes \cdots \otimes\sM_n/\eusm R,\]
be the quotient (in the sense of \ref{defn-quotients-by-subsets}) of the presheaf with transfers $\sM_1\otimes \cdots \otimes\sM_n\in\PT$ by the subset $\eusm R$, 
consisting of the elements
\eq{defn-LMFtensor1}{\sum_{P\in C\setminus |\max_{i}\{\fm_i\}|} v_P(f)\cdot  s^{\sM_1}_P(a_1)\otimes\cdots\otimes s^{\sM_n}_P(a_n)\quad \text{in } (\sM_1\otimes \cdots \otimes\sM_n)(x_C),}
where $C\in (\sC/S)$,   $\fm_i$, $i=1,\ldots, n$, are effective divisors on $C$, $a_i\in \sM_i(C,\fm_i)$  and $f\in \G_m(\eta_C)$ with $f\equiv 1$ mod $\max_i\{\fm_i\}$.

\item By Remark \ref{remark-quotientLMFsp} and Lemma \ref{LemmaLMFsp}, $\LT(\sM_1,\ldots, \sM_n)$ lies in $\LMFsp$, and the reciprocity $K$-functor associated with $\sM_1,\ldots,\sM_n$ is defined by
$$\T(\sM_1,\ldots, \sM_n):=\Sigma(\LT(\sM_1,\ldots, \sM_n)), $$
where $\Sigma$ is the functor from Proposition \ref{prop-adjoint-of-LMF}.
\end{enumerate}
\end{definition}

\begin{thm}\label{thm-tensorRF}
Let $\sM_i$, $i=1,\ldots, n$, be reciprocity functors. Then the functor $\T(\sM_1,\ldots, \sM_n)$ defined above is a reciprocity functor and the natural $n$-linear morphism of presheaves with transfers (in the sense of Definition \ref{defn-bilinearMF})
\eq{thm-tensorRF0}{\sM_1\times\cdots\times\sM_n\xr{\otimes} \sM_1\otimes\ldots\otimes\sM_n \surj \LT(\sM_1,\ldots,\sM_n)\to\T(\sM_1,\ldots,\sM_n)}
is an $n$-linear morphism of reciprocity functors denoted by
\eq{thm-tensorRF1}{\tau: \sM_1\times\cdots\times\sM_n\to \T(\sM_1,\ldots,\sM_n),\quad (a_1,\ldots, a_n)\mapsto \tau(a_1,\ldots, a_n).}
Furthermore, this gives a functor
\[\prod_{i=1}^n \RF \to \RF,\quad (\sM_1,\ldots,\sM_n)\mapsto \T(\sM_1,\ldots,\sM_n), \]
which represents the functor $\RF\to (R-{\rm mod})$, $\sN\mapsto \Lin{n}_\RF(\sM_1,\ldots,\sM_n; \sN)$ with $\tau$ as the universal $n$-linear map; in particular
\[\Lin{n}_\RF(\sM_1,\ldots,\sM_n; \sN)=\Hom_\RF(T(\sM_1,\ldots,\sM_n), \sN).\]
\end{thm}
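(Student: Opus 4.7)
The plan is to verify, in order, conditions (L1) and (L2) for $\tau$, a key moduli compatibility, condition (L3) for $\tau$, the modulus condition (MC) for $\T(\sM_1,\ldots,\sM_n)$, and finally the universal property. Throughout I use that $\T(\sM_1,\ldots,\sM_n)=\Sigma(\LT(\sM_1,\ldots,\sM_n))$ automatically belongs to $\MFsp$ by Proposition \ref{prop-adjoint-of-LMF}. Conditions (L1) and (L2) for $\tau$ are immediate from Corollary \ref{cor-tensor-PT}, since $\tau$ is the composition of the universal $n$-linear map $\sM_1\times\cdots\times\sM_n\to \sM_1\otimes\cdots\otimes\sM_n$ in $\PT$ with the successive quotients to $\LT$ and $\T$; functoriality in the $\sM_i$ will come for free from the universal property.

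The core input is the following moduli compatibility: for any $C\in (\sC/S)$, any effective divisors $\fm_1,\ldots,\fm_n$ on $C$, and any sections $a_i\in \sM_i(C,\fm_i)$, one has $\tau(a_1,\ldots,a_n)\in \T(C,\max_i\fm_i)$. This is essentially tautological from the definition of $\eusm R$: each $a_i$ restricts to $C\setminus|\max_i\fm_i|$, hence so does $\tau(a_1,\ldots,a_n)$, and using the identity $s_P^\T(\tau(a_1,\ldots,a_n))=\tau(s_P^{\sM_1}(a_1),\ldots,s_P^{\sM_n}(a_n))$ (which is (L1) applied to the closed embedding $P\hookrightarrow C$), the image in $\T(x_C)$ of the generator \eqref{defn-LMFtensor1} of $\eusm R$ rewrites as
\[\sum_{P\in C\setminus|\max_i\fm_i|}v_P(f)\,\Tr_{P/x_C}\,s_P^\T(\tau(a_1,\ldots,a_n))=0\]
for every $f\in\G_m(\eta_C)$ with $f\equiv 1\bmod \max_i\fm_i$, which is precisely (MC) for $\tau(a_1,\ldots,a_n)$ with modulus $\max_i\fm_i$. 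Condition (L3) follows at once from Lemma \ref{lem-alt-descr-modulus-space}: given $a_i\in \Fil^{r_i}_P\sM_i(\eta_C)$, pick moduli $\fm_i$ with $\fm_i(P)\leqslant r_i$; then $\tau(a_1,\ldots,a_n)\in \T(C,\max_i\fm_i)$ with $(\max_i\fm_i)(P)\leqslant \max_i r_i$, hence $\tau(a_1,\ldots,a_n)\in \Fil^{\max_i r_i}_P\T(\eta_C)$.

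To obtain (MC) on $\T$ in general, let $U\subset C$ be a non-empty open subset. By the construction of the tensor product in $\PT$ (see \S\ref{tensorPT}), every section of $\sM_1\otimes\cdots\otimes\sM_n$ on $U$ is a sum of pushforwards $\pi_*(a_1\otimes\cdots\otimes a_n)$ along finite flat morphisms $\pi\colon U'\to U$ with $U'\in \RegCon$ and $a_i\in \sM_i(U')$. Any such $\pi$ extends uniquely, by taking the normalization of $C$ in $\kappa(U')$, to a finite flat morphism $\pi\colon D\to C$ in $(\sC/S)$; by (MC) for the reciprocity functor $\sM_i$, each $a_i$ admits a modulus $\fn_i$ on $D$ with support $D\setminus U'$. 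The core input then gives $\tau(a_1,\ldots,a_n)\in \T(D,\max_i\fn_i)$, and Proposition \ref{prop-modulus}(2) (applied inside $\T\in \MFsp$) produces a modulus for the pushforward $\pi_*\tau(a_1,\ldots,a_n)$ on $C$ whose support is contained in $\pi(D\setminus U')=C\setminus U$. The passage from $\LT$ to $\T=\Sigma(\LT)$ consists of a quotient by sections vanishing at generic stalks (which trivially preserves the (MC) sum in $\T(x_C)$) followed by Nisnevich sheafification, and having a given modulus is local in the Nisnevich topology by Theorem \ref{thm-modulus-Nis-local}; combining these yields (MC) for $\T$.

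Finally, for the universal property, given an $n$-linear map of reciprocity functors $\Phi\colon \sM_1\times\cdots\times\sM_n\to \sN$, Corollary \ref{cor-tensor-PT} produces a unique factorization through $\sM_1\otimes\cdots\otimes\sM_n$ in $\PT$. By Remark \ref{rem-altL4}, condition (L3) for $\Phi$ is equivalent to $\Phi(\sM_1(C,\fm_1)\times\cdots\times\sM_n(C,\fm_n))\subset \sN(C,\max_i\fm_i)$, and under this the image of each generator \eqref{defn-LMFtensor1} of $\eusm R$ is precisely the (MC) sum for $\Phi(a_1,\ldots,a_n)\in \sN(C,\max_i\fm_i)$, which vanishes. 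So the factorization descends to $\LT$; since $\sN\in\MFsp$, the adjointness of $\Sigma$ in Proposition \ref{prop-adjoint-of-LMF} yields a unique extension to $\T$, and uniqueness at each of these three steps gives the universal property. Functoriality in the $\sM_i$ is then formal. The principal obstacle throughout is (MC) for $\T$: the relation $\eusm R$ directly controls moduli only for the ``tautological'' sections $\tau(a_1,\ldots,a_n)$, so propagating this to arbitrary sections of $\T$ on open subsets of curves requires combining it with the Mackey-style pushforwards built into the tensor product in $\PT$, Proposition \ref{prop-modulus}(2), and the Nisnevich-locality Theorem \ref{thm-modulus-Nis-local}.
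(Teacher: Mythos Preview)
Your proof is correct and follows essentially the same approach as the paper: establish the moduli compatibility $\tau(a_1,\ldots,a_n)\in\T(C,\max_i\fm_i)$ directly from the relation $\eusm R$, bootstrap this via pushforwards and Nisnevich locality to (MC) for $\T$, deduce (L3), and obtain the universal property by factoring through $\sM_1\otimes\cdots\otimes\sM_n$, $\LT$, and then $\T$ via the adjointness of $\Sigma$. The paper unwinds the (MC) sum for a pushed-forward tensor directly (identifying it with \eqref{defn-LMFtensor1} on the covering curve), whereas you package the same computation as ``core input on $D$'' followed by Proposition~\ref{prop-modulus}(2); these are equivalent once one notes that for a finite flat $\pi\colon U'\to U$ in $\RegCon$ the extension $D\to C$ satisfies $U'=\pi^{-1}(U)$ (so that $D\setminus U'=\pi^{-1}(C\setminus U)$ and the resulting modulus on $C$ really has support $C\setminus U$).

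There is one minor logical ordering issue: your step 3 deduces (L3) by concluding $\tau(a_1,\ldots,a_n)\in\Fil_P^{\max_i r_i}\T(\eta_C)$, but the filtration $\Fil_P^\bullet\T(\eta_C)$ is defined via the local symbol (Definition~\ref{defn-filtration} and Corollary-Definition~\ref{cor-defn-localsymbol}), which exists only once $\T$ is known to be a reciprocity functor, i.e.\ after your step 4. Since step 4 does not use step 3, swapping their order fixes this; the paper indeed proves (MC) first and (L3) afterwards.
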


\begin{proof}
First we prove that $\T(\sM_1,\ldots,\sM_n)$ is a reciprocity functor, i.e. we have to show that 
for $C\in(\sC/S)$ a curve and $U\subset C$ a non-empty open subset any section $\alpha\in \T(\sM_1,\ldots,\sM_n)(U)$ admits a modulus whose support is equal to $C\setminus U$.
First assume that $\alpha=a_1\otimes \cdots\otimes a_n$ with $a_i\in\sM_i(U_\xi)$, where $\xi\to \eta_C$ is a finite morphism and
we denote by $C_\xi$ the normalization of $C$ in $\kappa(\xi)$ and by $U_\xi$ the pullback of $U$ to $C_\xi$.
Since the $\sM_i$'s are reciprocity functors we find effective divisors $\fm_i$ on $C$ with $|\fm_i|=C\setminus U$, such that $a_i\in \sM_i(C_\xi,\fm_{i,\xi})$,
where $\fm_{i,\xi}$ denotes the pullback of $\fm_i$ to $C_\xi$. Then $|\max_i\{\fm_i\}|=C\setminus U$ and we have to show 
\[\sum_{P\in C\setminus |\max_i\{\fm_i\}|} v_P(f) \Tr_{P/x_C} s_P(a_1\otimes\cdots\otimes a_n)=0\quad \text{in } \T(\sM_1,\ldots ,\sM_n)(x_C),\]
for all $f\in \G_m(\eta_C)$ with $f\equiv 1$ mod $\max_i\{\fm_i\}$.
But by definition  of $\Tr_{P/x_C}$ and $s_P$ (see the construction of the pushforward and pullback in  Lemma \ref{LemmaTensTr}), we have that the sum on the left-hand side equals 
\mlnl{\sum_{P\in C\setminus|\max_i\{\fm_i\}|}v_P(f)\sum_{P'\in C_\xi\times_C P}e(P'/P)(s^{\sM_1}_{P'}(a_1)\otimes\cdots\otimes s^{\sM_n}_{P'}(a_n)) \\
            = \sum_{P'\in C_\xi\setminus|\max_i\{\fm_{i,\xi}\}|}v_{P'}(f)(s^{\sM_1}_{P'}(a_1)\otimes\cdots\otimes s^{\sM_n}_{P'}(a_n)),}
which is zero in $\LT(\sM_1,\ldots,\sM_n)(x_C)$ by the relation \eqref{defn-LMFtensor1} we divide out; a fortiori it is zero in $\T(\sM_1,\ldots,\sM_n)(x_C)$.
Now a general section $\alpha\in \T(\sM_1,\ldots,\sM_n)(U)$ is Nisnevich locally a sum of elements $a_1\otimes\ldots\otimes a_n$ as above and hence it admits a modulus
with support equal to $C\setminus U$ by Theorem \ref{thm-modulus-Nis-local}. Thus $\T(\sM_1,\ldots,\sM_n)$ is a reciprocity functor.

Next we claim that  the $n$-linear morphism in $\PT$ \eqref{thm-tensorRF0} indeed induces a $n$-linear morphism of reciprocity functors $\tau$. We have to check that (L3) holds.
For this take effective divisors $\fm_i$ on $C$ and elements $a_i\in \sM_i(C,\fm_i)$.
As we saw above this gives in $\T(\sM_1,\ldots,\sM_n)(x_C)$
\mlnl{\sum_{Q\in|\max_i\{\fm_i\}|} (\tau(a_1,\ldots, a_n),f)_Q=\\
     -\sum_{Q\in C\setminus|\max_i\{\fm_i\}|} v_Q(f)\Tr_{Q/x_C}s_Q(\tau(a_1,\ldots, a_n))=0}
for all $f\equiv 1$ mod $\max_i\{\fm_i\}$. Hence 
\[\tau(a_1,\ldots, a_n)\in\T(\sM_1,\ldots,\sM_n)(C,\max_i\{\fm_i\}).\]
By Remark \ref{rem-altL4} this gives (L3).

The $n$-linear map \eqref{thm-tensorRF1} induces a natural transformation of functors $\RF\to (R-{\rm mod})$
\[\Hom_{\RF}(\T(\sM_1,\ldots \sM_n), -)\to \Lin{n}_{\RF}(\sM_1,\ldots,\sM_n;-)\]
and we claim, that it is in fact an isomorphism of functors.
For this it suffices to show that it is an isomorphism of $R$-modules when evaluated at any $\sN\in\RF$.
The injectivity follows immediately from the fact that for a regular connected scheme $X\in\Regone$ any element in $\T(\sM_1,\ldots,\sM_n)(X)$ can Nisnevich locally  be written as a sum of elements of the form 
$\Tr_{Y/X}(\tau(a_1,\ldots, a_n))$, where $Y\to X$ is a finite flat morphism in $\RegCon$ and $a_i\in\sM_i(Y)$.
For the surjectivity let $\Phi: \sM_1\times\cdots\times \sM_n\to \sP$ be a $n$-linear morphism between reciprocity functors.
Then the corresponding $n$-linear map on the underlying presheaves with transfers factors over $\sM_1\otimes \cdots\otimes\sM_n$. Furthermore since $\Phi$ satisfies (L3) 
we get
\[\Phi(\eqref{defn-LMFtensor1})=\sum_{Q\in C\setminus|\max_i\{\fm_i\}|} v_Q(f) \Tr_{Q/x_C}s_Q^{\sP}(\Phi(a_1,\ldots,a_n))=0.\]
Thus $\Phi$ factors over $\LT(\sM_1,\ldots,\sM_n)$ and we obtain a morphism of lax Mackey functors with \
specialization maps $\LT(\sM_1,\ldots,\sM_n)\to \sP$. The left adjoint property of $\Sigma$ hence gives 
a morphism in $\RF$
\[\T(\sM_1,\ldots,\sM_n)=\Sigma(\LT(\sM_1,\ldots,\sM_n))\to \sP.\]
This finishes the proof.
\end{proof}

\begin{corollary}\label{cor-RFtensor}
Let $\sM_i$, $\sM_i'$, $i=1,\ldots, n$, be reciprocity functors.
\begin{enumerate}
\item For all $1\leqslant i< j\leqslant n$ we have a functorial isomorphism
      \[\T(\sM_1,\ldots, \sM_i,\ldots, \sM_j,\ldots, \sM_n) \cong\T(\sM_1,\ldots, \sM_j,\ldots, \sM_i,\ldots, \sM_n).\]
\item For all $1\leqslant i\leqslant n$ we have a functorial isomorphism
      \mlnl{\T(\sM_1,\ldots, \sM_i\oplus \sM'_i,\ldots, \sM_n)\\ \cong \T(\sM_1,\ldots, \sM_i\ldots, \sM_n)\oplus \T(\sM_1,\ldots, \sM_i'\ldots, \sM_n).}
\item There is a functorial map
          \[\T(\sM_1,\sM_2,\sM_3)\to \T(\T(\sM_1,\sM_2),\sM_3),\]
           which is surjective as a map of Nisnevich sheaves. 
\end{enumerate}
\end{corollary}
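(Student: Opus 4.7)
The plan is to apply the universal property of Theorem \ref{thm-tensorRF} throughout. For part (1), every permutation $\sigma$ of $\{1,\ldots,n\}$ induces a natural isomorphism of functors on $\RF$
\begin{equation*}
\Lin{n}_\RF(\sM_1,\ldots,\sM_n;-) \xr{\simeq} \Lin{n}_\RF(\sM_{\sigma(1)},\ldots,\sM_{\sigma(n)};-),
\end{equation*}
since all three conditions (L1)--(L3) of Definition \ref{defn-MLmapRF} are symmetric in the arguments; Yoneda then yields the claimed isomorphism. For part (2), a $n$-linear morphism out of $\sM_1\times\cdots\times(\sM_i\oplus\sM_i')\times\cdots\times\sM_n$ decomposes uniquely as a pair of $n$-linear morphisms (with $i$-th entry $\sM_i$, resp.\ $\sM_i'$), because each of (L1)--(L3) is additive in each argument. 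A second application of Yoneda then gives the direct sum decomposition.

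For part (3), the strategy is to construct the morphism $\bar{\Phi}$ via the universal property applied to the natural composite $3$-linear map
\begin{equation*}
\Phi:\sM_1\times\sM_2\times\sM_3\to\T(\T(\sM_1,\sM_2),\sM_3),\quad (a_1,a_2,a_3)\mapsto \tau(\tau_{12}(a_1,a_2),a_3),
\end{equation*}
where $\tau_{12}$ and $\tau$ denote the universal bilinear maps provided by Theorem \ref{thm-tensorRF} applied to the pairs $(\sM_1,\sM_2)$ and $(\T(\sM_1,\sM_2),\sM_3)$ respectively. Conditions (L1) and (L2) for $\Phi$ are immediate from the corresponding conditions for $\tau_{12}$ and $\tau$. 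For (L3), if $a_i\in\Fil^{r_i}_P\sM_i(\eta_C)$ then (L3) for $\tau_{12}$ gives $\tau_{12}(a_1,a_2)\in\Fil^{\max\{r_1,r_2\}}_P\T(\sM_1,\sM_2)(\eta_C)$, and then (L3) for $\tau$ places $\tau(\tau_{12}(a_1,a_2),a_3)$ into $\Fil^{\max\{r_1,r_2,r_3\}}_P$ as required. Theorem \ref{thm-tensorRF} then factors $\Phi$ uniquely through $\bar{\Phi}:\T(\sM_1,\sM_2,\sM_3)\to\T(\T(\sM_1,\sM_2),\sM_3)$.

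To prove $\bar{\Phi}$ is Nisnevich-surjective, I would unfold the construction $\T(\sP,\sQ)=\Sigma(\LT(\sP,\sQ))$ of Definition \ref{defn-LMFtensor}. By Proposition \ref{prop-adjoint-of-LMF} the canonical map $\LT(\sP,\sQ)\to\T(\sP,\sQ)$ is surjective on Nisnevich stalks, and by the explicit construction in Lemma \ref{LemmaTensTr} every section of $\LT(\sP,\sQ)(Y)$ over a connected $Y$ is a finite sum of elements of the form $\Tr_{Z/Y}(p\otimes q)$ with $Z\to Y$ finite flat and $p\in\sP(Z)$, $q\in\sQ(Z)$. Applied twice, this shows that any Nisnevich-local section of $\T(\T(\sM_1,\sM_2),\sM_3)$ on $X\in\RegCon$ is, after a Nisnevich refinement, a sum of sections of the form $\Tr_{Z/X}\bigl(\tau(\Tr_{W/Z}(\tau_{12}(a_1,a_2)),c)\bigr)$ with $W\xrightarrow{\pi}Z\to X$ finite flat, $a_i\in\sM_i(W)$ and $c\in\sM_3(Z)$. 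The projection formula (L2) for $\tau$ rewrites this as $\Tr_{W/X}\bigl(\tau(\tau_{12}(a_1,a_2),\pi^*c)\bigr)=\Tr_{W/X}\bigl(\Phi(a_1,a_2,\pi^*c)\bigr)$, which by construction lies in the image of $\bar{\Phi}$.

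The main subtlety is the bookkeeping for the surjectivity step: one must keep track of the two-layered construction (an ordinary tensor product of presheaves with transfers, followed by the ``reciprocity completion'' functor $\Sigma$), and then use the projection formula to collapse nested traces down to a single one. In contrast, the failure of $\bar{\Phi}$ to be an isomorphism — i.e.\ the absence of genuine associativity — will stem from the fact that imposing (L3) globally on a single $3$-linear map is a priori weaker than imposing (L3) in the two separate stages that define the right-hand side; this is precisely the obstruction alluded to in the remark following the main theorem of the introduction.
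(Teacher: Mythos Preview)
Your proof is correct and follows the paper's approach: (1) and (2) via the universal property, and for (3) the paper produces the map from the natural transformation $\Bil_\RF(\T(\sM_1,\sM_2),\sM_3;-)\to\Lin{3}_\RF(\sM_1,\sM_2,\sM_3;-)$ (which is exactly your explicit $\Phi$) and deduces surjectivity from the observation that both sides are Nisnevich-local quotients of $\sM_1\otimes\sM_2\otimes\sM_3$. Your nested-trace unwinding with the projection formula is a more detailed version of that one-line surjectivity claim.
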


\begin{proof}
(1) and (2) follow immediately from the universal property.  The existence of the map in (3) follows from the universal property and the natural map 
\[\Bil_\RF(\T(\sM_1,\sM_2),\sM_3;\sN)\to \Lin{3}_\RF(\sM_1,\sM_2,\sM_3;\sN).\]
The surjectivity statement follows from the fact that both sides are quotients of $\sM_1\otimes\sM_2\otimes \sM_3$.
\end{proof}

\begin{remark}\label{rem-tensor-equiv}
 We don't know whether the map in (3) above is an isomomorphism (maybe one has to impose further conditions on the $\sM_i$'s). Thus we don't know whether $\T$ is associative,
and we cannot call it a tensor product.
\end{remark}

\begin{corollary}\label{cor-tensor-constant}
Let $\sM_1,\ldots,\sM_n$ be reciprocity functors and let $N$  be an $R$-module viewed as a constant reciprocity functor.
Then we have a functorial isomorphism of reciprocity functors
\[\T(\sM_1,\ldots,\sM_n, N)\cong \Sigma(\T(\sM_1,\ldots, \sM_n)\otimes N),\]
where $\otimes$ on the right hand side is the tensor product in $\PT$ (see Example \ref{ex-tensor-constant}).
In particular, $\T(\sM_1,\ldots,\sM_n, R)=\T(\sM_1,\ldots,\sM_n)$ and for $R$-modules $N_i$ we have 
$\T(N_1,\ldots, N_n)= N_1\otimes_R\ldots \otimes_R N_n$ .
\end{corollary}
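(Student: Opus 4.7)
The plan is to establish the isomorphism by a chain of natural identifications of representable functors on $\RF$ and conclude by Yoneda.

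First I would exploit the simple structure of the constant reciprocity functor $N$ from Example \ref{ex-ModisRF}: on any connected $X$, $N(X)=N$, pullbacks act as the identity, pushforwards $f_*$ along finite flat $f$ act as multiplication by $\deg(f)$, and $N\in\RF_0$ so $\Fil^r_P N(\eta)=N$ for every $r\geq 0$. From this I derive two consequences. On the one hand, for any $\sP\in\RF$ there is a natural bijection
\[\Lin{n+1}_\RF(\sM_1,\ldots,\sM_n,N;\sP)\xrightarrow{\sim}\Hom_R\bigl(N,\Lin{n}_\RF(\sM_1,\ldots,\sM_n;\sP)\bigr),\quad \Phi\mapsto\bigl(c\mapsto\Phi(-,\ldots,-,c)\bigr),\]
because the axioms (L1), (L2) for $i_0\leq n$ and (L3) transfer directly to each $\Phi(-,\ldots,-,c)$, while (L2) for $i_0=n{+}1$, namely $\Phi(a_1,\ldots,a_n,f_*c)=f_*\Phi(f^*a_1,\ldots,f^*a_n,c)$, reduces on both sides to $\deg(f)\Phi(a_1,\ldots,a_n,c)$ and so is exactly the $R$-linear dependence on $c$. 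On the other hand, in the presheaf-tensor $\T(\sM_1,\ldots,\sM_n)\otimes N$ in $\PT$ the defining relations $(a\otimes g_*b')=(g^*a\otimes b')$ and $(g_*a'\otimes b)=(a'\otimes g^*b)$ collapse to plain $R$-bilinearity (use $g^*=\id$ and $g_*=\deg\cdot$ on $N$ to push every summand to $X$), giving an identification of presheaves with transfers
\[\T(\sM_1,\ldots,\sM_n)\otimes N \;=\; \bigl(X\mapsto\T(\sM_1,\ldots,\sM_n)(X)\otimes_R N\bigr),\]
with pullback and pushforward acting through the first factor only.

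Second I would verify that $\Sigma(\T(\sM_1,\ldots,\sM_n)\otimes N)$ lies in $\RF$. By Theorem \ref{thm-modulus-Nis-local} the modulus condition is Nisnevich-local, so it suffices to treat a section represented over an open $V\subset D$ in a curve $D\in(\sC/S)$ by $\alpha=\sum_i \tau_i\otimes c_i\in\T(\sM_1,\ldots,\sM_n)(V)\otimes_R N$. By Theorem \ref{thm-tensorRF} each $\tau_i$ admits a modulus $\fm_i$ on $D$; taking $\fm=\max_i\{\fm_i\}$ (enlarged if necessary so that $|\fm|=D\setminus V$) and any $f\equiv 1$ modulo $\fm$, the computation
\[\sum_{P\in V}v_P(f)\,\Tr_{P/x_D}\,s_P\!\Bigl(\sum_i\tau_i\otimes c_i\Bigr)=\sum_i\Bigl(\sum_{P\in V}v_P(f)\,\Tr_{P/x_D}\,s_P(\tau_i)\Bigr)\otimes c_i=0\]
holds in $\T(\sM_1,\ldots,\sM_n)(x_D)\otimes_R N=(\T(\sM_1,\ldots,\sM_n)\otimes N)(x_D)$ (using $s_Pc_i=c_i$ and that trace is applied to the first factor), hence also in $\Sigma(\T(\sM_1,\ldots,\sM_n)\otimes N)(x_D)$. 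So $\fm$ is a modulus for $\alpha$ and the Nisnevich-local criterion applies.

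Third I would chain the universal properties:
\begin{align*}
\Hom_\RF(\T(\sM_1,\ldots,\sM_n,N),\sP)
&= \Lin{n+1}_\RF(\sM_1,\ldots,\sM_n,N;\sP)\\
&\cong \Hom_R\bigl(N,\Lin{n}_\RF(\sM_1,\ldots,\sM_n;\sP)\bigr)\\
&\cong \Hom_R\bigl(N,\Hom_\RF(\T(\sM_1,\ldots,\sM_n),\sP)\bigr)\\
&\cong \Bil_\PT(\T(\sM_1,\ldots,\sM_n),N;\sP)\\
&\cong \Hom_\PT(\T(\sM_1,\ldots,\sM_n)\otimes N,\sP)\\
&= \Hom_\LMFsp(\T(\sM_1,\ldots,\sM_n)\otimes N,\sP)\\
&\cong \Hom_\MFsp\bigl(\Sigma(\T(\sM_1,\ldots,\sM_n)\otimes N),\sP\bigr)\\
&= \Hom_\RF\bigl(\Sigma(\T(\sM_1,\ldots,\sM_n)\otimes N),\sP\bigr),
\end{align*}
using Theorem \ref{thm-tensorRF} for the second line, the $N$-slot analysis of step one again in $\PT$ for the fourth, Corollary \ref{cor-tensor-PT} for the fifth, the full embedding $\MFsp\subset\LMFsp$ for the sixth, Proposition \ref{prop-adjoint-of-LMF} for the seventh, and step two for the last. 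Yoneda then supplies the desired canonical isomorphism in $\RF$. The particular cases are immediate: for $N=R$ the tensor $\T(\sM_1,\ldots,\sM_n)\otimes R=\T(\sM_1,\ldots,\sM_n)$ by Corollary \ref{cor-tensor-PT} is already in $\RF$, so $\Sigma$ leaves it unchanged; and for $\sM_i=N_i$ constant, induction on $n$ starting from $\T(N_1)=N_1$ (since a $1$-linear map of reciprocity functors is just a morphism in $\RF$) gives $\T(N_1,\ldots,N_n)=N_1\otimes_R\cdots\otimes_R N_n$. The main obstacle is step two: one must ensure that the presheaf-level modulus calculation survives the passage through $\Sigma$, which itself involves an iterated quotient and a Nisnevich sheafification, and that the identification of step one with the pointwise tensor is genuinely compatible with all the functorial structure. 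Once this is done, the proof is purely formal.
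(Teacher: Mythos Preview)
Your proof is correct, but it takes a genuinely different route from the paper's. The paper's argument is a two-line direct computation at the level of the explicit construction $\T=\Sigma\circ\LT$: since $N$ is constant, $s_P^N=\id$ and one may take the modulus of the $N$-slot to be $0$, so the reciprocity relation \eqref{defn-LMFtensor1} for $\LT(\sM_1,\ldots,\sM_n,N)$ is literally $(\text{relation for }\LT(\sM_1,\ldots,\sM_n))\otimes c$, which is already zero in $\T(\sM_1,\ldots,\sM_n)\otimes N$; hence $\LT(\sM_1,\ldots,\sM_n,N)$ and $\T(\sM_1,\ldots,\sM_n)\otimes N$ have the same image under $\Sigma$. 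No Yoneda, no separate verification that $\Sigma(\T\otimes N)\in\RF$ (this comes for free, since $\T(\sM_1,\ldots,\sM_n,N)$ is a reciprocity functor by Theorem~\ref{thm-tensorRF}).

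Your approach, by contrast, never opens up $\LT$: you characterize both sides by the functors they represent and match them via the chain of adjunctions. This is more conceptual and makes transparent \emph{why} the constant slot is inert---your key observation, that (L2) in the $N$-slot collapses to $R$-linearity and (L3) in the $N$-slot imposes nothing beyond (L3) in the remaining slots (because $\Fil^r_P N=N$ and the filtration is increasing), is exactly the representability-level shadow of the paper's computation. The cost is that you must separately verify $\Sigma(\T\otimes N)\in\RF$ (your step two), and you must be careful that the modulus computation for a lift in $\T\otimes N$ really descends to $\Sigma(\T\otimes N)$; you handle this correctly via the compatibility of the map $\T\otimes N\to\Sigma(\T\otimes N)$ with transfers and specialization, together with Theorem~\ref{thm-modulus-Nis-local}. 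Both arguments are valid; the paper's is shorter, yours isolates the categorical content.
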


\begin{proof}
It suffices to show that (in the notation of Definition \ref{defn-LMFtensor}) we have 
\[\LT(\sM_1,\ldots,\sM_n, N)= \T(\sM_1,\ldots, \sM_n)\otimes N, \]
which is clear since \eqref{defn-LMFtensor1} is already zero on the right hand side.
\end{proof}

\begin{corollary}\label{cor-tensor-filtration}
For  $\sM_1,\ldots, \sM_n\in \RF_n$ (see Definitions \ref{defn-RFn} for the notation), we have $\T(\sM_1,\ldots, \sM_n)\in \RF_n$, i.e.
$\T$ restricts to a functor
\[\T: \prod \RF_n\to \RF_n.\]
\end{corollary}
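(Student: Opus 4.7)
The plan is to verify the defining condition of $\RF_n$ directly, namely that for every $C\in (\sC/S)$, every closed point $P\in C$, and with $\sT:=\T(\sM_1,\ldots,\sM_k)$ (where I write $k$ for the number of factors to avoid overloading $n$) one has
\[\Fil^n_P \sT(\eta_C)=\sT(\eta_C).\]
A key preliminary step will be to observe that the filtration $\Fil^n_\bullet$ is compatible with pushforward along finite maps. More precisely, for $\pi: D\to C$ a finite morphism in $(\sC/S)$, $P\in C$, and $b\in \sM(\eta_D)$ such that $b\in\Fil^n_Q\sM(\eta_D)$ for every $Q\in\pi^{-1}(P)$, I claim $\pi_* b\in\Fil^n_P\sM(\eta_C)$. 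Indeed, if $f\in U^{(n)}_P$, then $\pi^*f\in U^{(ne(Q/P))}_Q\subset U^{(n)}_Q$ for every $Q\in\pi^{-1}(P)$, so Proposition \ref{prop-pfpb-localsymbol}(1) gives
\[(\pi_* b,f)_P=\sum_{Q\in\pi^{-1}(P)}\Tr_{x_D/x_C}(b,\pi^*f)_Q=0.\]

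Next, I would identify a convenient set of generators of $\sT(\eta_C)$. Since the canonical map $\LT(\sM_1,\ldots,\sM_k)\to \sT$ is surjective on Nisnevich stalks (Proposition \ref{prop-adjoint-of-LMF}) and $\eta_C$ has only itself as a Nisnevich neighborhood, the natural map $\LT(\sM_1,\ldots,\sM_k)(\eta_C)\surj \sT(\eta_C)$ is surjective. By the definition of $\otimes$ in $\PT$ (specialized to the point $\eta_C$, where ``finite flat over $\eta_C$'' just means ``finite''), every element of $(\sM_1\otimes\cdots\otimes\sM_k)(\eta_C)$, and hence of $\sT(\eta_C)$, is a sum of elements of the form $\pi_*\tau(a_1,\ldots,a_k)$ with $\pi: C_\xi\to C$ the normalization of $C$ in a finite extension $\xi=\eta_{C_\xi}\to\eta_C$ and $a_i\in\sM_i(\eta_{C_\xi})$.

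Finally, I would combine these two ingredients. Since each $\sM_i\in\RF_n$, one has $\sM_i(\eta_{C_\xi})=\Fil^n_{P'}\sM_i(\eta_{C_\xi})$ for every $P'\in C_\xi$. Applying condition (L3) from Definition \ref{defn-MLmapRF} to the universal $k$-linear map $\tau$ with $r_1=\cdots=r_k=n$ yields
\[\tau(a_1,\ldots,a_k)\in \Fil^n_{P'}\sT(\eta_{C_\xi})\qquad\text{for every }P'\in C_\xi,\]
in particular for every $P'\in\pi^{-1}(P)$. The preliminary trace-compatibility step then shows $\pi_*\tau(a_1,\ldots,a_k)\in\Fil^n_P\sT(\eta_C)$. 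As these elements generate $\sT(\eta_C)$, the inclusion $\sT(\eta_C)\subset\Fil^n_P\sT(\eta_C)$ follows.

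The argument is mostly formal once the trace-compatibility of $\Fil^n_\bullet$ is in hand, and the only mildly subtle point is the normalization needed to pass from an arbitrary finite extension $\xi\to\eta_C$ to a finite morphism of curves in $(\sC/S)$ so that Proposition \ref{prop-pfpb-localsymbol} applies; there are no further obstacles.
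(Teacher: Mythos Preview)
Your proof is correct and follows essentially the same strategy as the paper: write a general element of $\sT(\eta_C)$ as a sum of traces $\pi_*\tau(a_1,\ldots,a_k)$, apply (L3) for the universal map $\tau$ upstairs, and then push forward. The only cosmetic difference is that the paper phrases everything in the modulus language $\sM(C,\fm)$ (invoking Remark \ref{rem-altL4} and Proposition \ref{prop-modulus}(2) for the trace step), whereas you work directly with $\Fil^n_P$ and derive the trace-compatibility from Proposition \ref{prop-pfpb-localsymbol}(1); via Lemma \ref{lem-alt-descr-modulus-space} these are equivalent.
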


\begin{proof}
Let $C\in (\sC/S)$ be a curve and $\fm_1,\ldots, \fm_n$ effective divisors on it. Then by (L3) the $n$-linear map $\tau$ induces a map
\[\sM_1(C,\fm_1)\times\cdots\times\sM_n(C,\fm_n)\to \T(\sM_1,\ldots, \sM_n)(C,\max\{\fm_1,\ldots,\fm_n\}).\]
Now the statement follows from Proposition \ref{prop-modulus}, (2) and the fact that any element in $\T(\sM_1,\ldots, \sM_n)(\eta_C)$ can be written as a sum of elements of
the form $\Tr_{\eta_D/\eta_C}(\tau(a_1,\ldots, a_n))$, with $D\to C$ a finite and flat map in $(\sC/S)$ and $a_i\in \sM_i(\eta_D)$.
\end{proof}

%

\begin{remark}\label{rmk-T-is-not-right-exact}
One can show that the category $\RF$ is quasi-Abelian in the sense of \cite[1.1]{Schneiders}. (The kernel of a morphism
in $\RF$ equals the kernel of the corresponding morphism in $\PT$ and the cokernel of a morphism in $\RF$ is obtained by
applying the functor $\Sigma$ from Proposition \ref{prop-adjoint-of-LMF} to the cokernel of the corresponding morphism in $\PT$.)
In particular one has a notion of short exact sequences. But it seems that for given reciprocity functors $\ul{\sM}=\sM_1,\ldots,\sM_{n-1}$
the functor 
\[\T(\ul{\sM},-):\RF\to \RF\]
is {\em not} right exact.
More precisely if $0\to \sN'\xr{\varphi} \sN\xr{\psi} \sN''\to 0$ is a short exact sequence in $\RF$ 
(i.e. $\sN'\cong {\rm Ker}_{\RF}(\psi)$ and $\sN''\cong {\rm Coker}_{\RF}(\varphi)$), then the sequence
\[\T(\ul{\sM},\sN')\to \T(\ul{\sM},\sN)\to \T(\ul{\sM},\sN'')\to 0\]
is in general only exact on the right. It follows from the universal property of $\T$ that 
a sufficient condition for the exactness of the above sequence
is the surjectivity of
\eq{rmk-T-is-not-right-exact1}{{\rm Fil}^r_P\sN(\eta_C)\surj {\rm Fil}^r_P\sN''(\eta_C),}
for all $C\in(\sC/S)$, all $P\in C$ and all $r\ge 1$. This was pointed out to us by \name{B. Kahn}. But condition \eqref{rmk-T-is-not-right-exact1} is in general not satisfied.
As an example you can assume, that the base field
$F$ is algebraically closed and has positive characteristic $p$, then we have a surjection of reciprocity functors
$W_2\surj \G_a$ but for example ${\rm Fil}^p_0 W_2(\eta_{\P^1_F})$ will {\em not} surject onto 
${\rm Fil}^p_0\G_a(\eta_{\P^1_F})$ as the computation of the filtration groups in \cite[Prop. 6.4, (3)]{Kato-Russell} shows.
Note however that condition \eqref{rmk-T-is-not-right-exact1} automatically holds on $\RF_1$.
\end{remark}

\begin{remark}\label{rmk-base-change}
Let $F\subset F'$ be a finite extension of perfect fields, giving rise to a map $S'=\Spec F'\to S=\Spec F$.
In the following we indicate by a label $S$ or $S'$ with respect to which base we are working.
We get a natural functor $\Regone\Cor_{S'}\to \Regone\Cor_S$, which induces a functor
$\RF_S\to \RF_{S'}$. Thus given $\sM_1,\ldots, \sM_n\in \RF_S$ we can view the $n$-linear map
$\tau: \sM_1\times\cdots\times\sM_n\to \T_S(\sM_1,\ldots,\sM_n)$ as a $n$-linear map of reciprocity functors
over $S'$ and hence we get a map
\[\T_{S'}(\sM_1,\ldots,\sM_n)\surj \T_S(\sM_1,\ldots, \sM_n),\quad \tau_{S'}(\ul{m})\mapsto \tau_S(\ul{m})\quad \text{in } \RF_{S'},\]
which is automatically surjective since both sides are quotients of $\sM_1\otimes\ldots \otimes\sM_n$.
\end{remark}

\begin{notation}\label{not-iteratedTensor} 
  Let $\sM$ and $\sN$ be two reciprocity functors. Then we will use the following notation (and variants of it)
       \[\T(\sM,\sN^{\times n}):=\T(\sM,\underbrace{\sN,\ldots,\sN}_{\text{$n$-times}}).\]
\end{notation}

\section{Computations}

\subsection{Relation with homotopy invariant Nisnevich sheaves with transfers}

\subsubsection{Tensor product for presheaves with transfers}\label{PST-tensor}
By \cite[3.2]{VoDM}  the Abelian category $\PST$ (see \ref{PST}) has a monoidal structure extending the one on $\SmCor$ via the Yoneda embedding. By \cite[\S 2]{SuVoBK} it is given by the following formula:
Let $\sF$ and $\sG$ be two presheaves with transfers, then $\sF\otimes_\PST \sG$ is the presheaf with transfers which on $X\in\SmCor$ is given by
\eq{PST-tensor1}{(\sF\otimes_\PST \sG)(X)= \bigoplus_{Y,Z\in\Sm} \sF(Y)\otimes_\Z \sG(Z)\otimes_\Z\Cor(X,Y\times Z)/\Lambda,}
where $\Sm$ is the category of smooth $S$-schemes and $\Lambda$ is the subgroup generated by elements of the following form
\eq{PST-tensor2}{\phi\otimes\psi\otimes (f\times\id_Z)\circ h- f^*(\phi)\otimes\psi\otimes h,}
where $\phi\in \sF(Y)$, $\psi\in \sG(Z)$, $f\in\Cor(Y',Y)$, $h\in\Cor(X, Y'\times Z)$, and
\eq{PST-tensor3}{\phi\otimes\psi\otimes (\id_Y\times g)\circ h-\phi\otimes g^*(\psi)\otimes h,}
where $\phi\in \sF(Y)$, $\psi\in \sG(Z)$, $g\in\Cor(Z',Z)$, $h\in\Cor(X, Y\times Z')$.

\subsubsection{} The categories $\HI$, $\NST$ and $\HINis$ inherit symmetric monoidal structures defined respectively by:
$$F\otimes_\NST G:=(F\otimes_\PST G)_\Nis,\qquad F\otimes_\HI G:=h_0(F\otimes_\PST G),$$
$$ F\otimes_\HINis G:=h^\Nis_0(F\otimes_\PST G),$$ 
where $h_0: \PST\to \HI$ is the left adjoint of the forgetful functor $\HI\to\PST$ (see \ref{PST}) and  $h_0^\Nis$ is the composition of $h_0$ with the sheafification functor.

\begin{proposition}\label{prop-tensor-PST-vs-MF}
Let $\sF,\sG\in\PST$ be two presheaves with transfers.
There exists a canonical and functorial isomorphism in $\PT$ 
$$(\sF\otimes_\PST\sG)\widehat{}\,\cong\, \hat{\sF}\otimes \hat{\sG},$$
where $(-)\widehat{}: \PST\to \PT$ is the functor from Proposition \ref{prop-PSTtoPT} and the tensor product on the right hand side is the one defined in \ref{tensorPT}.
\end{proposition}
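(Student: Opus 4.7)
The plan is to construct mutually inverse morphisms $\alpha:\hat{\sF}\otimes\hat{\sG}\to(\sF\otimes_{\PST}\sG)\widehat{\phantom{-}}$ and $\beta$ in the reverse direction, and to reduce all verifications to a single ``straightening'' identity in $\sF\otimes_{\PST}\sG$. The whole argument takes place at the level of models: given $X\in\RegCon$, by Lemma \ref{defn-models-cofil} the category $\fM_X$ is cofiltered, so any pair $(a,b)\in\hat{\sF}(X)\times\hat{\sG}(X)$ admits a common model $U\in\fM_X$ with lifts $\tilde a\in\sF(U)$, $\tilde b\in\sG(U)$.

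To build $\alpha$, I would define the bilinear map on such lifts by sending $(a,b)$ to the class of $\tilde a\otimes\tilde b\otimes\id_U$ in $(\sF\otimes_{\PST}\sG)(U)$, then pass to the colimit over $\fM_X^\op$. Well-definedness is straightforward by functoriality of $\sF\otimes_{\PST}\sG$. To see this pairing is a $2$-linear map in $\PT$ in the sense of Definition \ref{defn-bilinearMF}, note that the universal bilinear map $\sF\times\sG\to\sF\otimes_{\PST}\sG$ in $\PST$ is already compatible with all finite correspondences (both composition laws \eqref{PST-tensor2} and \eqref{PST-tensor3} are built into the relations defining $\Lambda$). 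Since pullback and pushforward on $\hat{\sF}$, $\hat{\sG}$, and $(\sF\otimes_{\PST}\sG)\widehat{\phantom{-}}$ are defined via the correspondences $[\Gamma_g]$ and $[\Gamma_g^t]$ extended to suitable models (as in Proposition \ref{prop-PSTtoPT} and Lemma \ref{LemmaTensTr}), the properties (L1) and (L2) transfer automatically. Corollary \ref{cor-tensor-PT} then yields $\alpha$.

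For $\beta$, the crucial observation is a ``straightening identity'' in $\sF\otimes_{\PST}\sG$: for any smooth $U$, $Y$, $Z$ and elementary correspondence $[W]\in\Cor(U,Y\times Z)$ with $W$ irreducible finite surjective over $U$, denoting by $q_U,p_Y,p_Z$ the projections from the normalization $\tilde W$, one has
\[
\phi\otimes\psi\otimes[W]\;=\;q_{U*}\!\bigl(p_Y^*\phi\otimes p_Z^*\psi\otimes\Delta_{\tilde W}\bigr)\quad\text{in }(\sF\otimes_{\PST}\sG)(U),
\]
which follows from two successive applications of \eqref{PST-tensor2} and \eqref{PST-tensor3} together with Lemma \ref{lem-basic-corr-formulas}. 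Given $\zeta\in(\sF\otimes_{\PST}\sG)\widehat{\phantom{-}}(X)$ represented by $\sum_i\phi_i\otimes\psi_i\otimes[W_i]$ on a model $U$, I would restrict each $W_i$ to $X$ and decompose $\tilde W_i\times_U X=\sqcup_j \tilde W_{ij}$ with $\tilde W_{ij}\to X$ finite flat (regularity of $X$ in dimension $\leqslant 1$ guarantees flatness) and multiplicities $\ell_{ij}$, and set
\[
\beta(\zeta):=\sum_{i,j}\ell_{ij}\cdot\Tr_{\tilde W_{ij}/X}\bigl(p_{Y,ij}^*\phi_i\otimes p_{Z,ij}^*\psi_i\bigr)\in(\hat{\sF}\otimes\hat{\sG})(X),
\]
where the pullbacks are defined by choosing a smooth neighbourhood of $\tilde W_{ij}$ in $\tilde W_i$. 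That $\beta\circ\alpha=\id$ is immediate from the definition (take $U=Y=Z$, $W=\Delta_U$). That $\alpha\circ\beta=\id$ follows directly from the straightening identity above applied to each summand.

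The main obstacle is verifying that $\beta$ is well defined, i.e.\ that it respects the relations in \eqref{PST-tensor1} (both \eqref{PST-tensor2} and \eqref{PST-tensor3}) and is independent of the choice of model $U$ and of the representation of $\zeta$. This reduces to a compatibility between the base change formula in $\PT$ (part (3) of Lemma \ref{lem-basic-corr-formulas} and the pushforward/pullback formula of Lemma \ref{LemmaTensTr}) and composition of correspondences in $\PST$; concretely, one has to unwind, for a composition of correspondences $(g\times\id)\circ h$ occurring in \eqref{PST-tensor2}, how the fibre product decomposition of the associated $W$ over $X$ recovers on the $\PT$-side the pushforward--pullback relations in $\hat{\sF}\otimes\hat{\sG}$. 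This is essentially bookkeeping with lengths of local rings at generic points of components, of the same flavour as the proof of Lemma \ref{RemaLength}, but it is the only step that requires genuine computation.
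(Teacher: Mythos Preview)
Your proposal follows essentially the same strategy as the paper's proof: your $\alpha$ is the paper's map $\nu$ (built from the diagonal $\Delta_U$) and your $\beta$ is the paper's $\theta$ (built by resolving an elementary correspondence $[W]$ through its normalization), with the order of construction reversed. The paper likewise reduces $\nu\circ\theta=\id$ to your ``straightening identity'' and identifies the vanishing of $\theta$ on the relations \eqref{PST-tensor2}, \eqref{PST-tensor3} as the main computation.

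One point to tighten: your straightening identity $\phi\otimes\psi\otimes[W]=q_{U*}(p_Y^*\phi\otimes p_Z^*\psi\otimes\Delta_{\tilde W})$ is only meaningful when $\tilde W$ is \emph{smooth}, since $\sF,\sG$ and the sum in \eqref{PST-tensor1} are defined only on $\Sm$. The paper handles this by first shrinking the model $U$ to $U'$ so that the normalization of the pullback of $W$ becomes smooth (possible because the image of $X$ in $U$ lies in codimension $\leqslant 1$). You gesture at this with ``choosing a smooth neighbourhood of $\tilde W_{ij}$ in $\tilde W_i$'', but the identity itself needs that hypothesis stated. Relatedly, once you localize the normal scheme $\tilde W_i$ along $X\to U$ the result is already normal and hence regular in dimension $\leqslant 1$, so your multiplicities $\ell_{ij}$ are all $1$; there is no need to carry them.
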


\begin{proof}
Let us first construct a morphism 
$$\theta: (\sF\otimes_\PST\sG)\widehat{}\ra \hat{\sF}\otimes \hat{\sG}\quad \text{in }\PT.$$ 
Let $X$ be a scheme in $\Regone$. Given a model $U\in\fM_X$ of $X$, we define a map
\eq{prop-tensor-PST-vs-MF1}{\theta_{X,U}: \bigoplus_{Y,Z\in \Sm}\sF(Y)\otimes_\Z\sG(Z)\otimes_\Z \Cor(U,Y\times Z)\to (\hat{\sF}\otimes  \hat{\sG})(X)}
as follows: For  $\alpha\in\sF(Y)\otimes\sG(Z)$ with $Y,Z\in\Sm$ and $[W]\in\Cor(U,Y\times Z)$ an elementary correspondence, take an open subset $U'\subset U$ which is a model of $X$  and 
such that the normalization $\tilde{W}'$ of the pullback of $W$ along $U'$ is smooth.
(This is possible since the image of $X$ in $U$ is contained in the locus of at most 1-codimensional points.)
Then we define
$$\theta_{X,U}(\alpha\otimes [W]):= \text{class of } (p_{\tilde{W}', Y}^*\otimes p_{\tilde{W}',Z}^*)(\alpha) 
          \text{ in } (\hat{\sF}\otimes  \hat{\sG})(X),$$
where $p_{\tilde{W}', Y}: \tilde{W}'\to Y$ and $p_{\tilde{W}', Z}: \tilde{W}'\to Z$ are induced by projection.
Clearly this element is independent of the choice of $U'$ and we thus obtain a well-defined map \eqref{prop-tensor-PST-vs-MF1}.
We claim that $\theta_{X,U}$ factors to give a map (again denoted by $\theta_{X,U}$)
\[\theta_{X,U}: (\sF\otimes_\PST\sG)(U)\to (\hat{\sF}\otimes \hat{\sG})(X).\]
For this we have to show that $\theta_{X,U}$ sends the elements \eqref{PST-tensor2} and  \eqref{PST-tensor3} to zero. Notice that we can assume that the correspondences $f,g,h$ appearing in 
these formulas are elementary. So take $Y,Y',Z\in\Sm$ and $\phi\in \sF(Y)$, $\psi\in \sG(Z)$ and elementary correspondences $f\in\Cor(Y',Y)$, $h\in\Cor(U, Y'\times Z)$ as in \eqref{PST-tensor2}.
After replacing $U$ with a smaller model of $X$, we can assume that $\tilde{h}$ - the normalization of $h$ - is smooth and further that the normalizations of all irreducible components
of the scheme-theoretic intersection of 
\[I:= (h\times Y\times Z)\cap (U\times (f\times \id_Z))\]
inside $U\times Y'\times Z\times Y\times Z$ are smooth as well as the normalization of their (reduced) images in $U\times Y\times Z$. (We can do this since all this schemes are finite and surjective over $U$.)
For an irreducible component $T$ of $I$ set
\[S_T=p_{UYZ}(T)_{\rm red}\subset U\times Y\times Z\]
and denote by $d_T:=[T:S_T]$ the degree of $T$ over $S_T$ and by $n_T$ the intersection number of $T$ inside $I$.
Thus
\[ [h\times Y\times Z]\cdot [U\times (f\times\id_Z)]=\sum_{T\subseteq I} n_T [T]\]
 and by definition
\[(f\times\id_Z)\circ h=\sum_{T\subseteq I} n_T \cdot d_T\cdot [S_T]\in \Cor(U, Y\times Z).\]
We introduce some more notations using the following commutative diagram in which all maps are the natural once (induced by composition of embeddings, projections and normalization):
\eq{prop-tensor-PST-vs-MF2}{\xymatrix{   & \tilde{S}_T\ar[r]^{p_{S,Y}}\ar@/^4em/[drr]^{p_{S,Z}}                &  Y                                &    \\ 
                                         & \tilde{T}\ar[d]_{p_{T,h}}\ar[r]\ar[u]^{\pi_T}\ar[ur]_{p_{T,Y}}      &  f\times\id_Z\ar[u]\ar[d]\ar[r]   &   Z\\
                                      Y' & \tilde{h}\ar[r]\ar@/_4em/[rru]_{p_{h, Z}}\ar[l]^{p_{h,Y'}}          &  Y'\times Z.\ar[ur]                & 
                                      }  }
Thus we get in $(\hat{\sF}\otimes \hat{\sG})(X)$ (where 'p.f.' refers to the projection formula coming from the definition of $\eusm R$ in \ref{tensorPT})
\begin{align}
\theta_{X,U}(\phi\otimes \psi\otimes (f\times\id_Y)\circ h) &= \sum_{T\subseteq I} n_T\cdot d_T\cdot (p_{S,Y}^*\phi\otimes p_{S,Z}^*\psi), &\text{by def. of } \theta_{X,U},\notag\\
                                                             &=\sum_{T\subseteq I} n_T \cdot (\pi_{T*}\pi_T^*p_{S,Y}^*\phi)\otimes p_{S,Z}^*\psi\notag\\
                                                            &=\sum_{T\subseteq I} n_T \cdot p_{T,Y}^*\phi\otimes p_{T,h}^*p_{h,Z}^*\psi, & \text{by p.f. and \eqref{prop-tensor-PST-vs-MF2}}\notag\\
                                                           &= \sum_{T\subseteq I} n_T \cdot p_{T,h*}p_{T,Y}^*\phi\otimes p_{h,Z}^*\psi, & \text{by p.f. }\notag
 \end{align}
Now observe, that we have a canonical isomorphism
\[I\cong (h\times Y\times Z)\times_{U\times Y'\times Z\times Y\times Z} (U\times f\times \id_Z)\cong h\times_{Y'} f.\]
Furthermore, the map $\tilde{h}\times_{Y'}f\to h\times_{Y'} f$ is an isomorphism over an open and dense subset of $h$ and thus induces finite birational maps $T'\to T$ between the irreducible components; hence their normalizations are equal, $\tilde{T}'=\tilde{T}$.
For $T'$ an irreducible component of $\tilde{h}\times_{Y'} f$ denote by $S_{T'}$ its reduced image (via projection) in $\tilde{h}\times Y$ and by $\tilde{S}_{T'}$ its normalization.
We obtain a commutative diagram
\[\xymatrix{     &     &   Y  \\
             \tilde{T}=\tilde{T}'\ar[r]^{\rho_T}\ar@/^2em/[urr]^{p_{T,Y}}\ar@/_2em/[drr]_{p_{T,h}} & \tilde{S}_{T'}\ar@{^(->}[r]\ar@/^1em/[ur]^{p_{S_{T'},Y}}\ar@/_1em/[dr]_{p_{S_{T'},h}} & \tilde{h}\times Y\ar[u]\ar[d]\\
                &      & \tilde{h}.
           }\]
Further if $T$ is an irreducible component in $I$ with intersection multiplicity $n_T$ and $T'$ is the corresponding irreducible component in $\tilde{h}\times_{Y'} f$, then 
(we denote by $\eta_T$ the generic point of $T$ and by abuse of notation also its image in the various other schemes appearing)
\begin{align}
n_T &= \sum_{i\geqslant 0} (-1)^i \length\left({\rm Tor}_i^{\sO_{Y'\times  Z,\eta_T}}(\sO_{h,\eta_T},\sO_{f\times\id_Z,\eta_T})\right)\notag\\
    &= \sum_{i\geqslant 0} (-1)^i \length\left({\rm Tor}_i^{\sO_{Y',\eta_T}}(\sO_{\tilde{h},\eta_T},\sO_{f,\eta_T})\right)\notag,
\end{align}
which is the intersection number of $T'$ in $\tilde{h}\times_{Y'} f$. Thus
\[f\circ[\Gamma_{p_{h,Y'}}]= \sum_{T\subset I} n_T\cdot\deg\rho_T\cdot [\tilde{S}_{T'}].\]
We get 
\begin{align}
\theta_{X,U}(\phi\otimes \psi\otimes (f\times\id_Y)\circ h) &= \sum_{T\subseteq I} n_T \cdot p_{T,h*}p_{T,Y}^*\phi\otimes p_{h,Z}^*\psi \notag\\
                                                            &= \sum_{T\subseteq I} n_T \cdot \deg \rho_T \cdot p_{S_{T'},h*}p_{S_{T'},Y}^*\phi\otimes p_{h,Z}^*\psi \notag\\
                                                            &=  (f\circ[\Gamma_{p_{h,Y'}}])^*\phi\otimes p_{h,Z}^*\psi \notag\\
                                                            &= p_{h,Y'}^*f^*\phi\otimes p_{h,Z}^*\psi\notag\\
                                                            &= \theta_{X,U}(f^*\phi\otimes\psi\otimes h).\notag
\end{align}
Hence $\theta_{X,U}(\eqref{PST-tensor2})=0$ and similarly one checks $\theta_{X,U}(\eqref{PST-tensor3})=0$. We thus obtain a well-defined map
\eq{prop-tensor-PST-vs-MF5}{\theta_{X}: (\sF\otimes_\PST\sG)\widehat{}\,(X)\to (\hat{\sF}\otimes\hat{\sG})(X).}
It is straightforward to check that the $\theta_X$, $X\in \Regone$, induce a morphism of presheaves with transfers on $\Regone$
\[\theta :(\sF\otimes_\PST \sG)\widehat{}\to (\hat{\sF}\otimes\hat{\sG}),\]
which clearly is functorial in $\sF$ and $\sG$.  Thus it remains to show that \eqref{prop-tensor-PST-vs-MF5} is an isomorphism for all $X$. We will give the inverse map. Let $X$ be in $\RegCon$ and define a map
\[\nu_X: \hat{\sF}(X)\times \hat{\sG}(X)\to (F\otimes_\PST \sG)\widehat{}\,(X)\]
as follows. For $(\hat{a},\hat{b})$ in $\hat{\sF}(X)\times \hat{\sG}(X)$ take a model $U\in\fM_X$ so that there are elements $a\in\sF(U)$ and $b\in \sG(U)$ representing $\hat{a}$ and $\hat{b}$, respectively. Then define
\[\nu_X(\hat{a},\hat{b}):=\text{class of } a\otimes b\otimes [\Gamma_{\delta_U}] \text{ in } (\sF\otimes_\PST\sG)\widehat{}\,(X),\]
where $\delta_U: U\to U\times U$ is the diagonal morphism and $[\Gamma_{\delta_U}]\in\Cor(U,U\times U)$ is its graph.
Clearly $\nu_X$ does not depend on the choice of $U$ or of the representatives $a$ and $b$.
Varying $X$ we claim that the $\nu_X$'s define a bilinear map of presheaves with transfers on $\Regone$ (see Definition \ref{defn-bilinearMF})
\begin{equation}\label{IsoLMFsp}
\nu : \hat{\sF}\times \hat{\sG}\to (\sF\otimes_\PST\sG)\widehat{}.
\end{equation}
Indeed the property (L1) follows immediately from 
\[\delta_U\circ \varphi= (\varphi\times\varphi)\circ \delta_V=(\varphi\times\id_U)\circ(\id_V\times\varphi)\circ\delta_V\]
for each map $\varphi: V\to U$ in $\Sm$. The property (L2) follows from the following equality in $(\sF\otimes_\PST\sG)(U)$ which holds for all 
finite maps $\varphi :V\to U$ in $Sm$ and all $a\in \sF(V)$ and $b\in \sG(U)$ (we denote by $\Gamma_\varphi$ the graph of $\varphi$ and by $\Gamma_\varphi^t$ its transpose):
\begin{align}
(\varphi_*a)\otimes b\otimes [\Gamma_{\delta_U}] &= ([\Gamma_{\varphi}^t]^*a)\otimes b\otimes [\Gamma_{\delta_U}]\notag\\
                                                 &= a\otimes b\otimes (([\Gamma^t_\varphi]\times\id_U)\circ [\Gamma_{\delta_U}]), &\text{by \eqref{PST-tensor2}},\notag\\
                                                 &=a\otimes b\otimes((\id_V\times[\Gamma_\varphi])\circ[\Gamma_{\delta_V}]\circ[\Gamma^t_\varphi])\notag\\
                                                 &= a\otimes \varphi^*b\otimes ([\Gamma_{\delta_V}]\circ [\Gamma^t_\varphi]), &\text{by \eqref{PST-tensor3}.}\notag
\end{align}
Thus $\nu$ is a bilinear map of presheaves with transfers on $\Regone$ and hence induces a morphism presheaves with transfers on $\Regone$  (also denoted by $\nu$)
\[\nu: \hat{\sF}\otimes \hat{\sG}\to (\sF\otimes_\PST\sG)\widehat{}.\]
It is immediate that $\theta\circ\nu=\id$. The equality $\nu\circ\theta=\id$ follows from the following equality in $(\sF\otimes_\PST\sG)(U)$, which holds for all $Y,Z\in\Sm$ and
all $a\in \sF(Y)$, $b\in \sG(Z)$ and all elementary correspondences $[W]\in\Cor(U, X\times Y)$ such that $\tilde{W}$ is smooth (where we denote
by $p_U$, $p_X$ and $p_Y$ the maps induced by the respective projections from $\tilde{W}$):
\begin{align}
 p_Y^*a\otimes p_Z^*b\otimes ([\Gamma_{\delta_{\tilde{W}}}]\circ [\Gamma_{p_U}^t]) &= a\otimes p_Z^*b\otimes (([\Gamma_{p_Y}]\times\id_{\tilde{W}})\circ [\Gamma_{\delta_{\tilde{W}}}]\circ [\Gamma_{p_U}^t] )\notag\\
                                    &= a\otimes b\otimes ((\id_Y\times \Gamma_{p_Z})\circ ([\Gamma_{p_Y}]\times\id_{\tilde{W}})\circ [\Gamma_{\delta_{\tilde{W}}}]\circ [\Gamma_{p_U}^t] )\notag\\
                                    &= a\otimes b\otimes [W].\notag
\end{align}
This finishes the proof.
\end{proof}

\begin{definition}\label{defn-Kgeo} Let $\sM\in\LMFsp$ be a lax Mackey functor with specialization maps and  $x$ 
an $S$-point. One defines the $R$-module 
$\Kgeo(x;\sM)$ as the quotient in the category of $R$-modules of $\sM(x)$ by the submodule generated by the elements 
$$\sum_{P\in U}v_P(f)\cdot\Tr_{P/x}(s^{\sM}_P(a)) $$
where $C\in(\sC/S)$ is a curve of finite type over $x$, $U\subseteq C$ is an open subset, $f\in\G_m(\eta_C)$ with $f\in U^{(1)}_P$, for all $P\in C\setminus U$, and $a\in \sM(U)$.
\end{definition}
Note that this kind of quotient was already considered in \cite[6.1 Definition]{KY}. By construction we have canonical surjections
$$\sM(x)\surj\Kgeo(x;\sM)\surj \mathrm{L}\varrho_1(\sM)(x)\surj\varrho_1(\sM)(x).$$

\subsubsection{}
Let $\sF\in\PST$ be a presheaf with transfers. Applying the functor $\widehat{\phantom{-}}$ from Proposition \ref{prop-PSTtoPT} to the canonical morphism of presheaves with transfers $a:\sF\ra h^\Nis_0(\sF)$ provides a morphism 
$$\hat{a}:\widehat{\sF}\ra\widehat{h^\Nis_0(\sF)}  $$
in $\LMFsp$. By Proposition \ref{HI=RF0}, the right hand side is a reciprocity functor that lies in $\RF_1$, therefore using the adjunction from Lemma \ref{hnLFAdj}, we obtain a morphism of reciprocity functors
\begin{equation}\label{adjFh0}
\hat{a}^\sharp:\varrho_1(\widehat{\sF})\ra\widehat{h^\Nis_0(\sF)}.
\end{equation}
The following proposition may be seen as a generalization of \cite[6.5 Proposition]{KY}.
\begin{proposition}\label{Prophath0}
Let $\sF\in\PST$ be a presheaf with transfers. Then the morphism (\ref{adjFh0}) is an isomorphism of reciprocity functors. Moreover for any $S$-point $x$, the morphisms
$$\Kgeo(x;\hat{\sF})\surj \varrho_1(\hat{\sF})(x)\xrightarrow{\hat{a}^\sharp}\widehat{h^\Nis_0(\sF)}(x), $$
where $\Kgeo(x;\hat{\sF})$ is the $\Z$-module defined in \ref{defn-Kgeo}, are isomorphisms.
\end{proposition}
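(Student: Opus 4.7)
The plan is to prove both assertions of the proposition together by identifying all three canonical surjections
\[\widehat{\sF}(x)\twoheadrightarrow\Kgeo(x;\widehat{\sF})\twoheadrightarrow\varrho_1(\widehat{\sF})(x)\xrightarrow{\hat{a}^\sharp}\widehat{h_0^\Nis(\sF)}(x)\]
as quotients of $\widehat{\sF}(x)$ by the same submodule, for each $S$-point $x$. To deduce the first assertion (that (\ref{adjFh0}) is an isomorphism of reciprocity functors) from bijectivity on $S$-points, I would argue that any morphism $\phi:\sM\to\sN$ of reciprocity functors which is bijective on $\sM(\eta_C)$ for every $C\in(\sC/S)$ is already an isomorphism in $\RF$: by (Inj), (FP) and Lemma \ref{lem-alt-descr-modulus-space} the sections $\sM(C,\fm)$ on opens of curves are recovered from $\sM(\eta_C)$ by intersecting the filtrations $\Fil^{n_P}_P$, and these are preserved by $\phi$ thanks to Corollary \ref{cor-LS-functoriality}.

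Setting $x=\Spec K$, I would next use that the Nisnevich topology on a field-point is trivial, hence
\[\widehat{h_0^\Nis(\sF)}(x)=h_0(\sF)(\Spec K)=\widehat{\sF}(x)/(s_0^*-s_1^*)\widehat{\sF}(\A^1_x),\]
manifestly a quotient of $\widehat{\sF}(x)$. The surjection $\Kgeo(x;\widehat{\sF})\twoheadrightarrow\widehat{h_0^\Nis(\sF)}(x)$ in the chain above is then well-defined because $\widehat{h_0^\Nis(\sF)}\in\RF_1$ by Proposition \ref{HI=RF0}: every section has modulus $\sum_{P\in C\setminus U}[P]$, so the reciprocity relations defining $\Kgeo(x;\widehat{\sF})$ are automatically satisfied in $\widehat{h_0^\Nis(\sF)}(x)$.

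For the inverse $\widehat{h_0^\Nis(\sF)}(x)\to\Kgeo(x;\widehat{\sF})$, I would check that the homotopy invariance relation $s_0^*(a)=s_1^*(a)$ already holds in $\Kgeo(x;\widehat{\sF})$ for every $a\in\widehat{\sF}(\A^1_x)$. To this end I would apply the defining relation of $\Kgeo$ to the curve $C=\P^1_x$, the open $U=\A^1_x\subset C$, the section $a$, and the rational function $f=(t-1)/t\in\G_m(\eta_C)$: since $v_\infty(f)=0$ and $f(\infty)=1$, one has $f\in U^{(1)}_\infty$, and on $U$ the only non-zero valuations are $v_0(f)=-1$ and $v_1(f)=1$. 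The resulting relation $\sum_{P\in U}v_P(f)\Tr_{P/x}s_P(a)=0$ in $\Kgeo(x;\widehat{\sF})$ reads $s_1^*(a)-s_0^*(a)=0$. Since this inverse agrees with the inverse of the forward map (both being induced by the identity on $\widehat{\sF}(x)$), all maps in the chain are forced to be bijections, yielding both claims at once.

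The main technical point is the initial reduction to $S$-points, which needs care to ensure that the full Mackey-with-specialization structure is captured by the generic values and filtrations; the explicit reciprocity computation with $f=(t-1)/t$ and the triviality of Nisnevich sheafification at a field-point are comparatively straightforward, as is the check that the two constructed maps are mutually inverse.
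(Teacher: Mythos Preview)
Your core computation is the same as the paper's: the reciprocity relation on $\P^1_x$ with $f=(t-1)/t$ (the paper uses $t/(t-1)$, which gives the same conclusion) shows that $s_0^*(a)-s_1^*(a)$ vanishes in $\Kgeo(x;\hat\sF)$, so the surjection $\hat\sF(x)\twoheadrightarrow\Kgeo(x;\hat\sF)$ factors through $\widehat{h_0^\Nis(\sF)}(x)$ and provides the required inverse on $S$-points. That part is fine.

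The gap is in your reduction step. You claim that a morphism $\phi:\sM\to\sN$ of reciprocity functors which is bijective on every $\sM(\eta_C)$ is automatically an isomorphism in $\RF$, appealing to Lemma~\ref{lem-alt-descr-modulus-space} and Corollary~\ref{cor-LS-functoriality}. But Lemma~\ref{lem-alt-descr-modulus-space} writes $\sM(C,\fm)=\bigcap_{P\in C}\Fil^{n_P}_P\sM(\eta_C)$, and for $P\in C\setminus|\fm|$ one has $n_P=0$, so $\Fil^0_P\sM(\eta_C)=\sM_{C,P}$, the Zariski stalk. This piece is a \emph{regularity condition}, not a condition on local symbols, and Corollary~\ref{cor-LS-functoriality} says nothing about it: from $\phi(a)\in\sN_{C,P}$ you cannot conclude $a\in\sM_{C,P}$. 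Hence your argument does not establish that $\phi$ is surjective on sections over open $U\subset C$, and the claimed general lemma is unproven (and there is no reason to expect it for arbitrary reciprocity functors --- conservativity of $\HI_\Nis\to\MF$ in Proposition~\ref{HI=RF0} uses Voevodsky's results specific to $\HI_\Nis$, and $\varrho_1(\hat\sF)$ is not a priori in that image).

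The paper closes this gap differently: it observes that $a:\sF\to h_0^\Nis(\sF)$ is a surjection on Nisnevich stalks, hence so is $\hat a$, and since $\hat a$ factors through $\hat a^\sharp$ via the canonical map $\hat\sF\to\varrho_1(\hat\sF)$, the map $\hat a^\sharp$ is a surjection of Nisnevich sheaves. Then (Inj) and (FP) reduce the remaining injectivity to $S$-points, where your computation applies. You should replace your general reduction lemma with this direct surjectivity argument.
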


\begin{proof}
Since $a:\sF\to h^\Nis_0(\sF)$ is a surjection on Nisnevich stalks, so is $\hat{a}$. Further by the construction of $\varrho_1$ in Lemma \ref{hnLFAdj}, 
there is a natural map $\hat{\sF}\to \varrho_1(\hat{\sF})$ whose composition with $\hat{a}^\sharp$ is the map $\hat{a}$. Thus $\hat{a}^\sharp$ is a surjection of Nisnevich sheaves.
By the conditions (Inj) and (FP) which a reciprocity functor satisfies it suffices to check the injectivity on $S$-points. Thus it suffices
to show that the surjective composition from the statement 
\[b:\Kgeo(x;\hat{\sF})\surj \widehat{h^\Nis_0(\sF)}(x)\] 
is injective.
Notice that for an $S$-point $x$ the $\Z$-module $\widehat{h^\Nis_0(\sF)}(x)$
is just the generic stalk of $h^\Nis_0(\sF)$ viewed as a Nisnevich sheaf on some model of $x$; in particular
\begin{equation*}
\widehat{h^\Nis_0(\sF)}(x) = \widehat{h_0(\sF)}(x) =\Coker\left[\hat{\sF}_{\P^1_x}(\A^1_x)\xrightarrow{i_0^*-i_1^*}\hat{\sF}(x)\right].
\end{equation*}
But now consider on $\P^1_x$ the modulus $\fm=\{\infty\}$ and the rational function $f=\tfrac{t}{t-1}\in \kappa(x)(t)^\times$ which is congruent to one modulo $\fm$. Given $\alpha\in\hat{\sF}_{\P^1_x}(\A^1_x)$, we have
$$i^*_0(\alpha)-i^*_1(\alpha)=\sum_{P\in\A^1_x}v_P(f)\Tr_{P/x}(s^{\hat{\sF}}_P(\alpha))$$
in $\hat{\sF}(x)$ and by definition this element vanishes in $\Kgeo(x;\hat{\sF})$. Thus the natural surjection $\hat{\sF}(x)\to \Kgeo(x;\hat{\sF})$ factors via $\widehat{h^\Nis_0(\sF)}(x)$
and gives an inverse of $b$. This proves the statement.
\end{proof}

\begin{lemma}\label{lem-CompTh0}
Let $\sF_1,\ldots,\sF_n\in\HINis$. Then
$$\varrho_1(\hat{\sF}_1\otimes\cdots\otimes\hat{\sF}_n)= \T(\hat{\sF}_1,\cdots,\hat{\sF}_n) \quad\text{in }\RF_1.$$
\end{lemma}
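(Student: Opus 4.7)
The plan is to invoke the Yoneda lemma in the subcategory $\RF_1 \subset \RF$. First I would verify that both objects actually lie in $\RF_1$: by Proposition \ref{HI=RF0} each $\hat{\sF}_i$ lies in $\RF_1$, so $\T(\hat{\sF}_1,\ldots,\hat{\sF}_n)$ lies in $\RF_1$ by Corollary \ref{cor-tensor-filtration}, while $\varrho_1(\hat{\sF}_1\otimes\cdots\otimes\hat{\sF}_n)$ is in $\RF_1$ by the very construction in Lemma \ref{hnLFAdj} (note that $\hat{\sF}_1\otimes\cdots\otimes\hat{\sF}_n$ belongs to $\LMFsp$ by Lemma \ref{LemmaLMFsp}).

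Next, I would fix a test object $\sN\in\RF_1$ and compute both representable functors. On the one hand, Theorem \ref{thm-tensorRF} gives
\[
\Hom_{\RF}(\T(\hat{\sF}_1,\ldots,\hat{\sF}_n),\sN) \;=\; \Lin{n}_\RF(\hat{\sF}_1,\ldots,\hat{\sF}_n;\sN).
\]
The key observation (and essentially the whole point) is that when all arguments and the target lie in $\RF_1$, the filtrations of Definition \ref{defn-filtration} collapse to the full sections, i.e.\ $\Fil^r_P\hat{\sF}_i(\eta_C)=\hat{\sF}_i(\eta_C)$ and $\Fil^r_P\sN(\eta_C)=\sN(\eta_C)$ for every $r\geq 1$. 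Consequently condition (L3) is automatic and this Hom-group coincides with $\Lin{n}(\hat{\sF}_1,\ldots,\hat{\sF}_n;\sN)$, the group of $n$-linear maps of presheaves with transfers in the sense of Definition \ref{defn-bilinearMF}.

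On the other hand, the adjunction of Lemma \ref{hnLFAdj} identifies
\[
\Hom_{\RF_1}(\varrho_1(\hat{\sF}_1\otimes\cdots\otimes\hat{\sF}_n),\sN) \;=\; \Hom_{\LMFsp}(\hat{\sF}_1\otimes\cdots\otimes\hat{\sF}_n,\sN),
\]
and the iterated tensor product in $\PT$ represents $n$-linear maps, which follows from the bilinear universal property of Corollary \ref{cor-tensor-PT} together with the associativity of $\otimes$ (a short induction on $n$, where the key point to check is that in $\sM_1\otimes\sM_2$ one has $f_*b\otimes a = f_*(b\otimes f^*a)$, which makes property (L2) pass through the identification). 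Thus the second Hom-group also equals $\Lin{n}(\hat{\sF}_1,\ldots,\hat{\sF}_n;\sN)$.

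Finally, matching the two representations is manifestly natural in $\sN\in\RF_1$, so Yoneda provides the desired canonical and functorial isomorphism. The only substantive step is the collapse of the filtration condition (L3) inside $\RF_1$; after that, everything reduces to formal manipulations of the adjunctions and universal properties already established in the paper.
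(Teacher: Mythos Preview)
Your proof is correct and rests on exactly the same key observation as the paper's: once the target lies in $\RF_1$, condition (L3) is vacuous, so $n$-linear maps of reciprocity functors into $\sN$ coincide with $n$-linear maps of presheaves with transfers. The only difference is packaging: the paper writes down the two maps explicitly (applying $\varrho_1$ to the canonical map $\hat{\sF}_1\otimes\cdots\otimes\hat{\sF}_n\to \T(\hat{\sF}_1,\ldots,\hat{\sF}_n)$ in one direction, and using the universal property of $\T$ on the tautological $n$-linear map $\hat{\sF}_1\times\cdots\times\hat{\sF}_n\to\varrho_1(\hat{\sF}_1\otimes\cdots\otimes\hat{\sF}_n)$ in the other), whereas you deduce the isomorphism from Yoneda after identifying both Hom-functors with $\Lin{n}(\hat{\sF}_1,\ldots,\hat{\sF}_n;-)$ on $\RF_1$; these are the same argument in different clothing.
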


\begin{proof}
By Corollary \ref{cor-tensor-filtration} $\T(\hat{\sF}_1,\ldots,\hat{\sF}_n)\in \RF_1$. Thus applying $\varrho_1$ to the natural map
$\hat{\sF}_1\otimes\ldots\otimes\hat{\sF}_n\to \T(\hat{\sF}_1,\ldots,\hat{\sF}_n)$ gives a canonical map
\eq{lem-CompTh01}{\varrho_1(\hat{\sF}_1\otimes\cdots\otimes\hat{\sF}_n)\to \T(\hat{\sF}_1,\cdots,\hat{\sF}_n).}
On the other hand we have a natural $n$-linear map of presheaves with transfers on $\Regone$
\[\hat{\sF}_1\times\ldots\times \hat{\sF}_n\to \varrho_1(\hat{\sF}_1\otimes\cdots\otimes\hat{\sF}_n),\]
which automatically satisfies (L3) (see Definition \ref{defn-MLmapRF}), since the right hand side is in $\RF_1$.
Thus it is a $n$-linear map of reciprocity functors and hence induces an inverse to \eqref{lem-CompTh01}.
\end{proof}

\begin{thm}\label{CompHIA}
Let $\sF_1,\ldots,\sF_n\in\HINis$ be homotopy invariant Nisnevich sheaves with transfers. There exists a canonical and functorial isomorphism of reciprocity functors
\begin{equation}\label{EquivFond}
\T(\hat{\sF}_1,\ldots,\hat{\sF}_n)\xrightarrow{\sim} (\sF_1\otimesHtr\cdots\otimesHtr \sF_n)\widehat{\phantom{-}}.
\end{equation}
Moreover for any $S$-point x the canonical morphism 
$$\Kgeo(x,\hat{\sF}_1\otimes\ldots\otimes \hat{\sF}_n)\ra\T(\hat{\sF}_1,\ldots,\hat{\sF}_n)(x)$$  is an isomorphism.
\end{thm}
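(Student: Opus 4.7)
The strategy is to chain together the three building blocks already established in the excerpt: Lemma~\ref{lem-CompTh0}, Proposition~\ref{prop-tensor-PST-vs-MF} and Proposition~\ref{Prophath0}. Set $\sF := \sF_1 \otimes_{\PST} \cdots \otimes_{\PST} \sF_n$. The plan is to produce, functorially in the $\sF_i$, the following chain of canonical isomorphisms of reciprocity functors:
\begin{equation*}
\T(\hat{\sF}_1,\ldots,\hat{\sF}_n)
\overset{(a)}{\cong} \varrho_1\bigl(\hat{\sF}_1 \otimes \cdots \otimes \hat{\sF}_n\bigr)
\overset{(b)}{\cong} \varrho_1\bigl(\widehat{\sF}\bigr)
\overset{(c)}{\cong} \widehat{h_0^{\Nis}(\sF)}
\overset{(d)}{\cong} \bigl(\sF_1 \otimesHtr \cdots \otimesHtr \sF_n\bigr)\widehat{\phantom{-}}.
\end{equation*}

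First I would invoke Lemma~\ref{lem-CompTh0} to obtain $(a)$; this encodes the fact that $\T(\hat{\sF}_1,\ldots,\hat{\sF}_n)$ lies in $\RF_1$ and is, after restriction to $\RF_1$, nothing but the $\RF_1$-truncation of the tensor product in $\PT$. For $(b)$, I would iterate Proposition~\ref{prop-tensor-PST-vs-MF}, observing that the natural comparison map $\theta$ constructed there is symmetric in the two arguments (in the sense that it intertwines the obvious symmetry of $\otimes_\PST$ and the one of $\otimes$ on $\PT$), so an $n$-fold version follows by induction. Step $(c)$ is a direct application of Proposition~\ref{Prophath0} to the single presheaf with transfers $\sF$. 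Finally, for $(d)$ I would use that the inclusion $\HINis \hookrightarrow \PST$ is monoidal (by the very definition of $\otimesHtr$ as $h_0^{\Nis}$ applied to $\otimes_\PST$), so that its left adjoint $h_0^{\Nis}$ commutes with iterated tensor products; concretely, the universal property of $\otimesHtr$ together with the fact that $\sF_{i}\mapsto h_0^\Nis(\sF_i)=\sF_i$ for $\sF_i\in\HINis$ yields a canonical isomorphism $h_0^{\Nis}(\sF) \cong \sF_1 \otimesHtr \cdots \otimesHtr \sF_n$. Applying $\widehat{\phantom{-}}$ (which is exact, hence preserves such canonical identifications) gives $(d)$.

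For the second assertion, I would exploit the compatibility of the comparison maps with the geometric $K$-group formalism. Explicitly: by Proposition~\ref{prop-tensor-PST-vs-MF} (iterated) we have a canonical isomorphism in $\LMFsp$ between $\hat{\sF}_1 \otimes \cdots \otimes \hat{\sF}_n$ and $\widehat{\sF}$, and by construction of $\Kgeo$ this induces a canonical isomorphism $\Kgeo(x;\hat{\sF}_1 \otimes \cdots \otimes \hat{\sF}_n) \cong \Kgeo(x;\widehat{\sF})$. Then I would feed this through Proposition~\ref{Prophath0}, which identifies $\Kgeo(x;\widehat{\sF})$ with $\varrho_1(\widehat{\sF})(x)$, and combine with steps $(a)$--$(b)$ above evaluated at $x$. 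The check to make is that the canonical surjection $\hat{\sF}_1(x) \otimes \cdots \otimes \hat{\sF}_n(x) \to \T(\hat{\sF}_1,\ldots,\hat{\sF}_n)(x)$ of Theorem~\ref{thm-tensorRF} factors through $\Kgeo$ as described, and this is immediate from the defining relations of $\LT$ and of $\Kgeo$ (both being quotients by sums indexed by closed points of a curve weighted by valuations of a function $f\equiv 1$ on the complement).

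The main conceptual obstacle, and really the only non-trivial point, is step $(d)$: one must check that the canonical morphism
\[
h_0^{\Nis}(\sF_1 \otimes_{\PST} \cdots \otimes_{\PST} \sF_n) \longrightarrow \sF_1 \otimesHtr \cdots \otimesHtr \sF_n
\]
is an isomorphism when all $\sF_i$ already lie in $\HINis$. This reduces, by iterated use of the adjunction between $h_0^{\Nis}$ and the inclusion $\HINis\hookrightarrow\PST$, to showing that for $\sF \in \HINis$ and $\sG \in \PST$ one has a natural isomorphism $h_0^{\Nis}(\sF \otimes_{\PST} \sG) \cong \sF \otimesHtr h_0^{\Nis}(\sG)$; this is a standard projection-formula type statement that I would establish by comparing representable functors on $\HINis$. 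All the other steps are essentially formal consequences of results already in place, so beyond this single verification the proof assembles itself.
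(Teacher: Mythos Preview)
Your proposal is correct and follows essentially the same route as the paper: set $\sF=\sF_1\otimes_\PST\cdots\otimes_\PST\sF_n$, use Proposition~\ref{prop-tensor-PST-vs-MF} to identify $\hat\sF_1\otimes\cdots\otimes\hat\sF_n\cong\hat\sF$ in $\LMFsp$, apply $\varrho_1$ together with Proposition~\ref{Prophath0}, and then invoke Lemma~\ref{lem-CompTh0}. The one difference is that the paper treats your step~$(d)$ as literally definitional (``$h_0^{\Nis}(\sF)=\sF_1\otimesHtr\cdots\otimesHtr\sF_n$ by definition''), so the projection-formula verification you flag as ``the only non-trivial point'' is already absorbed into the assertion that $\otimesHtr$ defines a monoidal structure on $\HINis$; no extra work is required there.
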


\begin{proof}
Let $\sF:=\sF_1\otimes_\PST\cdots\otimes_\PST\sF_n$ and $a:\sF\ra h_0^\Nis(\sF)$ be the canonical morphism of presheaves with transfers.
By  Proposition \ref{prop-tensor-PST-vs-MF}, we have a canonical and functorial isomorphism in $\LMFsp$
$$\hat{\sF}_1\otimes\cdots\otimes\hat{\sF}_n\xrightarrow{\nu}\hat{\sF}.$$
Thus by Proposition \ref{Prophath0} we get an isomorphism of reciprocity functors
$$\varrho_1\left(\hat{\sF}_1\otimes\cdots\otimes\hat{\sF}_n\right)\xrightarrow{\varrho_1(\nu)} \varrho_1(\hat{\sF})\xrightarrow{\hat{a}^\sharp}\left[h_0^\Nis(\sF)\right]\hat{}. $$
Since  $h_0^\Nis(\sF)=\sF_1\otimes_\HINis\cdots\otimes_\HINis\sF_n$ by definition the first statement follows from Lemma \ref{lem-CompTh0}. 
The second part of the statement is an immediate consequence of  Proposition \ref{Prophath0} and Lemma \ref{lem-CompTh0}.
\end{proof}

Using the main result of \cite{KY} we get:

\begin{corollary}
Let $\sF_1,\ldots, \sF_n$ be homotopy invariant Nisnevich sheaves with transfers on $\Sm_S$ with $S=\Spec F$ the spectrum of a perfect field.
Denote by 
\[\mathrm{K}(F,\sF_1,\ldots, \sF_n)\]
the $K$-group defined in \cite[Def. 5.1]{KY} (in particular if the $\sF_i$'s are semi-Abelian varieties, it coincides
with Somekawa's $K$-group defined in \cite[1.]{Somekawa}). Then there is an isomorphism
\[ \T( \hat{\sF}_1,\ldots,\hat{\sF}_n)(S)\cong\mathrm{K}(F, \sF_1,\ldots, \sF_n).\]
\end{corollary}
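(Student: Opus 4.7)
The plan is to deduce this essentially formally from Theorem \ref{CompHIA} together with the main result of \name{Kahn}--\name{Yamazaki} \cite{KY}. Specifically, evaluating the isomorphism of Theorem \ref{CompHIA} at the base point $S=\Spec F$ yields
\[\T(\hat{\sF}_1,\ldots,\hat{\sF}_n)(S)\xrightarrow{\sim} (\sF_1\otimes_{\HINis}\cdots\otimes_{\HINis}\sF_n)\widehat{\phantom{-}}(S).\]
So the task reduces to identifying the right-hand side with $\mathrm{K}(F,\sF_1,\ldots,\sF_n)$.

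First I would simplify the right-hand side. Since $S=\Spec F$ itself is a smooth $S$-scheme and is terminal in its own category of models $\fM_S$ (see Definition \ref{defn-model}), the colimit defining $\widehat{\phantom{-}}$ at $S$ degenerates and we obtain $\hat{\sG}(S)=\sG(S)$ for any presheaf with transfers $\sG\in\PST$. In particular
\[(\sF_1\otimes_{\HINis}\cdots\otimes_{\HINis}\sF_n)\widehat{\phantom{-}}(S)=(\sF_1\otimes_{\HINis}\cdots\otimes_{\HINis}\sF_n)(S).\]

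Next I would invoke the Kahn--Yamazaki isomorphism \eqref{KahnYamIso}. Their theorem identifies $\mathrm{K}(F,\sF_1,\ldots,\sF_n)$ with $\Hom_{\DMeffm}(\Z,\sF_1\otimes\cdots\otimes\sF_n)$, where the tensor product on the right is taken in $\DMeffm$. Since $\Z=\bbZtr(S)$ is concentrated in degree zero with respect to the homotopy $t$-structure and the derived tensor product in $\DMeffm$ is right $t$-exact, the group $\Hom_{\DMeffm}(\Z,\sF_1\otimes\cdots\otimes\sF_n)$ computes nothing but the global sections over $S$ of the zero-th homotopy sheaf $H^0_\Nis(\sF_1\otimes\cdots\otimes\sF_n)$ of this derived tensor product, which by definition is $\sF_1\otimes_{\HINis}\cdots\otimes_{\HINis}\sF_n$. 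Chaining these identifications gives the desired isomorphism.

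There is no real obstacle here: every ingredient is already available. The only point deserving a line of justification is the compatibility between the tensor product $\otimes_{\HINis}$ used in our Theorem \ref{CompHIA} and the one (implicitly) used on the right-hand side of \eqref{KahnYamIso}, but both are defined via $h_0^\Nis$ applied to $\otimes_\PST$ and so agree on the nose. I would therefore present the corollary as a short two-step argument invoking Theorem \ref{CompHIA} and \eqref{KahnYamIso} in sequence, with a brief remark about the $t$-structure computation identifying the Hom-group with the sections of the zero-th homotopy sheaf.
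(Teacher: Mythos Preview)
Your argument is correct. Both your approach and the paper's deduce the corollary from Theorem~\ref{CompHIA} together with a result of Kahn--Yamazaki, but they route through different halves of each.

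The paper uses the second part of Theorem~\ref{CompHIA}, namely the isomorphism $\Kgeo(S,\hat{\sF}_1\otimes\cdots\otimes\hat{\sF}_n)\cong \T(\hat{\sF}_1,\ldots,\hat{\sF}_n)(S)$, and then observes that $\Kgeo(S,\hat{\sF}_1\otimes\cdots\otimes\hat{\sF}_n)$ coincides \emph{by definition} with the K-group of geometric type $\mathrm{K}'(F,\sF_1,\ldots,\sF_n)$ from \cite[Def.~6.1]{KY}; it then invokes \cite[Thm.~6.2, Thm.~11.12]{KY} to identify $\mathrm{K}'$ with $\mathrm{K}$. You instead use the first part of Theorem~\ref{CompHIA} to land in $(\sF_1\otimes_{\HINis}\cdots\otimes_{\HINis}\sF_n)(S)$, and then invoke the headline isomorphism \eqref{KahnYamIso} plus a $t$-structure argument to identify this with $\mathrm{K}(F,\sF_1,\ldots,\sF_n)$. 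The paper's route has the advantage that the identification $\Kgeo=\mathrm{K}'$ is literally definitional, so no $t$-structure computation is needed; your route has the advantage of appealing only to the form of Kahn--Yamazaki's result already quoted in the introduction, and avoids introducing the auxiliary group $\mathrm{K}'$. Either is perfectly acceptable.
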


\begin{proof}
This follows from Theorem \ref{CompHIA} together with the fact that $\Kgeo(S,\hat{\sF}_1\otimes\ldots\otimes \hat{\sF}_n)$ coincides by its
very definition with the K-group of geometric type $\mathrm{K}'(F, \sF_1,\ldots, \sF_n)$ defined in \cite[Def. 6.1]{KY} and which 
by \cite[Thm 6.2 ]{KY} and \cite[Thm. 11.12]{KY} is isomorphic to $\mathrm{K}(F,\sF_1,\ldots, \sF_n)$.
\end{proof}

\subsection{Applications}

\subsubsection{} We now relate, as in \cite{KY}, the K-groups of some reciprocity functors associated with homotopy invariant Nisnevich sheaves with transfers to Hom groups in \name{V. Voevodsky}'s triangulated category of effective motivic complexes $\DMeffm$. The main result is Theorem \ref{ComputationMC}. Recall that $\DMeffm$ is the full triangulated subcategory of the derived category $\mathrm{D}^{-}(\NST)$ formed by the complexes whose cohomology sheaves are homotopy invariant. Let $X$ be a smooth scheme of finite type over $S$. 
In the sequel we use the following notations
\[M(X):= C_*(L(X)), \qquad M^c(X):=C_*(L^c(X))\]
and
$$h^\Nis_0(X):=\tH^0(C_*(L(X))),\qquad h^{\Nis,c}_0(X):=\tH^0(C_*(L^c(X))),$$ 
where $L(X)$ is the sheaf with transfers represented  by $X$, $L^c(X)$ is the presheaf with transfers such that $L^c(X)(Y)$, for $Y\in\Sm$, is the free Abelian group generated by the closed integral subschemes of $Y\times X$ that are quasi-finite and dominate an 
irreducible component of $Y$ (see \cite[\S 4.1]{VoDM}), and $C_*$ is the Suslin complex \emph{i.e.} the $\A^1$-localization functor.


\subsubsection{} 
Let us denote by $\otimes_\DM$ the tensor product in the category $\DMeffm$ defined by
$$\sC\otimes_\DM\sD:=C_*(\sC\otimes^{\mathrm{L}}\sD) $$
where $-\otimes^{\mathrm{L}}-$ is the derived tensor product on $\mathrm{D}^-(\NST)$ defined using "free resolutions" as in \cite[\S2]{SuVoBK}. 
In particular if $\sF_1,\ldots,\sF_n$ are Nisnevich sheaves with transfers, then unfolding the definitions we get
\begin{equation}\label{IsoMotC}
\tH^0(C_*(\sF_1)\otimes_\DM\ldots \otimes_\DM C_*(\sF_n)) =h^\Nis_0(\sF_1)\otimes_\HINis\ldots\otimes_\HINis h^\Nis_0(\sF_n).
\end{equation}

\begin{thm}\label{ComputationMC}
\begin{enumerate}
\item
Let $X_1,\ldots,X_n$ be smooth schemes of finite type over $S$ and $r\geqslant 0$ be an integer. Set $X:=X_1\times\cdots\times X_n$.  For $*\in\{\emptyset,c\}$, we have isomorphisms of reciprocity functors
\begin{equation*}
 \T(\widehat{h}^{\Nis,*}_0(X_1),\ldots,\widehat{h}^{\Nis,*}_0(X_n),\G_m^{\times r}) \cong \tH^0(M^*(X)(r)[r])\widehat{\phantom{-}}.
\end{equation*}
 \end{enumerate}
\end{thm}
\begin{proof}
Since $\G_m\simeq\Z(1)[1]$, this follows from \cite[Proposition 4.1.7]{VoDM} and 
the isomorphisms \eqref{IsoMotC} and \eqref{EquivFond}.
\end{proof}

\begin{lemma}\label{lem-comp-Chow-HI}
\begin{enumerate}
 \item Assume $X$ is a smooth projective $S$-scheme of pure dimension $d$. Then for all $r\geqslant 0$ we have an isomorphism of reciprocity functors
         \[\tH^0(M(X)(r)[r])\widehat{\phantom{-}}\cong \sC H^{d+r}(X,r),\]
         where $\sC H_0(X,r)$ is the reciprocity functor defined in \ref{Chow-as-RF}.
\item Assume $X$ is a smooth, quasi-projective $S$-scheme of pure dimension $d$ and that $S$ admits resolution of singularities. Then for all $r\geqslant 0$ we have an isomorphism of reciprocity functors
         \[\tH^0(M^c(X)(r)[r])\widehat{\phantom{-}}\cong \sC H^{d+r}(X,r).\]
\end{enumerate}
\end{lemma}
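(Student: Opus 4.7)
The plan is to identify, for each part, the reciprocity functor $\tH^0(M^*(X)(r)[r])\widehat{\phantom{-}}$ with $\sC H^{d+r}(X,r)$ by routing through the homotopy invariant Nisnevich sheaf with transfers $\CH^{d+r}_\Nis(X,r)$ of \ref{Chow-as-HI} and then invoking the isomorphism of reciprocity functors \eqref{Chow-as-HI1}.

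First I would identify $\tH^0(M^*(X)(r)[r])$ with $\CH^{d+r}_\Nis(X,r)$ as objects of $\HINis$. For smooth $Y$, Voevodsky's identification of motivic cohomology with Bloch's higher Chow groups should give a natural isomorphism
$$\Hom_{\DMeffm}(M(Y), M^*(X)(r)[r]) \cong \CH^{d+r}(X \times Y, r).$$
For part (1), Poincar\'e duality $M(X)^\vee \cong M(X)(-d)[-2d]$, valid for $X$ smooth projective of pure dimension $d$, yields
$$\Hom_{\DMeffm}(M(Y), M(X)(r)[r]) \cong \Hom_{\DMeffm}(M(Y \times X), \Z(d+r)[2d+r]) \cong \CH^{d+r}(X \times Y, r).$$
For part (2), the analogous identification for $M^c$ is available through Voevodsky's \cite[Proposition 4.2.9]{VoDM}, which crucially requires resolution of singularities on $S$. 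Either way, unpacking the definition of $\tH^0(-)$ as a Nisnevich sheaf and using the vanishing of higher Nisnevich cohomology of homotopy invariant sheaves with transfers at fields, these formulas should assemble into an isomorphism $\tH^0(M^*(X)(r)[r]) \cong \CH^{d+r}_\Nis(X,r)$ in $\HINis$.

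Applying the functor $\widehat{\phantom{-}}: \HINis \to \RF$ of Proposition \ref{HI=RF0} and composing with the isomorphism $\CH^{d+r}_\Nis(X,r)\widehat{\phantom{-}} \cong \sC H^{d+r}(X,r)$ from \eqref{Chow-as-HI1} would then finish the proof.

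The hard part will not be the group-theoretic computations above, which are standard applications of motivic cohomology, but rather the verification that the various identifications are functorial in $Y$ with respect to finite correspondences, yielding an isomorphism of sheaves with transfers rather than merely of abstract Abelian groups section-wise. Once that is established, the further compatibility with the transfer and specialization structure defining a reciprocity functor reduces, via the Gersten resolution of $\KMsheaf_*$ on $X \times Y$, to essentially the same bookkeeping already used to justify \eqref{Chow-as-HI1} itself, so it should go through without new difficulties.
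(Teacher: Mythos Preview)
Your proposal is correct and follows essentially the same approach as the paper: reduce via \eqref{Chow-as-HI1} to identifying $\tH^0(M^*(X)(r)[r])$ with $\CH^{d+r}_\Nis(X,r)$ in $\HINis$, then invoke duality and the comparison with higher Chow groups. The paper additionally cites the cancellation theorem (needed so that the duality, which a priori lives in $\DM$, can be used in $\DMeffm$), but otherwise treats the computation as classical and does not dwell on the functoriality-in-correspondences issue you flag.
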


\begin{proof}
 Let $\CH_\Nis^{d+r}(X,r)\in \HINis$ be the homotopy invariant Nisnevich with transfers defined in \ref{Chow-as-HI}.
Then by \eqref{Chow-as-HI1} it suffices to show that $\tH^0(M(X)(r)[r])\cong \CH^{d+r}_\Nis(X,r)$ in the first case and $\tH^0(M^c(X)(r)[r])\cong\CH^{d+r}_\Nis(X,r)$
in the second case. This follows from a classical computation, using duality, the cancellation theorem \cite{MR2804268},  \cite[Corollary B.2]{MR2249535}, \cite[Proof of Thm 4.3.7]{VoDM}
 and the comparison with higher Chow groups  \cite[Cor. 2]{MR1883180}.
\end{proof}

Thus we can rewrite Theorem \ref{ComputationMC} as follows:

\begin{corollary}\label{cor-tensor-chow}
Let $X_1,\ldots,X_n$ be smooth, quasi-projective $S$ schemes of pure dimension $d_1,\ldots,d_n$ and $r\geqslant 0$ an integer.
Assume that either the $X_i$'s are projective or $S$ admits resolution of singularities. Then we have an isomorphism of reciprocity functors
       \[\T(\sC H_0(X_1),\ldots, \sC H_0(X_n),\G_m^{\times r})\cong \sC H^{d+r}(X_1\times\cdots\times X_n, r),\]
       where $d=d_1+\cdots+d_n$.
In particular for all $S$-points $x$ we have
\[\T(\sC H_0(X_1),\ldots, \sC H_0(X_n),\G_m^{\times r})(x)\cong \CH^{d+r}(X_{1,x}\times_x\cdots\times_x X_{n,x}, r).\]
\end{corollary}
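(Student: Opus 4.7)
The plan is to combine Corollary \ref{ComputationMC} with Lemma \ref{lem-comp-Chow-HI} (applied twice, once to each individual $X_i$ and once to their product) and the functoriality of $\T$. The key observation is that Lemma \ref{lem-comp-Chow-HI} applied with $r=0$ already identifies the inputs: in the projective case, taking $*=\emptyset$, Lemma \ref{lem-comp-Chow-HI}(1) gives $\widehat{h}^{\Nis}_0(X_i) = \tH^0(M(X_i))\widehat{\phantom{-}}\cong \sC H^{d_i}(X_i, 0)=\sC H_0(X_i)$; in the quasi-projective case (assuming resolution of singularities), taking $*=c$, Lemma \ref{lem-comp-Chow-HI}(2) gives $\widehat{h}^{\Nis,c}_0(X_i)\cong\sC H_0(X_i)$. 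Thus in both cases the reciprocity functor $\sC H_0(X_i)$ is identified with $\widehat{h}^{\Nis,*}_0(X_i)$ for the appropriate choice of $*$.

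By the functoriality of $\T$ (see Theorem \ref{thm-tensorRF}), these isomorphisms induce
\[\T(\sC H_0(X_1),\ldots, \sC H_0(X_n),\G_m^{\times r})\cong \T(\widehat{h}^{\Nis,*}_0(X_1),\ldots,\widehat{h}^{\Nis,*}_0(X_n),\G_m^{\times r}).\]
Now I would apply Corollary \ref{ComputationMC} with $X=X_1\times\cdots\times X_n$ to rewrite the right hand side as $\tH^0(M^*(X)(r)[r])\widehat{\phantom{-}}$. Since $X$ is itself smooth of pure dimension $d=d_1+\cdots+d_n$, and is projective when the $X_i$'s are (so we can use $*=\emptyset$) or quasi-projective in general (so we can use $*=c$ under the resolution hypothesis), Lemma \ref{lem-comp-Chow-HI} then identifies this with $\sC H^{d+r}(X,r)$, giving the first isomorphism in the statement.

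For the ``in particular'' part, I would simply evaluate the identification of reciprocity functors at a $S$-point $x$: by the very definition of $\sC H^{d+r}(X,r)$ in \S\ref{Chow-as-RF} (see \eqref{RFChow} and \eqref{Chow-as-RF1}), its value at $x$ is $\CH^{d+r}(X_x, r)$, and one has $X_x = (X_1\times_S\cdots\times_S X_n)\times_S x = X_{1,x}\times_x\cdots\times_x X_{n,x}$ by transitivity of base change. There is no significant obstacle in this argument; all the serious work has been done in Theorem \ref{CompHIA}, its Corollary \ref{ComputationMC}, and the comparison Lemma \ref{lem-comp-Chow-HI}. The only point which deserves care is the bookkeeping of the two cases (projective/quasi-projective with resolution) in order to use the correct variant $M$ or $M^c$ throughout, both in identifying the inputs and in applying the Chow comparison to the product.
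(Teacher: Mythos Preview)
Your proposal is correct and follows exactly the approach the paper intends: the corollary is presented as an immediate rewriting of Corollary \ref{ComputationMC} via Lemma \ref{lem-comp-Chow-HI}, and that is precisely what you do---first using the lemma with $r=0$ to identify $\sC H_0(X_i)$ with $\widehat{h}^{\Nis,*}_0(X_i)$, then invoking Corollary \ref{ComputationMC}, then using the lemma again for the product $X$ with the given $r$.
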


\begin{remark}
 \begin{enumerate}
  \item The above corollary should be compared to the results \cite[Thm 2.2]{RS} and \cite[Thm. 6.1]{Akhtar} (see also \cite[12.3 - 12.5]{KY} for the case in which
         $x$ is the spectrum of a perfect field). In particular in the projective case we get for all $S$-points $x$
         \[\T(\sC H_0(X_1),\ldots, \sC H_0(X_n),\G_m^{\times r})(x)= \mathrm{K}(\kappa(x); \sC H_0(X_1),\ldots, \sC H_0(X_n),\G_m^{\times r}),\]
       where the K-group on the right is the Somekawa-type K-group defined by \name{Raskind}-\name{Spie\ss} and \name{Akhtar} in \cite[Def. 2.1.1]{RS} and \cite[3.1]{Akhtar}.
  \item One can proof Corollary \ref{cor-tensor-chow} also in a direct way without using the motivic machinery of
          Voevodsky. Indeed, identifying $\G_m$ with $\sC H^1(S,1)$ 
  one easily checks that the product structure on higher Chow groups induces a multi-linear map of reciprocity functors 
   in the sense of Definition \ref{defn-MLmapRF}
      \[\sC H_0(X_1)\times\cdots\times \sC H_0(X_n)\times\G_m^{\times r}
                                                            \to \sC H^{d+r}(X_1\times\cdots\times X_n, r).\]
     (Notice that the condition (L3) is automatic since we are in $\RF_1$.)
   Hence we get a map from $\T({\rm LHS})$ to the RHS and one can prove by hand that this is an isomorphism 
in a similar way as in \cite{RS} and \cite{Akhtar}.
 \end{enumerate}
\end{remark}

\subsection{Relation with Milnor K-theory}
In the following we denote by $\G_m$ and $\KM_n$ the $\Z$-reciprocity functors over $S$ defined in Example \ref{ex-MilnorK}.

\begin{proposition}\label{prop-Milnor-to-tensor}
Let $\sM$ be a $\Z$-reciprocity functor over $S$ and $x$ an $S$-point. For all $n\geqslant  1$ 
there exists a homomorphism of Abelian groups
\[\sM(x)\otimes_{\Z} \KM_n(x)\to \T(\sM,\G_m^{\times n})(x), \quad m\otimes \{a_1,\ldots,a_n\}\mapsto \tau(m,a_1,\ldots, a_n).\] 
\end{proposition}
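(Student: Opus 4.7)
Since $\tau \colon \sM \times \G_m^{\times n} \to \T(\sM, \G_m^{\times n})$ is $\Z$-multilinear (Theorem \ref{thm-tensorRF}), the rule $m \otimes a_1 \otimes \cdots \otimes a_n \mapsto \tau(m, a_1, \ldots, a_n)$ defines a $\Z$-multilinear map
\[\widetilde{\phi} \colon \sM(x) \otimes_\Z (\kappa(x)^\times)^{\otimes n} \to \T(\sM, \G_m^{\times n})(x).\]
The content of the proposition is to check that $\widetilde{\phi}$ descends to $\sM(x) \otimes_\Z \KM_n(\kappa(x))$, i.e.\ annihilates the Steinberg relators. By the commutativity of $\T$ in its $\G_m$-slots (Corollary \ref{cor-RFtensor}(1)) combined with multilinearity, this reduces to the single vanishing
\[\tau(m, a, 1-a, a_3, \ldots, a_n) = 0 \quad \text{in } \T(\sM, \G_m^{\times n})(x)\]
for all $a \in \kappa(x)^\times \setminus \{1\}$, all $m \in \sM(x)$, and all $a_3, \ldots, a_n \in \kappa(x)^\times$.

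To prove this Steinberg-type identity, I apply the reciprocity law on the curve $C = \P^1_x$. With $\pi \colon C \to x$ the structural projection and $t$ the coordinate on $C$, form
\[\beta = \tau(\pi^* m, t, 1-t, \pi^* a_3, \ldots, \pi^* a_n) \in \T(\sM, \G_m^{\times n})(\eta_C).\]
Since $\G_m \in \RF_1$ (Remark \ref{rmk-gps-Fil}) and each of $\pi^* m$ and the $\pi^* a_j$ is globally regular on $C$, condition (L3) applied to $\tau$ forces $\beta \in \Fil^1_P$ for every $P \in \{0, 1, \infty\}$ and $\beta$ to be regular at every other closed point; hence $\beta$ admits the modulus $[0] + [1] + [\infty]$. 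Applying the reciprocity relation $\sum_P (\beta, f)_P = 0$ of Corollary-Definition \ref{cor-defn-localsymbol} to the rational function $f = 1 - t/a \in \kappa(x)(t)^\times$ (whose divisor is $[a] - [\infty]$), the local symbol at $P = a$ evaluates to $\tau(m, a, 1-a, a_3, \ldots, a_n)$; at every closed point of $\P^1_x \setminus \{0, 1, \infty, a\}$ it vanishes by regularity of $\beta$ together with $v_P(f) = 0$; and at $P = 0$ it vanishes because $f(0) = 1$ places $f$ in $U_0^{(1)}$ while $\beta \in \Fil^1_0$.

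The only remaining contributions are at $P = 1$ and $P = \infty$, which via the bilinearity and continuity of the local symbol decompose into pairings of $\beta$ against the constants $(a-1)/a$, $-1/a$ and against the global function $t$. Each such pairing is in turn constrained by its own reciprocity relation $\sum_P (\beta, g)_P = 0$ for $g \in \{(a-1)/a,\ -1/a,\ t\}$; since $\beta$ is regular outside $\{0, 1, \infty\}$, these relations produce cancellation identities among the symbols at those three bad points. A further essential input is the M\"obius symmetry $\sigma \colon t \mapsto 1-t$ of $\P^1_x$: by commutativity of $\tau$ one has $\sigma^* \beta = \beta$, and since $\sigma$ swaps $0 \leftrightarrow 1$ and fixes $\infty$, Proposition \ref{prop-pfpb-localsymbol} yields identities such as $(\beta, c)_0 = (\beta, c)_1$ for any constant $c \in \kappa(x)^\times$. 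The main obstacle is the careful bookkeeping required to combine these auxiliary reciprocity relations with the $\sigma$-symmetry so that every residual contribution at $\{0, 1, \infty\}$ cancels, isolating the single term $\tau(m, a, 1-a, a_3, \ldots, a_n) = 0$. This is the analog, in our setting of reciprocity functors, of the classical derivation of the Steinberg relation in Milnor $K$-theory via Weil reciprocity on $\P^1$.
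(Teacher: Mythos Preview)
Your reduction to the Steinberg vanishing $\tau(m,a,1-a,a_3,\ldots,a_n)=0$ is correct, and using reciprocity for $\beta=\tau(\pi^*m,t,1-t,\pi^*a_3,\ldots,\pi^*a_n)$ on $\P^1_x$ with modulus $[0]+[1]+[\infty]$ is the right framework. The gap is in your choice of test function. Your $f=1-t/a$ satisfies $f\equiv 1$ only at $0$; at $1$ one has $f(1)=(a-1)/a\neq 1$ and at $\infty$ the function has a pole. Consequently the residual symbols $(\beta,f)_1$ and $(\beta,f)_\infty$ are genuinely nonzero, and your final paragraph does not compute them: it only asserts that ``careful bookkeeping'' with the involution $t\mapsto 1-t$ and auxiliary reciprocity for constants and for $t$ will make everything cancel. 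If you actually write out those relations you find, for instance, $2(\beta,c)_0+(\beta,c)_\infty=0$ for constants $c$ and $(\beta,t)_0+(\beta,t)_\infty=0$, but neither the individual terms $(\beta,t)_\infty$, $(\beta,-1/a)_\infty$, $(\beta,(a-1)/a)_1$ nor their sum is forced to vanish by these identities alone. The ``analog of the classical derivation via Weil reciprocity'' you invoke works for the tame symbol because one has explicit formulas at every point; here $\beta$ is only known to lie in $\Fil^1$ at the bad points, which controls its pairing against $U^{(1)}$ but not against arbitrary units or uniformizers.

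The paper avoids this obstruction by choosing a test function that is congruent to $1$ modulo the \emph{entire} modulus $[0]+[1]+[\infty]$, namely
\[
f=\frac{t^3-a}{t^3-(1+a)t^2+(1+a)t-a},
\]
so that reciprocity produces no residual terms at the bad points at all. After passing to a small extension containing a cube root of $a$ and a primitive cube root of unity, the zeros and poles of $f$ contribute explicitly computable specializations of $\beta$, and one obtains $12\cdot\tau(m,a,1-a,\underline{b})=0$. A separate torsion argument (as in \cite[Lemma 5.8]{MVW}) then removes the factor $12$. If you want to salvage your approach, you must replace $1-t/a$ by a function with the correct congruence at all three bad points; the naive linear choice cannot work.
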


\begin{proof}
The proof is similar to the proof of \cite[Prop. 5.9]{MVW}. Write $x=\Spec k$. Clearly there is a natural morphism of Abelian groups
\[\sM(x)\otimes (k^\times)^{\otimes_\Z n}\to \T(\sM,\G_m^{\times n})(x), \quad m\otimes a_1\otimes \ldots\otimes a_n \mapsto \tau(m,a_1,\ldots, a_n).\]
Hence (using Corollary \ref{cor-RFtensor}, (1)) it suffices to show that 
\eq{prop-Milnor-to-tensor1}{\tau(m,a,1-a,\ul{b})=0\quad \text{in } \T(\sM,\G_m^{\times n})(x),}
for all  $a, b_i\in k\setminus\{0,1\}$, $m\in \sM(x)$ and $\ul{b}=(b_3,\ldots, b_n)$.
For this take $c\in \bar{k}$ with $c^3=a$, set $L=k(c)$ and $y=\Spec L$. Further, let $\mu\in \bar{k}$ be a third primitive root of 1, 
set $E=L(\mu)$ and $z=\Spec E$. Denote by $\varphi: z\to x$ the induced map. Notice that $[E:k]$ divides 6. Define a rational function $f$ on $\P_x^1\supset\A_x^1=\Spec k[t]$
\[f:=\frac{t^3-a}{t^3-(1+a)t^2+(1+a)t-a}\in k(t).\]
Then in $E(t)$ we have
\[f=\frac{(t-c)(t-\mu c)(t-\mu^2 c)}{(t-a)(t+\mu)(t+\mu^2)}.\]
Further, let $\pi: \P^1_z\to z$ be the structure map, then by Remark \ref{rem-altL4} 
\[\tau(\pi^*\varphi^*m,t,1-t, \pi^*\varphi^*\ul{b}) \in\T(\sM,\G_m^{\times n})(\P^1_z, \{0\}+ \{1\}+\{\infty\}).\]
Since $f\equiv 1$ mod $\{0\}+\{1\}+\{\infty\}$ reciprocity yields in $\T(\sM,\G_m^{\times n})(z)$
\begin{align}
0 & = \sum_{P\in \P^1_z}(\tau(\pi^*\varphi^*m,t,1-t,\pi^*\varphi^*\ul{b}), f)_P\notag \\ 
  & =\tau(\varphi^*m,c,1-c,\varphi^*\ul{b})+ \tau(\varphi^*m,\mu c, 1-\mu c, \varphi^*\ul{b})+ \tau(\varphi^*m,\mu^2 c, 1-\mu^2 c, \varphi^*\ul{b})\notag\\
  & \phantom{=} -\tau(\varphi^*m,c^3, 1-c^3,\varphi^*\ul{b})- \tau(\varphi^*m,-\mu, 1+\mu,\varphi^*\ul{b})- \tau(\varphi^*m,-\mu^2, 1+\mu^2, \varphi^*\ul{b}).\notag
\end{align}
Multiplying this by 3 we obtain in $\T(\sM,\G_m^{\times n})(z)$
\mlnl{0= \tau(\varphi^*m,c^3, (1-c)(1-\mu c)(1-\mu^2 c), \varphi^*\ul{b})- 3\cdot \tau(\varphi^*m,c^3, 1-c^3, \varphi^*\ul{b})\\ - \tau(\varphi^*m,-1, (1+\mu)(1+\mu^2), \varphi^*\ul{b}). }
Since $(1-c)(1-\mu c)(1-\mu^2 c)=1-c^3$ and $(1+\mu)(1+\mu^2)=1$  we obtain
\[0=2\cdot\tau(\varphi^*m,c^3,1-c^3, \varphi^*\ul{b})=2\cdot\varphi^*\tau(m,a, 1-a, \ul{b}) \quad \text{in } \T(\sM,\G_m^{\times n})(z).\]
Applying $\varphi_*$ we get 
\[12\cdot \tau(m,a, 1-a, \ul{b})=0 \quad \text{in } \T(\sM,\G_m^{\times n})(x).\]
This holds  for all $S$-points $x=\Spec k$ and all $a, b_i\in k\setminus\{0,1\}$, $m\in M(x)$. Thus the vanishing \eqref{prop-Milnor-to-tensor1} follows by exactly the same argument
as in the proof of \cite[Lemma 5.8]{MVW}. (Notice that there is a misprint and the first $1-x$ in the displayed formula on page 32, line 13 should be replaced by $1-y$.)
\end{proof}

\begin{proposition}\label{prop-Milnor-surj-tensor}
The maps from Proposition \ref{prop-Milnor-to-tensor} (with $\sM=\Z$) for $x$ running through all $S$-points yield a morphism of Mackey functors
\[\sigma:\KM_n\to \T(\G_m^{\times n}) \quad \text{in } \MF.\]
In particular $\sigma(x)$ is a surjective homomorphism of $\Z$-modules for all $S$-points $x$.
\end{proposition}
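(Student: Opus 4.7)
The plan is to define $\sigma$ pointwise using Proposition \ref{prop-Milnor-to-tensor} and then to promote the family $\{\sigma(x)\}_x$ to a Mackey morphism by verifying the compatibility conditions of Remark \ref{rmk-alt-defn-MF}. Applying Proposition \ref{prop-Milnor-to-tensor} with $\sM=\Z$ and invoking Corollary \ref{cor-tensor-constant} to identify $\T(\Z,\G_m^{\times n})\simeq\T(\G_m^{\times n})$ yields, for each $S$-point $x$, a group homomorphism $\sigma(x):\KM_n(x)\to\T(\G_m^{\times n})(x)$ sending $\{a_1,\ldots,a_n\}\mapsto\tau(a_1,\ldots,a_n)$. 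Compatibility with pullback along any $\varphi:y\to x$ is immediate from property (L1) of the universal $n$-linear map $\tau$ (Definition \ref{defn-bilinearMF}), since pullback acts componentwise on symbols on both sides.

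For the pushforward compatibility along a finite $\varphi:y\to x$, the strategy I would follow is to factor $\sigma$ through the Mackey tensor product. By Remark \ref{defn-MFtensor} the underlying Mackey functor of $\G_m^{\otimes n}\in\PT$ is the $n$-fold Mackey tensor product $\G_m^{\otimesM n}$, so the $n$-linear-of-Mackey-functors map $(a_1,\ldots,a_n)\mapsto\tau(a_1,\ldots,a_n)$ extends to a Mackey morphism $\tilde\sigma:\G_m^{\otimesM n}\to\T(\G_m^{\times n})$. Proposition \ref{prop-Milnor-to-tensor} applied at every $S$-point shows that $\tilde\sigma$ annihilates every Steinberg symbol $a\otimes(1-a)\otimes b_3\otimes\cdots\otimes b_n$. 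These generate a sub-Mackey-functor $\sI\subset\G_m^{\otimesM n}$, and the Bass--Tate--Kato description of the norm on $\KM_n$ (characterized by the projection formula together with the classical case $\KM_1=\G_m$) identifies $\KM_n$ with $\G_m^{\otimesM n}/\sI$ as Mackey functors. Consequently $\tilde\sigma$ factors as $\G_m^{\otimesM n}\twoheadrightarrow\KM_n\xrightarrow{\sigma}\T(\G_m^{\times n})$, which gives the required Mackey morphism and in particular encodes the identity $\sigma(x)\circ\Nm_{y/x}=\Tr_{y/x}\circ\sigma(y)$.

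Surjectivity of $\sigma(x)$ is then a direct consequence. Since the endofunctor $\Sigma$ from Proposition \ref{prop-adjoint-of-LMF} leaves values at $S$-points unchanged ($\sR(x)=0$ in the notation of its proof), one has $\T(\G_m^{\times n})(x)=\LT(\G_m^{\times n})(x)$, a quotient of $(\G_m^{\otimes n})(x)$. The latter, by the construction of $\otimes$ in $\PT$, is generated by elements $b_1\otimes\cdots\otimes b_n$ lying in summands indexed by finite extensions $y\to x$; tracking these through the quotient shows that $\T(\G_m^{\times n})(x)$ is generated by elements of the form $\Tr_{y/x}\tau(b_1,\ldots,b_n)$. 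By the Mackey compatibility established above, each such element equals $\sigma(x)\Nm_{y/x}\{b_1,\ldots,b_n\}$ and hence lies in the image of $\sigma(x)$.

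The main obstacle is the pushforward compatibility, which I have reduced to the identification $\KM_n\simeq\G_m^{\otimesM n}/\sI$ as Mackey functors. If one wishes to avoid appealing to this classical fact, an alternative approach is to reduce, by transitivity, to the case of a simple finite extension $L=K(\theta)/K$ and compare the reciprocity law for $\KM_n$ on $\P^1_K$ (coming from Rost's axiom \textbf{RC} in the proof of Proposition \ref{prop-cycle-modules-are-RF}) with the reciprocity law for $\T(\G_m^{\times n})$ on $\P^1_K$ (Corollary-Definition \ref{cor-defn-localsymbol}), matching residues at the closed point of $\P^1_K$ corresponding to the minimal polynomial of $\theta$ against residues at all other closed points.
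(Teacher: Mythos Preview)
Your proposal is correct, and the surjectivity argument coincides with the paper's. For the pushforward compatibility, the paper proceeds more directly than you do: using that both sides satisfy (MF1) and (MF2), the Bass--Tate transfer trick \cite[I,~(5.9)]{BT} reduces to the case in which every finite extension of $k$ has degree a power of a fixed prime~$\ell$, hence by transitivity to a single extension with $[L:k]=\ell$; then \cite[I,~Cor.~5.3]{BT} gives $\KM_n(L)=\KM_{n-1}(k)\cdot L^\times$, so every symbol in $\KM_n(L)$ has all but one slot coming from $k$, and the projection formula (L2) on both sides finishes.

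Your factorisation through $\G_m^{\otimesM n}/\sI$ is a legitimate repackaging, but not a genuinely different argument. The identification $\KM_n\cong\G_m^{\otimesM n}/\sI$ in $\MF$ that you invoke is not a free citation: unwinding it, one must show that every class in $(\G_m^{\otimesM n})(x)/\sI(x)$ is represented by a pure tensor indexed by $x$ itself, and this is precisely what the Bass--Tate corollary above delivers after the same prime-degree reduction. So the hard step has been relocated into the black-boxed isomorphism rather than bypassed; the underlying content is identical to the paper's. What your formulation buys is a cleaner separation between the universal property of the Mackey tensor (which gives $\tilde\sigma$ for free) and the Milnor-specific input (which lives entirely in the isomorphism $\KM_n\cong\G_m^{\otimesM n}/\sI$). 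The alternative via matching reciprocity laws on $\P^1_K$ that you sketch at the end is a genuine third route, closer to Kato's original construction of the norm, but the paper does not take it either.
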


\begin{proof}
The compatibility with pullback is clear. It remains to show that for a finite morphism $\varphi: y=\Spec L\to x=\Spec k$ the pushforward $\varphi_*$ is compatible with $\sigma$.
This is really the same argument as in \cite[Lemma 4.7]{NS}:
Since both sides satisfy (MF1), (MF2) (see Remark \ref{rmk-alt-defn-MF}), the arguments from \cite[I, (5.9)]{BT} reduce us to the case in which every finite extension of $k$ has degree a power of a fixed prime $\ell$
and hence can be written as a successive sequence of extensions of degree $\ell$. Thus by functoriality we can assume $[L:k]=\ell$; hence $\KM_{n+1}(L)=\KM_n(k)\cdot \KM_1(L)$ (by \cite[I, Cor. 5.3]{BT})
and the statement follows from the projection formula on both sides. 
 The surjectivity statement follows immediately, since by definition (see Definition \ref{defn-LMFtensor} and Theorem \ref{thm-tensorRF}) any element
in $\T(\G_m^{\times n})(x)$ is a finite sum of elements of the form $\Tr_{y/x}(\tau(a_1,\ldots, a_n))$ for $y/x$ finite and $a_i\in \G_m(y)$.
\end{proof}

\begin{thm}\label{thm-Milnor=tensor}
For all $n\geqslant  1$ the natural map
\[\Phi:\G_m^{\times n}\to \KM_n,\quad (a_1,\ldots, a_n)\mapsto \{a_1,\ldots, a_n\}\]
is a $n$-linear map of reciprocity functors (in the sense of Definition \ref{defn-MLmapRF}) and the induced morphism
\[\T(\G_m^{\times n}) \xr{\simeq} \KM_n \quad \text{in }\MF\]
is an isomorphism of {\em Mackey functors} whose inverse is given by the map $\sigma$ from Proposition \ref{prop-Milnor-surj-tensor} above.
Moreover if the base field $F$ is {\em infinite}, then the above is an isomorphism of {\em reciprocity functors}.
\end{thm}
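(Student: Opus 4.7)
The plan divides into three stages: checking that $\Phi$ is a multilinear map of reciprocity functors, establishing the inverse bijection on Mackey functors, then extending the isomorphism to all of $\Regone$ when $F$ is infinite.

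\textbf{Stage 1: $\Phi$ is multilinear of reciprocity functors.} The $n$-linearity of $\Phi$ on sections is built into Milnor K-theory, and condition (L1) of Definition \ref{defn-MLmapRF} is the functoriality of pullbacks of symbols. Condition (L2) is precisely the projection formula $f_*\{f^*a_1,\ldots,b,\ldots,f^*a_n\}=\{a_1,\ldots,\Nm_{X/Y}(b),\ldots,a_n\}$ for Milnor K-theory, which is classical (and matches the trace on $\G_m$ via (Tr2)/Example \ref{ex-MilnorK}). For condition (L3), note that both $\G_m$ (a semi-Abelian variety, Remark \ref{rmk-gps-Fil}) and $\KM_n$ (a cycle module, Proposition \ref{prop-cycle-modules-are-RF}) lie in $\RF_1$; so whenever some $r_i\geqslant 1$ the filtration $\Fil^{r_i}_P$ on both sides equals the full group, while for $r_1=\cdots=r_n=0$ the statement $\{a_1,\ldots,a_n\}\in\KMsheaf_{n,P}$ for $a_i\in\sO_{C,P}^\times$ is tautological from \eqref{DefRegularCM}. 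By Theorem \ref{thm-tensorRF} we obtain a morphism of reciprocity functors $\phi:\T(\G_m^{\times n})\to \KM_n$ with $\phi\circ\tau=\Phi$.

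\textbf{Stage 2: $\phi$ and $\sigma$ are mutually inverse on Mackey functors.} On any $S$-point $x$, we have $\phi(\sigma(\{a_1,\ldots,a_n\}))=\phi(\tau(a_1,\ldots,a_n))=\{a_1,\ldots,a_n\}$; since symbols generate $\KM_n(x)$ this yields $\phi\circ\sigma=\id$. Conversely, every element of $\T(\G_m^{\times n})(x)$ is by construction (see Definition \ref{defn-LMFtensor} and Theorem \ref{thm-tensorRF}) a sum of elements of the form $\Tr_{y/x}\tau(a_1,\ldots,a_n)$ with $y\to x$ finite and $a_i\in\G_m(y)$. Since both $\phi$ and $\sigma$ respect transfers (as Mackey functor morphisms), we compute $\sigma(\phi(\Tr_{y/x}\tau(\ul a)))=\Tr_{y/x}\sigma(\{a_1,\ldots,a_n\})=\Tr_{y/x}\tau(\ul a)$, proving $\sigma\circ\phi=\id$. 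This establishes the Mackey-functor isomorphism in general.

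\textbf{Stage 3: Upgrade to an isomorphism of reciprocity functors when $F$ is infinite.} It suffices to show $\phi_X:\T(\G_m^{\times n})(X)\to\KM_n(X)$ is bijective for every connected $X\in\Regone$. Injectivity is immediate: both groups embed by (Inj) into their generic stalks, where $\phi_\eta$ is an isomorphism by Stage 2. For surjectivity we argue locally, since both sides are Nisnevich (in fact Zariski) sheaves. Because $F$ is infinite, every residue field $\kappa(P)$ at a closed point $P\in X$ is infinite, so Kerz's theorem \cite[Thm.~7.1]{Kerz} applies to each local ring $\sO_{X,P}$ (a DVR or field), yielding $\KMsheaf_{n,X,P}=\KM_n(\sO_{X,P})$ and that this group is generated by symbols of units in $\sO_{X,P}^\times$. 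Given $\alpha\in\KM_n(X)$, choose for each $P\in X$ such a symbol expression of the germ $\alpha_P$; it extends to some Zariski-open neighborhood $U_P\ni P$, producing a lift in $\T(\G_m^{\times n})(U_P)$ mapping to $\alpha|_{U_P}$ via $\phi$. The lifts agree on overlaps by the injectivity already established, and the sheaf property produces a preimage in $\T(\G_m^{\times n})(X)$.

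\textbf{Anticipated obstacle.} Stages 1 and 2 are essentially formal given the universal property of $\T$ and standard Milnor K-theory formulas. The real content is Stage 3, whose crux is the symbol-generation statement for $\KM_n$ of local rings with infinite residue field; without this input one only obtains isomorphism on Mackey functors, which is why the infinite-field hypothesis is imposed precisely here.
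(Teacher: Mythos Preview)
Your proposal is correct and follows essentially the same approach as the paper: verify that $\Phi$ is an $n$-linear map of reciprocity functors (with (L3) automatic because $\KM_n\in\RF_1$), use the universal property to get $\phi$, observe that $\phi\circ\sigma=\id$ together with surjectivity of $\sigma$ gives the Mackey-functor isomorphism, and then upgrade via (Inj) plus symbol-generation of $\KMsheaf_{n,C,P}$ for local rings with infinite residue field. The only cosmetic differences are that the paper cites \cite{EMS}/\cite{Gabber} rather than Kerz for the last input, and that your discussion of the ``$r_1=\cdots=r_n=0$'' case is not part of condition (L3) (which only concerns $r_i\geqslant 1$) but rather belongs to verifying that $\Phi$ lands in the sheaf $\KMsheaf_n$ on all of $\Regone$---which you do need, and which you handle correctly.
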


\begin{proof}
Clearly $\Phi$ is a $n$-linear morphism of Mackey functors (Definition \ref{defn-bilinearMF}). Since $\KM_n\in \RF_1$ the condition (L3) in Definition \ref{defn-MLmapRF} is automatically satisfied.
Thus we obtain a morphism $\psi: \T(\G_m^{\times n})\to \KM_n$ in $\RF$. Now $\sigma$ is surjective and obviously we have $\psi\circ\sigma=\id$, hence it is an isomorphism in $\MF$.
Notice that by (Inj) $\psi$ is an isomorphism in $\RF$ if it is surjective on Nisnevich stalks. But by \cite[Prop. 4.3]{EMS} (see also \cite{Gabber}) $\KM_n(\sO_{C,P})\to \sK_{n,C,P}^M$
is surjective if $F$ is infinite, where $\sK_{n,C,P}$ is defined in Example \ref{ex-MilnorK}. 
Since this holds for all $C, P$ we also have a surjection $\KM_n(\sO_{C,P}^h)\to (\sK_{n,C,P}^M)^h$.
Thus the Nisnevich stalk at some point $P\in C$ of $\sK_n^M$ is generated by symbols $\{a_1,\ldots,a_n\}$
with $a_i\in \sO(V)^\times$, where $V\to C$ is some Nisnevich neighborhood of $P$. 
Since $\psi(\tau(a_1,\ldots, a_n))=\{a_1,\ldots,a_n\}$ we obtain the second statement. 
\end{proof}

\begin{remark}\label{rmk-KM-Gm-tensor-only-equiv}
\begin{enumerate}
 \item In the same way one can prove 
            \[\KM_n\cong\T(\KM_{n_1},\ldots,\KM_{n_r})\quad \text{for all } n=n_1+\ldots+n_r,\, r\geqslant  1.\]
\item  Combining Theorem \ref{thm-Milnor=tensor} and Corollary \ref{cor-tensor-chow} we get the Nesterenko-Suslin  isomorphism \cite{NS}, i.e. 
         \[\KM_n(x)\cong \CH^n(x,n)\quad \text{for all  $S$-points $x$}.\]
\end{enumerate}
\end{remark}

\subsection{Relation with K\"ahler differentials}
In the following we denote simply by $\G_a$, $\G_m$ and $\Omega^n$ the $\Z$-reciprocity functors over $S$ defined in section 2.
Recall $S=\Spec F$ with $F$ a perfect field.

\begin{lemma}\label{lem-module-structure}
Let $\sM_i$, $i=1,\ldots, n$, be $\Z$-reciprocity functors. Then for any $S$-point $x=\Spec k$ the natural $F$-vector space structure on $k\otimes_\Z \sM_1(x)\otimes_\Z\ldots\otimes_\Z \sM_n(x)$ extends naturally to 
$\T(\G_a, \sM_1,\ldots, \sM_n)(x)$.
\end{lemma}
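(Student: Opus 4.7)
\medskip
\noindent\emph{Proof plan for Lemma~\ref{lem-module-structure}.}

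The plan is to construct, for each $\lambda\in k=\G_a(x)$, an endomorphism
$\mu_\lambda$ of $\T(\G_a,\sM_1,\ldots,\sM_n)(x)$ that on a generator
$\Tr_{y/x}\tau(a,m_1,\ldots,m_n)$ with $y/x$ finite and $a\in\G_a(y)$, $m_i\in\sM_i(y)$
acts by
\[
\mu_\lambda\bigl(\Tr_{y/x}\tau(a,m_1,\ldots,m_n)\bigr)\;=\;\Tr_{y/x}\tau(\lambda|_y\cdot a,\,m_1,\ldots,m_n),
\]
and then check that $\lambda\mapsto\mu_\lambda$ is a unital ring homomorphism
$k\to\End_{\Z}\bigl(\T(\G_a,\sM_1,\ldots,\sM_n)(x)\bigr)$ extending the obvious
action on $k\otimes_\Z\sM_1(x)\otimes_\Z\cdots\otimes_\Z\sM_n(x)$ via $\tau$.

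First I would fix $\lambda\in k$ and, for every $x$-scheme $Y\in\Regone$,
consider the $\Z$-linear endomorphism of $\G_a(Y)$ given by multiplication by
$\lambda|_Y$; by the projection formula recalled in~\ref{trace-for-alg-gps} this
collection is compatible with pullback and with trace along finite flat
$x$-morphisms, so it induces an endomorphism of $\G_a\otimes\sM_1\otimes\cdots\otimes\sM_n$
(as presheaves with transfers, restricted to $x$-schemes) acting only on
the first tensor factor. To descend this endomorphism through
$\LT(\G_a,\sM_1,\ldots,\sM_n)$ I must show it kills the generating relations
\eqref{defn-LMFtensor1}. Fix such a relation at $x_C$ for some curve $C\in(\sC/x)$,
divisors $\fm_0,\ldots,\fm_n$, functions $a_0\in\G_a(C,\fm_0)$,
$a_i\in\sM_i(C,\fm_i)$, and $f\equiv1$ mod $\max_i\{\fm_i\}$. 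After multiplication
by $\lambda|_C$ in the first slot we obtain the same expression with $a_0$ replaced
by $\lambda|_C\cdot a_0$ (using $s_P(\lambda|_C\cdot a_0)=\lambda|_P\cdot s_P(a_0)$);
this is again of the form \eqref{defn-LMFtensor1} provided $\lambda|_C\cdot a_0$
still has modulus $\fm_0$ in $\G_a$. This is the one nontrivial point: for
$g\equiv1$ mod $\fm_0$ the projection formula for the trace on $\G_a$ yields
\[
\sum_{P\in C\setminus|\fm_0|}v_P(g)\,\Tr_{P/x_C}\bigl(s_P(\lambda|_C\cdot a_0)\bigr)
=\lambda|_{x_C}\cdot\!\!\!\sum_{P\in C\setminus|\fm_0|}v_P(g)\,\Tr_{P/x_C}(s_P(a_0))
=0,
\]
so indeed $\lambda|_C\cdot a_0\in\G_a(C,\fm_0)$. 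Hence $\mu_\lambda$ descends to
$\LT(\G_a,\sM_1,\ldots,\sM_n)$ on $x$-schemes and, by functoriality of $\Sigma$
(Proposition~\ref{prop-adjoint-of-LMF}), to $\T(\G_a,\sM_1,\ldots,\sM_n)$; in
particular we get the desired endomorphism at $Y=x$.

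It remains to verify the $k$-module axioms and the compatibility with the map
from $k\otimes_\Z\sM_1(x)\otimes\cdots\otimes\sM_n(x)$. Additivity
$\mu_{\lambda+\lambda'}=\mu_\lambda+\mu_{\lambda'}$, multiplicativity
$\mu_{\lambda\lambda'}=\mu_\lambda\circ\mu_{\lambda'}$, and $\mu_1=\id$ are immediate
from the corresponding identities for multiplication in $\G_a(Y)$ and the fact
that $\mu_\lambda$ acts only on the first tensor factor; they are preserved by
the universal quotient defining~$\T$. Evaluating at a generator
$\tau(a,m_1,\ldots,m_n)$ with $a\in k=\G_a(x)$ shows that $\mu_\lambda$ restricted
to the image of $k\otimes_\Z\sM_1(x)\otimes\cdots\otimes\sM_n(x)$ is the canonical
$k$-action, as required. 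The main obstacle is exactly the modulus-preservation
step above; every other verification is a bookkeeping exercise once the
projection formula for $\G_a$ is in hand.
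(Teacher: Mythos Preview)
You are proving more than the lemma asks: you take $\lambda\in k=\G_a(x)$ and aim for a $k$-vector space structure on $\T(\G_a,\sM_1,\ldots,\sM_n)(x)$. The lemma only claims an $F$-vector space structure, and the paper's Remark~\ref{rmk-module-structure-base-change} immediately afterwards says explicitly that the extension to a $k$-structure is \emph{not clear}. For $\lambda\in F$ the paper's argument is one line: multiplication by $\lambda$ is an endomorphism of $\G_a$ \emph{as a reciprocity functor over $S$}, so functoriality of $\T$ (Theorem~\ref{thm-tensorRF}) yields an endomorphism $\lambda\otimes\id$ of $\T(\G_a,\sM_1,\ldots,\sM_n)$ in $\RF$; evaluate at $x$. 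No modulus computation or relation-checking is needed, because a morphism of reciprocity functors is automatically carried through by $\T$.

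Your attempt at the stronger $k$-statement has a genuine gap. You restrict to $x$-schemes and to curves $C\in(\sC/x)$, but $\LT$ and $\T$ are constructed over $S$: the submodule $\langle\eusm R\rangle(x)$ is generated by images under correspondences $\gamma\in\Cor(x,x_C)$ of relations~\eqref{defn-LMFtensor1} for \emph{all} curves $C\in(\sC/S)$, and $x_C$ need not be an $x$-point, so $\lambda$ does not pull back to $C$ and your modulus-preservation step does not apply as written. Likewise, your appeal to ``functoriality of $\Sigma$'' requires a morphism in $\LMFsp$ over $S$, not an endomorphism defined only on the subcategory of $x$-schemes; the iterated quotient in the construction of $\Sigma$ again brings in correspondences to schemes not over $x$. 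One might hope to repair this by base-changing each such $C$ along the finite map $z\to x_C$ coming from the correspondence (since $z$ does lie over $x$), but this is substantially more bookkeeping than you have done, and it is precisely what the authors flag as unclear in Remark~\ref{rmk-module-structure-base-change}. If you only need the lemma as stated, replace $k$ by $F$ throughout: then multiplication by $\lambda$ is globally defined on $\G_a$ over $S$ and all of your local checks become unnecessary.
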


\begin{proof}
Multiplication with an element $\lambda\in F$ induces a morphism $\lambda:\G_a\to\G_a$ of reciprocity functors over $S$. We obtain a morphism
$\lambda\otimes\id: \T(\G_a, \sM_1,\ldots, \sM_n) \to \T(\G_a,\sM_1,\ldots, \sM_n)$ of reciprocity functors over $S$ and it is straightforward to check
that this induces the looked for module structure $\T(\G_a, \sM_1,\ldots, \sM_n)(x)$.
\end{proof}

\begin{remark}\label{rmk-module-structure-base-change}
It is not clear that in the situation above, the natural $k$-vector space structure on the $\Z$-module $k\otimes_\Z \sM_1(x)\otimes_\Z\ldots\otimes_\Z \sM_n(x)$ extends to the $\Z$-module
$\T(\G_a, \sM_1,\ldots, \sM_n)(x)$.
\end{remark}

\begin{lemma}\label{lem-Kaehler-symbol-calculation}
Let $x=\Spec k$ be an $S$-point and let $0$ be the zero point in the affine line $\A^1=\Spec k[t]\subset \P^1$.
Then for any $f\in k(t)^\times$ and $a,b_1,\ldots, b_{n-1}\in k^\times$ we have
\[(\tau(at,t, b_1,\ldots, b_{n-1}), f)_0=0 \quad \text{in } \T(\G_a, \G_m^{\times n})(x).\]
\end{lemma}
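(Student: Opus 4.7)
The plan is to exploit the modulus structure of $m := \tau(at,t,b_1,\ldots,b_{n-1})$ at the two critical points $0$ and $\infty$ of $\P^1_x$, then reduce the symbol at $0$ to a small number of computations via reciprocity. First I would bound the modulus of $m$. By (L3) from Definition \ref{defn-MLmapRF} applied to the polynomial $at\in\G_a$ (which has modulus $2\{\infty\}$, coming from a simple pole at infinity), to $t\in\G_m$ (which has the standard modulus $\{0\}+\{\infty\}$), and to the constants $b_i\in\G_m$ (trivial modulus), together with Lemma \ref{lem-alt-descr-modulus-space}, we get $m\in\T(\G_a,\G_m^{\times n})(\P^1_x,\{0\}+2\{\infty\})$, and in particular $m\in\Fil^1_0\cap\Fil^2_\infty$.

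Since $m\in\Fil^1_0$, the symbol $f\mapsto (m,f)_0$ vanishes on $U^{(1)}_0$, hence factors through $k(t)^\times/U^{(1)}_0$. This quotient is generated (as an abelian group) by the class of $t$ and the image of $k^\times$, so by bilinearity of the local symbol it suffices to prove the two cases $f=t$ and $f=c\in k^\times$. For each, I would use reciprocity $\sum_{P\in\P^1_x}(m,f)_P=0$ together with the fact that at any closed point $P\in\A^1_x\setminus\{0\}$ the element $m$ is regular with specialization $s_P(m)=\tau(aP,P,b_1,\ldots,b_{n-1})$. Applying reciprocity with the auxiliary function $g=1+t$ (whose divisor is $[-1]-[\infty]$, which lies in $U^{(1)}_0$ at the origin and in $U^{(1)}_\infty$ at infinity up to the factor $t$) produces an explicit equation linking $(m,t)_0$, $(m,t)_\infty$, and $\tau(-a,-1,b_1,\ldots,b_{n-1})$; a parallel use of $g=1-t/c$ reduces $(m,c)_0$ to terms involving the specialization $s_c(m)=\tau(ac,c,b_1,\ldots,b_{n-1})$ and contributions at infinity.

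The main obstacle will be the evaluation of the symbols at $\infty$. Because $m$ lies only in $\Fil^2_\infty$ (not $\Fil^1_\infty$), the boundary contributions from factors of the test function that lie in $U^{(1)}_\infty\setminus U^{(2)}_\infty$ do not vanish automatically from the modulus bound alone; they must be extracted and identified with further $\tau$-expressions via iterated reciprocity. To handle these, I would use reciprocity on $\P^1_x$ with functions like $1-cs=(t-c)/t$ in the local coordinate $s=1/t$ at infinity, and carefully track the interaction with the reciprocity relations for constants and for $t$ themselves (which give $(m,g)_\infty=-(m,g)_0$ for constants $g$ and for $g=t$). Gathering these relations, the remaining $\tau$-terms assemble into the combinations $\tau(-a,-1,b_1,\ldots)$ and $\tau(ac,c,b_1,\ldots,b_{n-1})$ together with their images under the defining relations \eqref{defn-LMFtensor1} of $\LT$ coming from modulus-$\{0\}+2\{\infty\}$ reciprocity data on $\P^1_x$; the key cancellation is that precisely these relations force the residual expression to vanish in $\T(\G_a,\G_m^{\times n})(x)$. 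The pattern of cancellation mirrors the vanishing $d(-1)=0$ and the identity $d(ac)\wedge dc/c=a\,dc$ in K\"ahler differentials, consistent with the eventual isomorphism of Theorem \ref{thm-tensorRF=KD}.
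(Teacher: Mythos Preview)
Your setup is correct: the modulus bound $m:=\tau(at,t,\ul{b})\in\T(\G_a,\G_m^{\times n})(\P^1_x,\{0\}+2\{\infty\})$ holds, and the reduction to $f=t$ and $f=c\in k^\times$ is legitimate. The gap is in the execution. The auxiliary functions you propose, $1+t$, $1-t/c$, $(t-c)/t$, all fail to lie in $U^{(2)}_\infty$, so their contributions at $\infty$ do not vanish from the modulus bound. Your plan to recover these $\infty$-terms by ``further reciprocity'' is circular: for instance, applying reciprocity to $1+t$ yields
\[(m,t)_0 = \tau(-a,-1,\ul{b}) + (m,1+1/t)_\infty,\]
and then applying reciprocity to $(t+1)/t$ to compute $(m,1+1/t)_\infty$ gives back \emph{exactly the same equation}. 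The same happens for the constant case. You never obtain a second independent relation, so the system does not close. The appeal to ``the defining relations \eqref{defn-LMFtensor1}'' adds nothing, since those relations are precisely the reciprocity law you are already using; and invoking the differential-form identities of Theorem~\ref{thm-tensorRF=KD} is circular, since the present lemma is an input to that theorem.

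The paper's proof avoids this by choosing, for each case, a degree-two rational function $f_0$ satisfying \emph{both} $f/f_0\in U^{(1)}_0$ and $f_0\in U^{(2)}_\infty$: namely $f_0=\tfrac{t^2-f(0)}{t^2-1}$ for $f$ a unit at $0$, and $f_0=\tfrac{t(t+1)}{t^2+t+1}$ for $f=t$. Then $(m,f)_0=(m,f_0)_0$ and $(m,f_0)_\infty=0$, so reciprocity reduces the symbol to a finite sum of explicit specializations at the zeros and poles of $f_0$ in $\A^1\setminus\{0\}$; these are shown to cancel, sometimes after a quadratic or cubic base change and using the $F$-module structure (e.g.\ $\tau(c,-1,\ul{b})=0$ when $\mathrm{char}\,F\neq 2$). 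The point is that degree-two test functions are needed to match the order-two condition at $\infty$; linear test functions cannot do this.
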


\begin{proof}
We have 
\[at\in \G_a(\P^1,2\cdot \{\infty\}),\quad t\in \G_m(\P^1, \{0\}+\{\infty\})\]
and
\[\ul{b}=(b_1,\ldots, b_{n-1})\in \G_m^{\times n}(\P^1).\]
Hence
$\tau(at,t,\ul{b})\in \T(\G_a,\G_m^{\times n})(\P^1, \{0\}+2\cdot\{\infty\})$ and 
\mlnl{(\tau(at,t,\ul{b}), f)_0=(\tau(at,t,\ul{b}), f_0)_0+(\tau(at,t,\ul{b}), f_0)_\infty  \\
       =  -\sum_{P\in \P^1\setminus\{0,\infty\}} v_P(f_0) \Tr_{P/x}s_P(\tau(at, t,\ul{b})),}
for all $f_0\in k(t)^\times$ with 
\eq{lem-Kaehler-symbol-calculation1}{f/f_0\in U^{(1)}_0 \text{ and } f_0\in U^{(2)}_\infty.}
We distinguish 3 cases. (In the calculation we use Lemma \ref{lem-module-structure} without mentioning it.)

{\em 1. case: $f\in \sO_{\A^1,0}^\times$.} In this case  $f_0:=\frac{t^2-f(0)}{t^2-1}$ satisfies \eqref{lem-Kaehler-symbol-calculation1}.
        Let $\alpha\in \bar{k}$ be a root of $t^2-f(0)$ and $\varphi: y=\Spec k(\alpha)\to x$ the induced map.
        First assume ${\rm char}(k)\neq 2$. Then we have in $\T(\G_a,\G_m^{\times n})(y)$
       \begin{align}
        \varphi^*(\tau(at, t, \ul{b}), f)_0 & = -\tau(a\alpha, \alpha,\ul{b}) - \tau(-a\alpha, -\alpha,\ul{b}) \notag\\
                                              &\phantom{ = } +\tau(a, 1, \ul{b}) + \tau(-a, -1,\ul{b}) \notag\\
                                     & = \tau(-a\alpha, \alpha,\ul{b})+ \tau(-a\alpha,\frac{-1}{\alpha}, \ul{b})\notag\\
                                     & = \tau(-a\alpha, -1,\ul{b})=0.\notag
       \end{align}
       Since $\varphi$ has degree 1 or 2, property (MF2) implies that $(\tau(at, t, \ul{b}), f)_0$ is zero in $\T(\G_a,\G_m^{\times n})(x)$.
       If ${\rm char}(k)=2$ we obtain
        \[(\tau(at, t,\ul{b}), f)_0=-2\cdot\Tr_{y/x}(\tau(a\alpha, \alpha, \ul{b}))+2\cdot \tau(a, 1, \ul{b})=0.\]

{\em 2. case: $f=t$.} In this case $f_0:=\frac{t(t+1)}{t^2+t+1}$ satisfies \eqref{lem-Kaehler-symbol-calculation1}. 
                      Let $\alpha\in \bar{k}$ be a third primitive root of 1 and $y=\Spec k(\alpha)\to x$ the induced map.
                      Assume first ${\rm char}(k)\neq 3$ and $y$ has degree 2 over $x$. Then we have in $\T(\G_a,\G_m^{\times n})(x)$
                     \[(\tau(at,t, \ul{b}),f)_0= - \tau(-a, -1,\ul{b})+\Tr_{y/x}(\tau(a\alpha, \alpha,\ul{b})) = \Tr_{y/x}(\tfrac{1}{3}\tau(a\alpha,1,\ul{b}))=0.\]
                       If the degree of $y$ over $x$ is 1, then 
                     \[(\tau(at,t,\ul{b}),f)_0= - \tau(-a, -1,\ul{b})+ \tau(a\alpha,\alpha,\ul{b}) +\tau(a\alpha^2,\alpha^2,\ul{b})=0.\]
                     If ${\rm char}(k)=3$, then 
                      \[(\tau(at,t,\ul{b}),f)_0=- \tau(-a,-1,\ul{b})+ 2\cdot  \tau(a, 1, \ul{b})=0.\]

{\em 3. case: $f\in k(t)^\times$ arbitrary.} Write $f=t^nu$ with $u\in\sO_{\A^1,0}^\times$ and conclude with the first two cases.
\end{proof}

\begin{proposition}\label{prop-Kaehler-map-to-RFtensor}
Let $x=\Spec k$ be an $S$ -point. For all $n\geqslant  1$ there exists a homomorphism of $F$-vector spaces (see Lemma \ref{lem-module-structure}) 
\[\theta(x): \Omega^n_x\to \T(\G_a,\G_m^{\times n})(x), \quad a\frac{db_1}{b_1}\cdots \frac{db_n}{b_n}\mapsto \tau(a,b_1,\ldots, b_n).\]
\end{proposition}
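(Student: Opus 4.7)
My plan is to construct $\theta(x)$ by descent from the $\Z$-multilinear map obtained by evaluating the universal multilinear map $\tau$ at $x$:
\[
\lambda : k \otimes_\Z (k^\times)^{\otimes_\Z n} \longrightarrow N := \T(\G_a,\G_m^{\times n})(x), \qquad a \otimes b_1 \otimes \cdots \otimes b_n \longmapsto \tau(a, b_1, \ldots, b_n).
\]
This is well-defined and $\Z$-multilinear by the bilinearity of $\tau$ in each slot as a morphism of Mackey functors, using that the group law on $\G_m$ is multiplication (corresponding to addition in the tensor notation). By Lemma~\ref{lem-module-structure}, $\lambda$ is $F$-linear in the first slot.

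Since $k$ is a field, every element of $\Omega^n_{x}$ is a $\Z$-linear combination of logarithmic forms $a\,\frac{db_1}{b_1}\wedge\cdots\wedge\frac{db_n}{b_n}$ with $a\in k$ and $b_i\in k^\times$, and the kernel of the natural surjection $k\otimes_\Z(k^\times)^{\otimes n}\twoheadrightarrow\Omega^n_x$ is generated by the alternating relations together with the Leibniz relations encoding $d(b+b')=db+db'$ (for $b,b',b+b'\in k^\times$). Hence it suffices to verify in $N$ the two relations
\begin{enumerate}
\item[(L)] $\tau(a(b+b'),\,b+b',\,c_2,\ldots,c_n) = \tau(ab,b,c_2,\ldots,c_n) + \tau(ab',b',c_2,\ldots,c_n)$ for $a\in k$ and $b,b',b+b',c_i\in k^\times$;
\item[(A)] $\tau(a,b_1,\ldots,b_n)=0$ whenever $b_i = b_j$ for some $i\neq j$.
\end{enumerate}

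Relation (L) is exactly what Lemma~\ref{lem-Kaehler-symbol-calculation} is tailored to deliver. I will apply reciprocity on $\P^1_x$ to the section $\alpha := \tau(at,t,c_2,\ldots,c_n)$ (of modulus $\{0\}+2\{\infty\}$ by (L3)) together with the rational function
\[
f(t) := \frac{(t-b)(t-b')}{t\,(t-(b+b'))}.
\]
The numerator and denominator both have degree $2$ with matching leading coefficient and matching coefficient of $t$, so $f\in U^{(2)}_\infty$. Thus the local symbol at $0$ vanishes by Lemma~\ref{lem-Kaehler-symbol-calculation} and the symbol at $\infty$ vanishes since $\alpha\in\Fil^2_\infty$. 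Reciprocity then forces the remaining contributions at the simple zeros $b,b'$ and the simple pole $b+b'$ to sum to zero, yielding (L) for $a\in k^\times$; additivity in the first slot extends this to all $a\in k$.

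For (A), I will run an analogous reciprocity argument with the section $\alpha' := \tau(a,t,t,c_3,\ldots,c_n)$, of modulus $\{0\}+\{\infty\}$, and rational functions of the form $f = \frac{(t-b)(t-b')}{(t-w)(t-z)}$ with $bb'=wz$ (so $f\equiv 1\pmod{\{0\}+\{\infty\}}$). Reciprocity yields that $u\mapsto q(u):=\tau(a,u,u,c_3,\ldots,c_n)$ is a homomorphism $k^\times\to N$, while bilinearity in the two coincident $\G_m$-slots gives $q(u^2)=4\,q(u)$; the homomorphism property gives $q(u^2)=2\,q(u)$, so $2\,q(u)=0$, which suffices in every characteristic different from $2$. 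In characteristic $2$, the residual $2$-torsion must be removed by combining (L) with the symmetry isomorphism of Corollary~\ref{cor-RFtensor}(1). Once (L) and (A) are established, $\lambda$ factors through $\Omega^n_{x}$ and induces the required $F$-linear map $\theta(x)$, which by construction sends $a\,\tfrac{db_1}{b_1}\wedge\cdots\wedge\tfrac{db_n}{b_n}$ to $\tau(a,b_1,\ldots,b_n)$. The main obstacle is (A): it is not a formal consequence of (L) and the universal property of $\T$, and handling it uniformly in the characteristic---especially in characteristic $2$---is the most delicate step.
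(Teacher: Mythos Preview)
Your treatment of the Leibniz relation (L) is correct and is essentially the paper's own argument: the paper verifies the equivalent Bloch--Esnault relation $\lambda a\otimes a + \lambda(1-a)\otimes(1-a)\mapsto 0$ (cited from \cite[Lemma~4.1]{BE03}) via reciprocity for $\tau(\lambda t,t,\ul{b})$ against $f=\frac{(t-a)(t-(1-a))}{t(t-1)}$, which is your function specialized to $b'=1-b$. So the presentation of the kernel you use and the verification of (L) both match the paper.

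The gap is in the alternating relation (A) in characteristic $2$. Your reciprocity argument correctly shows that $q(u)=\tau(a,u,u,\ul{c})$ is a group homomorphism $k^\times\to N$ with $2q(u)=0$; since $N$ is an $F$-vector space by Lemma~\ref{lem-module-structure}, this kills $q$ when $\mathrm{char}\,F\neq 2$. But when $\mathrm{char}\,F=2$ the relation $2q(u)=0$ is vacuous, and your proposed fix does not work: the symmetry isomorphism of Corollary~\ref{cor-RFtensor}(1) sends $\tau(a,u,v,\ul{c})$ to $\tau(a,v,u,\ul{c})$ and says nothing when $u=v$, while (L) cannot be specialized to $b'=b$ since then $b+b'=0\notin k^\times$. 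All you can extract is that $q$ factors through $k^\times/(k^\times)^2$, which is nonzero when $k$ is not perfect. The paper handles (A) uniformly by a different route: Proposition~\ref{prop-Milnor-to-tensor} (with $\sM=\G_a$) shows that the map factors through $k\otimes_\Z\KM_n(k)$, and the Milnor identity $\{u,u\}=\{-1,u\}$ then gives $\tau(a,u,u,\ul{c})=\tau(a,-1,u,\ul{c})$, which vanishes in characteristic $2$ since $-1=1$, and in characteristic $\neq 2$ as $2$-torsion in an $F$-vector space. You should replace your argument for (A) by an appeal to Proposition~\ref{prop-Milnor-to-tensor}.
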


\begin{proof}
 By \cite[Lemma 4.1]{BE03} the kernel of the surjective map
\[k\otimes_\Z (k^\times)^{\otimes n}\to \Omega^n_x, \quad a\otimes(b_1\otimes\ldots\otimes b_n)\mapsto a\frac{db_1}{b_1}\cdots \frac{db_n}{b_n} \]
is the subgroup of $k\otimes_\Z (k^\times)^{\otimes n}$ generated by elements of the following form
\eq{prop-Kaehler-map-to-RFtensor1}{a\otimes b_1\otimes\ldots\otimes b_n,\quad \text{with }b_i=b_j \text{ for some } 1\leqslant i<j\leqslant n, }
with $a\in k$, $b_i\in k^\times$, and
\eq{prop-Kaehler-map-to-RFtensor2}{\lambda a\otimes a\otimes b_1\otimes\ldots\otimes b_{n-1} +
      \lambda (1-a)\otimes(1-a) \otimes b_1\otimes\ldots\otimes b_{n-1},}
with $\lambda, b_i\in k^\times$ and $a\in k\setminus\{0,1\}$.
Thus it suffices to check, that the natural map
\[k\otimes_\Z (k^\times)^{\otimes n}\to \T(\G_a,\G_m^{\times n})(x), \quad a\otimes b_1\otimes\ldots\otimes b_n\mapsto \tau(a, b_1,\ldots, b_n) \]
maps the elements \eqref{prop-Kaehler-map-to-RFtensor1} and \eqref{prop-Kaehler-map-to-RFtensor2} to 0. 
By Proposition \ref{prop-Milnor-to-tensor} (with $\sM=\G_a$) this map factors over $k\otimes_\Z\KM_n(k)$, which shows (using also Lemma \ref{lem-module-structure}) that \eqref{prop-Kaehler-map-to-RFtensor1}
is mapped to zero.
Now take $\lambda,a, b_i\in k^\times$ as in \eqref{prop-Kaehler-map-to-RFtensor2} and write $0$ for the zero point in the affine line  $\A^1=\Spec k[t]\subset \P^1$.
We get
\eq{prop-Kaehler-map-to-RFtensor3}{\tau(\lambda t, t,\ul{b})\in \T(\G_a,\G_m^{\times n})(\P^1, 0+2\cdot \infty).}
Set 
\[f:=\frac{(t-a)(t-(1-a)) }{t(t-1) }.\]
We have $f\equiv 1$ mod $2\cdot \infty$. Thus $(\tau(\lambda t,t,\ul{b}),f)_\infty=0$.
Further $(\tau(\lambda t,t,\ul{b}),f)_0=0$ by Lemma \ref{lem-Kaehler-symbol-calculation}. Hence
\begin{align}
0 & = \sum_{P\in \P^1}(\tau(\lambda t,t,\ul{b}),f)_P
      = \sum_{P\in \P^1\setminus\{0,\infty\}}(\tau(\lambda t,t,\ul{b}),f)_P\notag\\
  & =\tau(\lambda a, a,\ul{b})+ \tau(\lambda (1-a), (1-a),\ul{b}).\notag
\end{align}
This yields the statement.
\end{proof}

\begin{corollary}\label{cor-Kaehler-map-to-RFtensor-on-RS}
Let $X$ be in $\RegCon$ with generic point $\eta$. Then the morphism $\theta(\eta)$ from above induces a map
\[\theta(X): \Omega^n(X)\to \T(\G_a,\G_m^{\times n})(X).\]
\end{corollary}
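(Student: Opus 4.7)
If $X$ is zero-dimensional then $X=\eta$ and there is nothing to prove, so I will assume that $X$ is one-dimensional. By the condition (Inj) of Definition \ref{defn-MFsp}, both restriction maps $\Omega^n(X)\hookrightarrow \Omega^n_\eta$ and $\T(\G_a,\G_m^{\times n})(X)\hookrightarrow \T(\G_a,\G_m^{\times n})(\eta)$ are injective. The plan is to show that for each closed point $P\in X$ there is a Zariski open neighborhood $U_P\subset X$ and a section $\beta_P\in \T(\G_a,\G_m^{\times n})(U_P)$ whose image in $\T(\G_a,\G_m^{\times n})(\eta)$ equals $\theta(\eta)(\omega)$; the $\beta_P$'s will then automatically agree on overlaps because of (Inj), and will glue to the desired global section $\theta(X)(\omega)\in\T(\G_a,\G_m^{\times n})(X)$ by the Zariski sheaf axiom.

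To produce the local representative $\beta_P$ at a closed point $P\in X$ I will first rewrite $\omega\in\Omega^n_{X,P}$ as a finite sum of logarithmic differentials with unit arguments. Since $\Omega^n_{X,P}$ is generated as an $\sO_{X,P}$-module by wedges $dx_1\wedge\cdots\wedge dx_n$ with $x_j\in\sO_{X,P}$, and since every $x\in\sO_{X,P}$ satisfies $dx=dy$ for some $y\in\sO_{X,P}^\times$ (take $y=x$ when $x$ is a unit and $y=1+x$ otherwise, using that $1+\fm_P\subset\sO_{X,P}^\times$), the element $\omega$ admits an expression
\[
\omega = \sum_i a_i\,\frac{dy_{i,1}}{y_{i,1}}\wedge\cdots\wedge\frac{dy_{i,n}}{y_{i,n}}
\]
in $\Omega^n_{X,P}$ with finitely many terms, $a_i\in\sO_{X,P}$ and $y_{i,j}\in\sO_{X,P}^\times$. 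After shrinking I may assume that all the $a_i$ and $y_{i,j}$ come from sections of $\G_a$ and $\G_m$ over a common Zariski open $U_P\ni P$ in $X$, and the functoriality of the universal $n$-linear map $\tau:\G_a\times\G_m^{\times n}\to\T(\G_a,\G_m^{\times n})$ of Theorem \ref{thm-tensorRF} then yields a well-defined element
\[
\beta_P := \sum_i \tau(a_i,y_{i,1},\ldots,y_{i,n})\in\T(\G_a,\G_m^{\times n})(U_P),
\]
whose image in the generic stalk is $\theta(\eta)(\omega)$ by the formula in Proposition \ref{prop-Kaehler-map-to-RFtensor}.

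For any two closed points $P,Q\in X$ the restrictions $\beta_P|_{U_P\cap U_Q}$ and $\beta_Q|_{U_P\cap U_Q}$ both map to the same element $\theta(\eta)(\omega)$ of $\T(\G_a,\G_m^{\times n})(\eta)$, so (Inj) forces them to agree in $\T(\G_a,\G_m^{\times n})(U_P\cap U_Q)$; hence the local $\beta_P$'s glue to a uniquely determined global section $\theta(X)(\omega)\in\T(\G_a,\G_m^{\times n})(X)$, and its independence of the choice of local decomposition of $\omega$ is again a consequence of (Inj). The only nontrivial input is the rewriting trick $dx=d(1+x)$ for $x\in\fm_P$, which turns an arbitrary differential into a logarithmic one with a unit argument so that $\tau$ can be applied; once that is in hand everything else is a formal consequence of (Inj), the sheaf property, and the functoriality of $\tau$.
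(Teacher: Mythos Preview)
Your proof is correct and follows essentially the same approach as the paper's. The paper's argument is terser: since a reciprocity functor $\sM$ is a Zariski sheaf with $\sM(X)\subset\sM(\eta)$, one has $\sM(X)=\bigcap_{P}\sM_{X,P}$ inside $\sM(\eta)$, so it suffices to check $\theta(\eta)(\Omega^n_{X,P})\subset\T(\G_a,\G_m^{\times n})_{X,P}$ at each closed point, and this follows because every element of $\Omega^n_{X,P}$ is a sum of terms $a\tfrac{db_1}{b_1}\cdots\tfrac{db_n}{b_n}$ with $a\in\sO_{X,P}$ and $b_i\in\sO_{X,P}^\times$. Your explicit gluing argument and your justification of this last generation statement via $dx=d(1+x)$ for $x\in\fm_P$ simply unpack what the paper leaves implicit.
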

\begin{proof}
Since any reciprocity functor $\sM$ is in particular a Zariski sheaf on $X$ and we have inclusions $\sM(X)\subset \sM(\eta)$, it suffices to check
that $\theta(\eta)$ sends $\Omega^n_{X,P}$ to $\T(\G_a,\G_m^{\times n})_{X,P}$ for all closed points $P\in X$.  But this follows immediately from the fact that any element in 
$\Omega^n_{X,P}$ is a sum of elements of the form $a \frac{db_1}{b_1}\cdots \frac{db_n}{b_n}$ with $a\in \sO_{X,P}$ and $b_i\in \sO_{X,P}^\times$.
\end{proof}

\begin{proposition}\label{prop-Kaehler-map-to-RFtensor-is-MFhom}
The maps $\theta(X)$ from Corollary \ref{cor-Kaehler-map-to-RFtensor-on-RS} for $X$ running through $\RegCon$ yield a morphism of reciprocity functors
\[\theta:\Omega^n\to \T(\G_a,\G_m^{\times n}) \quad \text{in } \RF.\]
In particular $\theta$ is a surjective homomorphism of Nisnevich sheaves.
\end{proposition}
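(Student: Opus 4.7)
The plan is to verify, in the language of Proposition \ref{prop-alt-MFsp}, that the collection $\{\theta(X)\}_{X\in\RegCon}$ is compatible with pullback, with pushforward along finite flat morphisms, and with the specialization maps. Since both presheaves satisfy (Inj) and (FP), since specializations are particular pullbacks, and since the Zariski sheaf structure of $\theta$ is built into its definition, the real content reduces to checking pullback and trace compatibility at the level of the underlying Mackey functors. Pullback compatibility is immediate: for any map $g:V\to U$ in $\RegCon$ and any generator $\omega=a\,\tfrac{db_1}{b_1}\cdots\tfrac{db_n}{b_n}$ of $\Omega^n(U)$ one has $g^*\omega=g^*a\,\tfrac{dg^*b_1}{g^*b_1}\cdots\tfrac{dg^*b_n}{g^*b_n}$, and condition (L1) applied to $\tau$ yields $\theta_V(g^*\omega)=\tau(g^*a,g^*\vec b)=g^*\tau(a,\vec b)=g^*\theta_U(\omega)$.

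The substantial step is the trace compatibility $\theta_x\circ\Tr^{\Omega^n}_{L/k}=\Tr^{\T}_{L/k}\circ\theta_y$ for a finite morphism of $S$-points $\varphi:y=\Spec L\to x=\Spec k$. Using (Inj) and the transitivity of both trace structures, I split into the case where $L/k$ is separable and the case where it is purely inseparable of prime degree $p=\mathrm{char}\,k$. In the separable case one uses the isomorphism $\Omega^n_L\simeq L\otimes_k\Omega^n_k$ to write $\omega=\sum_i a_i\cdot\varphi^*\alpha_i$ with $a_i\in L$ and $\alpha_i\in\Omega^n_k$: then (Tr1) computes $\Tr^{\Omega^n}_{L/k}(\omega)=\sum_i\Tr_{L/k}(a_i)\cdot\alpha_i$ on the Kähler side, while on the $\T$-side the Mackey projection formula together with $\Tr^{\G_a}_{L/k}=\Tr_{L/k}$ reduces $\Tr^{\T}_{L/k}\tau_L(a_i,\varphi^*\vec b_i)$ to $\tau_k(\Tr_{L/k}(a_i),\vec b_i)$, so applying $\theta_x$ to either side produces the same sum.

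The purely inseparable case $L=k(t)$, $t^p=a\in k$, is the heart of the argument. I expand any $c\in L$ as $c=\sum_{i=0}^{p-1}c_it^i$ in the $k$-basis of $L$; using (Tr1), (Tr6), (Tr7) and $\mathrm{Nm}_{L/k}(t)=a$ one checks that on the Kähler side only the constant term contributes, giving $\Tr_{L/k}(c\,\tfrac{dt}{t}\wedge\varphi^*\gamma)=c_0\,\tfrac{da}{a}\wedge\gamma$ and $\Tr_{L/k}(c\,\varphi^*\gamma)=0$ (the latter because the trace on $\G_a=L\to k$ vanishes identically for a purely inseparable extension). On the $\T$-side the Mackey projection formula reduces the constant-term contribution to $\tau_k(c_0,\mathrm{Nm}_{L/k}(t),\vec b)=\tau_k(c_0,a,\vec b)$, which matches $\theta_x$ applied to the Kähler trace. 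The remaining point, which is the main obstacle, is to show that the residual higher-order classes coming from $\tau_L(c_it^i,t,\varphi^*\vec b)$ for $1\le i\le p-1$ vanish after pushforward to $\T(\G_a,\G_m^{\times n})(k)$. This vanishing is not available at the Mackey-tensor level and must be produced by invoking the defining reciprocity relation of $\T$ applied to the curve $\P^1_k$ with the pair $(s^i,s)\in\G_a\times\G_m$ and a modulus-$(i{+}1)$ rational function $f$ having a simple zero at the closed point $P_0\in\A^1_k$ with residue field $L$; the remaining terms in the resulting reciprocity sum are absorbed using the Mackey projection formula on $\G_a$ and the vanishing of the Newton power-sum traces $\Tr_{\kappa(\alpha_j)/k}(\alpha_j^{i-1}\,d\alpha_j)$ along the separable residue extensions appearing at the other zeros of $f$.

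Finally, the surjectivity of $\theta$ on Nisnevich stalks is a formal consequence of pushforward compatibility: any section of $\T(\G_a,\G_m^{\times n})$ on $X\in\Regone$ is, Nisnevich-locally, a sum of pushforwards $\Tr_{y/x}\tau_y(a,\vec b)$ from finite covers, and the identity $\Tr_{y/x}\tau_y(a,\vec b)=\Tr_{y/x}\theta_y(a\,\tfrac{db_1}{b_1}\cdots\tfrac{db_n}{b_n})=\theta_x(\Tr^{\Omega^n}_{y/x}(a\,\tfrac{db_1}{b_1}\cdots))$ exhibits every such section as Nisnevich-locally in the image of $\theta$.
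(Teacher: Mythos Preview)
Your overall strategy matches the paper's: reduce trace compatibility to the separable and purely-inseparable-of-degree-$p$ cases, dispatch the separable case and the ``easy'' inseparable terms (those of the form $c\,\varphi^*\gamma$ and the $i=0$ term of $c\tfrac{dt}{t}\wedge\varphi^*\gamma$) by the projection formula together with (Tr7), and identify the core difficulty as the vanishing of $\Tr_{L/k}^{\T}\tau_L(\lambda t^i, t, \varphi^*\ul{b})$ for $1\le i\le p-1$, where $L = k[t]/(t^p - a)$.

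Where your argument has a genuine gap is precisely this vanishing. You sketch a reciprocity argument on $\P^1_k$, but the function $f$ is never specified, the modulus you quote (``modulus-$(i{+}1)$'') omits the contribution $\{0\}$ coming from $s\in\G_m$, and the crucial claim that the remaining terms are ``absorbed by Newton power-sum traces $\Tr_{\kappa(\alpha_j)/k}(\alpha_j^{i-1}d\alpha_j)$'' is neither made precise nor proven. Those traces live in $\Omega^1_k$, whereas you are computing in $\T(\G_a,\G_m^{\times n})(k)$; to pass between the two you would have to invoke the already-proven separable compatibility of $\theta$, and then you would still need the resulting sum of K\"ahler forms to vanish in $\Omega^n_k$ --- and whether that happens depends entirely on the unspecified $f$.

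The paper handles this step by a different mechanism. It takes the function $t^p - a$ (which is \emph{not} $\equiv 1$ at $0$ or $\infty$) and works directly with the local-symbol formalism: reciprocity gives $\sum_P(\tau(\lambda t^j, t, \ul{b}), t^p - a)_P = 0$, the term at $P_0=V(t^p-a)$ is the trace in question, and the only other contributing points are $0$ and $\infty$. After rewriting $\tau(\lambda t^j,t,\ul b)=\tfrac1j\tau(\lambda t^j,t^j,\ul b)$ and applying Proposition~\ref{prop-pfpb-localsymbol} to the cover $t\mapsto t^j$, the symbols at $0$ and $\infty$ are killed by Lemma~\ref{lem-Kaehler-symbol-calculation}, which was proved precisely for this purpose. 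Your proposal does not invoke that lemma, and without it (or an explicit replacement computation) the required vanishing remains unproved.
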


\begin{proof}
The compatibility of $\theta$ with pullbacks is immediate. It suffices to check the compatibility with pushforward on $S$-points and 
to consider the two cases in which $y=\Spec L\to x=\Spec k$ is either separable or purely inseparable of degree $p$.
In the separable case, we can write any element in $\Omega^n_L=L\otimes\Omega^n_k$ as a finite sum of elements of the form
\[a\frac{db_1}{b_1}\cdots \frac{db_n}{b_n},\quad \text{with } a\in L \text{ and } b_i\in k.\]
Thus the compatibility with the trace follows since both sides satisfy the projection formula.
In the purely inseparable case we write $L= k[c]$ with $c^p=a\in k$, where $p={\rm char}(k)>0$. We can write any element in $\Omega^n_L$ as a sum of elements of the following form
\eq{prop-Kaehler-map-to-RFtensor-is-MFhom1}{\lambda c^j\frac{db_1}{b_1}\cdots \frac{db_n}{b_n},\quad \text{with } 0\leqslant j\leqslant p-1, \,\lambda, b_i\in k}
and 
\eq{prop-Kaehler-map-to-RFtensor-is-MFhom2}{\lambda c^j\frac{dc}{c}\frac{db_1}{b_1}\cdots \frac{db_{n-1}}{b_{n-1}},\quad \text{with } 1\leqslant j\leqslant p, \,\lambda, b_i\in k.}
The compatibility with the trace for the elements \eqref{prop-Kaehler-map-to-RFtensor-is-MFhom1} again follows from the projection formula (in fact the trace is zero in this case).
The compatibility for the elements \eqref{prop-Kaehler-map-to-RFtensor-is-MFhom2} in the case $j=p$ follows from the projection formula on both sides together with (Tr7) in \ref{KDtrace} (since $c^p\in k$).
Further, by \cite[Ex. 16.6]{Ku} we have 
\[\Tr_{L/k}(c^j\frac{dc}{c}\frac{b_1}{b_1}\cdots \frac{db_{n-1}}{b_{n-1}})=0,\quad \text{for all }1\leqslant j\leqslant p-1.\]
Thus it remains to show that $\Tr_{L/k}(\tau(\lambda c^j,c, \ul{b}))=0$ in $\T(\G_a,\G_m^{\times n})(x)$ for
 $1\leqslant j\leqslant p-1$. 
For this we view $y=\Spec k[t]/(t^p-a)\subset \A^1=\Spec k[t]\subset \P^1$ as a closed point.
Then  for $1\leqslant j\leqslant p-1$ reciprocity yields
\begin{align}
\Tr_{L/k}(\tau(\lambda c^j,c, \ul{b})) & = -(\tau(\lambda t^j, t,\ul{b}), t^p-a)_0  -(\tau(\lambda t^j, t,\ul{b}), t^p-a)_\infty\\
                                       & = -\tfrac{1}{j}(\tau(\lambda t^j, t^j,\ul{b}), t^p-a)_0-\tfrac{1}{j}(\tau(\lambda t^j, t^j,\ul{b}), t^p-a)_\infty.
\end{align}
Now let $\pi: \P^1\to\P^1$ be the $x$-morphism induced by $k[t]\to k[t]$, $t\mapsto t^j$. 
Then by Proposition \ref{prop-pfpb-localsymbol}, 2. and Lemma \ref{lem-Kaehler-symbol-calculation} we have 
\[(\tau(\lambda t^j, t^j,\ul{b}), t^p-a)_0=(\tau(\lambda t, t,\ul{b}), \pi_*(t^p-a))_0=0.\]
For the same reason also
\[(\tau(\lambda t^j, t^j,\ul{b}), t^p)_0=(\tau(\lambda t, t, \ul{b}), \pi_*(t^p))_0=0.\]
Hence by reciprocity also
\[(\tau(\lambda t^j, t^j, \ul{b}), t^p)_\infty=0.\]
We obtain
\[\Tr_{L/k}(\tau(\lambda c^j, c, \ul{b}))=-\tfrac{1}{j}(\tau(\lambda t^j, t^j,\ul{b}), \frac{t^p-a}{t^p})_\infty\]
which is zero since $\tau(\lambda t^j, t^j,\ul{b})\in \T(\G_a,\G_m^{\times n})(\P^1, 0+(j+1)\cdot\{\infty\})$ and $\frac{t^p-a}{t^p}\in U^{(p)}_\infty$.

Hence $\theta$ is a morphism of reciprocity functors. Finally, for $X\in \RegCon$ and $P\in X$ closed any element in $\T(\G_a,\G_m^{\times n})_{X,P}^h$ (the Nisnevich stalk on $X$ in $P$) is a finite sum 
of elements of the form  $\Tr_{Y/X}(\tau(a,b_1,\ldots, b_n))$, with $\pi: Y\to X$ in $\RegCon_*$,  $a\in\pi_*(\sO_Y)_P^h $ and $b_i\in \pi_*(\sO_Y^\times)_P^h$ 
by definition (see Definition \ref{defn-LMFtensor} and Theorem \ref{thm-tensorRF}). Thus the surjectivity statement follows immediately.
\end{proof}

\begin{thm}\label{thm-tensorRF=KD}
Assume $F$ has characteristic zero. Then the map 
\[\theta: \Omega^n\xr{\simeq} \T(\G_a,\G_m^{\times n})\]
from Proposition \ref{prop-Kaehler-map-to-RFtensor-is-MFhom} is an isomorphism of reciprocity functors.
\end{thm}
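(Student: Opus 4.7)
The strategy is to construct an inverse to $\theta$ via the universal property of $\T$ (Theorem \ref{thm-tensorRF}). I define
\[
\Phi:\G_a\times\G_m^{\times n}\to\Omega^n,\qquad \Phi(a,b_1,\dots,b_n)=a\,\tfrac{db_1}{b_1}\wedge\cdots\wedge\tfrac{db_n}{b_n},
\]
and verify that it is an $(n+1)$-linear map of reciprocity functors in the sense of Definition \ref{defn-MLmapRF}. Compatibility with pullback (L1) is immediate from the naturality of $d$, and the projection formula (L2) is a combination of (Tr1) and (Tr7) recalled in Section~2.

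The heart of the proof is the filtration condition (L3). Since $\G_m\in\RF_1$ (Remark \ref{rmk-gps-Fil}), $\Fil^r_P\G_m=\G_m$ for every $r\ge 1$, so (L3) reduces to the following statement: for any $C\in(\sC/S)$, any closed point $P\in C$ with local parameter $t$, any $r\ge 1$, $a\in\Fil^r_P\G_a(\eta_C)$, $b_1,\dots,b_n\in\G_m(\eta_C)$ and $f\in U^{(r)}_P$,
\[
\Res^{n+1}_P\Bigl(a\,\tfrac{df}{f}\wedge_i\tfrac{db_i}{b_i}\Bigr)=0\quad\text{in }\Omega^n_{x_C}.
\]
A first use of the residue formula of Theorem \ref{thm-KD} identifies $\Fil^r_P\G_a(\eta_C)=\{a\in\kappa(\eta_C):v_P(a)\ge -r+1\}$. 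The characteristic zero hypothesis enters crucially through the formal series $\log f:=\sum_{j\ge 1}(-1)^{j+1}(f-1)^j/j$, which defines an element of the completion $\widehat{\kappa(\eta_C)}_P$ with $v_P(\log f)\ge r$ and $d(\log f)=df/f$. By the Leibniz rule,
\[
a\,\tfrac{df}{f}\wedge_i\tfrac{db_i}{b_i} = d\bigl(a\log f\cdot\wedge_i\tfrac{db_i}{b_i}\bigr) - \log f\cdot da\wedge_i\tfrac{db_i}{b_i}.
\]
The first term has vanishing residue: the $n$-form $\eta=a\log f\wedge_i\tfrac{db_i}{b_i}$ is holomorphic at $P$ because $v_P(a\log f)\ge 1$ compensates the pole of order at most $1$ of $\wedge_i\tfrac{db_i}{b_i}$ (only one factor $\tfrac{dt}{t}$ can survive the wedge product), so $d\eta$ is again holomorphic at $P$ and $\Res^{n+1}_P(d\eta)=0$ by the very definition of the residue symbol. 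For the second term, write $b_i=t^{v_i}u_i$ with $u_i$ a unit at $P$ and expand $\wedge_i\tfrac{db_i}{b_i}=\wedge_i\tfrac{du_i}{u_i}+\tfrac{dt}{t}\wedge L$ with $L$ holomorphic at $P$; since $v_P(\log f\cdot da)\ge 0$, wedging with $\wedge_i\tfrac{du_i}{u_i}$ gives a holomorphic $(n+1)$-form whose residue vanishes. For the remaining contribution, the separability of $\kappa(P)/\kappa(x_C)$ (automatic in characteristic zero) allows the decomposition $\log f\cdot da=A\,dt+B$ with $A\in\widehat{\sO}_{C,P}$ and $B$ in the ``from-$x_C$'' submodule. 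Writing $a=t^{-r+1}\tilde a$, the ``from-$x_C$'' component of $da$ equals $t^{-r+1}d_x\tilde a$ of valuation $\ge -r+1$, hence $v_P(B)\ge 1$ and $B$ vanishes at $P$; consequently $\Res^{n+1}_P(-\tfrac{dt}{t}\wedge B\wedge L)=0$, establishing (L3).

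By the universal property (Theorem \ref{thm-tensorRF}), $\Phi$ extends to a morphism $\psi:\T(\G_a,\G_m^{\times n})\to\Omega^n$ of reciprocity functors. The composition $\psi\circ\theta$ acts as the identity on generators $a\,\tfrac{db_1}{b_1}\cdots\tfrac{db_n}{b_n}$, hence on all of $\Omega^n$ by (Inj) and (FP). Combined with the surjectivity of $\theta$ from Proposition \ref{prop-Kaehler-map-to-RFtensor-is-MFhom}, this shows that $\theta$ is an isomorphism. The principal obstacle is the residue vanishing in (L3), which relies essentially both on the existence of $\log f$ and on the separability of $\kappa(P)/\kappa(x_C)$; both fail in positive characteristic, consistent with the weaker statement in Corollary \ref{cor-Kaehler-posChar}.
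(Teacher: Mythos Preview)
Your argument is correct and follows the same overall strategy as the paper: define $\Phi$, verify (L1)--(L3), invoke the universal property of $\T$, and conclude from $\Phi\circ\theta=\id$ together with the surjectivity of $\theta$. The identification $\Fil^r_P\G_a(\eta_C)=\{a:v_P(a)\ge -r+1\}$ you use is exactly Lemma~\ref{lem-GaFil}.

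The only difference is in how (L3) is checked. The paper writes $u=1+t^rc\in U^{(r)}_P$ and computes $\tfrac{du}{u}=\tfrac{t^{r-1}(rc\,dt+t\,dc)}{1+t^rc}$ directly; expanding $a=\tfrac{a_0}{t^{r_0-1}}$ and $b_i=t^{s_i}u_i$, one sees by inspection that every term in $\Res^{n+1}_P\bigl(\Phi(a,b_1,\ldots,b_n)\tfrac{du}{u}\bigr)$ is either holomorphic or contains $dt\wedge dt=0$. Your route through the formal logarithm and the Leibniz identity $a\tfrac{df}{f}\wedge\omega=d(a\log f\cdot\omega)-\log f\,da\wedge\omega$ reaches the same valuation estimates, but at the cost of working in the completion and invoking characteristic zero a second time (for the convergence of $\log$); the paper's differentiation of $1+t^rc$ is characteristic-free, so that char~$0$ enters only through Lemma~\ref{lem-GaFil}. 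Both arguments are valid; the paper's is a bit more direct.
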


Before we prove the theorem we need the following Lemma.
 
\begin{lemma}\label{lem-GaFil}
Assume $F$ has characteristic zero. Then for all $C\in (\sC/S)$, $P\in C$ and $r\geqslant 1$
\[\Fil^r_P\G_a(\eta_C)=\{a\in \kappa(\eta_C) \,|\, v_P(a)\geqslant -r+1 \}.\]
In particular
\[\Fil^0_P \G_a(\eta_C)=\Fil^1_P\G_a(\eta_C)=\sO_{C,P}.\]
\end{lemma}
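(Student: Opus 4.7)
My approach is to use the explicit formula for the local symbol attached to the reciprocity functor $\G_a = \Omega^0$ given by Theorem \ref{thm-KD}, namely
\[(a,f)_P = \Res^1_P\!\left(a\,\frac{df}{f}\right) = \Tr_{\kappa(P)/\kappa(x_C)}\!\left(\widetilde{\Res}_P\!\left(a\,\frac{df}{f}\right)\right),\]
where the second equality uses the fine residue of Remark \ref{rmk-fineresidue}, valid because $\kappa(P)/\kappa(x_C)$ is separable in characteristic zero. Since $\kappa(x_C)$ is perfect, $C$ is smooth over $x_C$, so $\Omega^1_{\sO_{C,P}/\kappa(x_C)}$ is free of rank one over $\sO_{C,P}$. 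Fixing a uniformizer $t$ at $P$ and identifying the completion with $\kappa(P)[[t]]$, the fine residue $\widetilde{\Res}_P$ is simply the coefficient of $t^{-1}\,dt$ in the $\kappa(P)$-expansion of a relative $1$-form.

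For the inclusion $\supseteq$, take $a\in\kappa(\eta_C)$ with $v_P(a)\geqslant -r+1$ and $f\in U_P^{(r)}$. Writing $f=1+u$ with $u\in t^r\sO_{C,P}$ and using that $\Omega^1_{\sO_{C,P}/\kappa(x_C)}$ is generated by $dt$, a direct differentiation shows $df=du\in t^{r-1}\,\Omega^1_{\sO_{C,P}/\kappa(x_C)}$ and $1/f\in\sO_{C,P}^\times$, so $a\,\frac{df}{f}\in t^{-(r-1)}\cdot t^{r-1}\,\Omega^1_{\sO_{C,P}/\kappa(x_C)}=\Omega^1_{\sO_{C,P}/\kappa(x_C)}$ is regular at $P$. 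A regular $1$-form has zero fine residue, so $(a,f)_P=0$ and $a\in\Fil^r_P\G_a(\eta_C)$.

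For the converse $\subseteq$, suppose $v_P(a)=-N$ with $N\geqslant r$, and expand $a=a_{-N}t^{-N}+\sum_{i>-N}a_it^i$ in $\kappa(P)[[t]]$ with $a_{-N}\neq 0$. Since the trace pairing of the separable extension $\kappa(P)/\kappa(x_C)$ is non-degenerate, choose $\bar\lambda\in\kappa(P)$ with $\Tr_{\kappa(P)/\kappa(x_C)}(\bar\lambda a_{-N})\neq 0$, lift it to $\tilde\lambda\in\sO_{C,P}$, and set $f:=1+\tilde\lambda t^N\in\kappa(\eta_C)^\times$, which lies in $U_P^{(N)}\subseteq U_P^{(r)}$. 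In the completion the expansion of $f$ reads $1+\bar\lambda t^N+O(t^{N+1})$, so
\[\frac{df}{f}=N\bar\lambda\, t^{N-1}\,dt+O(t^{N})\,dt,\]
where the remainder absorbs both the contribution of $d\tilde\lambda$ and the geometric series for $1/f$. Multiplying by $a$, the coefficient of $t^{-1}\,dt$ picks up only the term $N\bar\lambda a_{-N}$, because any product of $a$ (valuation $\geqslant -N$) with the $O(t^N\,dt)$ remainder has valuation $\geqslant 0$. Therefore
\[(a,f)_P=N\cdot\Tr_{\kappa(P)/\kappa(x_C)}(\bar\lambda a_{-N})\neq 0,\]
using that $N$ is invertible in characteristic zero. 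Hence $a\notin\Fil^r_P\G_a(\eta_C)$. The ``in particular'' statement follows because $\Fil^0_P=\G_{a,C,P}=\sO_{C,P}$ by definition, and the formula for $r=1$ also gives $\{v_P\geqslant 0\}=\sO_{C,P}$.

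The main obstacle is the converse direction: one must exhibit an $f\in\kappa(\eta_C)^\times$ (not merely in the completion) whose symbol with $a$ is nonzero, and verify that all sub-leading contributions to the residue of $a\,\frac{df}{f}$ vanish. Matching the exponent of $f-1$ to the pole order $N$ of $a$ reduces the residue to the single term $N\bar\lambda a_{-N}$; non-degeneracy of the trace (separability in characteristic zero) then provides the freedom to choose $\bar\lambda$ making the final expression nonzero, and characteristic zero is used again to ensure $N$ is invertible.
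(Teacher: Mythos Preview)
Your proof is correct and follows essentially the same approach as the paper's: both use the residue formula for the local symbol of $\G_a$, verify $\supseteq$ by checking that $a\,\tfrac{df}{f}$ is regular, and for $\subseteq$ test against $f=1+(\text{lift of }\bar\lambda)\,t^N$ with $N=-v_P(a)$, using non-degeneracy of the trace pairing on $\kappa(P)/\kappa(x_C)$ to force a nonzero symbol. The only difference is presentational: the paper writes $a=u/t^s$ with $u\in\sO_{C,P}^\times$ and computes directly in $\sO_{C,P}$, whereas you pass to the Laurent expansion in the completion; both arrive at the same residue $N\,\Tr_{P/x_C}(\bar\lambda\, a_{-N})$.
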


\begin{proof}
Recall that the local symbol of $\G_a$ at a point $P\in C$ is given by (see Theorem \ref{thm-KD})
\[(a,f)_P=\Res_P(a\frac{df}{f}).\]
The inclusion $\supset$ is thus straightforward to check. For the other inclusion take $a\in \Fil^r_P \G_a(\eta_C)$ and assume we can write $a= u/t^s$ with $s\geqslant r$, $u\in \sO_{C,P}^\times$
and $t\in \sO_{C,P}$ a local parameter. Then for $b\in \sO_{C,P}$ we have by definition of $\Fil$
\mlnl{0=(a, 1+bt^s)_P=\Res_P\left(\frac{u}{t^s} \frac{d(1+b t^s)}{1+b t^s}\right)\\ 
       = \Res_P\left(\frac{sub}{1+b t^s} \frac{dt}{t}\right)+\Res_P\left(\frac{ub}{1+b t^s} db\right)=s\Tr_{P/x_c}(u(P)b(P)).}
Since $\Tr: \kappa(P)\times \kappa(P)\to \kappa(x_C)$, $(\lambda,\mu)\mapsto \Tr(\lambda\mu)$ is non-degenerate, we get $u(P)=0$. A contradiction.
\end{proof}

\begin{proof}[Proof of Theorem \ref{thm-tensorRF=KD}.]
For $X\in \RegCon$ define 
\[\Phi_X: \G_a(X)\times (\G_m(X))^{\times n}\to \Omega^n(X),\quad (a, b_1,\ldots, b_n)\mapsto a\frac{db_1}{b_1}\cdots \frac{db_n}{b_n}.\]
Clearly the collection $\Phi=\{\Phi_X\}_{X\in \RegCon}$ satisfies condition (L1) of Definition \ref{defn-bilinearMF} and by (Tr1), (Tr2) and (Tr7) in \ref{KDtrace} also condition (L2).
Now let $C\in (\sC/S)$ be a curve, $P\in C$ a point and $r\geqslant 1$ a positive integer. 
Take $a\in \Fil^{r}_P\G_a(\eta_C)$ and $b_i\in \Fil^r_P\G_m(\eta_C)=\G_m(\eta_C)$, $i=1,\ldots, n$.
Then we can write $a= \frac{a_0}{t^{r-1}}$ with $a_0\in \sO_{C,P}$ (by Lemma \ref{lem-GaFil}) and 
$b_i=t^{s_i}u_i$ with $s_i\in\Z$ and $u_i\in\sO_{C,P}^\times$. 
For $u= 1+ t^{r}c\in U^{(r)}_P$ we get
\mlnl{\Res_P(\Phi(a, b_1\ldots, b_n)\frac{du}{u})=
 \Res_P\left(\frac{a_0}{t^{r-1}}\frac{du_1}{u_1}\cdots \frac{du_n}{u_n} \frac{t^{r-1}(rcdt+tdc)}{1+t^r c}\right) \\
     +\sum_{i=1}^n s_i\Res_P\left(\frac{a_0}{t^{r-1}}\frac{du_1}{u_1}\cdots
      \underbrace{\frac{dt}{t}}_{i\text{-th place}}\cdots \frac{du_n}{u_n} \frac{t^{r-1}(rcdt+tdc)}{1+t^{r}c}\right), }
which clearly is zero. Hence $\Phi(a, b_1\ldots, b_n)\in \Fil^r_P\Omega^n(\eta_C)$ and thus satisfies (L3).
Therefore $\Phi$ is an (n+1)-linear map of reciprocity functors and the universal property 
of $\T(\G_a,\G_m^{\times n})$ yields a map of reciprocity functors (also denoted by $\Phi$)
\[\Phi: \T(\G_a,\G_m^{\times n})\to \Omega^n.\]
By the very definition of $\Phi$ and $\theta$ we have $\Phi\circ \theta=\id_{\Omega^n}$ 
(it suffices to check this on $S$-points). In particular $\theta$ is injective. 
Since it is also surjective by Proposition \ref{prop-Kaehler-map-to-RFtensor-is-MFhom}, the theorem follows.
\end{proof}

\begin{remark}
 The above theorem should be compared to \cite[Thm. 3.6]{Hiranouchi}. 
        In particular 
         \[\T(\G_a,\G_m^{\times n})(S)= \mathrm{K}(F; \G_a,\G_m^{\times n}),\]
       where the K-group on the right is the Somekawa-type K-group defined by \name{Hiranouchi} in \cite[Def. 3.3]{Hiranouchi}.
\end{remark}

\subsubsection{}\label{higher-Bs}
Assume $F$ has characteristic $p>0$. Notice that $d: \Omega^{n-1}\to \Omega^n$ is a map of $\Z$-reciprocity functors 
and thus 
\[X\mapsto B_1\Omega^n(X):= (d\Omega^{n-1}_X)(X)\]
defines a Nisnevich sheaf with transfers on $\Regone$, $B_1\Omega^n\in\NT$. We denote by $\Omega^n/B_1\Omega^n\in \NT$ the quotient in $\NT$.
Furthermore for any $X\in \Regone$ we have the inverse Cartier operator 
\[C^{-1}: \Omega^n_X\to \Omega^n_X/B_1\Omega^n_X,\]
which on stalks is given by
\[C^{-1}: \Omega^n_{X,P}\to \Omega^n_{X,P}/B_1\Omega^n_{X,P},\quad a\frac{db_1}{b_1}\cdots\frac{db_n}{b_n} \mapsto a^p\frac{db_1}{b_1}\cdots\frac{db_n}{b_n}.\]
The Cartier operator is clearly compatible with pullbacks and it is well-known that it is also compatible with the trace (see e.g. \cite[Thm. 2.6, (i), (v)]{Kay} and use that the Frobenius on the de Rham-Witt complex lifts
the inverse Cartier operator). Thus we get a morphism of Nisnevich sheaves with transfers on $\Regone$
\[C^{-1}: \Omega^n\to \Omega^n/B_1\Omega^n \quad \text{in }\NT.\]
Recursively we define $B_r\Omega^n\in\NT$ for $r\geqslant 2$ as the Nisnevich sheaf with transfers which as a Zariski sheaf on $X\in \Regone$ is defined 
as the preimage of $C^{-1}(B_{r-1}\Omega^n_X)$ under the natural surjection $\Omega^n_X\to \Omega^n_X/B_1$ (see e.g. \cite[(2.2)]{IlDRW}). We obtain a chain in $\NT$
\[0:=B_0\subset B_1\subset \ldots\subset B_r\subset\ldots\subset \Omega^n\]
and we set 
\[B_\infty\Omega^n:=\bigcup_r B_r\Omega^n\subset \Omega^n\quad \text{in } \NT.\]
The the inverse Cartier operator induces thus an endomorphism in $\NT$
\[C^{-1}: \Omega^n/B_\infty\to \Omega^n/B_\infty.\]

\begin{lemma}\label{cor-B-RF}
The Nisnevich sheaves with transfers on $\Regone$, $B_r\Omega^n$ and $\Omega^n/B_r$, $r\in [1,\infty]$,
defined above are reciprocity functors. 
\end{lemma}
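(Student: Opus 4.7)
The plan is to carry out the proof in three main stages, relying on the fact that the exterior derivative and the inverse Cartier operator are morphisms in the appropriate categories.

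First I would check that $d:\Omega^{n-1}\to \Omega^n$ is a morphism of reciprocity functors. Compatibility with pullback is tautological from the functoriality of absolute Kähler differentials, compatibility with the trace is precisely (Tr6) of \S2.5.1, and compatibility with specialization is a special case of pullback. Analogously, the inverse Cartier operator $C^{-1}:\Omega^n\to \Omega^n/B_1\Omega^n$ is a morphism in $\NT$: its compatibility with pullback is immediate from its definition on stalks $a\tfrac{db_1}{b_1}\cdots\tfrac{db_n}{b_n}\mapsto a^p\tfrac{db_1}{b_1}\cdots\tfrac{db_n}{b_n}$, and its compatibility with the trace is classical (cf.\ the reference to \cite{Kay} already invoked in \S\ref{higher-Bs}).

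Second, I would treat the subsheaves $B_r\Omega^n$ via the following general principle: if $\sN\hookrightarrow \sM$ is a sub-Nisnevich-sheaf-with-transfers of a reciprocity functor $\sM$ such that $\sN(X)\hookrightarrow \sM(X)$ is injective for every $X\in\Regone$, then $\sN$ is itself a reciprocity functor. Indeed (Inj), (FP) and (Nis) are inherited, while (MC) holds because for $a\in\sN(U)$ the reciprocity sum $\sum_P v_P(f)\Tr_{P/x_C}s_P(a)$ lies in the subgroup $\sN(x_C)\subset \sM(x_C)$ and vanishes already in $\sM(x_C)$ by (MC) for $\sM$, hence vanishes in $\sN(x_C)$. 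By induction on $r$, using Step~1, $B_r\Omega^n$ is realized as such a subsheaf: $B_1=\operatorname{im}(d)$ by the first step, and the inductive definition $B_r=q^{-1}(C^{-1}(B_{r-1}))$ with $q:\Omega^n\twoheadrightarrow \Omega^n/B_1$ is preserved by pullback and trace because $q$, $C^{-1}$ and $B_{r-1}$ (by induction) all are. The case $r=\infty$ follows by passing to the filtered colimit, which preserves all reciprocity functor axioms.

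Third, and this is where the real work lies, I would handle the quotients $\Omega^n/B_r$. Property (Nis) holds by definition of the quotient in $\NT$; (FP) is automatic since stalks at generic points commute with cokernels of Nisnevich sheaves; and (MC) can be deduced from (MC) for $\Omega^n$ by Theorem~\ref{thm-modulus-Nis-local}: any section of $\Omega^n/B_r$ over an open $U\subset C$ lifts Nisnevich-locally to a section of $\Omega^n$, which admits a modulus, and the image of this modulus remains a modulus for the original section in $\Omega^n/B_r$. The main obstacle is verifying (Inj) for $\Omega^n/B_r$. After reducing to the local case $X=\Spec\sO_{C,P}$ (where $(\Omega^n/B_r)(X)=\Omega^n_{C,P}/B_r\Omega^n_{C,P}$), the claim becomes the purely algebraic statement
\[
B_r\Omega^n_{\eta_C}\cap \Omega^n_{C,P} = B_r\Omega^n_{C,P}
\quad\text{inside}\quad\Omega^n_{\eta_C}.
\]
I would attack this by induction on $r$, exploiting the recursive definition of $B_r$ together with the injectivity of $C^{-1}$ on stalks (a consequence of the Cartier isomorphism), and, at the base $r=1$, the explicit structure of absolute Kähler differentials on a DVR which allows one to extend a primitive across the closed point. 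The hardest point will be organizing the induction so that $B_r\Omega^n_\eta\cap\Omega^n_{C,P}$ is controlled in terms of $B_{r-1}\Omega^n_\eta\cap\Omega^n_{C,P}$ under the preimage along $C^{-1}$. Finally the case $r=\infty$ follows from the cases $r<\infty$ by taking the filtered union, since filtered colimits of sub-reciprocity-functors (and of their quotients) remain reciprocity functors.
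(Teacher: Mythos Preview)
Your outline is correct and matches the paper's argument closely: the $B_r\Omega^n$ are reciprocity functors simply as sub-Nisnevich-sheaves-with-transfers of $\Omega^n$, and for the quotients $\Omega^n/B_r$ everything reduces to (Inj), which in turn boils down to the purely local statement $\Omega^n_A\cap B_r(\Omega^n_K)=B_r(\Omega^n_A)$ for a DVR $A$ essentially smooth over $F$ with fraction field $K$. The paper then deduces (MC) for the quotient from (MC) for $\Omega^n$ via Theorem~\ref{thm-modulus-Nis-local}, exactly as you propose.

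The only genuine difference is in how the local statement is established. The paper invokes Illusie \cite[0, Prop.~2.2.8(a)]{IlDRW}: since $A$ is essentially smooth over the perfect field $F$, the $A$-module $B_r(\Omega^n_A)\subset F^r_*\Omega^n_A$ is free of finite rank, and a short induction on $r$ gives $B_r(\Omega^n_K)=B_r(\Omega^n_A)\otimes_A K$, from which the claim follows. Your inductive route via the Cartier isomorphism is an equally valid and somewhat more self-contained alternative. One simplification worth noting: you do not need a separate ``extend a primitive across the closed point'' argument for the base case $r=1$. The Cartier isomorphism $C^{-1}_A:\Omega^n_A\xrightarrow{\sim}Z_1\Omega^n_A/B_1\Omega^n_A$ (available since $A$ is essentially smooth over perfect $F$), being compatible with the one for $K$, already handles it: if $\omega\in\Omega^n_A\cap B_1\Omega^n_K$ then $\omega\in Z_1\Omega^n_A$ and $C_A(\omega\bmod B_1\Omega^n_A)$ vanishes in $\Omega^n_A\hookrightarrow\Omega^n_K$, hence $\omega\in B_1\Omega^n_A$. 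The same mechanism drives the inductive step, so your whole argument can be run uniformly through Cartier.
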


\begin{proof}
Since the $B_r(\Omega^n)$ are sub-Nisnevich-sheaves with transfers of $\Omega^n$, they are clearly reciprocity functors.
If we know that $\Omega^n/B_r$, $r\in [1,\infty]$, satisfies the property (Inj) from Definition \ref{defn-MFsp}, then 
it is also a Mackey functor with specialization maps.
In this case it is also a reciprocity functor. Indeed a section $a\in(\Omega^n/B_r)(U)$, 
$U\subset C$, $C\in (\sC/S)$, can be lifted Nisnevich (or even Zariski) locally to a section in $\Omega^n$.
Since the latter is a reciprocity functor it follows that $a$ satisfies the assumptions of 
Theorem \ref{thm-modulus-Nis-local} (with $\sM=\Omega^n/B_r$) and thus $a$ has a modulus. 

Thus it suffices to prove (Inj), which follows from the following claim:

{\em Let $A$ be a discrete valuation ring, which is essentially of finite type over $F$ and denote by $K$ its fraction field.
Then for all  $n\ge 0$ and $r\in [1,\infty]$ we have
\[ \Omega^n_A\cap B_r(\Omega^n_K)= B_r(\Omega^n_A),\]
where the intersection is taken inside $\Omega^n_K$.}

Indeed, since $A$ is essentially smooth over $F$, the $A$-module $\Omega^n_{A/k}=\Omega^n_A$ is
free of finite rank and a submodule of $\Omega^n_K$. Furthermore the Frobenius iterate $F^r: A\to A$ is a finite and
flat (hence free) homomorphism. In particular $F^r_*\Omega^n_A$ is a free $A$-module as well as its sub-$A$-module
$B_r(\Omega^n_A)$ (see \cite[0, Prop. 2.2.8, (a)]{IlDRW}). But it is straightforward to check by induction on $r$ that we
have $B_r(\Omega^n_K)= B_r(\Omega^n_A)\otimes_A K$. This implies the claim and finishes the prove of the lemma.
\end{proof}

\begin{corollary}\label{cor-Kaehler-posChar}
Assume $F$ has characteristic $p>0$. Then the surjective morphism of reciprocity functors 
$\theta:\Omega^n \to \T(\G_a,\G_m^{\times n})$ from Proposition \ref{prop-Kaehler-map-to-RFtensor} factors via
\[\Omega^n/B_\infty\surj \T(\G_a,\G_m^{\times n})\quad \text{in } \RF\]
and the following diagram commutes (in $\RF$)
\[\xymatrix{\Omega^n/B_\infty\ar[r]^{C^{-1}}\ar@{->>}[d] & \Omega^n/B_\infty\ar@{->>}[d]\\
            \T(\G_a,\G_m^{\times n})\ar[r]^{F\otimes\id} & \T(\G_a,\G_m^{\times n}),  }\]
where $F:\G_a\to \G_a$ is the absolute Frobenius.
\end{corollary}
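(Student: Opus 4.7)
My plan is to establish three things in sequence: the commutation $\theta\circ C^{-1}=(F\otimes\id)\circ\theta$ at the level of generators of $\Omega^n$, the vanishing $\theta(B_1)=0$, and, combining these, the vanishing $\theta(B_\infty)=0$ by induction. Both assertions of the corollary then follow at once, using Lemma \ref{cor-B-RF} to ensure that $\Omega^n/B_\infty$ is a reciprocity functor.

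The absolute Frobenius $F:\G_a\to\G_a$, $a\mapsto a^p$, is a morphism of smooth algebraic groups, hence of reciprocity functors by \S\ref{EXA}, and by functoriality of $\T$ induces the morphism $F\otimes\id$, which on generators satisfies $(F\otimes\id)\tau(a,b_1,\ldots,b_n)=\tau(a^p,b_1,\ldots,b_n)$. From the definition of $\theta$ in Proposition \ref{prop-Kaehler-map-to-RFtensor} and of $C^{-1}$ recalled in \ref{higher-Bs}, one computes on a generator $\omega=a\,\tfrac{db_1}{b_1}\cdots\tfrac{db_n}{b_n}$ of $\Omega^n$ that
\[
\theta(C^{-1}\omega)=\theta\bigl(a^p\,\tfrac{db_1}{b_1}\cdots\tfrac{db_n}{b_n}\bigr)=\tau(a^p,b_1,\ldots,b_n)=(F\otimes\id)(\theta(\omega)),
\]
an identity that is well-defined modulo $\theta(B_1)$ only.

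The key technical step is therefore $\theta(B_1)=0$. Any element of $B_1=d\Omega^{n-1}$ over an $S$-point $x=\Spec k$ is a sum of terms $d(a\,\tfrac{db_1}{b_1}\cdots\tfrac{db_{n-1}}{b_{n-1}})=da\cdot\tfrac{db_1}{b_1}\cdots\tfrac{db_{n-1}}{b_{n-1}}$ with $a\in k$ and $b_i\in k^\times$. Discarding the trivial case $a=0$ and writing $da=a\cdot\tfrac{da}{a}$ when $a\in k^\times$, it suffices to show $\tau(a,a,b_1,\ldots,b_{n-1})=0$ in $\T(\G_a,\G_m^{\times n})(x)$. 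By Proposition \ref{prop-Milnor-to-tensor} applied to $\sM=\G_a$, the assignment $(a,b_1,\ldots,b_n)\mapsto\tau(a,b_1,\ldots,b_n)$ factors through $\G_a(x)\otimes_\Z\KM_n(x)$, and the Steinberg relation $\{a,a\}=\{a,-1\}$ in Milnor K-theory yields $\tau(a,a,\ul{b})=\tau(a,-1,\ul{b})$. If $p=2$ then $-1=1$ and additivity in the $\G_m$-slot gives $\tau(a,1,\ul{b})=0$. If $p>2$, then $2\tau(a,-1,\ul{b})=\tau(a,(-1)^2,\ul{b})=\tau(a,1,\ul{b})=0$ and, since $\T(\G_a,\G_m^{\times n})(x)$ carries an $F$-vector space structure by Lemma \ref{lem-module-structure} in which $2\in F^\times$, we conclude $\tau(a,-1,\ul{b})=0$.

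Finally, I prove $\theta(B_r)=0$ by induction on $r\geq 1$, the base $r=1$ being the previous step. For the inductive step, if $\omega\in B_r$, then by the definition of $B_r$ its image in $\Omega^n/B_1$ lies in $C^{-1}(B_{r-1})$, so I can write $\omega=\sum_i a_i^p\,\tfrac{db_{i,1}}{b_{i,1}}\cdots\tfrac{db_{i,n}}{b_{i,n}}+\beta$ with $\beta\in B_1$ and $\omega':=\sum_i a_i\,\tfrac{db_{i,1}}{b_{i,1}}\cdots\tfrac{db_{i,n}}{b_{i,n}}\in B_{r-1}$; the commutation on generators together with $\theta(\beta)=0$ and the inductive hypothesis give $\theta(\omega)=(F\otimes\id)(\theta(\omega'))=0$. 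Passing to the union, $\theta(B_\infty)=0$, so $\theta$ descends to a surjective morphism of reciprocity functors $\Omega^n/B_\infty\surj\T(\G_a,\G_m^{\times n})$, and the square of the statement commutes in $\RF$ because it does so on the generators of $\Omega^n/B_\infty$. The main obstacle is the vanishing of $\tau(a,a,\ul{b})$, which genuinely exploits both the factorisation through Milnor K-theory from Proposition \ref{prop-Milnor-to-tensor} and the $F$-linear structure from Lemma \ref{lem-module-structure}.
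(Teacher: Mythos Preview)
Your overall architecture is sound: the reduction to showing $\tau(a,a,\ul{b})=0$ for $a\in k^\times$, the induction up the tower $B_r$, and the commutativity verification on generators are all correct and match the paper. However, the key step --- the vanishing of $\tau(a,a,\ul{b})$ --- contains a genuine gap.

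You invoke Proposition~\ref{prop-Milnor-to-tensor} to factor through $\G_a(x)\otimes_\Z\KM_n(x)$ and then apply the Milnor relation $\{a,a\}=\{a,-1\}$. But under that factorization the element $\tau(a,a,\ul{b})$ corresponds to $a\otimes\{a,b_1,\ldots,b_{n-1}\}$: the \emph{first} $a$ sits in the $\G_a$-slot, and the \emph{second} $a$ is the first entry of the Milnor symbol. The identity $\{a,a\}=\{a,-1\}$ lives entirely inside $\KM_2$, i.e.\ it relates two $\G_m$-entries to one another; it says nothing about replacing the single $\G_m$-entry $a$ in $\{a,\ul{b}\}$ by $-1$. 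In other words, Proposition~\ref{prop-Milnor-to-tensor} lets you impose Steinberg-type relations among the $b$'s (and between the second $a$ and the $b$'s), but never between the $\G_a$-argument and a $\G_m$-argument. Your claimed equality $\tau(a,a,\ul{b})=\tau(a,-1,\ul{b})$ therefore does not follow. (Contrast this with the proof of Proposition~\ref{prop-Kaehler-map-to-RFtensor}, where the same factorization is used to kill \eqref{prop-Kaehler-map-to-RFtensor1}: there two of the $b_i$ coincide, so the relation genuinely lies in $\KM_n$.)

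The paper's proof of $\tau(a,a,\ul{b})=0$ is accordingly much less formal: one first observes from the well-definedness of $\theta$ that $\tau(a,a,\ul{b})=\tau(a+1,a+1,\ul{b})$, applies $F\otimes\id$, and manipulates to obtain $\tau\bigl((a^{p-1}-(a+1)^{p-1})a,\,a,\,\ul{b}\bigr)=0$. Then a reciprocity computation on $\P^1_x$ with the element $\tau((t^{p-1}-(t+1)^{p-1})a,a,\ul{b})$ and the function $f=(t^{p-1}-a^{p-1})/t^{p-1}$, together with a pass to an extension containing a primitive $(p-1)$-th root of unity, yields $(p-1)\tau(a,a,\ul{b})=0$, hence the vanishing. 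This argument genuinely mixes the additive and multiplicative structures and cannot be replaced by a purely Milnor-K-theoretic manipulation.
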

\begin{proof}
The commutativity of the diagram is immediate once we know the vertical maps exist. Thus we have to show that for any $X\in \RegCon$ and any $r\geqslant 1$ the map 
$\theta$ sends $B_r(X)$ to zero. Since $\T(\G_a,\G_m^{\times n})$ is a reciprocity functor it suffices to prove this for all $S$-points $x$.
First we show that $B_1(x)$ is mapped to zero in $\T(\G_a,\G_m^{\times n})(x)$, i.e. we have to show that for $a\in \kappa(x)^\times$ and $\ul{b}\in \G_m(x)^{\times {n-1}}$ we have 
\[\tau(a,a,\ul{b})=0 \quad \text{in } \T(\G_a,\G_m^{\times n})(x).\]
We can assume that $\kappa(x)$ contains a $(p-1)$-th primitive root of unity $\zeta$,
else we consider $\varphi: \Spec k(x)(\zeta)\to x$, which is of degree prime to $p$ and get 
\[\deg \varphi\cdot \tau(a,a,\ul{b})= \varphi_*\tau(\varphi^*a, \varphi^*a,\varphi^*\ul{b})=0.\]
Further by Remark \ref{rmk-base-change} we can also assume $\zeta$ lies in our ground field $F$.
Proposition \ref{prop-Kaehler-map-to-RFtensor} implies
\[\tau(a,a,\ul{b})=\tau(a+1,a+1,\ul{b})\quad \text{in } \T(\G_a,\G_m^{\times n})(x).\]
Applying $F\otimes \id$ we obtain
\[\tau(a^p, a,\ul{b})=\tau((a+1)^p, a+1,\ul{b}).\]
On the other hand, Proposition \ref{prop-Kaehler-map-to-RFtensor} yields
\[\tau((a+1)^p, a+1,\ul{b})= \tau((a+1)^{p-1}a, a,\ul{b}) \quad \text{in } \T(\G_a,\G_m^{\times n})(x).\]
So that all in all we obtain
\eq{cor-Kaehler-posChar1}{ \tau\left((a^{p-1}-(a+1)^{p-1})a, a,\ul{b}\right) = 0 \quad \text{in } \T(\G_a, \G_m^{\times n})(x).}
Consider $\A^1=\Spec \kappa(x)[t]\subset \P^1$. Since $(t^{p-1}-(t+1)^{p-1})a\in \G_a(\P^1, (p-1)\cdot\{\infty\})$ we have
\[\tau((t^{p-1}-(t+1)^{p-1})a, a, \ul{b})\in \T(\G_a,\G_m^{\times n})(\P^1,(p-1)\cdot\{\infty\}).\]
Further $f:= \frac{t^{p-1}-a^{p-1}}{t^{p-1}}\in \G_m(\eta_{\P^1})$ is congruent to 1 modulo $(p-1)\cdot\{\infty\}$. Thus reciprocity yields
\begin{align}
 0 & =\sum_{P\in \A^1} v_P(f) \Tr_{P/x}s_P(\tau((t^{p-1}-(t+1)^{p-1})a, a, \ul{b}))\notag\\
   & =    \sum_{i=0}^{p-1} \tau\left(((a\zeta^i)^{p-1}-(a\zeta^i+1)^{p-1})a, a, \ul{b})+(p-1)\tau(a,a,\ul{b}\right).\notag
\end{align}
Using $\zeta\in F$, Lemma \ref{lem-module-structure}, $\tau(c, \zeta^i,\ul{b})=0$ for all $c\in\G_a(x)$, all $i$, and \eqref{cor-Kaehler-posChar1} for $a\zeta^i$ we obtain
\begin{align}
0 & =\sum_{i=0}^{p-1} \zeta^{-i}\cdot \tau\left(((a\zeta^i)^{p-1}-(a\zeta^i+1)^{p-1}) \zeta^i a, \zeta^ia, \ul{b}\right)+(p-1)\tau(a,a,\ul{b})\notag\\
   & = (p-1)\tau(a,a,\ul{b}).\notag 
\end{align}
Hence $\theta$ maps $B_1$ to zero.
By definition of $B_n$, the image of $B_n$ in $\T(\G_a,\G_m^{\times n})(x)$ equals $F\otimes\id(\text{image of } B_{n-1}\text{ in } \T(\G_a,\G_m^{\times n})(x))$ and hence vanishes by induction.
This gives the statement.
\end{proof}

\begin{remark}
One would like to construct a map from $\T(\G_a,\G_m^{\times n})$ to $\Omega^n/B_\infty$ using the universal property as in the proof of Theorem \ref{thm-tensorRF=KD}.
The problem is that we don't have a description of $\Fil_P^r\G_a(\eta_C)$ in positive characteristic. In case $P$ is separable over $x_C$ this can be done (cf. \cite[Prop.6.4]{Kato-Russell}).
But the points which are inseparable over $x_C$ make trouble.
\end{remark}

\subsection{A vanishing result}

The following vanishing result was suggested by \name{B. Kahn}.

\begin{thm}\label{thm-van-unipotent-gps}
Assume ${\rm char}(\kappa(S))\neq 2$. Let $\sM_1,\ldots, \sM_n$ be $\Z$-reciprocity functors.
\[\T(\G_a,\G_a,\sM_1,\ldots,\sM_n)=0.\]
\end{thm}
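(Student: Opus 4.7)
The proof plan breaks into a reduction, a reciprocity computation on $\P^1$, and a division by $2$.

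First a reduction. By property (Inj) of a reciprocity functor, $\T(\G_a,\G_a,\sM_1,\ldots,\sM_n)(X)$ embeds into its value at generic points for $X\in\Regone$ connected, so it suffices to establish vanishing on $S$-points. Moreover, by the construction of $\T$ via $\LT$ and $\Sigma$ (Definition \ref{defn-LMFtensor}, Theorem \ref{thm-tensorRF}), every element of $\T(\G_a,\G_a,\sM_1,\ldots,\sM_n)(x)$ at a $S$-point $x=\Spec k$ is a finite sum of traces $\Tr_{y/x}(\tau(a,b,m_1,\ldots,m_n))$ with $y\to x$ finite, $a,b\in\kappa(y)$, $m_i\in\sM_i(y)$. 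Hence it is enough to show that for every $S$-point $x=\Spec k$, every $a,b\in k$ and every $m_i\in\sM_i(x)$, the element $\tau(a,b,m_1,\ldots,m_n)$ vanishes in $\T(\G_a,\G_a,\sM_1,\ldots,\sM_n)(x)$.

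Fix such data and work on $\P^1_x$ with coordinate $t$ and structure map $\pi:\P^1_x\to x$. Using Theorem \ref{thm-KD}, the local symbol of $\G_a$ at $\infty$ is $(g,u)_\infty=\Res_\infty(g\,du/u)$, and a direct residue calculation gives $bt,at\in \Fil^2_\infty\G_a(\eta_{\P^1_x})$; combined with regularity on $\A^1_x$, Lemma \ref{lem-alt-descr-modulus-space} gives $bt,at\in\G_a(\P^1_x,2\{\infty\})$. By (L3) (equivalently Remark \ref{rem-altL4}), setting
\[\alpha:=\tau(bt,at,\pi^*m_1,\ldots,\pi^*m_n)\in\T(\G_a,\G_a,\sM_1,\ldots,\sM_n)(\P^1_x,2\{\infty\}).\]
Now apply the modulus condition (MC) to $\alpha$ with $f=(t^2-1)/t^2=1-1/t^2$, which satisfies $v_\infty(f-1)=2$ and thus $f\equiv 1$ mod $2\{\infty\}$. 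The divisor of $f$ on $\A^1_x$ is $-2[0]+[1]+[-1]$, and the residue fields at $0,\pm 1$ are all $k$. At $P=0$, $\Z$-bilinearity of $\tau$ gives $s_0(\alpha)=\tau(0,0,m_1,\ldots,m_n)=0$. At $P=\pm 1$, $s_{\pm 1}(\alpha)=\tau(\pm b,\pm a,m_1,\ldots,m_n)$, and bilinearity yields $\tau(-b,-a,\ul{m})=\tau(b,a,\ul{m})$. The reciprocity sum therefore collapses to
\[0=\sum_{P\in\A^1_x}v_P(f)\,\Tr_{P/x}s_P(\alpha)=2\,\tau(b,a,m_1,\ldots,m_n).\]

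Finally, by Lemma \ref{lem-module-structure} applied to the first $\G_a$ slot, $\T(\G_a,\G_a,\sM_1,\ldots,\sM_n)(x)$ carries a natural $F$-vector space structure extending the $\Z$-module structure. Since $\mathrm{char}(\kappa(S))=\mathrm{char}(F)\neq 2$, the element $2\in F$ is invertible; because the $F$-action of the integer $2$ coincides with the $\Z$-action (both are induced by the group law on $\G_a$), multiplication by $2$ is an isomorphism. Thus $2\,\tau(b,a,m_1,\ldots,m_n)=0$ forces $\tau(b,a,m_1,\ldots,m_n)=0$, and the reduction in the first paragraph completes the proof. The main obstacle is really just the clever choice of the function $f$: it must have its affine divisor supported on a pair $\{P,-P\}$ of points that are swapped under $t\mapsto -t$ (so the symmetry of $bt,at$ under this involution yields the factor $2$), while simultaneously having $f\equiv 1$ modulo $2\{\infty\}$ so that the mild pole of $bt,at$ at infinity is compatible with reciprocity; the function $(t^2-1)/t^2$ is essentially the simplest such choice.
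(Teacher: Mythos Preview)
Your proof is correct and follows essentially the same approach as the paper's: reduce to $S$-points, pull back to $\P^1_x$, use that $at,bt\in\G_a(\P^1_x,2\{\infty\})$, and apply reciprocity to a function congruent to $1$ modulo $2\{\infty\}$ whose divisor on $\A^1_x$ is supported on $\{0,1,-1\}$. The only cosmetic differences are that the paper uses $f=t^2/(t^2-1)$ (the reciprocal of your choice) and is terser about the division by $2$, which you justify explicitly via Lemma~\ref{lem-module-structure}.
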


\begin{proof}
It suffices to show that for any $S$-point $x$
and any elements $a,b\in\G_a(x)$, $m_i\in\sM_{i}(x)$ we have (with $\ul{m}=(m_2,\ldots,m_n)$)
\eq{thm-van-unipotent-gps1}{\tau(a,b,\ul{m})=0 \quad \text{in } \T(\G_a,\G_a,\sM_2,\ldots,\sM_n)(x).}
To show this consider $\pi: \P^1_x\to x$ and write $\P^1_x\setminus\{\infty\}=\Spec \kappa(x)[t]$. Then $\pi^*(a)\cdot t, \pi^*(b)\cdot t\in \G_a(\P^1_x, 2\{\infty\})$ and $\pi^*(m_i)\in \sM_i(\P^1)$.
Thus
\[\tau_0:=\tau(\pi^*(a)\cdot t, \pi^*(b)\cdot t, \pi^*(\ul{m}))\in \T(\G_a,\G_a,\ul{\sM})(\P^1_x, 2\{\infty\}).\]
Define $f:= t^2/(t^2-1)\in \kappa(x)(t)$, which is congruent to 1 modulo $2\{\infty\}$. Then the reciprocity law yields
\[0=\sum_{P\in \P^1_x} (\tau_0, f)_P= -\tau(a,b,\ul{m})- \tau(-a,-b,\ul{m})=-2 \tau(a,b,\ul{m}),\]
which proves \eqref{thm-van-unipotent-gps1}.
\end{proof}

\begin{remark}
In case the characteristic of $F$ is zero the above theorem gives the vanishing $\T(\sM_1,\ldots,\sM_n)=0$ whenever two of the $\sM_i$'s are smooth, commutative, connected and unipotent algebraic group, since these groups are isomorphic
to a direct sum of $\G_a$'s. We don't know whether this vanishing also holds in positive characteristic, even the case 
in which two of the $\sM_i$'s are Witt vector groups of finite length is not clear to us.
\end{remark}

\bibliographystyle{amsplain}
\bibliography{ReciprocityFunctor}
\end{document}